\documentclass[10pt]{article}
\usepackage[a4paper, total={6.5in, 10in}]{geometry}

\usepackage[normalem]{ulem} 

\usepackage{mathtools}
\mathtoolsset{showonlyrefs=true}

\usepackage[numbers]{natbib}
\usepackage{amsthm}
\usepackage{mathrsfs}
\usepackage[breaklinks,bookmarks=false]{hyperref}

\hypersetup{colorlinks, linkcolor=blue, citecolor=blue,
urlcolor=blue, plainpages=false, pdfwindowui=false,
pdfstartview={FitH}}

\usepackage{amsmath,amsfonts,cleveref,amssymb,color,enumitem,tensor,mathtools,graphicx,epstopdf,pgfplotstable,tikz,subcaption,xcolor,comment}
\mathtoolsset{showonlyrefs}
\usetikzlibrary{positioning, arrows.meta,calc}

\usepackage{algorithm}
\usepackage{algpseudocode}
\usepackage[only,llbracket,rrbracket,llparenthesis,rrparenthesis]{stmaryrd} 
\usepackage[textsize=tiny,color=blue!10!white]{todonotes}
\usepackage{bm}
\usepackage{adjustbox}
\usepackage{bm}

\usepackage[medium,compact]{titlesec}
\titleformat*{\section}{\large\bfseries}
\titleformat*{\subsection}{\bfseries}

\numberwithin{equation}{section}

\newtheorem{theorem}{Theorem}[section]{\bfseries}{\it}
\newtheorem{proposition}[theorem]{Proposition}{\bfseries}{\it}
\newtheorem{lemma}[theorem]{Lemma}{\bfseries}{\it}
\newtheorem{corollary}[theorem]{Corollary}{\bfseries}{\it}
{\bfseries}{\it}
\newtheorem{eg}{Example}[section]{\bfseries}{\rmfamily}
{\bfseries}{\it}
\theoremstyle{definition}
\newtheorem{remark}{Remark}[section]{\bfseries}{\rmfamily}
\newcommand{\proofbox}{\qed}

\newcommand{\R}{\mathbb{R}}

\newcommand{\jump}[1]{\llbracket #1 \rrbracket}

\newcommand{\p}{\partial}
\newcommand{\abs}[1]{\lvert#1\rvert} 
\newcommand{\tends}{\rightarrow}
\newcommand{\norm}[1]{\lVert#1\rVert}
\newcommand{\expectation}[1]{\mathbb{E}\left[#1\right]}
\newcommand{\Ip }{\mathcal{I}_{+}^k}
\newcommand{\In}{\mathcal{I}_{-}^k}
\newcommand{\qPikn}{\Pi_{-}^{k,N}}
\newcommand{\qPikp}{\Pi_{+}^{k,N}}

\newcommand{\Ipm}{\mathcal{I}_{\pm}}

\newcommand{\rewrite}[1]{}

\newcommand{\Mk}{M_k}

\newcommand{\ds}{{\mathrm{d}s}}
\newcommand{\LLspace}{{{L}^2(\Omega;\mathbb{R}^d)}}

\newcommand{\Lip}{{\rm Lip}}

\newcommand{\logLogSlopeTriangle}[5]
{
	
	\pgfplotsextra
	{
		\pgfkeysgetvalue{/pgfplots/xmin}{\xmin}
		\pgfkeysgetvalue{/pgfplots/xmax}{\xmax}
		\pgfkeysgetvalue{/pgfplots/ymin}{\ymin}
		\pgfkeysgetvalue{/pgfplots/ymax}{\ymax}
		
		\pgfmathsetmacro{\xArel}{#1}
		\pgfmathsetmacro{\yArel}{#3}
		\pgfmathsetmacro{\xBrel}{#1-#2}
		\pgfmathsetmacro{\yBrel}{\yArel}
		\pgfmathsetmacro{\xCrel}{\xArel}
		
		\pgfmathsetmacro{\lnxB}{\xmin*(1-(#1-#2))+\xmax*(#1-#2)} 
		\pgfmathsetmacro{\lnxA}{\xmin*(1-#1)+\xmax*#1} 
		\pgfmathsetmacro{\lnyA}{\ymin*(1-#3)+\ymax*#3} 
		\pgfmathsetmacro{\lnyC}{\lnyA+#4*(\lnxA-\lnxB)}
		\pgfmathsetmacro{\yCrel}{\lnyC-\ymin)/(\ymax-\ymin)} 
		
		\coordinate (A) at (rel axis cs:\xArel,\yArel);
		\coordinate (B) at (rel axis cs:\xBrel,\yBrel);
		\coordinate (C) at (rel axis cs:\xCrel,\yCrel);
		
		\draw[#5]   (A)-- node[pos=0.5,anchor=north] {1}
		(B)-- 
		(C)-- node[pos=0.5,anchor=west] {$1$}
		cycle;
	}
}

\newcommand{\logLogSlopeTrianglei}[5]
{
	
	\pgfplotsextra
	{
		\pgfkeysgetvalue{/pgfplots/xmin}{\xmin}
		\pgfkeysgetvalue{/pgfplots/xmax}{\xmax}
		\pgfkeysgetvalue{/pgfplots/ymin}{\ymin}
		\pgfkeysgetvalue{/pgfplots/ymax}{\ymax}
		
		\pgfmathsetmacro{\xArel}{#1}
		\pgfmathsetmacro{\yArel}{#3}
		\pgfmathsetmacro{\xBrel}{#1-#2}
		\pgfmathsetmacro{\yBrel}{\yArel}
		\pgfmathsetmacro{\xCrel}{\xArel}
		
		\pgfmathsetmacro{\lnxB}{\xmin*(1-(#1-#2))+\xmax*(#1-#2)} 
		\pgfmathsetmacro{\lnxA}{\xmin*(1-#1)+\xmax*#1} 
		\pgfmathsetmacro{\lnyA}{\ymin*(1-#3)+\ymax*#3} 
		\pgfmathsetmacro{\lnyC}{\lnyA+#4*(\lnxA-\lnxB)}
		\pgfmathsetmacro{\yCrel}{\lnyC-\ymin)/(\ymax-\ymin)} 
		
		\coordinate (A) at (rel axis cs:\xArel,\yArel);
		\coordinate (B) at (rel axis cs:\xBrel,\yBrel);
		\coordinate (C) at (rel axis cs:\xCrel,\yCrel);
		
		\draw[#5]   (A)-- node[pos=0.5,anchor=north] {1}
		(B)-- 
		(C)-- node[pos=0.5,anchor=west] {$\frac{1}{2}$}
		cycle;
	}
}

\newcommand{\logLogSlopeTriangleii}[5]
{
	
	\pgfplotsextra
	{
		\pgfkeysgetvalue{/pgfplots/xmin}{\xmin}
		\pgfkeysgetvalue{/pgfplots/xmax}{\xmax}
		\pgfkeysgetvalue{/pgfplots/ymin}{\ymin}
		\pgfkeysgetvalue{/pgfplots/ymax}{\ymax}
		
		\pgfmathsetmacro{\xArel}{#1}
		\pgfmathsetmacro{\yArel}{#3}
		\pgfmathsetmacro{\xBrel}{#1-#2}
		\pgfmathsetmacro{\yBrel}{\yArel}
		\pgfmathsetmacro{\xCrel}{\xArel}
		
		\pgfmathsetmacro{\lnxB}{\xmin*(1-(#1-#2))+\xmax*(#1-#2)} 
		\pgfmathsetmacro{\lnxA}{\xmin*(1-#1)+\xmax*#1} 
		\pgfmathsetmacro{\lnyA}{\ymin*(1-#3)+\ymax*#3} 
		\pgfmathsetmacro{\lnyC}{\lnyA+#4*(\lnxA-\lnxB)}
		\pgfmathsetmacro{\yCrel}{\lnyC-\ymin)/(\ymax-\ymin)} 
		
		\coordinate (A) at (rel axis cs:\xArel,\yArel);
		\coordinate (B) at (rel axis cs:\xBrel,\yBrel);
		\coordinate (C) at (rel axis cs:\xCrel,\yCrel);
		
		\draw[#5]   (A)-- node[pos=0.5,anchor=north] {\tiny 1}
		(B)-- 
		(C)-- node[pos=0.5,anchor=west] {\tiny $1/3$}
		cycle;
	}
}

\newcommand{\logLogSlopeTriangleiii}[5]
{
	
	\pgfplotsextra
	{
		\pgfkeysgetvalue{/pgfplots/xmin}{\xmin}
		\pgfkeysgetvalue{/pgfplots/xmax}{\xmax}
		\pgfkeysgetvalue{/pgfplots/ymin}{\ymin}
		\pgfkeysgetvalue{/pgfplots/ymax}{\ymax}
		
		\pgfmathsetmacro{\xArel}{#1}
		\pgfmathsetmacro{\yArel}{#3}
		\pgfmathsetmacro{\xBrel}{#1-#2}
		\pgfmathsetmacro{\yBrel}{\yArel}
		\pgfmathsetmacro{\xCrel}{\xArel}
		
		\pgfmathsetmacro{\lnxB}{\xmin*(1-(#1-#2))+\xmax*(#1-#2)} 
		\pgfmathsetmacro{\lnxA}{\xmin*(1-#1)+\xmax*#1} 
		\pgfmathsetmacro{\lnyA}{\ymin*(1-#3)+\ymax*#3} 
		\pgfmathsetmacro{\lnyC}{\lnyA+#4*(\lnxA-\lnxB)}
		\pgfmathsetmacro{\yCrel}{\lnyC-\ymin)/(\ymax-\ymin)} 
		
		\coordinate (A) at (rel axis cs:{1-\xArel},{1-\yArel});
\coordinate (B) at (rel axis cs:{1-\xBrel},{1-\yBrel});
\coordinate (C) at (rel axis cs:{1-\xCrel},{1-\yCrel});
		
		\draw[#5]   (A)-- node[pos=0.5,anchor=south] {\tiny 1}
		(B)-- 
		(C)-- node[pos=0.5,anchor=east] {\tiny $1/4$}
		cycle;
	}
}

\newcommand{\logLogSlopeTriangleiv}[5]
{
	
	\pgfplotsextra
	{
		\pgfkeysgetvalue{/pgfplots/xmin}{\xmin}
		\pgfkeysgetvalue{/pgfplots/xmax}{\xmax}
		\pgfkeysgetvalue{/pgfplots/ymin}{\ymin}
		\pgfkeysgetvalue{/pgfplots/ymax}{\ymax}
		
		\pgfmathsetmacro{\xArel}{#1}
		\pgfmathsetmacro{\yArel}{#3}
		\pgfmathsetmacro{\xBrel}{#1-#2}
		\pgfmathsetmacro{\yBrel}{\yArel}
		\pgfmathsetmacro{\xCrel}{\xArel}
		
		\pgfmathsetmacro{\lnxB}{\xmin*(1-(#1-#2))+\xmax*(#1-#2)} 
		\pgfmathsetmacro{\lnxA}{\xmin*(1-#1)+\xmax*#1} 
		\pgfmathsetmacro{\lnyA}{\ymin*(1-#3)+\ymax*#3} 
		\pgfmathsetmacro{\lnyC}{\lnyA+#4*(\lnxA-\lnxB)}
		\pgfmathsetmacro{\yCrel}{\lnyC-\ymin)/(\ymax-\ymin)} 
		
		\coordinate (A) at (rel axis cs:{1-\xArel},{1-\yArel});
\coordinate (B) at (rel axis cs:{1-\xBrel},{1-\yBrel});
\coordinate (C) at (rel axis cs:{1-\xCrel},{1-\yCrel});
		
		\draw[#5]   (A)-- node[pos=0.5,anchor=south] {\tiny 1}
		(B)-- 
		(C)-- node[pos=0.5,anchor=east] {\tiny $1/5$}
		cycle;
	}
}

\newcommand{\logLogSlopeTrianglev}[5]
{
	
	\pgfplotsextra
	{
		\pgfkeysgetvalue{/pgfplots/xmin}{\xmin}
		\pgfkeysgetvalue{/pgfplots/xmax}{\xmax}
		\pgfkeysgetvalue{/pgfplots/ymin}{\ymin}
		\pgfkeysgetvalue{/pgfplots/ymax}{\ymax}
		
		\pgfmathsetmacro{\xArel}{#1}
		\pgfmathsetmacro{\yArel}{#3}
		\pgfmathsetmacro{\xBrel}{#1-#2}
		\pgfmathsetmacro{\yBrel}{\yArel}
		\pgfmathsetmacro{\xCrel}{\xArel}
		
		\pgfmathsetmacro{\lnxB}{\xmin*(1-(#1-#2))+\xmax*(#1-#2)} 
		\pgfmathsetmacro{\lnxA}{\xmin*(1-#1)+\xmax*#1} 
		\pgfmathsetmacro{\lnyA}{\ymin*(1-#3)+\ymax*#3} 
		\pgfmathsetmacro{\lnyC}{\lnyA+#4*(\lnxA-\lnxB)}
		\pgfmathsetmacro{\yCrel}{\lnyC-\ymin)/(\ymax-\ymin)} 
		
		\coordinate (A) at (rel axis cs:{1-\xArel},{1-\yArel});
\coordinate (B) at (rel axis cs:{1-\xBrel},{1-\yBrel});
\coordinate (C) at (rel axis cs:{1-\xCrel},{1-\yCrel});
		
		\draw[#5]   (A)-- node[pos=0.5,anchor=south] {\tiny 1}
		(B)-- 
		(C)-- node[pos=0.5,anchor=east] {\tiny $1/6$}
		cycle;
	}
}

\newcommand{\drawcube}[6]{%
\begin{scope}[shift={(#1,0)}]
  \coordinate (FBL) at (0,0);    \coordinate (FBR) at (\s,0);
  \coordinate (FTR) at (\s,\s);   \coordinate (FTL) at (0,\s);
  \coordinate (BBR) at (\s+\d,\d); \coordinate (BTR) at (\s+\d,\s+\d);
  \coordinate (BTL) at (\d,\s+\d);

  \fill[gray!12] (FBL)--(FBR)--(FTR)--(FTL)--cycle;
  \fill[gray!22] (FTL)--(FTR)--(BTR)--(BTL)--cycle;
  \fill[gray!17] (FBR)--(FTR)--(BTR)--(BBR)--cycle;
  \draw (FBL)--(FBR)--(FTR)--(FTL)--cycle;
  \draw (FTL)--(BTL)--(BTR)--(FTR);
  \draw (FBR)--(BBR)--(BTR);

  \coordinate (topC)   at (\cc,\s+\d/2);   
  \coordinate (rightC) at (\s+\d/2,\cc);   
  \coordinate (leftC)  at (0,\cc);         
  \coordinate (botC)   at (\cc,0);         

  \draw[vin]   ($(topC)+(0,\al)$)   -- (topC)
        node[pos=0, above, text=black] {#2};
  \draw[horiz] ($(rightC)+(\al,0)$) -- (rightC)
        node[pos=0, above, text=black] {#3};
  \draw[horiz] (leftC) -- ($(leftC)+(-\al,0)$)
        node[pos=1, above,  text=black] {#4};
  \draw[vout]  (botC)  -- ($(botC)+(0,-\al)$)
        node[pos=1, right, text=black] {#5};

  \node[align=center, font=\small] at (\s/2,\s/2) {#6};
\end{scope}%
}

 \title{Numerical analysis of first-order mean field games under displacement monotonicity}
  \author{Alp\'ar R.\ M\'esz\'aros\footnotemark[2]~ and Yohance A.\ P.\ Osborne\footnotemark[2]}

\begin{document}

\maketitle

 \renewcommand{\thefootnote}{\fnsymbol{footnote}}

\footnotetext[2]{Department of Mathematical Sciences, Durham University, Stockton Road, DH1 3LE Durham, United Kingdom (\texttt{alpar.r.meszaros@durham.ac.uk}, \texttt{yohance.a.osborne@durham.ac.uk}).}

\begin{abstract}
We introduce a particle method for the numerical approximation of time-dependent first-order Mean Field Games (MFGs) systems with non-separable, displacement monotone Hamiltonians and terminal costs, for arbitrary time-horizons and (possibly) singular initial player distributions in $\mathcal{P}_2(\mathbb{R}^d)$. The numerical scheme is based on an implicit Euler discretization in time and sampling in space of the characteristic Hamiltonian/Pontryagin system associated with the continuous MFGs system. We prove convergence of the approximations of the player distribution in the $L^{\infty}(\mathcal{W}_2)$-metric and the approximations for the gradient of the value function along optimal trajectories in the $L^{\infty}{(L^2)}$-norm as the number of spatial samples tends to infinity jointly with the temporal time-step vanishing. The error bound that we establish for this convergence further implies rates of convergence of the scheme for a range of spatial sampling techniques. Provided that the Lagrangian and terminal costs are additionally locally Lipschitz continuous, we also establish an asymptotic error bound in the $L^{\infty}(L^1)$-norm for the approximations of the value function along optimal trajectories. This is the first work in the literature on rigorous numerical approximation and analysis of first-order MFG systems {that} handles non-separable Hamiltonians and potentially singular initial agent distributions for arbitrary long time horizons. We illustrate the performance of the scheme in numerical experiments for a range of {initial agent distributions,} time horizons and space dimensions. 
\end{abstract}

\setcounter{tocdepth}{1}
\tableofcontents
\vspace{0.5cm}

\section{Introduction}\label{sec-introduction}
Mean Field Games (MFGs) model Nash equilibria in (stochastic) differential games that involve a continuum of players. MFGs were initiated independently by Lasry--Lions in \cite{lasry2006jeuxI,lasry2006jeuxII,lasry2007mean} in the mathematics literature and by Huang--Caines--Malham\'e in \cite{huang2006large} in the engineering community. MFGs models are often described by a nonlinear system of PDEs in which a Hamilton--Jacobi--Bellman (HJB) equation --- for the value function of a representative agent --- is coupled with a Kolmogorov--Fokker--Planck (KFP) equation that describes the evolution of the players' distribution. 
For an introduction to the theory, numerical analysis, and applications of MFGs, we refer the reader to \cite{achdou2020mean,GomesSaude2014}.

A prototypical first-order MFGs PDE system takes the form
\begin{equation}\label{mfg-pde-sys}
\left\{
	\begin{array}{ll}
		- \partial_tu +H(x,-D_xu,\rho)=0, &\text{ in }(0,T)\times\mathbb{R}^d,
		\\[3pt]
		\partial_t\rho  + D_x\cdot\left(\rho{D_p H}(x,-D_xu,\rho)\right)=0, &\text{ in }(0,T)\times\mathbb{R}^d,
		\\[3pt]
		u(T,x)=g(x,\rho(T))\quad\text{and}\quad \rho(0,\cdot)=\rho_0,   &\text{ in }\mathbb{R}^d,
	\end{array}
\right.
\end{equation}
where $T>0$ is a fixed time-horizon, the first equation being the HJB equation for the value function $u$ of a typical agent and the second equation is the continuity equation (the `first order version' of the KFP equation) for the unknown $(\rho_t)_{t\in[0,T]}$, a flow of probability measures describing the time evolution of  the player population distribution. This system is equipped with an initial player distribution $\rho_0$. The forward-backward PDE system is nonlinearly coupled through both the Hamiltonian $H:\R^d\times\R^d\times\mathcal{P}_2(\R^d)\to\R$ and the terminal cost $g:\R^d\times\mathcal{P}_2(\R^d)\to\mathbb{R}$, associated with the underlying optimal control problem faced by the generic player. 

We would like to emphasise already at this point that Hamiltonian $H$ has in general a so-called \emph{non-separable} structure: the generalised momentum variable $p$ is not supposed to be additively separated from the the distribution $\rho$. More precisely, we do not suppose the existence of maps $H_0:\mathbb{R}^d\times\mathbb{R}^d\to\mathbb{R}$ and $F:\R^d\times\mathcal{P}_2(\mathbb{R}^d)\to \mathbb{R}$ such that the additive decomposition
\begin{equation}\label{sep-H-structure}
    H(x,p,\mu) = H_0(x,p) - F(x,\mu),\quad\forall (x,p,\mu)\in\mathbb{R}^d\times\mathbb{R}^d\times\mathcal{P}_2(\mathbb{R}^d)
\end{equation}
holds. Here we let $\mathcal{P}_2(\mathbb{R}^d)$ denote the metric space of Borel probability measures supported on $\mathbb{R}^d$, with finite second moments, equipped with the $\mathcal{W}_2$-Wasserstein distance (see Section \ref{sec-basic-notation-cont-pb} below). Non-separable MFGs systems appear naturally in applications, such as in macroeconomics \cite{AchdouEtAl2014} or pedestrian dynamics \cite{gomes2015short} for instance. 

\medskip

{\bf Brief literature review relevant to our work.}

\medskip

\emph{Analysis of MFGs systems.} The existence and uniqueness of solutions to MFGs systems attracted increased attention in the past two decades. When establishing existence of solutions, different techniques have been applied, depending on the presence of regularising effect coming from either non-degenerate parabolicity or regularisation effects of the coupling functions $H$ and $g$ with respect to the measure component. When non-degenerate noise is present there are by now well-paved PDE tools establishing existence of smooth or weak solutions both in the local or nonlocal frameworks. We refer to \cite{achdou2020mean} (and more particularly to \cite{CarPor}) for a detailed summary (as of around 2019 or so) of the various developments on these approaches, and to \cite{GomPimVos} for a regularity theory of classical solutions. Existence for first oder models, or for the ones which are degenerate parabolic, is much more intricate to study. Even for regularising couplings, in general one needs to assume further conditions, as regularity/summability/compact support of the initial measure and separability of the Hamiltonian in the sense of \eqref{sep-H-structure} (see for instance \cite[Section 1.3.4]{CarPor}); in the case of local couplings weak solutions can be produced using variational techniques (cf. \cite{Car,CarGra,CarMesSan,San, GriMes}). Remarkably, in the case of strong monotonicity properties such weak solutions can be proved to be more regular (cf. \cite{GraMes, GriMes}) or even smooth (using a joint `elliptic structure' in time-space, see \cite{Mun}).

When it comes to the uniqueness and stability of solutions, usually more structural assumptions need to be imposed on the data $H$ and $g$. These typically boil down to suitable {\it monotonicity conditions}. The most well-known regimes in this context are the so-called {\it Lasry--Lions (LL) monotonicity} and the {\it displacement (D) monotonicity} (for other possible monotonicity conditions and for their comparisons we refer to \cite{GraMes:23}). While for regularising couplings LL-monotonicity is known to be developed only in the case of separable Hamiltonians \eqref{sep-H-structure}, in the case of local couplings this has a suitable analog (see for instance \cite{AchPor:IHP}). D-monotonicity is known to be developed only in the case of non-local Hamiltonians, and in general by default this does not need to assume any separability on $H$ (cf. \cite{meszaros2024mean, GanMesMouZha}), and gives not only uniqueness, but also existence and stability in a unified manner for first and second order models with general initial measures (cf. \cite{GanMes, BanMes:FMS}). All this previous discussion is on results which are {\it global in time}.

Without monotonicity conditions well-posedness of MFGs systems involving general non-separable Hamiltonians from the point of view of classical solutions can be established at the price of suitable {\it smallness conditions} either on the time horizon, the size of the Hamiltonian, the initial measure or the combination of these. For a non-exhaustive list of works in these directions we refer to \cite{Amb:18, AmbMes, AmbGriMes}. 

\medskip

\emph{Numerical analysis of MFG systems.} The analysis of numerical methods for MFGs systems with separable Hamiltonians (i.e.\ where the decomposition \eqref{sep-H-structure} holds) is an active area of research. Early works on the convergence analysis of monotone finite difference methods appeared in \cite{achdou2010mean,achdou2013mean,achdou2016convergence}, with more recent contributions concerning the convergence of policy iteration \cite{cacace2020policy,camilli2022rates} and fictitious play schemes \cite{inoue2023fictitious, tang2024learning}, as well as the derivation of rates of convergence \cite{bonnans2022error}. The convergence  of semi-Lagrangian methods in one space dimension was shown in \cite{CarliniSilva14,CarliniSilve15} for the first-order and degenerate second-order systems, respectively. This resulted in further works, such as the extension of the convergence analysis of \cite{CarliniSilva14} to arbitrary space dimension in \cite{carlini2024lagrange} (see also \cite{hadikhanloo2019finite}), the convergence of a semi-Lagrangian scheme for a non-degenerate second-order system in \cite{carlini2018discretization}, and the approximation of monotone operator formulations of first-order systems in \cite{carlini2025semi}.  In addition, a tensor-train-based high-order semi-Lagrangian policy iteration scheme was introduced in \cite{carlini2025high} for separable MFG systems with high-dimensional state space. Other numerical methods have also been studied, such as augmented Lagrangian methods \cite{benamou2015augmented,andreev2017preconditioning,fu2023high}, kernel methods \cite{liu2021computational,nurbekyan2019fourier,agrawal2022random}, and approaches based on neural networks \cite{lin2021alternating}. 

The analysis and numerical approximation of MFGs systems with (possibly) nondifferentiable, but separable, Hamiltonians was recently initiated in \cite{osborne2022analysis,osborne2024erratum,osborne2023finite}, where in particular convergence of a class of monotone stabilized finite element methods was shown. Since then, finite element error bounds for such schemes have appeared, such as \emph{a priori} quasi-optimal error bounds in \cite{osborne2024near} and \emph{a posteriori} error bounds in \cite{osborne2025posteriori,smears2026timeapost}, each addressing the case of systems with $C^{1,1}$ Hamiltonians, as well as rates of convergence for MFGs systems with merely Lipschitz, nondifferentiable, separable Hamiltonians in \cite{osborne2025rates}. An alternative approach to error analysis based on Brezzi--Rappaz--Raviart approximation theory was introduced in \cite{berry2025approximation} for the finite element approximation of solutions to MFGs systems with $C^2$-separable Hamiltonians that admit multiple solutions; see also \cite{berry2025nonsmooth} for an extension of this approach to the case of $C^{1,1}$-separable Hamiltonians based on the notion of metric-regular mappings. 

The analysis of numerical schemes for second-order MFG systems in the case of non-separable Hamiltonians seems to have been considered only in \cite{lauriere2021policy,ASSOULI2023113802,camilli2025quadratic}. In particular, a policy iteration method was analyzed in \cite{lauriere2021policy} for a second-order non-separable system which was then applied to a finite difference discretization of the system to generate simulations. In \cite{ASSOULI2023113802} an algorithm was studied for a second-order non-separable system, wherein the HJB and KFP equations are each approximated by a deep neural network within an iteration procedure for the entire system. The quadratic convergence of a Newton iteration scheme in infinite-dimensional function spaces was shown in \cite{camilli2025quadratic} for a class of second-order non-separable MFG systems. Turning to the case of systems that are both first-order and non-separable, to the best of our knowledge, only numerical algorithms, without their analysis, have been introduced for solving a very particular MFGs system in the work \cite{mazanti2019minimal}. Therein, an iterative algorithm based on approximating the forward-backward Pontryagin system for the player trajectories was proposed, without analysis, to obtain simulations of minimal-time MFG models with initial distributions being Dirac measures supported on a single point. It appears therefore that the analysis of numerical methods for first-order, non-separable MFG systems has been largely unexplored {until our current work.}

\medskip

{\bf Our contributions.} 

\medskip

In this paper, we focus on the numerical approximation of the MFG system \eqref{mfg-pde-sys} that is both first-order and non-separable. As our main contribution, we develop a {\it particle-based} numerical method for the system \eqref{mfg-pde-sys} where the Hamiltonian $H$ and the terminal cost $g$ are in particular displacement monotone, and the initial player distribution $\rho_0$ is allowed to be an arbitrary measure $\mathcal{P}_2(\mathbb{R}^d)$. For this, we fix  a reference probability space $(\Omega,\mathbb{F},\mathbb{P})$ with $\mathbb{P}$ being non-atomic (to ensure than any $\rho\in\mathcal{P}_2(\R^d)$ can be realised as the law of a random variable $X\in L^2(\Omega;\mathbb{R}^d)$). Without loss of generality, $\Omega$ can be taken to be a compact set of a Euclidean space, with unit Lebesgue measure and $\mathbb{P}$ can be chosen to be the restriction of the Lebesgue measure to $\Omega$.

Suppose that $(\rho(s))_{s\in[0,T]}$ is the MFG equilibrium flow of probability measures and let $X_0\in L^2(\Omega;\mathbb{R}^d)$ be a given initial random variable distributed according to $\rho_0$. The starting point for the method is the deterministic forward-backward Hamiltonian type system 
\begin{equation}\label{characteristics-continuous}
	\left\{
	\begin{array}{ll}
		 X(t) &=X_0+\displaystyle \int_0^tD_pH\left(X(s),Y(s),\rho(s)\right)\mathrm{d}s, \quad\text{in }C([0,T];L^2(\Omega;\mathbb{R}^d)), 
		\\
		 Y(t) &\displaystyle= -D_xg\left(X(T),\rho(T)\right)+\int_t^TD_xH\left(X(s),Y(s),\rho(s)\right)\mathrm{d}s,\quad\text{in }C([0,T];L^2(\Omega;\mathbb{R}^d)), 
	\end{array}
	\right.
\end{equation}
together with the fixed point condition that the law of $X(s)$ is precisely $\rho_s$, i.e.
\begin{equation}\label{eq:FP}
X(s)_\#\mathbb{P} = \rho_s,\ \ \forall s\in[0,T]. 
\end{equation}
It was shown in \cite{meszaros2024mean} that under suitable regularity and D-monotonicity assumptions on $H,g$, this Hamiltonian system has a unique global in time solution pair $(X,Y)$ (in particular $X$ satisfes \eqref{eq:FP}) which is in a one-to-one correspondence with the unique solution $(u,\rho)$ to the MFG system \eqref{mfg-pde-sys}. In formal terms, this relationship is as follows: the momentum flow $Y\in C([0,T];L^2(\Omega;\mathbb{R}^d))$ satisfies $Y(t) = -D_xu(t,X(t))$ in $L^2(\Omega;\mathbb{R}^d)$, $t\in [0,T]$, which is the flow of the {negative of the} value function's gradient $-D_xu$ along the players' characteristic path $X\in C([0,T];L^2(\Omega;\mathbb{R}^d))$, that uniquely satisfies the forward equation of \eqref{characteristics-continuous} with $\rho(t) = (X(t))_{\#}\mathbb{P}$ in $\mathcal{P}_2(\mathbb{R}^d)$, $t\in [0,T]$ (see Section \ref{sec-basic-notation-cont-pb} for further details). While in this system there is no randomness coming from idiosyncratic or common noises, the role of the probability space $(\Omega,\mathbb{F},\mathbb{P})$ and the Lebesgue space $L^2(\Omega;\mathbb{R}^d)$ for the corresponding $L^2$-random variables is to model the dependence on the initial measure $\rho_0$ and its approximations in a robust way, and give a functional analytic setting, suitable for our analysis. We will benefit from this setting throughout the analysis later.

\medskip

In view of this connection between $(u,\rho)$ and $(X,Y)$,  we introduce a discretisation of the continuous Hamiltonian system \eqref{characteristics-continuous} as a numerical scheme to approximate both $\rho$ and $D_xu(t,X)$. More precisely, the numerical scheme is a discrete Hamiltonian system whose solutions $\{X^{k,N}\}_{k,N}$, $\{Y^{k,N}\}_{k,N}$ are piecewise constant in time and defined everywhere in $[0,T]$, with constant step-size $\tau_k>0$, $k\in\mathbb{N}$, whose values are simple random variables in $L^2(\Omega;\mathbb{R}^d)$ that assume $N$ distinct values in $\mathbb{R}^d$ for each $N\in\mathbb{N}$. The formulation of the numerical method, which we present in Section \ref{sec-numerical-discretization}, is structurally consistent with the continuous Hamiltonian system \eqref{characteristics-continuous} and employs merely deterministic empirical approximations of $\rho_0$, thereby allowing for (possibly) purely singular initial player distributions for the MFG model problem. {Here $k\in\mathbb{N}$ is the index of the time step $\tau_k$, while $N\in\mathbb{N}$ refers to the discretization level of the initial measure $\rho_0$.}

The main guiding principle of our analysis is the following observation: first, under D-monotonicity the HJB equation in \eqref{mfg-pde-sys} has a unique classical solution such that $D_xu(t,\cdot)\in C^{0,1}(\R^d)$;  second, this combined with the deterministic nature of the problem implies that if $\rho_0$ is a finitely supported measure, {$t\mapsto\rho_t$ will remain a finitely supported measure flow} throughout $t\in[0,T]$, and collisions of the Dirac masses/crossing of characteristics cannot occur. This precisely means that if $X_0$ is taken to be a simple random variable (i.e. with finite range), this will be maintained for the solution $(X_t,Y_t)_{t\in [0,T]}$ of the Hamiltonian system as well, throughout the time evolution. Our numerical approach is hence based on these crucial facts.

\medskip

Our main results concerning well-posedness for the scheme are given in Theorem \ref{theorem-existence-num-scheme} and Theorem \ref{theorem-uniqueness-numerical-sheme} where we prove, respectively, the existence and uniqueness of the approximations $\{(X^{k,N},Y^{k,N})\}_{{k,N}}$ for all $k\in\mathbb{N}$ sufficiently large and for all $N\in\mathbb{N}$. At the heart of the analysis is a continuous dependence bound that we prove in Theorem \ref{theorem-cont-dependence-discrete-Hamiltonian-sys} for discrete Hamiltonian systems under affine perturbations. Therein, we show the key property that the discrete Hamiltonian system remarkably {\it inherits} the D-monotonicity from the D-monotonicity property satisfied by the continuous Hamiltonian system \eqref{characteristics-continuous}, which ultimately implies {\it uniqueness and stability} of our discrete approximations. As such, we have developed a \emph{D-monotone} numerical scheme, in the sense of displacement monotonicity of the nonlinear couplings, that applies to the MFG system \eqref{mfg-pde-sys} even if the initial player distribution is not absolutely continuous with respect to Lebesgue measure or does not have compact support. To the best of our knowledge, this is the first use of  D-monotonicity in developing a well-posed numerical method for MFGs systems. 

\medskip

The next main result concerns the strong convergence of the numerical scheme in the joint limit as $k,N\to\infty$. In fact, we prove a stronger result in Theorem \ref{theorem-convergence-k-N-joint}, where we establish the asymptotic error bound 
\begin{align}\label{main-error-bound-num-scheme-intro}
	\nonumber\sup_{t\in [0,T]}\mathcal{W}_2\left(\rho(t),\rho^{k,N}(t)\right)&+\sup_{t\in [0,T]}\|(-D_xu(\cdot,X)-Y^{k,N})(t)\|_{L^2(\Omega;\mathbb{R}^d)}\\
    &+\sup_{t\in [0,T]}\|(X-X^{k,N})(t)\|_{L^2(\Omega;\mathbb{R}^d)}
			\leq C\left( \|X_0-X_0^N\|_{\LLspace}+ \tau_k\right)
\end{align}
for all $k,N\in\mathbb{N}$ with $k$ sufficiently large, where the constant $C>0$ depends only on parameters associated with the model data of the continuous problem. Here, $\rho^{k,N}(t)\in\mathcal{P}_2(\mathbb{R}^d)$ denotes the law of $X^{k,N}(t)$ for each $t\in [0,T]$ and $k,N\in\mathbb{N}$. The approximations $\{X_0^N\}_{N\in\mathbb{N}}$ for $X_0$ are a  given sequence of simple random variables that are distributed according to a corresponding sequence of deterministic finitely supported measures $\{\rho_0^N\}_{N\in\mathbb{N}}$ for $\rho_0$, where $\rho_0^N$ is the law of $X_0^N$ for each $N\in\mathbb{N}$ and $\{X_0^N\}_{N\in\mathbb{N}}$ converges strongly to $X_0$ in $L^2(\Omega;\mathbb{R}^d)$. Crucially, we show that the above error bound \eqref{main-error-bound-num-scheme-intro} is a consequence of the continuous dependence bound in Theorem \ref{theorem-cont-dependence-discrete-Hamiltonian-sys} for discrete Hamiltonian systems and the $W^{1,\infty}$-in-time regularity of the solution $(X,Y)$ to the continuous Hamiltonian system \eqref{characteristics-continuous}. If both the terminal cost $g$ and the Lagrangian $L(x,v,\mu)\coloneqq \sup_{p\in\mathbb{R}^d}\{p\cdot v-H(x,p,\mu)\}$ are additionally locally Lipschitz continuous, then we further prove in Theorem \ref{theorem-convergence-k-N-joint-u-val} that the discrete value function  
\begin{align}\label{def:u-N}
u^{k,N}(t)\coloneqq \int_t^TL(X^{k,N}(s),{D_pH(X^{k,N}(s), Y^{k,N}(s),\rho^{k,N}(s))},\rho^{k,N}(s))\mathrm{d}s + g(X^{k,N}(T),\rho^{k,N}(T)),\ \forall t\in [0,T],
\end{align}
constructed from the approximations $\{(X^{k,N},Y^{k,N},\rho^{k,N})\}_{k,N}$, satisfies the error bound
$$\sup_{t\in [0,T]}\|(u(\cdot,X)-u^{k,N})(t)\|_{L^1(\Omega;\mathbb{R}^d)}
			\lesssim \|X_0-X_0^N\|_{\LLspace}+ \tau_k$$ for all $k,N\in\mathbb{N}$ both sufficiently large.

\medskip

We then obtain two corollaries on deriving explicit rates of convergence for the scheme according to the choice of the piecewise constant spatial approximations $\{X_0^N\}_{N\in\mathbb{N}}$. For example, if $\{X_0^N\}_{N\in\mathbb{N}}$ is generated through optimal transport maps associated with the semi-discrete {optimal transport problem for} $\mathcal{W}_2(\rho_0,\rho_0^N)$, for a suitable class of empirical measures $\{\rho_0^N\}_{N\in\mathbb{N}}$, then Corollary \ref{cor-1-eg-rate-of-conv} gives the error bound
\begin{multline}
    \label{cor-1-eg-rate-of-conv-intro}
		\sup_{t\in [0,T]}\mathcal{W}_2\left(\rho(t),\rho^{k,N}(t)\right)+\sup_{t\in [0,T]}\|(-D_xu(\cdot,X)-Y^{k,N})(t)\|_{L^2(\Omega;\mathbb{R}^d)}\\+\sup_{t\in [0,T]}\|(X-X^{k,N})(t)\|_{L^2(\Omega;\mathbb{R}^d)}
		\leq C
		\begin{dcases}
			\max\{N^{-1/d},  N^{-1/2 + 1/q}\} +\tau_k, & \text{if } d \ne \frac{2q}{q-2},\\
			N^{-1/d} \log(1+N)^{1/d} + \tau_k, & \text{if } d = \frac{2q}{q-2},
		\end{dcases} 
\end{multline}
for all $N,k\in\mathbb{N}$ with $k$ sufficiently large, provided $\rho_0\in \mathcal{P}_q(\mathbb{R}^d)$, $q>2$, is absolutely continuous with respect to the $d$-dimensional Lebesgue measure. Alternatively, if $\{X_0^N\}_{N\in\mathbb{N}}$ is obtained through simple  vector quantization of $X_0$, Corollary \ref{cor-2-eg-rate-of-conv} gives the error bound
\begin{multline}\label{cor-2-eg-rate-of-conv-intro}
		\sup_{t\in [0,T]}\mathcal{W}_2\left(\rho(t),\rho^{k,N}(t)\right)+\sup_{t\in [0,T]}\|(-D_xu(\cdot,X)-Y^{k,N})(t)\|_{L^2(\Omega;\mathbb{R}^d)}\\+\sup_{t\in [0,T]}\|(X-X^{k,N})(t)\|_{L^2(\Omega;\mathbb{R}^d)}
		\leq C\left( N^{-1/d}+\tau_k\right),
\end{multline}
for all $N,k\in\mathbb{N}$ with $k$ sufficiently large, provided that $\rho_0$ has a non-trivial absolutely continuous part and $X_0\in L^{2+\delta}(\Omega;\mathbb{R}^d)$ for some $\delta>0$, or equivalently $\rho_0$ has finite $(2+\delta)$-moment, for some $\delta>0$.  In the case where $L$ and $g$ are additionally locally Lipschitz continuous, we deduce corresponding error bounds in $L^{\infty}(L^1)$-norm for the discrete value function approximation $u^{k,N}$ that inherit the above asymptotic convergence rates.

We conclude this work with numerical experiments that illustrate the performance of our numerical scheme. In particular, in Section \ref{sec-num-experi-algorithms} we propose two iterative algorithms that approximate the nonlinear discrete forward-backward Hamiltonian system satisfied by $(X^{k,N},Y^{k,N})$ for given $k,N\in\mathbb{N}$. The first method, Algorithm \ref{alg:standard_picard_iteration}, is based on a standard Picard iteration globally in time. Motivated by the fact that the convergence of such Picard iterations requires that the time horizon $T$ is sufficiently small, we propose an alternative approach, namely Algorithm \ref{alg:local_picard_iteration}, to approximate the discrete problem for longer time horizons. Our approach is inspired by the work of Chassagneux--Crisan--Delarue \cite{chassagneux2019numerical} in which they employ local-in-time Picard iterations to obtain pointwise approximations of solutions to a class of master equations posed on Wasserstein space, where the approximations are shown to remain stable independently of the size of the time horizon. In Algorithm \ref{alg:local_picard_iteration}, we generate local-in-time Picard iterations within a global-in-time Picard fixed-point scheme to approximate the nonlinear discrete Hamiltonian system. Here we find yet another application of the general continuous dependence bound of Theorem \ref{theorem-cont-dependence-discrete-Hamiltonian-sys}, which we use to establish an \emph{a posteriori} error bound for the iteration scheme given by Algorithm \ref{alg:local_picard_iteration}, for the simplified scenario where each of the local-in-time Picard iterations return the exact discrete solution within one iteration. 

\medskip

The numerical experiments involve examples of MFG systems where we  use reference solutions or have access to an exact solution, {and where the initial agent distribution} has compact support or not. In particular, the first two experiments showcase that both Algorithm \ref{alg:standard_picard_iteration} and Algorithm \ref{alg:local_picard_iteration} can approximate the system equally well (in the sense of having nearly identical relative errors) when $T$ is not large (such as $T=1$), but that Algorithm \ref{alg:local_picard_iteration} can be used to solve the discrete system when $T$ assumes larger values such as {$T\in\{2,4,8,16,32\}$} while preserving theoretically predicted asymptotic convergence rates independently of $T$. These experiments hence showcase robustness of our numerical method for longer time horizons. For the final numerical experiment we illustrate the space-dimension dependence of the asymptotic rate of convergence for the numerical scheme when applied to an MFG with absolutely continuous initial measure supported {everywhere} on $\mathbb{R}^d$ with $d\in\{1,2,3,4,5,6\}$. By initializing the numerical scheme using vector quantizers for $X_0$, in this test we observe an asymptotic convergence rate of order $1/d$, which is consistent with the conclusion of Corollary \ref{cor-2-eg-rate-of-conv}. {To the best of our knowledge, our method is the first to approximate first-order, non-separable MFG systems with theoretically predicted rates of convergence.}

\medskip
\medskip

{\bf Outline.} 

\medskip

The rest of the manuscript is structured as follows. We set basic notation and model assumptions for the MFG system \eqref{mfg-pde-sys} in Section \ref{sec-basic-notation-cont-pb}. This is followed by the formulation of the continuous Hamiltonian system in Section \ref{sec-basic-notation-cont-pb}, where in particular we discuss its connection with the MFG system \eqref{mfg-pde-sys} in terms of continuous dependence with respect to discrete approximations of the initial data. We then dedicate Section \ref{sec-numerical-discretization} to introducing notation for a suite of vector spaces consisting of Banach space-valued maps that are piecewise constant in time. We employ such vector spaces in Section \ref{sec-numerical-discretization} where we formulate our discrete numerical scheme for the continuous Hamiltonian system \eqref{characteristics-continuous}. In Section \ref{sec-main-results}, we present the main results on well-posedness and convergence, as well as corollaries on rates of convergence, for the numerical scheme. The proof of the key continuous dependence bound for discrete Hamiltonian systems under affine perturbations is the subject of Section \ref{sec-cont-dep-gen-discrete-ham-sys}. We then employ this bound to prove the existence of discrete approximations in Section \ref{sec-existence} and the convergence of such approximations in Section \ref{sec-convergence}. Algorithms for solving the numerical scheme are given in Section \ref{sec-num-experi-algorithms}, which is then followed by the results of three numerical experiments therein. We include proofs of some auxiliary results in Appendix \ref{app-aux-disc-temp-ana}. 

\section{Setting and Continuous Problem}\label{sec-basic-notation-cont-pb}
\subsection*{Basic notation.}
Given $q\geq 1$, let $\mathcal{P}_q(\mathbb{R}^d)$ denote the set of Borel probability measures {supported on Boreal measurable subsets of $\R^d$} with finite $q$-moment, and for $\mu\in \mathcal{P}_q(\mathbb{R}^d)$ we denote its $q$-moment by $M_q(\mu)\coloneqq \left(\int_{\mathbb{R}^d}|x|^q\mathrm{d}\mu(x)\right)^{\frac{1}{q}}$. Let  $(\Omega,\mathbb{F},\mathbb{P})$ denote a {non-atomic} reference probability space. {Without loss of generality, it is convenient for us to choose} $\Omega\coloneqq [0,1]^d$ to be the unit cube, $\mathbb{F}$ the $\sigma$-algebra of Lebesgue measurable subsets of $\Omega$ and $\mathbb{P}$ the Lebesgue measure {restricted to $\Omega$}. For $q\in [1,\infty)$, we denote by $L^q(\Omega;\mathbb{R}^d)$ the space of $\mathbb{R}^d$-valued random variables $X$ on $\Omega$ that are Lebesgue measurable  and $\int_{\Omega}|X(\omega)|^q\mathrm{d}\mathbb{P}(\omega)<\infty$. We equip $L^q(\Omega;\mathbb{R}^d)$ with the standard norm denoted by $\|\cdot\|_{L^q(\Omega;\mathbb{R}^d)}$. For the special case when $q=2$, we equip $\LLspace$ with the standard inner product that is induced by expectation:
$\mathbb{E}[V\cdot W]\coloneqq \int_{\Omega}V\cdot W\mathrm{d}\mathbb{P}$ for $V,W\in \LLspace$, with the corresponding norm denoted by  $\|\cdot\|_{{L}^2(\Omega;\mathbb{R}^d)}$. As such,  $\LLspace$ is a Hilbert space. Given $q\geq 1$, $\mu\in \mathcal{P}_q(\mathbb{R}^d)$ and $\xi\in L^q(\Omega;\mathbb{R}^d)$, we say that $\mu$ is the law of $\xi$ if $\mathcal{L}_{\xi}\coloneqq \xi_{\#}\mathbb{P}=\mu$. Here, the push-forward notation $\xi_{\#}\mathbb{P}=
\mu$ means $\mu(A)=\mathbb{P}( \xi^{-1}(A))$ for any Borel set $A\subseteq\mathbb{R}^d$. For $1\leq q <\infty$, we equip $\mathcal{P}_q(\mathbb{R}^d)$ with the $\mathcal{W}_q$-Wasserstein metric, which {for all $\mu, \nu\in \mathcal{P}_q(\mathbb{R}^d)$} is defined as
\begin{equation}\label{W2-distance}
\mathcal{W}_q(\mu,\nu)\coloneqq \inf\left\{\left(\mathbb{E}[|\xi-\eta|^q]\right)^{\frac{1}{q}}:\ \ \xi,\eta\in L^q(\Omega;\R^d),\ {\rm with}\  \mu = \mathcal{L}_{\xi},\nu = \mathcal{L}_{\eta}\right\}.
\end{equation}

\subsection*{Vector spaces generated by simple random variables.} 

An $\mathbb{R}^d$-valued simple random variable $\xi$ is a function in $L^2(\Omega;\mathbb{R}^d)$ such that, for some finite $M\in\mathbb{N}$, there exist sets of (nontrivial) weights $\mathcal{A}\coloneq\{\alpha_{j}\}_{j=1}^{M}\subset (0,1]$, points $\{x_{j}\}_{j=1}^{M}\subset\mathbb{R}^d$ and subsets $\mathcal{O}\coloneq \{\Omega_{j}\}_{j=1}^{M}\subset \mathbb{F}$ such that $\xi\coloneqq \sum_{j=1}^{M}x_{j}\chi_{\Omega_{j}}$ where $\sum_{j=1}^{M}\alpha_{j}=1$, $\cup_{j=1}^{M}\overline{\Omega_{j}}=\Omega$, $\mathbb{P}(\Omega_{j})=\alpha_{j}$ for $j\in \{1,\cdots, {M}\}$, and $\Omega_{j}\cap\Omega_{i}=\emptyset$ if $i\neq j$ in $\{1,\cdots,{M}\}$. {In this case, we have that $\mathcal{L}_\xi = \sum_{j=1}^{M}\alpha_{j}\delta_{x_{j}}$.} Furthermore, it is clear that $
\xi$ lives in the finite-dimensional vector space of all $\mathbb{R}^d$-valued simple random variables on $(\Omega,\mathbb{F},\mathbb{P})$ of the form $W = \sum_{j=1}^{M}w_j\chi_{\Omega_{j}}$ such that $\{w_j\}_{j=1}^{M}\subset\mathbb{R}^d$. We let $\mathcal{R}_{d}(\mathcal{A},\mathcal{O})$ denote this ${M}$-dimensional vector space generated by $\xi$ which consists of all $\mathbb{R}^d$-valued simple random variables supported on the subsets $\mathcal{O}$ with corresponding weights $\mathcal{A}$. Notice that for each $V,W\in \mathcal{R}_{d}(\mathcal{A},\mathcal{O})$ the scalar product $V\cdot W$ yields an $\mathbb{R}$-valued simple random variable on  $(\Omega,\mathbb{F},\mathbb{P})$ where $V\cdot W =\sum_{i=1}^{M}v_i\cdot w_i\chi_{\Omega_{i}}$
and taking expectation with respect to $\mathbb{P}$ gives $\mathbb{E}\left[V\cdot W\right]=\sum_{i=1}^{M}\alpha_{i}v_i\cdot w_i$. As such, the space $\mathcal{R}_{d}(\mathcal{A},\mathcal{O})$ generated by $\xi$, when equipped with the norm $\|\cdot\|_{L^2(\Omega;\mathbb{R}^d)}$, is a finite dimensional subspace of $  L^2(\Omega;\mathbb{R}^d)$.

\subsection*{{Standing} assumptions.}
We let $H: \mathbb{R}^d\times\mathbb{R}^d\times\mathcal{P}_2(\mathbb{R}^d)\to\mathbb{R}$ denote the Hamiltonian of the MFG system \eqref{mfg-pde-sys}. We assume that $H$ is continuous, {where the continuity is with respect to $\mathcal{W}_1$ in the measure variable, and which is locally uniform with respect to the $(x,p)$ variable. Furthermore, $H$}
satisfies 
\begin{equation}\label{ass-H:1}
 \bullet\ \  H(\cdot,\cdot,\mu)\in C^2(\mathbb{R}^d\times\mathbb{R}^d),\ \forall \mu\in\mathcal{P}_2(\mathbb{R}^d),
\end{equation}
\begin{equation}\label{ass-H:2}
	\hspace{15pt} \bullet\ \ D_pH,D_xH\in C^{0,1}( \mathbb{R}^d\times\mathbb{R}^d\times\mathcal{P}_2(\mathbb{R}^d);\mathbb{R}^d),
\end{equation}
where the Lipschitz continuity is understood with respect to $\mathcal{W}_1$ in the measure variable. Note that this Lipschitz continuity then also holds with respect to the weaker $\mathcal{W}_q$ metric  in the measure variable for any $q\in [1,\infty)$.

It then follows that there exist constants $C_{D_pH}> 0$ and $ C_{D_xH}> 0$ such that
\begin{subequations}\label{linear-growth-D_pH-D_xH}
	\begin{align}
		&|D_pH(x,p,\mu)|\leq C_{D_pH}\left(|x|+|p|+{\mathcal{W}_q}(\mu,\delta_0)+1\right)\label{linear-growth-D_pH}
		\\
		&|D_xH(x,p,\mu)|\leq C_{D_xH}\left(|x|+|p|+{\mathcal{W}_q}(\mu,\delta_0)+1\right)\label{linear-growth-D_xH}
	\end{align}
\end{subequations}
for all $(x,p,\mu)\in \mathbb{R}^d\times\mathbb{R}^d\times\mathcal{P}_2(\mathbb{R}^d)$ and all $q\in [1,\infty)$. We also assume that
\begin{equation}\label{ass-H:3}
	\bullet\ \ \R^d\ni p\mapsto H(x,p,\mu) \text{ is strongly convex\ } \text{uniformly in }(x,\mu)\in \mathbb{R}^d\times\mathcal{P}_2(\mathbb{R}^d), 
\end{equation} i.e. there exists $c_0>0$ such that  
\begin{equation}\label{ass-H:4}
	D_{pp}^2H\geq c_0I_d\quad \text{on}\quad\mathbb{R}^d\times\mathbb{R}^d\times\mathcal{P}_2(\mathbb{R}^d).
\end{equation} We assume that 
\begin{equation}\label{ass-H:5}
    |D_pH(0,p,\mu)|\leq C(p)\quad \forall (p,\mu)\in \mathbb{R}^d\times \mathcal{P}_2(\mathbb{R}^d),    
\end{equation}
for some positive function $\mathbb{R}^d\ni p\mapsto C(p)>0$ that is independent of $\mu$. 

We further assume that the Hamiltonian $H$ is \emph{displacement monotone} in the sense that
\begin{equation}\label{ass-H:disp-mono-cond-H}
	\begin{split}
		&\bullet\ \ \mathbb{E}\Bigl[\left(D_pH(X_1,Y_1,\mathcal{L}_{X_1}) - D_pH(X_2,Y_2,\mathcal{L}_{X_2})\right)\cdot (Y_1-Y_2)
		\\
		&\qquad\qquad-\left(D_xH(X_1,Y_1,\mathcal{L}_{X_1}) - D_xH(X_2,Y_2,\mathcal{L}_{X_2})\right)\cdot (X_1-X_2)\Bigr]\geq 0
	\end{split}
\end{equation}
for any $X_i,Y_i\in \LLspace$, $i\in\{1,2\}$.

The terminal cost $g: \mathbb{R}^d\times\mathcal{P}_2(\mathbb{R}^d)\to\mathbb{R}$ of the MFG system \eqref{mfg-pde-sys} is assumed to be continuous, {where the continuity is with respect to $\mathcal{W}_1$ in the measure variable. Furthermore, $g$ is supposed to satisfy}
\begin{equation}\label{ass-g:1}
	\hspace{5pt}\bullet\ \  g(\cdot,\mu)\in C^2(\mathbb{R}^d),\ \forall \mu\in\mathcal{P}_2(\mathbb{R}^d),
\end{equation}
\begin{equation}\label{ass-g:2}
	\bullet\ \ D_xg\in C^{0,1}(\mathbb{R}^d\times\mathcal{P}_2(\mathbb{R}^d);\mathbb{R}^d),
\end{equation} 
where the Lipschitz continuity is with respect to $\mathcal{W}_1$ in the measure variable. We note then that this Lipschitz continuity also holds with respect to the weaker $\mathcal{W}_q$ metric  in the measure variable for any $q\in [1,\infty)$. As a consequence, 
there exists a constant $C_{D_xg}>0$ such that
\begin{equation}\label{linear-growth-D_xg}
|D_xg(x,\mu)|\leq C_{D_xg}\left(|x|+\mathcal{W}_q(\mu,\delta_0)+1\right),\quad\forall (x,\mu)\in \mathbb{R}^d\times\mathcal{P}_2(\mathbb{R}^d),
\end{equation}
for all $q\in[1,\infty)$. We also assume that $g$ satisfies the {\it displacement monotonicity} condition, in the sense that
\begin{equation}\label{ass-g:disp-mono-cond-g}
	\bullet\ \ \mathbb{E}\left[\left(D_xg({X}_1,\mathcal{L}_{X_1})-D_xg({X}_2,\mathcal{L}_{X_2})\right)\cdot ({X}_1-{X}_2)\right]\geq 0
\end{equation}
for any ${X}_1,X_2\in \LLspace$. 
It follows that $\R^d\ni x\mapsto g(x,\mu)$ is convex, uniformly in $\mu\in\mathcal{P}_2(\mathbb{R}^d)$ (cf. \cite{meszaros2024mean}). 

Finally, let $L:\mathbb{R}^d\times\mathbb{R}^d\times\mathcal{P}_2(\mathbb{R}^d)\to \mathbb{R}$ be the Lagrangian function associated to $H$, i.e. 
$$L(x,v,\mu)\coloneqq \sup_{p\in\mathbb{R}^d}\left\{p\cdot v - H(x,p,\mu)\right\},$$ for
all $x,v\in\mathbb{R}^d$ and $\mu\in\mathcal{P}_2(\mathbb{R}^d)$. It follows from standard convex analysis that the regularity \eqref{ass-H:1} and uniform Lipschitz continuity of $D_pH$ and $D_xH$ via \eqref{ass-H:2}, together with strong convexity of $H$ with respect to \ $p$ via \eqref{ass-H:3} and \eqref{ass-H:4}, altogether imply  
\begin{equation}\label{property-L:1}
 \bullet\ \  L(\cdot,\cdot,\mu)\in C^2(\mathbb{R}^d\times\mathbb{R}^d),\ \forall \mu\in\mathcal{P}_2(\mathbb{R}^d),
\end{equation}
\begin{equation}\label{property-L:2}
	\hspace{15pt} \bullet\ \ D_vL,D_xL\in C^{0,1}( \mathbb{R}^d\times\mathbb{R}^d\times\mathcal{P}_2(\mathbb{R}^d);\mathbb{R}^d),
\end{equation}where the Lipschitz continuity is understood with respect to $\mathcal{W}_1$ in the measure variable, and which hence also holds with respect $\mathcal{W}_q$ for any $q\in [1,\infty)$. We note that the uniform Lipschitz continuity of $D_xL$ and $D_vL$ imply that there exist constants $C_{D_vL}> 0$ and $ C_{D_xL}> 0$ such that
\begin{subequations}\label{linear-growth-D_qL-D_xL}
	\begin{align}
		&|D_vL(x,v,\mu)|\leq C_{D_vL}\left(|x|+|v|+{\mathcal{W}_q}(\mu,\delta_0)+1\right)\label{linear-growth-D_qL}
		\\
		&|D_xL(x,v,\mu)|\leq C_{D_xL}\left(|x|+|v|+{\mathcal{W}_q}(\mu,\delta_0)+1\right)\label{linear-growth-D_xL}
	\end{align}
\end{subequations}
for all $(x,v,\mu)\in \mathbb{R}^d\times\mathbb{R}^d\times\mathcal{P}_2(\mathbb{R}^d)$ and all $q\in [1,\infty)$.
Throughout, we assume that 
\begin{equation}\label{ass-L-g}
    L(x,v,\mu)\geq \theta_1(|v|) - \theta_2(\mu)(|x|+1),\quad g(x,\mu)\geq -\theta_2(\mu),\quad \forall (x,q,\mu)\in\mathbb{R}^d\times\mathbb{R}^d\times\mathcal{P}_2(\mathbb{R}^d),
\end{equation} for some superlinear function $\theta_1: [0, \infty) \to [0, \infty)$ and some function $\theta_2 : \mathcal{P}_2(\mathbb{R}^d) \to [0, \infty)$ which is bounded in $\{\mu \in \mathcal{P}_2(\mathbb{R}^d) : M_2(\mu) \leq R\}$, for any $R > 0$.

\medskip

{Throughout this paper, the initial agent distribution is taken to be a general measure $\rho_0\in\mathcal{P}_2(\R^d)$, without any further assumptions, unless specified in particular examples.}

\medskip
\medskip

	For notational purposes, for real numbers $a$ and $b$, we write $a\lesssim b$ if there exists a constant $C$ such that $a\leq Cb$ where $C>0$ {(naturally, independent of $a$ and $b$)}  depends only on the problem data $c_0$, $T$, $\Lip(D_pH)$, $\Lip(D_xH)$, $\Lip(D_vL)$, $\Lip(D_xL)$, $\Lip(D_xg)$, but is otherwise independent of all temporal and spatial discretisation parameters. We write $a \eqsim b$ if $a\lesssim b$ and $b\lesssim a$.

\begin{eg}\label{eg-1}
	Let $C_1,C_2>0$ be constants {(whose values will be specified)}. Let $\psi,\nu\in C^{0,1}(\mathbb{R}^d)\cap L^{\infty}(\mathbb{R}^d)$ both be non-negative functions. Let $\phi\in C^2(\mathbb{R}^d\times\mathbb{R}^d)$ and $\upsilon\in C^2(\mathbb{R}^d)$ be such that $p\mapsto\phi(x,p)$ is convex, uniformly in $x\in\mathbb{R}^d$,  $x\mapsto\phi(x,p)$ is concave, uniformly in $p\in\mathbb{R}^d$, $\upsilon$ is convex, and each of the derivatives $D_p\phi$, $D_x\phi$, and $D_x\upsilon$ are uniformly bounded and Lipschitz continuous on $\mathbb{R}^d\times\R^d$ and $\R^d$, respectively. Consider the Hamiltonian  $H: \mathbb{R}^d\times\mathbb{R}^d\times\mathcal{P}_2(\mathbb{R}^d)\to\mathbb{R}$ given by 
    \begin{equation}
        H(x,p,\mu)\coloneqq \frac{C_1}{2}\left(|p|^2 - |x|^2\right)+\phi(x,p)\int_{\mathbb{R}^d}\psi(z)d\mu(z)\quad \forall (x,p,\mu)\in \mathbb{R}^d\times\mathbb{R}^d\times\mathcal{P}_2(\mathbb{R}^d)
    \end{equation}
    and the terminal cost $g: \mathbb{R}^d\times\mathcal{P}_2(\mathbb{R}^d)\to\mathbb{R}$ of the form
    \begin{equation}
        g(x,\mu)\coloneqq \frac{C_2}{2}|x|^2 + \upsilon(x)\int_{\mathbb{R}^d}\nu(z)d\mu(z)\quad\forall (x,\mu)\in \mathbb{R}^d\times\mathbb{R}^d\times\mathcal{P}_2(\mathbb{R}^d).
    \end{equation}
    By a direct calculation, one can show that the above $H$ satisfies the regularity assumptions \eqref{ass-H:1}, \eqref{ass-H:2}, \eqref{ass-H:3}, \eqref{ass-H:4}, \eqref{ass-H:5} and displacement monotonicity condition \eqref{ass-H:disp-mono-cond-H} for $C_1$ satisfying $$ C_1\geq 2\|\psi\|_{L^{\infty}(\mathbb{R}^d)}(\Lip(D_p\phi)+\Lip(D_x\phi))+2\Lip(\psi)(\|D_x\phi\|_{L^{\infty}(\mathbb{R}^d\times\mathbb{R}^d;\mathbb{R}^d)}+\|D_p\phi\|_{L^{\infty}(\mathbb{R}^d\times\mathbb{R}^d;\mathbb{R}^d)}).$$ {Similarly, one can show } that $g$ satisfies the regularity assumptions \eqref{ass-g:1}, \eqref{ass-g:2}, and the displacement monotonicity condition \eqref{ass-g:disp-mono-cond-g} for $C_2\geq \Lip(D_x\upsilon)\|\nu\|_{L^{\infty}(\mathbb{R}^d)}+\Lip(\nu)\|D_x\upsilon\|_{L^{\infty}(\mathbb{R}^d)}$. It can be shown that the constants $C_1,C_2$ can be taken to be sufficiently large so that, in addition, the growth conditions \eqref{ass-L-g} on the Lagrangian $L$ and $g$ hold with $\theta_1(r)\coloneqq \frac{r^2}{2C_1}$, $r\geq 0$, and $\theta_2\coloneqq C_*$ on $\mathcal{P}_2(\mathbb{R}^d)$ with constant $C_*$ sufficiently large that depends only on $\|D_p\phi\|_{L^{\infty}(\mathbb{R}^d\times\mathbb{R}^d;\mathbb{R}^d)}$, $\|D_x\phi\|_{L^{\infty}(\mathbb{R}^d\times\mathbb{R}^d;\mathbb{R}^d)}$, $\|D_x\upsilon\|_{L^{\infty}(\mathbb{R}^d;\mathbb{R}^d)}$, $\|\psi\|_{L^{\infty}(\mathbb{R}^d)}$, and $\|\nu\|_{L^{\infty}(\mathbb{R}^d)}$.
\end{eg}


\subsection*{The Continuous problem: existence and uniqueness of solutions.}
Let $X_0\in L^2(\Omega;\mathbb{R}^d)$ be a random variable such that $\rho_0=\mathcal{L}_{X_0}$. We consider solutions to the Hamiltonian system \eqref{characteristics-continuous} in the following sense: \emph{find $(X,Y,\rho)\in C^1([0,T];L^2(\Omega;\mathbb{R}^d))\times C^1([0,T];L^2(\Omega;\mathbb{R}^d))\times C([0,T];\mathcal{P}_2(\mathbb{R}^d))$ such that}
\begin{equation}\label{definition-soln-of-continuous-pb}
\left\{
	\begin{array}{lll}
        X(t) &\displaystyle =X_0+ \int_0^tD_pH\left(X(s),Y(s),\rho(s)\right)\mathrm{d}s, \qquad&\text{in }[0,T]
		\\[3pt]
		Y(t) &= \displaystyle-D_xg\left(X(T),\rho(T)\right)+\int_t^TD_xH\left(X(s),Y(s),\rho(s)\right)\mathrm{d}s, \qquad&\text{in }[0,T]\\[3pt]
		\rho(t) &= X(t)_{\#}\mathbb{P} \qquad&\text{in }[0,T].
	\end{array}
\right.
\end{equation}

\begin{remark}\label{rem-rv-finite-support-rho}
	For the particular case when $\rho_0$ is a deterministic measure of finite support  of the form $\rho_0=\sum_{i=1}^m\tilde{\alpha_i}\delta_{\tilde{x}_i}$ in $\mathcal{P}_2(\mathbb{R}^d)$, for some finite set of points $\{\tilde{x}_i\}_{i=1}^m\subset\mathbb{R}^d$ and weights {$\{\tilde{\alpha}_i\}_{i=1}^m\subset (0,1]$, $\sum_{j=1}^m\tilde{\alpha}_j = 1,$} $m\in\mathbb{N}$, we assume in the Hamiltonian system \eqref{characteristics-continuous} that $X_0=\sum_{i=1}^mx_{i,m}\chi_{\Omega_{i,m}}$ in $L^2(\Omega;\mathbb{R}^d)$ where $x_{i,m}\coloneqq \tilde{x}_i$ and $\mathbb{P}(\Omega_{i,m})=\alpha_{i,m}$ with $\alpha_{i,m}\coloneqq \tilde{\alpha}_i$ for $i\in\{1,\dots,m\}$.
\end{remark}

We recall that
it is known from \cite[Theorem 4.1 \& Theorem 4.5]{meszaros2024mean} that, for each initial random variable $X_0$ distributed according to $\rho_0$, the Hamiltonian system \eqref{definition-soln-of-continuous-pb} admits a unique solution $(X,Y,
\rho)$. This solution is determined by the unique pair $(u,\rho)$ which satisfies the first-order mean field game PDE system 
\begin{equation}
\left\{
    \begin{array}{ll}
        - \partial_tu+H(x,-D_xu,\rho)=0 &\text{ in }(0,T)\times\mathbb{R}^d,
		\\[3pt]
		\partial_t\rho + D_x\cdot\left(\rho{D_p H}(x,-D_xu,\rho)\right)=0 &\text{ in }(0,T)\times\mathbb{R}^d,
		\\[3pt]
		u(T,x)=g(x,\rho(T))\quad\text{and}\quad \rho(0,x)=\rho_0,  &\text{ in }\mathbb{R}^d,
    \end{array}
    \right.
\end{equation}
in the sense that $u$ is the unique {classical} solution {(of class $C^{1,1}_{\rm loc}$)} of the Hamilton--Jacobi equation 
 and $\rho$ is the unique distributional solution of the {continuity} equation.  %
 The existence and uniqueness of such a pair $(u,\rho)$ is ensured by \cite[Corollary 4.6]{meszaros2024mean} under the regularity and growth assumptions \eqref{ass-H:1}, \eqref{ass-H:2}, \eqref{ass-H:3}, \eqref{ass-H:4}, \eqref{ass-H:5}, \eqref{ass-g:1}, \eqref{ass-g:2}, \eqref{ass-L-g} and the displacement monotonicity conditions \eqref{ass-H:disp-mono-cond-H}, \eqref{ass-g:disp-mono-cond-g} on $H$ and $g$.  
 \cite[Theorem 4.1]{meszaros2024mean} then shows that 
\begin{equation}
    Y(t)\coloneqq -D_xu(t,X(t))\quad\text{in }[0,T],
\end{equation} and that $X$ is the unique solution of the forward equation
\begin{equation}
    X(t) =X_0+ \int_0^{t}D_pH\left(X(s),-D_xu(s,X(s)),\rho(s)\right)\mathrm{d}s, \qquad\text{in }[0,T],
\end{equation}with $\rho(t)= X(t)_{\#}\mathbb{P}$, $t\in[0,T]$. In particular $X,Y\in C^1([0,T];L^2(\Omega;\mathbb{R}^d))$ and $\rho$ is an element in the space $C([0,T];\mathcal{P}_2(\mathbb{R}^d))$. Consequently, the value function evaluated along the optimal trajectory $(X(t),{Y(t)})_{t\in [0,T]}$ has the representation
\begin{equation}\label{u-opt-trajectory-rep}
    u(t,X(t))=\int_t^TL(X(s),{D_pH(X(s),Y(s),\rho(s))},\rho(s))\mathrm{d}s + g(X(T),\rho(T))\quad\forall t\in [0,T].
\end{equation}

\subsection*{Continuous dependence of solutions on discrete initial conditions.}
Given $N\in\mathbb{N}$, let $\rho_0^N=\sum_{j=1}^N\alpha_{j,N}\delta_{x_{j,N}}$ denote a deterministic measure approximation of $\rho_0$ with finite support in $\mathcal{P}_2(\mathbb{R}^d)$ where $\text{card}(\text{spt}(\rho_0^N))=N$ and $\mathcal{A}_N\coloneqq \{\alpha_{j,N}\}_{j=1}^N\subset (0,1]$ with $\sum_{j=1}^{N}\alpha_{j,N}=1$. For each $N\in\mathbb{N}$, let $X_0^N$ denote an $\mathbb{R}^d$-valued simple random variable distributed according to $\rho_0^N$ with respect to the reference probability space $(\Omega,\mathbb{F},\mathbb{P})$ of the form $X_0^N=\sum_{j=1}^Nx_{j,N}\chi_{\Omega_{j,N}}$ for some subsets $\mathcal{O}_N\coloneqq \{\Omega_{j,N}\}_{j=1}^N\subset\mathbb{F}$ such that $\cup_{j=1}^{N}\overline{\Omega_{j,N}}=\Omega$, $\mathbb{P}(\Omega_{j,N})=\alpha_{j,N}$ for $j\in \{1,\cdots, N\}$, and $\Omega_{j,N}\cap\Omega_{i,N}=\emptyset$ if $i\neq j$ in $\{1,\cdots,N\}$. Therefore, for each $N\in \mathbb{N}$, the simple random variable $X_0^N$ generates the subspace $\mathcal{R}_d(\mathcal{A}_N,\mathcal{O}_N)$ of $L^2(\Omega;\mathbb{R}^d)$ with $\mathcal{A}_N=\{\alpha_{j,N}\}_{j=1}^N$ and $\mathcal{O}_{N}=\{\Omega_{j,N}\}_{j=1}^N$. Throughout this work, we assume that the sequences $\{\rho_0^N\}_{N\in\mathbb{N}}$ and $\{X_0^N\}_{N\in\mathbb{N}}$, and hence the sequences of weights $\{\mathcal{A}_N\}_{N\in\mathbb{N}}$ and subsets $\{\mathcal{O}_N\}_{N\in\mathbb{N}}$, are given and fixed, unless otherwise specified. 

Applying once more \cite[Theorem 4.5]{meszaros2024mean}, we have that for each $N\in\mathbb{N}$ there exists a unique solution $(X^N,Y^N,\rho^N)$ to the continuous Hamiltonian system \eqref{definition-soln-of-continuous-pb} with $X_0^N$ as the initial random variable of the system, with the corresponding value function $u^N$ satisfying
\begin{equation}\label{u-opt-trajectory-rep-N}
    u^N(t,X^N(t))=\int_t^TL(X^N(s),{D_pH(X^N(s),Y^N(s),\rho^N(s))},\rho^N(s))\mathrm{d}s + g(X^N(T),\rho^N(T)),\quad\forall t\in [0,T].
\end{equation}
The following result concerns the convergence of the sequence $\{(X^{N},Y^{N},\rho^{N})\}_{N\in\mathbb{N}}$,  whose proof is omitted since it is a direct application of the argument behind the continuous dependence result given in \cite[Theorem 4.5]{meszaros2024mean}.
\begin{lemma}\label{lemma-convegrence-fixed-N-inf}
	Let the Hamiltonian $H$ satisfy the regularity and growth assumptions \eqref{ass-H:1}, \eqref{ass-H:2}, \eqref{ass-H:3}, \eqref{ass-H:4}, \eqref{ass-H:5}, \eqref{ass-L-g} and the displacement monotonicity condition \eqref{ass-H:disp-mono-cond-H}. Let the terminal cost $g$ satisfy the regularity and growth assumptions \eqref{ass-g:1}, \eqref{ass-g:2}, \eqref{ass-L-g} and the displacement monotonicity condition \eqref{ass-g:disp-mono-cond-g}.  Let $X_0\in L^2(\Omega;\mathbb{R}^d)$ be given such that $\rho_0=\mathcal{L}_{X_0}$. For each $N\in\mathbb{N}$, let $\rho_0^N$ be a deterministic measure of finite support with ${\rm card}({\rm spt}(\rho_0^N))=N$ and let $X_0^N$ be a simple random variable distributed according to $\rho_0^N$. Let $(X,Y,\rho)\in C^1([0,T];\LLspace)\times C^1([0,T];\LLspace)\times C([0,T];\mathcal{P}_2(\mathbb{R}^d))$ denote the triple that uniquely satisfies the system \eqref{definition-soln-of-continuous-pb} with initial random variable $X_0$, and let $\{(X^{N},Y^{N},\rho^{N})\}_{N\in\mathbb{N}}\subset  C^1([0,T];\LLspace)\times C^1([0,T];\LLspace)\times C([0,T];\mathcal{P}_2(\mathbb{R}^d))$ denote the sequence of unique solutions to the system \eqref{definition-soln-of-continuous-pb} corresponding to the sequence of initial random variables $\{X_0^N\}_{N\in\mathbb{N}}$. Then,
	\begin{equation}\label{N-convergence-error-bound}
		\begin{split}
			&	\sup_{0\leq t\leq T}\|(-D_xu(\cdot,X) - Y^{N})(t)\|_{\LLspace}+\sup_{0\leq t\leq T}\|(X - X^{N})(t)\|_{\LLspace}  + \sup_{0\leq t\leq T} \mathcal{W}_2(\rho(t),\rho^{N}(t))
			\\
			&\qquad\qquad\qquad\qquad\qquad\qquad\qquad\qquad\qquad\qquad\qquad\qquad\lesssim \|X_0-X_0^N\|_{\LLspace},\quad\forall N\in\mathbb{N}.
		\end{split}
	\end{equation}
	In particular, if $X_0^N\to X_0$ in $\LLspace$ as $N\to\infty$, then 
	\begin{subequations}\label{numerical-scheme-N-inf-only-conv}
		\begin{align}
			&X^{N}\to X \text{ and } Y^{N}\to -D_xu(\cdot,X) {\rm\ in\ }C([0,T];\LLspace) \\
			&\rho^{N} \to \rho=(X(t))_{\#}\mathbb{P} \quad {\rm\ in\ }C([0,T];\mathcal{P}_2(\mathbb{R}^d)),
		\end{align}
	\end{subequations} as $N\to\infty$.
\end{lemma}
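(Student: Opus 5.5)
The plan is to run the standard displacement-monotonicity stability argument directly on the difference of the two Hamiltonian systems. Set $\delta X\coloneqq X-X^N$ and $\delta Y\coloneqq Y-Y^N$. By \cite[Theorem 4.1]{meszaros2024mean}, $Y(t)=-D_xu(t,X(t))$, so the first term in \eqref{N-convergence-error-bound} is just $\|\delta Y(t)\|_{\LLspace}$. Since $X(t)$ and $X^N(t)$ are both defined on $(\Omega,\mathbb{F},\mathbb{P})$, the definition \eqref{W2-distance} immediately gives $\mathcal{W}_2(\rho(t),\rho^N(t))\le\|\delta X(t)\|_{\LLspace}$. It therefore suffices to bound $\sup_t\|\delta X(t)\|$ and $\sup_t\|\delta Y(t)\|$ by $\|X_0-X_0^N\|$.

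\textbf{Step 1: energy identity.} Both pairs lie in $C^1([0,T];\LLspace)$, so I differentiate $t\mapsto\mathbb{E}[\delta X(t)\cdot\delta Y(t)]$. With $\dot X=D_pH$ and $\dot Y=-D_xH$, integrating over $[0,T]$ gives
\begin{equation*}
\mathbb{E}[\delta X(T)\cdot\delta Y(T)]-\mathbb{E}[\delta X(0)\cdot\delta Y(0)]=\int_0^T\mathbb{E}\bigl[(\Delta D_pH)\cdot\delta Y-(\Delta D_xH)\cdot\delta X\bigr]\,\mathrm{d}s .
\end{equation*}
Here $\Delta D_pH$ and $\Delta D_xH$ denote the differences of $D_pH$ and $D_xH$ evaluated at $(X,Y,\mathcal{L}_X)$ and $(X^N,Y^N,\mathcal{L}_{X^N})$. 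By \eqref{ass-H:disp-mono-cond-H} the right-hand side is nonnegative. The terminal condition gives $\delta Y(T)=-(D_xg(X(T),\rho(T))-D_xg(X^N(T),\rho^N(T)))$, so by \eqref{ass-g:disp-mono-cond-g} we have $\mathbb{E}[\delta X(T)\cdot\delta Y(T)]\le0$.

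\textbf{Step 2: the key coercive estimate (main obstacle).} Step 1 only yields the crude bound
\begin{equation*}
\int_0^T\mathbb{E}[\dots]\le -\mathbb{E}[\delta X_0\cdot\delta Y(0)]\le\|\delta X_0\|\,\|\delta Y(0)\|.
\end{equation*}
Monotonicity alone gives no coercivity in $\delta Y$. To recover it I follow \cite{meszaros2024mean} and use the strong convexity \eqref{ass-H:4} via a splitting. Fix a parameter $\epsilon>0$ and write $H=(H-\tfrac{\epsilon}{2}|p|^2)+\tfrac{\epsilon}{2}|p|^2$, so the monotonicity integrand gains a term $\epsilon\|\delta Y\|^2$. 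The cleanest route is probably to make the displacement-monotonicity assumption quantitatively strict in the $p$-direction, which is what \cite{meszaros2024mean} exploits through \eqref{ass-H:4}. This gives
\begin{equation*}
\epsilon\int_0^T\|\delta Y\|^2\,\mathrm{d}s\lesssim\|\delta X_0\|\,\|\delta Y(0)\|,
\end{equation*}
possibly up to a lower-order term in $\int\|\delta X\|^2$. If a direct strict-monotonicity bound is not available, I would instead argue by comparison: treat the difference of $Y$ along the two flows using the Lipschitz regularity of $D_xu$ (the $C^{1,1}$ bound from \cite{meszaros2024mean}, uniform in $N$ thanks to D-monotonicity), which reduces control of $\delta Y$ to control of $\delta X$ plus the measure difference.

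\textbf{Step 3: Gronwall closing.} The Lipschitz bounds \eqref{ass-H:2} and $\mathcal{W}_2\le\|\delta X\|$ give the forward estimate
\begin{equation*}
\|\delta X(t)\|\le\|\delta X_0\|+C\int_0^t(\|\delta X\|+\|\delta Y\|)\,\mathrm{d}s,
\end{equation*}
and the backward analogue, including the terminal term bounded by $\mathrm{Lip}(D_xg)\cdot 2\|\delta X(T)\|$. Gronwall then yields $\sup_t\|\delta X\|+\sup_t\|\delta Y\|\lesssim\|\delta X_0\|+\|\delta Y\|_{L^2(0,T;\LLspace)}$. Combining this with Step 2 and Young's inequality, absorbing $\|\delta Y(0)\|$, gives \eqref{N-convergence-error-bound}, with constants depending only on the data listed in the $\lesssim$ convention. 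The convergence statements \eqref{numerical-scheme-N-inf-only-conv} then follow immediately.
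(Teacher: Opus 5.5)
There is a genuine gap in Step~2, and your Step~3 relies on it. Step~1 and the forward/backward Lipschitz estimates are fine. The problem is the splitting $H=(H-\tfrac{\epsilon}{2}|p|^2)+\tfrac{\epsilon}{2}|p|^2$: it only produces a term $\epsilon\|\delta Y\|^2$ if $H-\tfrac{\epsilon}{2}|p|^2$ is still displacement monotone. That means \eqref{ass-H:disp-mono-cond-H} would have to hold with lower bound $\epsilon\,\mathbb{E}[|Y_1-Y_2|^2]$ instead of $0$. Strong convexity \eqref{ass-H:4} does not give this once $H$ depends on the measure. Take $d=1$ and $H(x,p,\mu)=\frac14(p^2-x^2)+p\int_{\mathbb{R}}\arctan z\,\mathrm{d}\mu(z)$. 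It satisfies the standing assumptions on $H$ with $c_0=\frac12$, and its monotonicity integrand is at least $\frac12(\|\Delta Y\|-\|\Delta X\|)^2\ge 0$. But for the constant random variables $X_1=-Y_1=\delta$, $X_2=Y_2=0$ the integrand equals $\delta(\delta-\arctan\delta)=O(\delta^4)$, so no $\epsilon\delta^2$ can be extracted. The argument the paper relies on (\cite[Theorem 4.5]{meszaros2024mean}, redone in discrete time in Section~\ref{sec-cont-dep-gen-discrete-ham-sys}) does not need such strictness. Your fallback does not fill the gap either. $Y^N=-D_xu^N(\cdot,X^N)$ involves a \emph{different} value function, and $D_xu-D_xu^N$ at time $t$ is controlled by $\rho-\rho^N$ on the whole of $[t,T]$. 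Feeding this into the forward equation gives $\sup_t\|\delta X\|\le\|\delta X_0\|+CT\sup_t\|\delta X\|$, which closes only for small $T$; this is exactly the forward--backward obstruction that monotonicity is meant to remove.

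The missing idea is to use monotonicity and strong convexity for different jobs, as in Lemmas \ref{lemma-step-1-discrete-uniqueness}--\ref{X-Y-diff-Linf-bound}.
\begin{itemize}
\item \emph{Localise in time.} The energy identity on $[t,T]$ together with \eqref{ass-H:disp-mono-cond-H} and \eqref{ass-g:disp-mono-cond-g} gives $\mathbb{E}[\delta X(t)\cdot\delta Y(t)]\le 0$. The identity on $[0,t]$ then gives $\int_0^t\mathcal{M}\,\mathrm{d}s\le\|\delta X_0\|\,\|\delta Y(0)\|$ for \emph{every} $t$, where $\mathcal{M}$ is the monotonicity integrand.
\item \emph{Extract coercivity.} Add and subtract $D_pH(X,Y^N,\rho)$ in $\Delta D_pH$. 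The $p$-increment contributes at least $c_0|\delta Y|^2$ by \eqref{ass-H:4}. All remaining terms are bounded via \eqref{ass-H:2} by $|\delta Y|(|\delta X|+\mathcal{W}_2(\rho,\rho^N))+|\delta X|(|\delta X|+|\delta Y|+\mathcal{W}_2(\rho,\rho^N))$, with $\mathcal{W}_2(\rho,\rho^N)\le\|\delta X\|$. Young's inequality then yields $\int_0^t\|\delta Y\|^2\lesssim\epsilon^{-1}\|\delta X_0\|^2+\epsilon\|\delta Y(0)\|^2+\int_0^t\|\delta X\|^2$.
\item \emph{Close with Gronwall.} The last term above is \emph{not} lower order. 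With it, your Step~3 (Gronwall in terms of $\|\delta Y\|_{L^2(0,T;\LLspace)}$ using only the interval $[0,T]$) has an $O(T)$ coefficient that cannot be absorbed. Instead, insert the $[0,t]$ bound into $\|\delta X(t)\|^2\le\|\delta X_0\|^2+C\int_0^t(\|\delta X\|^2+\|\delta Y\|^2)$ and apply forward Gronwall. Then apply backward Gronwall for $\delta Y$, starting from $\|\delta Y(T)\|\le 2\,\mathrm{Lip}(D_xg)\|\delta X(T)\|$. Finally choose $\epsilon$ small to absorb $\epsilon\|\delta Y(0)\|^2\le\epsilon\sup_t\|\delta Y(t)\|^2$.
\end{itemize}
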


We obtain a corollary for continuous dependence of the value function along the optimal trajectory provided that $L$ (equivalently $H$) and $g$ are additionally locally Lipschitz continuous in all variables. 
\begin{corollary}\label{cor-val-func-contin-dep-bound}
{Let  $u$ denote the value function in the continuous MFG, represented along optimal trajectories in \eqref{u-opt-trajectory-rep} and let $u^N$ stand for the discretised value function defined in \eqref{u-opt-trajectory-rep-N}.}
    In addition to the hypotheses of Lemma \ref{lemma-convegrence-fixed-N-inf}, suppose that for each $R>0$ there exists a constant $C_R>0$ such that
    \begin{subequations}
        \begin{align}
            |L(x,v,\mu)-L(y,w,\nu)|&\leq C_R\left(1+|x|+|y|+|v|+|w|+\mathcal{W}_1(\mu,\delta_0)+\mathcal{W}_1(\nu,\delta_0)\right)(|x-y|+|v-w|+\mathcal{W}_1(\mu,\nu)) \label{L-loc-meas-contin}\\
            |g(x,\mu)-g(y,\nu)|&\leq C_R\left(1+|x|+|y|+\mathcal{W}_1(\mu,\delta_0)+\mathcal{W}_1(\nu,\delta_0)\right)(|x-y|+\mathcal{W}_1(\mu,\nu)) \label{g-loc-meas-contin}
        \end{align}
    \end{subequations}
    whenever $(x,v),(y,w)\in\mathbb{R}^d\times\mathbb{R}^d$ and $\mu,\nu\in\mathcal{P}_2(\mathbb{R}^d)$ satisfy $|x-y|+|y-w|+\mathcal{W}_1(\mu,\nu)<R$. If $X_0^N\to X_0$ in $\LLspace$ as $N\to\infty$, then 
    \begin{equation}\label{N-convergence-error-bound-u-value}
		\sup_{0\leq t\leq T}\|(u(\cdot,X) - u^{N}(\cdot,X^N)(t)\|_{L^1(\Omega)}\lesssim \|X_0-X_0^N\|_{\LLspace}
	\end{equation}
    for all $N\in\mathbb{N}$ sufficiently large.
\end{corollary}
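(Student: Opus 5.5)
We subtract the representations \eqref{u-opt-trajectory-rep} and \eqref{u-opt-trajectory-rep-N} pointwise in $\omega$ and estimate the two resulting differences with the local Lipschitz bounds \eqref{L-loc-meas-contin}--\eqref{g-loc-meas-contin}. Set $V(s)\coloneqq D_pH(X(s),Y(s),\rho(s))$ and $V^N(s)\coloneqq D_pH(X^N(s),Y^N(s),\rho^N(s))$. For each $t\in[0,T]$,
\begin{equation*}
|u(t,X(t))-u^N(t,X^N(t))|\leq \int_t^T|L(X,V,\rho)-L(X^N,V^N,\rho^N)|(s)\,\mathrm{d}s+|g(X(T),\rho(T))-g(X^N(T),\rho^N(T))|.
\end{equation*}
We take the $L^1(\Omega)$ norm and use Fubini to exchange the expectation with the time integral.

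The first task is to control the differences of the arguments. By Lemma \ref{lemma-convegrence-fixed-N-inf}, together with $Y=-D_xu(\cdot,X)$ and the global Lipschitz continuity \eqref{ass-H:2} of $D_pH$, we have, uniformly in $s$,
\begin{equation*}
\|X-X^N\|_{\LLspace}+\|V-V^N\|_{\LLspace}+\mathcal{W}_1(\rho,\rho^N)\lesssim \|X_0-X_0^N\|_{\LLspace},
\end{equation*}
where we used $\mathcal{W}_1\le\mathcal{W}_2$.

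The main obstacle is that \eqref{L-loc-meas-contin} is only assumed on the set where $|x-y|+|v-w|+\mathcal{W}_1(\mu,\nu)<R$. Pointwise in $\omega$, the differences $|X-X^N|$ and $|V-V^N|$ need not be small, even when their $L^2$ norms are. I would handle this in one of two ways. The first is to read the hypothesis so that the constant $C_R$ can be chosen increasing in $R$. The measure distances $\mathcal{W}_1(\rho,\rho^N)$ are deterministic, so they lie below $1$ once $N$ is large; this is where ``$N$ sufficiently large'' enters. The pointwise differences are treated by a splitting, as follows.
\begin{itemize}
\item If the structure permits, note that the bound \eqref{L-loc-meas-contin} extends globally. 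The right-hand side grows quadratically, and a $C^1$ function whose gradient has linear growth satisfies such a bound for all pairs of points, by the mean value theorem along segments. Here $D_vL$ and $D_xL$ have linear growth by \eqref{linear-growth-D_qL-D_xL}, and the measure variable is handled by the stated Lipschitz property.
\item Otherwise, split $\Omega$ into the set where the pointwise increments are below $R/2$ and its complement. On the complement, use the same linear-growth argument.
\end{itemize}
In either case we obtain, a.e. in $\Omega$,
\begin{equation*}
|L(X,V,\rho)-L(X^N,V^N,\rho^N)|\lesssim C\bigl(1+|X|+|X^N|+|V|+|V^N|+M_1(\rho)+M_1(\rho^N)\bigr)\bigl(|X-X^N|+|V-V^N|+\mathcal{W}_1(\rho,\rho^N)\bigr).
\end{equation*}

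The second step is Cauchy--Schwarz in $\Omega$. The $L^1$ norm of this product is bounded by the $L^2$ norm of the first factor times the $L^2$ norm of the second. The first factor is bounded uniformly in $N$ and $s$. Indeed, $X,Y\in C([0,T];\LLspace)$, and $X^N,Y^N$ converge to them by Lemma \ref{lemma-convegrence-fixed-N-inf}, so they are bounded for $N$ large. The growth bound \eqref{linear-growth-D_pH} then bounds $V$ and $V^N$ in $L^2$, and the moments $M_1(\rho)\le\|X\|_{L^2}$ are controlled likewise. The second factor is $\lesssim\|X_0-X_0^N\|_{\LLspace}$ by the bound above. The terminal term $g$ is treated identically using \eqref{g-loc-meas-contin}. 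Integrating over $[t,T]$ contributes a factor of at most $T$, and taking the supremum in $t$ yields \eqref{N-convergence-error-bound-u-value}.
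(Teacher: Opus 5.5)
Your proposal is correct and is essentially the paper's argument. The paper inserts the intermediate values $L(X,V,\rho^N)$ and $g(X(T),\rho^N(T))$, and applies \eqref{L-loc-meas-contin}--\eqref{g-loc-meas-contin} only to the pure measure increments, with $x=y$ and $v=w$; there the deterministic quantity $\sup_t\mathcal{W}_1(\rho(t),\rho^N(t))$ is below $R$ for $N$ large. It handles the $(x,v)$ increments by the mean value theorem with the linear growth bounds \eqref{linear-growth-D_qL-D_xL} and \eqref{linear-growth-D_xg}, and concludes with Cauchy--Schwarz and the uniform bounds from Lemma \ref{lemma-convegrence-fixed-N-inf}.

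Your remarks about taking $C_R$ increasing in $R$ or splitting $\Omega$ are unnecessary, since the mean-value step already works globally in $(x,v)$. Choosing $C_R$ increasing would not by itself control the $\omega$-dependent increments anyway. You should simply state the intermediate-point split explicitly.
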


This result is a direct consequence of the representation formulae \eqref{u-opt-trajectory-rep} and \eqref{u-opt-trajectory-rep-N}, the standing assumptions on $H$ and $g$, and the additional local Lipschitz hypotheses \eqref{L-loc-meas-contin}, \eqref{g-loc-meas-contin}. For completeness, we present its proof in Section \ref{sec-convergence}.

{Let us emphasise that the additional assumptions on the local Lipschitz properties of $L$ and $g$ (in the measure variable) are not a consequence of the previously imposed regularity assumptions. We note that all the previous regularity assumptions in the measure variable are imposed on $D_x g$, $D_xH, D_pH$ (or equivalently on $D_xL, D_vL$) and in particular no regularity or Lipschitz property was imposed on $L$ and $g$ in the measure variable.}

We conclude this section with discussion of two constructions for the sequence of deterministic measures $\{\rho_0^N\}_{N\in\mathbb{N}}$ of finite support and corresponding random variables $\{X_0^N\}_{N\in\mathbb{N}}$ that converge strongly to $X_0$ in $L^2(\Omega;\mathbb{R}^d)$. Each example is also accompanied by rates of convergence that hold under sufficient conditions.
\begin{eg}[Semi-discrete optimal transport]\label{eg-2}
    Given a measure $\rho_0\in\mathcal{P}_2(\mathbb{R}^d)$, it is well known that $\rho_0$ can be approximated by sequences of empirical measures $\rho_0^N$ in $\mathcal{P}_2(\mathbb{R}^d)$ which are stochastic \cite{boissard2014mean,fournier2015rate,fournier2023convergence} or deterministic \cite{chevallier2018uniform,quattrocchi2024asymptotics,seeger2025error}. In particular, assuming that $\rho_0\in\mathcal{P}_q(\mathbb{R}^d)$ for some $q>2$, \cite[Theorem 1]{seeger2025error} provides an algorithmic construction of a sequence of uniformly weighted deterministic empirical measures $\{\rho_0^N\}_{N\in\mathbb{N}}$ that converge strongly to $\rho_0$ in $\mathcal{P}_2(\mathbb{R}^d)$ as $N\to\infty$, with the error bounds
    \begin{equation}
        \mathcal{W}_2(\rho_0,\rho_0^N)\leq C M_q(\rho_0) \times
		\begin{dcases}
			\max\{N^{-1/d},  N^{-1/2 + 1/q}\}, & {\rm if\ } d \ne \frac{2q}{q-2},\\
			N^{-1/d} \log(1+N)^{1/d}, & {\rm if\ } d = \frac{2q}{q-2},
		\end{dcases}\quad\forall N\in\mathbb{N}
    \end{equation} and $\mathcal{W}_2(\rho_0,\rho_0^N)\leq C N^{-1/2+1/q}$ for $N\in\mathbb{N}$ if $q<\infty$ and $d < \frac{2q}{q-2}$. The constant $C$ in both of these bounds here depends only on $d$ and $q$. Assuming further that $\rho_0$ is absolutely continuous with respect to  $\mathbb{P}$, Brenier's Theorem \cite{brenier1991polar} then ensures that, for each $N\in\mathbb{N}$, there exists an optimal transport map $\mathcal{T}^N:\mathbb{R}^d\to\mathbb{R}^d$, defined everywhere with ${\rm card}({\rm Ran}(T^N))=N$, which satisfies $\mathcal{T}^N(X_0)\in L^2(\Omega;\mathbb{R}^d)$ and $$\|X_0-\mathcal{T}^N(X_0)\|_{L^2(\Omega;\mathbb{R}^d)}=\mathcal{W}_2(\rho_0,\rho_0^N).$$ As such, the sequence $\{X_0^N\}_{N\in\mathbb{N}}$ given by $X_0^N\coloneqq \mathcal{T}^N(X_0)$, $N\in\mathbb{N}$, converges to $X_0$ in $L^2(\Omega;\mathbb{R}^d)$ with rates inherited from those above for the convergence of $\{\rho_0^N\}_{N\in\mathbb{N}}$ to $\rho_0$ in $\mathcal{P}_2(\mathbb{R}^d)$. The sequence of partitions $\{\mathcal{O}_N\}_{N\in\mathbb{N}}$ of $\Omega$ associated with $\{X_0^N\}_{N\in\mathbb{N}}$ are given by the pre-images $\{X_0^{-1}(\mathcal{L}_N)\}_{N\in\mathbb{N}}$ of the Laguerre tessellations $\{\mathcal{L}_N\}_{N\in\mathbb{N}}$ of $\mathbb{R}^d$ of the maps $\{\mathcal{T}^N\}_{N\in\mathbb{N}}$. The practical construction of the optimal transport maps $\{\mathcal{T}^N\}_{N\in\mathbb{N}}$ is an active area of research concerning semi-discrete optimal transport, see \cite{bourne2018semi,tacskesen2023semi,sadhu2024stability} for a range of computational approaches. 
\end{eg}

\begin{eg}[Quantization of random variables]\label{eg-3}
    An alternative construction can be achieved through quantization of the random variable $X_0$, see the seminal papers \cite{max1960quantizing,lloyd1982least} as well as  \cite{graf2000foundations} for an introduction to the subject. To help formulate the main idea of this approach with an error bound, we introduce some notation. Given $N\in\mathbb{N}$, let $\mathcal{F}_N$ denote the set of Borel measurable functions {$f:\R^d\to\R^d$ (defined a.e.)}, where ${\rm card}({\rm Range}($f$))\leq N$ and let the $N$-th quantization error for $\rho_0$ be defined as 
    $$V_{N}(\rho_0)\coloneqq \inf_{f\in\mathcal{F}_N}\|X_0-f(X_0)\|_{L^2(\Omega;\mathbb{R}^d)}^2.$$ 
    The lower quantization dimension for $\rho_0$ is defined as
    $$\mathcal{D}_L(\rho_0)\coloneqq \liminf_{N\to\infty}((-2)\log(N)/\log(V_N(\rho_0)))$$ and the upper quantization dimension for $\rho_0$ is defined as
    $$\mathcal{D}_U(\rho_0)\coloneqq \limsup_{N\to\infty}((-2)\log(N)/\log(V_N(\rho_0))).$$ If $\mathcal{D}_L(\rho_0)=\mathcal{D}_U(\rho_0)$ then the common value $\mathcal{D}(\rho_0)$ is called the quantization dimension of $\rho_0$. Now, suppose that the initial random variable $X_0$ has slightly stronger integrability in $L^{2+\delta}(\Omega;\mathbb{R}^d)$ for some $\delta>0$. Using  \cite[Theorem 4.1, Theorem 4.12 \& Corollary 11.4(c)]{graf2000foundations} and unpacking the definitions of $\mathcal{D}_L(\rho_0),\mathcal{D}_U(\rho_0)$, it follows that, for each $\epsilon\in (0,1)$ there exist $N_{\dagger}=N_{\dagger}(\epsilon,{\mathcal{D}_L(\rho_0)})\in\mathbb{N}$, sequences of weights $\{\mathcal{A}_N\}_{N\geq N_{\dagger}}$, Voronoi tessellations $\{\mathcal{O}_N\}_{N\geq N_{\dagger}}$ of $\Omega$ and maps $\{f_{N}\}_{N\geq N_{\dagger}}$ (which achieve the infimum in $V_N(\rho_0)$ for each $N$)  such that $X_0^N\coloneqq f_N(X_0)\in \mathcal{R}_d(\mathcal{A}_N,\mathcal{O}_N)$ for each $N\geq N_{\dagger}$ and
    \begin{equation}
        N^{-\frac{1}{\mathcal(1-\epsilon){D}_L(\rho_0)}}\leq\|X_0-X_0^N\|_{\LLspace}\leq N^{-\frac{1}{\mathcal{D}_U(\rho_0)+\epsilon\mathcal{D}_L(\rho_0)}}\quad\forall N\geq N_{\dagger}(\epsilon,{\mathcal{D}_L(\rho_0)})
    \end{equation}
    where $0< \mathcal{D}_L(\rho_0)\leq \mathcal{D}_U(\rho_0)\leq d$. The above bound merely provides a non-sharp estimate for the asymptotic rate of convergence for such quantizers $\{X_0^N\}_{N\geq N_{\dagger}}$, where the quantization dimensions need not coincide in general. In the case when ${\rm card}({\rm spt}(\rho_0))$ is finite the error bound is trivial since then $X_0$ is already a simple random variable that does not need approximating. If ${\rm card}({\rm spt}(\rho_0))=\infty$ and $\rho_0$ has non-trivial absolutely continuous part, \cite[Corollary 11.4(c)]{graf2000foundations} and Zador's theorem \cite{zador1964development} imply  $\mathcal{D}_L(\rho_0)=\mathcal{D}_U(\rho_0)$ with $\mathcal{D}(\rho_0)=d$ and one has a more precise bound with a sharp convergence rate: 
\begin{equation}
	\|X_0-X_0^N\|_{\LLspace}\leq CN^{-1/d}\quad\forall N\geq N_{\dagger}
\end{equation}
where the constant $C$ depends only on $\delta$, $X_0$, the absolutely continuous part of $\rho_0$, and $d$. If $\rho_0$ is a distribution supported on a self-similar subset of $\mathbb{R}^d$ then it is possible for $\mathcal{D}_L(\rho_0)=\mathcal{D}_U(\rho_0)$ and $\mathcal{D}(\rho_0)<d$. For example, if $d=1$ and $\rho_0$ is the one-dimensional Cantor distribution, \cite[Ch.\ 14]{graf2000foundations} gives $\mathcal{D}(\rho_0)=\log(2)/\log(3)<1$ where the optimal dyadic $N_m$-quantizer with $N_m=2^m$ satisfies
\begin{equation}
	\|X_0-X_0^{N_m}\|_{\LLspace}\leq \frac{1}{8}N_m^{-\log(3)/\log(2)}\quad\forall m\in\mathbb{N}.
\end{equation} Concerning the practical construction of quantizers $\{X_0^N\}_{N\in\mathbb{N}}$, together with the associated weights $\{\mathcal{A}_N\}_{N\geq N_{\dagger}}$, points $\{\{x_{j,N}\}_{j=1}^{N}\}_{N\geq N_{\dagger}}$ and Voronoi tessellations $\{\mathcal{O}_N\}_{N\geq N_{\dagger}}$ of $\Omega$, one can consider methods based on Lloyd's algorithm \cite{lloyd1982least,kieffer1982exponential}, which include stochastic gradient descent and Newton iterations \cite{du1999centroidal,pages2003optimal}, as well as greedy algorithms \cite{luschgy2015greedy,el2022new}. In this way, for each $N\in\mathbb{N}$, the deterministic {finitely supported} measure $\rho_0^N$ is constructed from the quantizer $X_0^N$ and the weights $\mathcal{A}_N$, where its support is precisely the range of $X_0^N$.
\end{eg}



\section{The Discrete Problem}\label{sec-numerical-discretization}

\subsection{{The setting for} temporal discretization}


We introduce some notation for vector-space valued maps that will be used throughout the manuscript.
Let $(A,\|\cdot\|_A)$ be an arbitrary real, normed vector space. Let $a,b\in\mathbb{R}$ be given such that $0\leq a<b$. Given $k\in\mathbb{N}$, let $\mathcal{J}_k\coloneqq \{I_n\}_{n=1}^{\Mk}$, $M_k\in\mathbb{N}$, denote the partition of the time interval $[a,b]$ where $I_n\coloneqq (t_{n-1},t_n)$ with $t_n\coloneqq a+n\tau_k$ for $n\in\{1,\cdots,\Mk\}$, $t_0\coloneqq a$, and the time-step $\tau_k\coloneqq (b-a)/\Mk$, with  $M_{k+1}\geq M_k$ for $k\in\mathbb{N}$ and $\Mk \tends \infty$ as $k\tends \infty$. 

\subsubsection*{The space $\mathbb{V}_k(a,b;A)$.}

\begin{itemize}
	\item We let $\mathbb{V}_k(a,b;A)$ denote the vector space of $A$-valued functions defined on $(a,b)$ that are piecewise constant-in-time with respect to the temporal partition $\mathcal{J}_k$, i.e.\ $X\in\mathbb{V}_k(a,b;A)$ if and only if $X|_{I_n}\in A$ is constant in time for all $n\in \{1,\cdots,M_k\}$. 
	\item For a function $X\in\mathbb{V}_k(a,b;A)$ and $n\in\{1,\cdots,\Mk-1\}$, we let $X(t_n^{-})$ and $X(t_n^{+})$  denote respectively the left- and right-limits of $X$ in $A$ at time $t_n$. {A function $X\in \mathbb{V}_k(a,b;A) $ also admits a right-limit $X(t_{0}^+)=X|_{I_1}$ in $A$ at $t_0=a$ and a left-limit $X(t_{\Mk}^-)=X|_{I_{\Mk}}$ in $A$ at $t_{\Mk}=b$.} 
\end{itemize}

\begin{remark} 
Since $X\in \mathbb{V}_k(a,b;A)$ is piecewise constant, we clearly have $X(t_n^{-})=X|_{I_n}$ in $A$ for all $n\in\{1,\dots,\Mk\}$, and that $X(t_{n}^+)=X|_{I_{n+1}}$ in $A$ for all $n\in\{0,\dots,\Mk-1\}$. 
\end{remark} 

\subsubsection*{The spaces $\mathbb{V}_k^{\pm}([a,b];A)$.}
In the ensuing analysis it will be useful to consider piecewise constant $A$-valued maps that are defined everywhere in $[a,b]$ through either left-continuity or right-continuity. To be precise, we introduce the following notation and conventions. 
\begin{itemize}
	\item For a function $X\colon ([a,b],|\cdot|) \tends (A,\|\cdot\|_A)$ with $X|_{(a,b)}\in \mathbb{V}_k(a,b;A)$, we say that $X$ is left-continuous if $X(t_n)=X(t_{n}^-)$ in $A$ for all $n=1,\dots,\Mk$, and that $X$ is right-continuous if $X(t_n)=X(t_{n}^+)$ in $A$ for all $n=0,\dots,\Mk-1$. 
	\item We define the vector spaces \begin{subequations}
		\begin{align}
			&\mathbb{V}_k^+([a,b];A)\coloneqq \left\{X\colon[a,b]\tends  A\text{ such that\ } X|_{(a,b)}\in \mathbb{V}_k(a,b;A), \text{ and }X\text{ is {left}-continuous}\right\},\\
			&\mathbb{V}_k^-([a,b];A)\coloneqq \left\{X\colon[a,b]\tends  A \text{ such that\ } X|_{(a,b)}\in \mathbb{V}_k(a,b;A), \text{ and }X\text{ is {right}-continuous}\right\}.
		\end{align}
	\end{subequations}
	\item When $V\in\mathbb{V}_k^{+}([a,b];A)$ and $W\in\mathbb{V}_k^{-}([a,b];A)$, we set $V(t_0^{-})\coloneqq V(a)$ in $A$ and $W(t_{\Mk}^+)\coloneqq W(b)$ in $A$.
	\item {For $V\in \mathbb{V}_k(a,b;A)$, we define the jump $\jump{v}_{n}\in A$ of $v$ at time $t_n\in(a,b)$ by
		\begin{equation}\label{def-time-jump}
			\jump{V}_n\coloneqq V(t_n^{-})-V(t_n^{+})\text{ in }A.
		\end{equation}
		We define also the jump $\jump{V}_{0}=V(a)-V(a^+)$ in $A$ for $V\in\mathbb{V}_k^{+}([a,b];A)$ , and $\jump{W}_{\Mk}=W(b^-)-W(b)$ in $A$ for $W\in \mathbb{V}_k^{-}([a,b];A)$.
	}
\end{itemize}
	Throughout this work, we adopt the convention that functions in $\mathbb{V}_k^{\pm}([a,b];A)$ are defined at all points in $[a,b]$, and that two functions in $\mathbb{V}_k^{\pm}([a,b];A)$ that agree up to a subset of measure zero of $[a,b]$ are \emph{not} identified.

\subsubsection*{Reconstruction of time derivatives.}
Let $a,b\in\mathbb{R}$ be given such that $0\leq a<b$. We let $C([a,b];A)$ be the space of continuous maps from $([a,b],|\cdot|)$ to $(A,\|\cdot\|_A)$. The formulation of the numerical scheme in the next section will be based on the following interpolation-in-time of maps from $\mathbb{V}_k^{\pm}([a,b];A)$:
\begin{itemize}
	\item Let the \emph{forward-in-time} reconstruction operator $\mathcal{I}_+^k:\mathbb{V}_k^+([a,b];A)\to {C([a,b]; A)}$ be defined by 
	\begin{multline}\label{Ip-op}
		(\Ip V)(t)\coloneqq {V(t)+\frac{t_n-t}{\tau_k}\jump{V}_{n-1}}\text{ in }A,\text{ } t\in {(t_{n-1},t_n]},\;n\in\{1,\cdots,\Mk\},  V \in \mathbb{V}_k^+([a,b];A).
	\end{multline}
	\item Define the \emph{backward-in-time} reconstruction operator $\In :\mathbb{V}_k^-([a,b];A)\to C([a,b]; A)$ by 
	\begin{multline}\label{In-op}
		(\In W)(t)\coloneqq {W(t)-\frac{t-t_{n-1}}{\tau_k}\jump{W}_{n}}\text{ in }A,\text{ }{t\in [t_{n-1},t_n)},\; n\in\{1,\cdots,\Mk\},  W\in \mathbb{V}_k^-([a,b];A).
	\end{multline}
\end{itemize}
Throughout, we slightly abuse notation by not indicating the dependence of $\Ipm^k$ on $A$ as there is no risk of confusion. 
\begin{remark}
Since $V\in \mathbb{V}_k^+([a,b];A)$ is by definition left-continuous, it follows that $\Ip V$ is continuous on $[a,b]$ and satisfies $\Ip V(t_{n})=V(t_n)=V(t_{n}^-)$ in $A$ for all $n\in\{0,1,2,\dots,\Mk\}$ and  $\Ip V(a)=V(a)$ in $A$. Likewise, for $W\in \mathbb{V}_k^-([a,b];A)$, the map $\In W$ is continuous on $[a,b]$ and $\In W(t_n)=W(t_n)=W(t_{n}^+)$ in $A$ for all $n\in\{0,\dots,\Mk\}$ and $\In W(b)=W(b)$ in $A$.
\end{remark}

It can be shown that $\Ip V$ and $\In W$ are piecewise continuously differentiable with respect to the time partition $\mathcal{J}_k$ for any $V\in\mathbb{V}_k^+([a,b];A)$ and any $W\in \mathbb{V}_k^-([a,b];A)$, where
\begin{equation}\label{derivatives_Ip}
	\begin{aligned}
		\p_t \Ip V|_{I_n}= -\frac{1}{\tau_k}\jump{V}_{n-1} , && \p_t \In W|_{I_n} = -\frac{1}{\tau_k} \jump{W}_n, \text{ in }A&& \forall n\in\{1,\dots,\Mk\}.
	\end{aligned}
\end{equation}
This shows that $\p_t \Ip$ and $\p_t \In$ are respectively related to the first-order backward and forward difference operators. 

\subsection*{Discrete-time probability measure flows.}
Let $N\in\mathbb{N}$ be given. We let $\mathcal{P}_{d,N}$ denote the set of deterministic, finitely supported probability measures of the form $\sum_{i=1}^N\alpha_{i,N}\delta_{{y}_i}$ where $\{{y}_i\in\mathbb{R}^d:i=1,\dots,N\}$ and  $\{\alpha_{i,N}\}_{i=1}^N\subset (0,1)$ are given {such that $\sum_{i=1}^N\alpha_{i,N}=1$}. We then  introduce  the set of maps 
\begin{equation}
	\begin{split} 
\mathbb{P}_k([0,T];\mathcal{P}_{d,N})
		\coloneqq\Big\{&\upsilon:[0,T]\to (\mathcal{P}_{d,N},\mathcal{W}_2) \text{ s.t.\ } \upsilon|_{I_n}\in\mathcal{P}_2(\mathbb{R}^d) \text{ constant }\forall n\in\{1,\cdots, M_k\} \\ 
&\text{ and }\upsilon\text{ is left-continuous}\Big\},
	\end{split} 
\end{equation}
which thus consist of deterministic, empirical probability measure flows that are piecewise constant in time with respect to the time partition $\mathcal{J}_k$ that are defined everywhere in $[0,T]$. Note that when $\varrho\in \mathbb{P}_k([0,T];\mathcal{P}_{d,N})$ we set $\varrho(t_0^{-})\coloneqq \varrho(0)$ in $\mathcal{P}_2(\mathbb{R}^d)$.

\subsection{Discretization of the Hamiltonian system}
\subsubsection*{{The} numerical scheme.}
We now present a numerical scheme for the approximation of the characteristic system of the first-order MFG system \eqref{mfg-pde-sys}.  For each $N\in\mathbb{N}$, let $\rho_0^N$ denote an arbitrary deterministic {finitely supported} measure approximation of $\rho_0$ of the form $\rho_0^N\coloneqq \sum_{i=1}^{N}\alpha_{i,N}\delta_{x_{i,N}}$ where  $\{x_{i,N}\}_{i=1}^N\subset \mathbb{R}^d$, and let $X_0^N$ denote the corresponding $\rho_0^N$-distributed random variable in $\mathcal{R}_d(\mathcal{A}_N,\mathcal{O}_N)$. 

The numerical method we consider in this work is defined as follows: \emph{Let $N\in\mathbb{N}$ and $k\in\mathbb{N}$ be given. Find $X^{k,N}\in \mathbb{V}_k^+([0,T];\mathcal{R}_{d}(\mathcal{A}_N,\mathcal{O}_N))$, $Y^{k,N}\in \mathbb{V}_k^-([0,T];\mathcal{R}_{d}(\mathcal{A}_N,\mathcal{O}_N))$ and $\rho^{k,N}\in\mathbb{P}_k([0,T];\mathcal{P}_{d,N})$ where 
\begin{subequations}\label{numerical-scheme}
    \begin{align}
		\mathcal{I}_+^kX^{k,N}(t) &=X_0^{N}+ \int_0^tD_pH\left(X^{k,N}(s),Y^{k,N}(s),\rho^{k,N}(s)\right)\mathrm{d}s,\ \forall t\in [0,T], \label{disc-HJ-eqn}
		\\
		\mathcal{I}_-^kY^{k,N}(t) &= -D_xg\left(X^{k,N}(T),\rho^{k,N}(T)\right)+\int_t^TD_xH\left(X^{k,N}(s),Y^{k,N}(s),\rho^{k,N}(s)\right)\mathrm{d}s,\ \forall t\in [0,T], \label{disc-KFP-eqn}\\
		\rho^{k,N}(t) &= (X^{k,N}(t))_{\#}\mathbb{P}, \quad \forall t\in [0,T].
    \end{align}
\end{subequations}
}

\begin{remark}\label{rem-rv-finite-support-rho-N}
	In view of Remark \ref{rem-rv-finite-support-rho}, whenever $\rho_0$ is an measure supported on a finite number of points with corresponding simple random variable $X_0$ we consider the constant sequence $\{X_0^N\}_{N\in\mathbb{N}}$ of initial conditions given by $X_0^N\coloneqq X_0$ in $L^2(\Omega;\mathbb{R}^d)$ for the numerical scheme \eqref{numerical-scheme}.
\end{remark}
\begin{remark} The integral terms in each equation \eqref{disc-HJ-eqn}, \eqref{disc-KFP-eqn} are well-defined. Indeed, write $ X^{k,N}(t) =\sum_{i=1}^{N}X^{k,N}(t)|_{\Omega_{i,N}}\chi_{\Omega_{i,N}}$ and $ Y^{k,N}(t) =\sum_{i=1}^{N}Y^{k,N}(t)|_{\Omega_{i,N}}\chi_{\Omega_{i,N}}$ for each $t\in [0,T]$. Since $X^{k,N}(t)$ is piecewise constant-in-time with values in $\mathcal{R}_{d}(\mathcal{A}_N,\mathcal{O}_N)$ and is left-continuous as maps from $([0,T],|\cdot|)$ to $(\mathcal{R}_{d}(\mathcal{A}_N,\mathcal{O}_N),\|\cdot\|_{L^2(\Omega;\mathbb{R}^d)})$, we have
\begin{equation}\label{nonlinearity-identity-3}
	\begin{split} 
		\rho^{k,N}(t) & =\sum_{i=1}^N\alpha_{i,N}\delta_{X^{k,N}(t)|_{\Omega_{i,N}}}
	\end{split} 
\end{equation}
is a deterministic map from $([0,T],|\cdot|)$ to $(\mathcal{P}_2(\mathbb{R}^d),W_2)$ that is piecewise constant in time. To see that $\rho^{k,N}$ is left-continuous, notice that there holds the bound
\begin{equation}
	\mathcal{W}_2(\rho^{k,N}(s),\rho^{k,N}(r))\leq \|X^{k,N}(s)-X^{k,N}(r)\|_{\LLspace}\quad\forall s,r\in [0,T]
\end{equation}
since $\rho^{k,N}(t)$ is the law of $X^{k,N}(t)$ with respect to the reference probability space $(\Omega,\mathbb{F},\mathbb{P})$ for each $t\in [0,T]$. It then follows that $\rho^{k,N}$ is left-continuous on $[0,T]$ with $\rho^{k,N}(t_0^{-})=\rho_0^N=\rho(0)$ in $\mathcal{P}_2(\mathbb{R}^d)$, so that $\rho^{k,N}$ is in  $\mathbb{P}_k([0,T];\mathcal{P}_{d,N})$.
We then obtain that the map  $[0,T]\ni t\mapsto D_pH\left(X^{k,N}(t),Y^{k,N}(t),\rho^{k,N}(t)\right)$ is piecewise constant in time and defined via
\begin{multline}
	D_pH\left(X^{k,N}(t),Y^{k,N}(t),\rho^{k,N}(t)\right)|_{I_n} 
	=  D_pH\left(\sum_{i=1}^{N}\mathfrak{X}_{\Omega_{i,N}}(t)\chi_{\Omega_{i,N}},\sum_{i=1}^{N}\mathfrak{Y}_{\Omega_{i,N}}(t)\chi_{\Omega_{i,N}},\sum_{i=1}^{N}\alpha_{i,N}\delta_{\mathfrak{X}_{\Omega_{i,N}}(t)}\right),
\end{multline} 
for $t\in I_n$, where $\mathfrak{X}_{\Omega_{i,N}}(t)\coloneqq X^{k,N}(t)|_{\Omega_{i,N}}$ and $\mathfrak{Y}_{\Omega_{i,N}}(t)\coloneqq Y^{k,N}(t)|_{\Omega_{i,N}}$ are constant for $t\in I_n$ since $X^{k,N},Y^{k,N}$ are constant-in-time on $I_n$ for $n\in\{1,\cdots,M_k\}$. It further follows that, for $n\in\{1,\cdots, M_k\}$ and $\omega\in \Omega_{j,N}$ where $i=1,2,\cdots, N$, 
\begin{align*}
		D_pH\left(X^{k,N}(t)(\omega),Y^{k,N}(t)(\omega),\rho^{k,N}(t)\right)|_{I_n} = D_pH\left(\mathfrak{X}_{\Omega_{j,N}}|_{I_n},\mathfrak{Y}_{\Omega_{j,N}}|_{I_n},\sum_{i=1}^{N}\alpha_{i,N}\delta_{\mathfrak{X}_{\Omega_{i,N}}|_{I_n}}\right).
\end{align*} 
Therefore,  the map $I_n\ni s\mapsto D_pH\left(X^{k,N}(t),Y^{k,N}(t),\rho^{k,N}(t)\right)$ has values in the vector space of uniform random variables on  $(\Omega,\mathbb{F},\mathbb{P})$ where
\begin{align}\label{nonlinearity-identity-1}
	D_pH\left(X^{k,N}(t),Y^{k,N}(t),\rho^{k,N}(t)\right)|_{I_n}=\sum_{j=1}^{N} D_pH\left(\mathfrak{X}_{\Omega_{j,N}}|_{I_n},\mathfrak{Y}_{\Omega_{j,N}}|_{I_n},\sum_{i=1}^{N}\alpha_{i,N}\delta_{\mathfrak{X}_{\Omega_{i,N}}|_{I_n}}\right)\chi_{\Omega_{j,N}}
\end{align}
in $\mathcal{R}_{d}(\mathcal{A}_N,\mathcal{O}_N)$ for each $n\in \{1,\cdots, M_k\}$, so $D_pH\left(X^{k,N},Y^{k,N},\rho^{k,N}\right)$ is a map in $\mathbb{V}_k(0,T;\mathcal{R}_{d}(\mathcal{A}_N,\mathcal{O}_N))$ and the integral in \eqref{disc-HJ-eqn} is therefore well-defined as it is the integral of a simple function in time with values in $\mathcal{R}_{d}(\mathcal{A}_N,\mathcal{O}_N)$. Similarly, $D_xH\left(X^{k,N},Y^{k,N},\rho^{k,N}\right)$ is in $\mathbb{V}_k(0,T;\mathcal{R}_{d}(\mathcal{A}_N,\mathcal{O}_N))$ where 
\begin{align}\label{nonlinearity-identity-2}
	D_xH\left(X^{k,N}(t),Y^{k,N}(t),\rho^{k,N}(t)\right)|_{I_n}=\sum_{j=1}^{N} D_xH\left(\mathfrak{X}_{\Omega_{j,N}}|_{I_n},\mathfrak{Y}_{\Omega_{j,N}}|_{I_n},\sum_{i=1}^{N}\alpha_{i,N}\delta_{\mathfrak{X}_{\Omega_{i,N}}|_{I_n}}\right)\chi_{\Omega_{j,N}}
\end{align} 
in $\mathcal{R}_{d}(\mathcal{A}_N,\mathcal{O}_N)$ for all $t\in I_n$, $n\in \{1,\cdots, M_k\}$,  
so the integral in \eqref{disc-KFP-eqn} is well-defined as it is the integral of a simple function in time with values in $\mathcal{R}_{d}(\mathcal{A}_N,\mathcal{O}_N)$.
\end{remark}

\begin{remark}[A numerical approximation for the value function]
    The numerical scheme \eqref{numerical-scheme} also implies an immediate construction of approximations for the value function \eqref{u-opt-trajectory-rep} evaluated along the optimal trajectory. Indeed, given $k,N\in\mathbb{N}$, if $(X^{k,N},Y^{k,N},\rho^{k,N})$ satisfies the discrete system \eqref{numerical-scheme}, define the map $u^{k,N}\in C([0,T];L^1(\Omega))$ via
    \begin{equation}\label{u-opt-trajectory-rep-N-k}
    u^{k,N}(t)=\int_t^TL(X^{k,N}(s),{D_pH(X^{k,N}(s), Y^{k,N}(s),\rho^{k,N}(s))},\rho^{k,N}(s))\mathrm{d}s + g(X^{k,N}(T),\rho^{k,N}(T)),\ \forall t\in [0,T].
\end{equation}
We note that $u^{k,N}$ is indeed $L^1(\Omega)$-valued for each $t\in[0,T]$ due to Fubini's theorem and the fact that $X^{k,N}$,$Y^{k,N}$ are piecewise constant in $(0,T)\times\Omega$ and $\rho^{k,N}$ is piecewise constant in time.
\end{remark}

\subsubsection*{An equivalent {re}formulation.}
The discrete scheme \eqref{numerical-scheme} can be viewed as a probabilistric interpretation of the following problem: \emph{Let $N\in\mathbb{N}$ and $k\in\mathbb{N}$ be given. Find maps $\mathcal{X}_{k,N}\in [\mathbb{V}_k^+([0,T];\mathbb{R}^d)]^{N}$ and $\mathcal{Y}_{k,N}\in [\mathbb{V}_k^-([0,T];\mathbb{R}^d)]^{N}$, and  $\rho^{k,N}\in\mathbb{P}_k([0,T];\mathcal{P}_{d,N})$ of the form $\mathcal{X}_{k,N}(t) = (\mathcal{X}_{k,N}^i(t))_{i=1}^{N}$, $\mathcal{Y}_{k,N}(t) = (\mathcal{Y}_{k,N}^i(t))_{i=1}^{N}$ where $	\rho^{k,N}(t) = \sum_{i=1}^{N}\alpha_{i,N}\delta_{\mathcal{X}_{k,N}^i(t)}$ in $[0,T]$, and for each $i\in\{1,2,\cdots, N\}$
	\begin{subequations}\label{numerical-scheme-equiv}
		\begin{align}
			\partial_t\mathcal{I}_+^k\mathcal{X}_{k,N}^i(t) &= D_pH\left(\mathcal{X}_{k,N}^i(t),\mathcal{Y}_{k,N}^i(t),\rho^{k,N}(t)\right),&&t\in(0,T)\backslash\iota_k, \label{disc-HJ-eqn-equiv}
			\\
			-\partial_t\mathcal{I}_-^k\mathcal{Y}_{k,N}^i(t) &=D_xH\left(\mathcal{X}_{k,N}^i(t),\mathcal{Y}_{k,N}^i(t),\rho^{k,N}(t)\right),&&t\in (0,T)\backslash \iota_k, \label{disc-KFP-eqn-equiv}
			\\
			\mathcal{X}_{k,N}^i(0)&=x_{i,N},\quad \mathcal{Y}_{k,N}^i(T) = - D_xg(\mathcal{X}_{k,N}^i(T),\rho^{k,N}(T)),&&\text{  }\label{disc-equiv-init-term-condition}
		\end{align}
\end{subequations} 
where $\iota_k\coloneqq \{t_{m}:q=1,2,\cdots, M_k-1\}$.}
The following result shows that the  numerical scheme \eqref{numerical-scheme} can be formulated in a mesh-free manner through the above formulation \eqref{numerical-scheme-equiv}, which benefits the practical implementation of \eqref{numerical-scheme} .
\begin{proposition}[Equivalence of numerical schemes]\label{Prop-equiv-scheme}
	The schemes \eqref{numerical-scheme} and \eqref{numerical-scheme-equiv} are equivalent in the following sense :
	\begin{equation}
		X^{k,N}(t) =\sum_{i=1}^{N}\mathcal{X}_{k,N}^i(t)\chi_{\Omega_{i,N}},\quad Y^{k,N}(t) =\sum_{i=1}^{N}\mathcal{Y}_{k,N}^i(t)\chi_{\Omega_{i,N}}\quad\text{ in }\quad\mathcal{R}_{d}(\mathcal{A}_N,\mathcal{O}_N)\quad\forall t\in [0,T].
	\end{equation}
\end{proposition}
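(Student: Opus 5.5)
The plan is to prove both directions of the equivalence by working componentwise on the partition $\mathcal{O}_N=\{\Omega_{i,N}\}_{i=1}^N$. The underlying fact is that $\mathcal{R}_d(\mathcal{A}_N,\mathcal{O}_N)$ is isomorphic to $(\mathbb{R}^d)^N$ through $W=\sum_i w_i\chi_{\Omega_{i,N}}\mapsto (w_i)_{i=1}^N$. This linear bijection commutes with taking left and right limits, jumps, the reconstruction operators $\Ip,\In$, time integration, and time differentiation. It also preserves piecewise constancy and left/right continuity in time, since all of these are linear operations acting coefficientwise. Consequently $X^{k,N}\in\mathbb{V}_k^+([0,T];\mathcal{R}_d(\mathcal{A}_N,\mathcal{O}_N))$ if and only if its coefficient vector lies in $[\mathbb{V}_k^+([0,T];\mathbb{R}^d)]^N$, and the same holds for $Y^{k,N}$ with the minus spaces.

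\textbf{Step 1 (identifying the measure and nonlinearities).} Given either solution, I would define the other by the stated formulas. The measure then satisfies $\rho^{k,N}(t)=(X^{k,N}(t))_\#\mathbb{P}=\sum_i\alpha_{i,N}\delta_{\mathcal{X}^i_{k,N}(t)}$. By the identities \eqref{nonlinearity-identity-1} and \eqref{nonlinearity-identity-2} from the preceding remark, $D_pH(X^{k,N},Y^{k,N},\rho^{k,N})$ on each $I_n$ is the simple random variable whose $i$-th coefficient is $D_pH(\mathcal{X}^i_{k,N},\mathcal{Y}^i_{k,N},\rho^{k,N})$, and likewise for $D_xH$. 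Similarly, $D_xg(X^{k,N}(T),\rho^{k,N}(T))=\sum_i D_xg(\mathcal{X}^i_{k,N}(T),\rho^{k,N}(T))\chi_{\Omega_{i,N}}$.

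\textbf{Step 2 (integral versus differential form).} Suppose \eqref{disc-HJ-eqn} holds. Evaluating at $t=0$ gives $X^{k,N}(0)=\Ip X^{k,N}(0)=X_0^N$, hence $\mathcal{X}^i_{k,N}(0)=x_{i,N}$. Differentiating on each $I_n$ and using \eqref{derivatives_Ip} gives \eqref{disc-HJ-eqn-equiv} coefficientwise. Conversely, assume \eqref{disc-HJ-eqn-equiv} and the initial condition. The map $\Ip\mathcal{X}^i_{k,N}$ is continuous and piecewise $C^1$, so the fundamental theorem of calculus applied on each $I_n$ and summed recovers
\[
\Ip \mathcal{X}^i_{k,N}(t)=x_{i,N}+\int_0^t D_pH\bigl(\mathcal{X}^i_{k,N}(s),\mathcal{Y}^i_{k,N}(s),\rho^{k,N}(s)\bigr)\,\mathrm{d}s .
\]
Here we use $\Ip V(0)=V(0)$. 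Multiplying by $\chi_{\Omega_{i,N}}$ and summing over $i$ gives \eqref{disc-HJ-eqn}. The backward equation is handled the same way, using $\In W(T)=W(T)$ and integrating from $t$ to $T$.

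The main point requiring care, rather than a genuine obstacle, is the bookkeeping at the nodes $t_n$. Continuity of $\Ip V$ and $\In W$, as established in the remark following their definition, is exactly what allows the piecewise fundamental theorem of calculus to be applied across the excluded set $\iota_k$. One also needs $\rho^{k,N}\in\mathbb{P}_k([0,T];\mathcal{P}_{d,N})$ in both directions, which follows from the Wasserstein bound $\mathcal{W}_2(\rho^{k,N}(s),\rho^{k,N}(r))\le\|X^{k,N}(s)-X^{k,N}(r)\|_{\LLspace}$ noted earlier.
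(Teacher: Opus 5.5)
Your proposal is correct and follows essentially the same route as the paper's proof: identify $X^{k,N},Y^{k,N}$ with their coefficient vectors on $\mathcal{O}_N$, use \eqref{nonlinearity-identity-1}--\eqref{nonlinearity-identity-3} for the measure and the nonlinearities, and differentiate the integral form on each $I_n$ via \eqref{derivatives_Ip}. The only difference is that you make the converse direction explicit, via the piecewise fundamental theorem of calculus using continuity of $\Ip V$, $\In W$ together with $\Ip V(0)=V(0)$ and $\In W(T)=W(T)$; the paper simply asserts that this direction is clear from the forward computation.
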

For the sake of brevity of the presentation, we include the proof of this proposition in Appendix \ref{app-aux-disc-temp-ana}. 

\section{Main results}\label{sec-main-results}

The first main result concerns the existence of solutions to the numerical scheme \eqref{numerical-scheme}.
\begin{theorem}[Existence of numerical approximations]\label{theorem-existence-num-scheme}
	Let the Hamiltonian $H$ satisfy the regularity assumptions \eqref{ass-H:1}, \eqref{ass-H:2}, \eqref{ass-H:3}, \eqref{ass-H:4}, and the displacement monotonicity condition \eqref{ass-H:disp-mono-cond-H}. Let the terminal cost $g$ satisfy the regularity assumptions \eqref{ass-g:1}, \eqref{ass-g:2} and the displacement monotonicity condition \eqref{ass-g:disp-mono-cond-g}. For each $N\in\mathbb{N}$, let $X_0^N$ denote an initial random variable in $\mathcal{R}_{d}(\mathcal{A}_N,\mathcal{O}_N)$ for some set of weights $\mathcal{A}_N$ and a partition $\mathcal{O}_N$ of $\Omega$. Then, there exists $k_{\dagger}\in\mathbb{N}$ such that, for each $k,N\in\mathbb{N}$ with $k\geq k_{\dagger}$, there exist $X^{k,N}\in \mathbb{V}_k^+([0,T];\mathcal{R}_{d}(\mathcal{A}_N,\mathcal{O}_N))$, $Y^{k,N}\in \mathbb{V}_k^-([0,T];\mathcal{R}_{d}(\mathcal{A}_N,\mathcal{O}_N))$ and $\rho^{k,N}\in\mathbb{P}_k([0,T];\mathcal{P}_{d,N})$ which satisfy the numerical scheme \eqref{numerical-scheme}. The constant $k_{\dagger}$ depends only on the time horizon $T$, the uniform convexity constant $c_0>0$, and the Lipschitz constants associated with $D_pH$, $D_xH$ and $D_xg$.
\end{theorem}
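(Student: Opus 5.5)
We propose a \emph{continuation (method of continuity) argument} in the parameter of a convex interpolation between the scheme \eqref{numerical-scheme} and a trivially solvable linear discrete Hamiltonian system. The engine is a continuous dependence bound for discrete Hamiltonian systems under affine perturbations, in the spirit of Theorem \ref{theorem-cont-dependence-discrete-Hamiltonian-sys}, with a constant uniform in the interpolation parameter.

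Since $N$ and $k$ are fixed, all unknowns live in the finite-dimensional spaces $\mathbb{V}_k^{\pm}([0,T];\mathcal{R}_{d}(\mathcal{A}_N,\mathcal{O}_N))$, which we equip with the $L^\infty(L^2)$-norm. For $\lambda\in[0,1]$ set
\[
D_pH_\lambda(x,p,\mu)\coloneqq \lambda D_pH(x,p,\mu)+(1-\lambda)c_0 p,\qquad D_xH_\lambda\coloneqq \lambda D_xH,\qquad D_xg_\lambda\coloneqq \lambda D_xg+(1-\lambda)x .
\]
Consider the family of discrete systems $(\mathrm{S}_\lambda)$ obtained from \eqref{numerical-scheme} by replacing $D_pH,D_xH,D_xg$ with $D_pH_\lambda,D_xH_\lambda,D_xg_\lambda$, and adding arbitrary affine perturbations. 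These are source terms $f,h\in\mathbb{V}_k(0,T;\mathcal{R}_{d}(\mathcal{A}_N,\mathcal{O}_N))$ in the forward and backward equations and a terminal perturbation $\xi\in\mathcal{R}_{d}(\mathcal{A}_N,\mathcal{O}_N)$.

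The data $(H_\lambda,g_\lambda)$ have the following properties, uniformly in $\lambda$:
\begin{itemize}
\item they satisfy the displacement monotonicity conditions \eqref{ass-H:disp-mono-cond-H} and \eqref{ass-g:disp-mono-cond-g}, since these are convex combinations of monotone data;
\item $D_pH_\lambda$ is strongly monotone in $p$ with constant $c_0$;
\item $D_pH_\lambda,D_xH_\lambda,D_xg_\lambda$ are Lipschitz with constants bounded by $\max\{c_0,1\}$ plus the Lipschitz constants of $D_pH,D_xH,D_xg$.
\end{itemize}

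\textbf{Step 1 (uniform stability).} For two solutions of $(\mathrm{S}_\lambda)$ with perturbations $(f_i,h_i,\xi_i)$, we show
\[
\sup_t\|\Delta X\|_{L^2}+\sup_t\|\Delta Y\|_{L^2}\lesssim \|\Delta f\|+\|\Delta h\|+\|\Delta \xi\|,
\]
with a constant independent of $\lambda$, $N$ and $k$, as long as $k\ge k_\dagger$.

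The argument tests the difference of the forward equations against $\Delta Y$ and the difference of the backward equations against $\Delta X$, and sums over the time intervals $I_n$ (summation by parts for the jumps $\jump{\cdot}_n$). This gives a discrete analogue of $\frac{d}{dt}\mathbb{E}[\Delta X\cdot\Delta Y]$. Displacement monotonicity of $H_\lambda,g_\lambda$ combined with strong convexity then yields
\[
c_0\int_0^T\|\Delta Y\|^2 \le \text{(perturbation terms)} + \text{(discretization remainders)}.
\]
The remainders arise because $X$ is left-continuous and $Y$ is right-continuous, so their values are evaluated at shifted time nodes. Each remainder is of the form $\tau_k\sum_n\|\jump{\Delta X}_n\|\,\|\jump{\Delta Y}_n\|$, and using the Lipschitz bounds on the equations it is dominated by $C\tau_k\int_0^T(\|\Delta X\|^2+\|\Delta Y\|^2)$.

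Next, a discrete Gronwall inequality on the forward equation controls $\Delta X$ by $\Delta Y$ and the data. A backward discrete Gronwall inequality then controls $\sup_t\|\Delta Y\|$. Choosing $k_\dagger$ so that $C\tau_k$, with $C=C(T,c_0,\mathrm{Lip})$, is small enough to be absorbed into the left-hand side closes the estimate, and discrete Gronwall is valid for $\tau_k$ small. This is where the dependence of $k_\dagger$ only on $T$, $c_0$ and the Lipschitz constants comes from.

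\textbf{Step 2 (solvability at $\lambda=0$).} $(\mathrm{S}_0)$ is a linear square system on a finite-dimensional space: the ($\rho$-independent) forward equation $X'=c_0Y+f$, the backward equation $-Y'=h$, and the terminal condition $Y(T)=-X(T)+\xi$. The stability estimate of Step~1 gives uniqueness, hence injectivity, hence surjectivity. So $(\mathrm{S}_0)$ is solvable for every $(f,h,\xi)$.

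\textbf{Step 3 (continuation).} Suppose $(\mathrm{S}_{\lambda_0})$ is solvable for all perturbations. For $\lambda=\lambda_0+\delta$, rewrite $(\mathrm{S}_\lambda)$ as $(\mathrm{S}_{\lambda_0})$ with perturbations
\[
f+\delta\bigl(D_pH-c_0Y\bigr)(X,Y,\rho),\qquad h+\delta D_xH(X,Y,\rho),\qquad \xi-\delta\bigl(D_xg-X\bigr)(X(T),\rho(T)),
\]
all evaluated at a frozen pair $(X,Y)$. The solution map of $(\mathrm{S}_{\lambda_0})$ then defines a map $(X,Y)\mapsto(\tilde X,\tilde Y)$. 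By Step~1 and the Lipschitz bounds, using $\mathcal W_2(\mathcal L_{X_1},\mathcal L_{X_2})\le\|X_1-X_2\|_{L^2}$, this map is a contraction once $\delta\le\delta_0$, where $\delta_0$ depends only on the uniform constants. Banach's fixed point theorem gives solvability on $[\lambda_0,\lambda_0+\delta_0]$, and after finitely many steps we reach $\lambda=1$ with zero perturbations, which is \eqref{numerical-scheme}. Finally, $\rho^{k,N}=(X^{k,N})_\#\mathbb{P}\in\mathbb{P}_k([0,T];\mathcal{P}_{d,N})$ by the preceding remark.

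\textbf{Main obstacle.} I expect the main obstacle to be Step~1: obtaining the discrete monotonicity identity with the correct signs from the jump/summation-by-parts structure of $\mathcal{I}_\pm^k$, and absorbing the $O(\tau_k)$ remainder terms uniformly in $\lambda$ and $N$.
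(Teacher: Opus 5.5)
Your argument is correct in its architecture, and it is a genuinely different route from the paper's. The paper applies the Leray--Schauder--Schaefer theorem (Theorem~\ref{lemma-leray--schauder--schaefer-fpt}) to the solution map $T^{k,N}$ on the finite-dimensional space $\mathbb{X}_k$, where continuity and compactness are immediate. Its only analytic input is the a priori bound of Lemma~\ref{lemma-unif-bound-for-lam-fp-map} for solutions of $Z=\beta T^{k,N}Z$. That bound comes from Theorem~\ref{theorem-cont-dependence-discrete-Hamiltonian-sys} with $\lambda_1=\beta$, $\lambda_2=0$, by comparison with the trivial solution $(0,0,\delta_0)$. Along that scaling homotopy the $p$-convexity constant is $\beta c_0$, which is why \eqref{gen-disp-mono-cont-bound} carries factors $\lambda_1^{-1}$, and why the paper extracts only a $\beta$-uniform bound against the zero solution.

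Your homotopy $(H_\lambda,g_\lambda)$ instead keeps displacement monotonicity, the convexity constant $c_0$ and the Lipschitz constants uniform in $\lambda$. So Theorem~\ref{theorem-cont-dependence-discrete-Hamiltonian-sys}, applied to $(H_\lambda,g_\lambda)$ with its scaling parameters $\lambda_1=\lambda_2=1$, yields a $\lambda$-uniform Lipschitz stability constant and a $\lambda$-uniform $k_\dagger$. That is exactly what a fixed continuation step $\delta_0$ needs. The one extension required is the terminal perturbation $\xi$, and it is harmless. The term $\mathbb{E}[\Delta\xi\cdot\Delta X(T)]$ in the monotonicity step is treated like the $(\lambda_2-\lambda_1)D_xg$ term and feeds into $\mathfrak{E}_1$ and $\mathfrak{E}_2$. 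The quantity $2\|\Delta\xi\|^2$ enters the backward Gronwall step additively.

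Your route dispenses with topological fixed-point theory and gives existence, uniqueness and Lipschitz dependence on $(f,h,\xi)$ in one stroke. It is also constructive in principle, as a nested homotopy--Picard iteration. The price is that you need the full stability estimate along the whole path, whereas the paper needs only an a priori bound.

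Your sketch of how Step~1 itself would be proved should be corrected, however, because it misplaces the difficulty.
\begin{itemize}
\item \textbf{No remainders.} For $V\in\mathbb{V}_k^+$ and $W\in\mathbb{V}_k^-$ the identity \eqref{eq:discrete_ibp-expectation_pn} is exact; the left/right-continuity pairing is precisely what makes it telescope. Jump terms occur only in \eqref{eq:discrete_ibp-expectation_pp} and \eqref{eq:discrete_ibp-expectation_nn}, with a favourable sign. Consequently $k_\dagger$ comes solely from the discrete Gronwall lemma (Lemma~\ref{lemma-discrete-gronwall}), not from absorbing remainders.
\item \textbf{The real issue is cross terms.} Strong convexity controls only the $p$-increment of $D_pH$. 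The $x$- and $\mu$-increments of $D_pH$ and $D_xH$ leave cross terms of order $\int(\|\Delta X\|^2+\|\Delta X\|\,\|\Delta Y\|)$; with measure dependence, the displacement-monotonicity expression can even vanish while $\Delta Y\neq0$. So the inequality $c_0\int_0^T\|\Delta Y\|^2\le$ (perturbation terms) is not available. A global-in-time bound $\int_0^T\|\Delta Y\|^2\lesssim\cdots+\int_0^T\|\Delta X\|^2$ combined with forward Gronwall closes only for small $T$.
\item \textbf{The paper's remedy.} Discard the nonnegative monotonicity integral over $[t_n,T]$, so that Lemma~\ref{lemma-step-1-discrete-uniqueness} holds on every $[0,t_n]$ with an $n$-independent right-hand side. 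Young's inequality with a parameter $\epsilon$ then gives $\int_0^{t_n}\|\Delta Y\|^2\lesssim\int_0^{t_n}\|\Delta X\|^2+\epsilon^{-1}\mathfrak{E}_1+\epsilon\mathfrak{E}_2$ (Lemma~\ref{lemma-X-bound-Y-discr-bound}). After that, forward and backward discrete Gronwall apply (Lemma~\ref{X-Y-diff-Linf-bound}), and $\epsilon\mathfrak{E}_2$ is absorbed at the end.
\end{itemize}
With Step~1 obtained in this way, your Steps~2 and~3 go through as written.
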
 Moreover, we have the uniqueness of such solutions, as the following result shows.
\begin{theorem}[Uniqueness of numerical approximation]\label{theorem-uniqueness-numerical-sheme}
	Assume the hypotheses of Theorem \ref{theorem-existence-num-scheme}. Let $k_{\dagger}\in\mathbb{N}$ be given as in Theorem \ref{theorem-existence-num-scheme}. Suppose that for each $k,N\in\mathbb{N}$, with $k\geq k_{\dagger}$, we are given triples $(X_i^{k,N},Y_i^{k,N},\rho_i^{k,N})$, $i=1,2$, that satisfy the scheme \eqref{numerical-scheme} with initial random variables $X_{0,i}^N\in \mathcal{R}_{d}(\mathcal{A}_N,\mathcal{O}_N)$ for $i=1,2$. Then
	\begin{equation}\label{theorem-uniqueness-bound}
		\begin{split}
			&\sup_{t\in [0,T]}\mathcal{W}_2\left(\rho_1^{k,N}(t),\rho_2^{k,N}(t)\right)
            +\sup_{t\in [0,T]}\left(\|(Y_1^{k,N}-Y_2^{k,N})(t)\|_{L^2(\Omega;\mathbb{R}^d)}+\|(X_1^{k,N}-X_2^{k,N})(t)\|_{L^2(\Omega;\mathbb{R}^d)}\right)\\
			&\qquad\qquad\qquad\qquad\qquad\qquad\qquad\qquad\qquad\qquad\qquad\qquad\lesssim \norm{X_{0,1}^N-X_{0,2}^N}_{L^2(\Omega;\mathbb{R}^d)}
		\end{split}
	\end{equation}
	where $k_{\dagger}$ and the hidden constant above depend only on the time horizon $T$, the uniform convexity constant $c_0>0$, and the Lipschitz constants associated with $D_pH$, $D_xH$ and $D_xg$. In particular, the numerical scheme \eqref{numerical-scheme} admits a unique solution for all $k\in\mathbb{N}$ sufficiently large and for all $N\in\mathbb{N}$.
\end{theorem}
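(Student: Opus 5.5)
The plan is a discrete analogue of the D-monotonicity energy argument of \cite{meszaros2024mean}, carried out directly on the difference of two discrete solutions. Write $\delta X\coloneqq X_1^{k,N}-X_2^{k,N}\in\mathbb{V}_k^+$ and $\delta Y\coloneqq Y_1^{k,N}-Y_2^{k,N}\in\mathbb{V}_k^-$, and let $\delta_pH(t)$ and $\delta_xH(t)$ denote the corresponding differences of $D_pH$ and $D_xH$. Subtracting the two copies of \eqref{numerical-scheme} gives $\partial_t\Ip\delta X=\delta_pH$ and $-\partial_t\In\delta Y=\delta_xH$ on each $I_n$. By \eqref{derivatives_Ip}, this means $\jump{\delta X}_{n-1}=-\tau_k\,\delta_pH|_{I_n}$ and $\jump{\delta Y}_n=\tau_k\,\delta_xH|_{I_n}$, with $\delta X(0)=X_{0,1}^N-X_{0,2}^N$ and $\delta Y(T)=-(D_xg(X_1(T),\rho_1(T))-D_xg(X_2(T),\rho_2(T)))$.

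The key step is a discrete summation-by-parts identity for $\mathbb{E}[\delta X\cdot\delta Y]$. Writing $x_n\coloneqq\delta X|_{I_n}$ and $y_n\coloneqq\delta Y|_{I_n}$, I telescope $\mathbb{E}[\delta X(T)\cdot\delta Y(T)]-\mathbb{E}[\delta X(0)\cdot\delta Y(0)]$ as a sum of increments over $n$. Each increment should equal $\tau_k\mathbb{E}[\delta_pH|_{I_n}\cdot y_n-\delta_xH|_{I_n}\cdot x_n]$ up to cross terms of the form $\mathbb{E}[(x_{n}-x_{n-1})\cdot(y_{n}-y_{n-1})]$, or their shifted versions, which come from the staggered left/right continuity. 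The bracket with matching arguments is exactly the D-monotone quantity \eqref{ass-H:disp-mono-cond-H}, since on $I_n$ both $D_pH$ and $D_xH$ are evaluated at $(X_i|_{I_n},Y_i|_{I_n},\mathcal{L}_{X_i|_{I_n}})$. It is therefore $\ge0$, and in fact $\ge c\,\tau_k\mathbb{E}|\delta_pH|^2$ after using strong convexity \eqref{ass-H:4} in the refined form from \cite{meszaros2024mean}, i.e.\ monotonicity plus Lipschitz bounds yield a gain in $\|\delta_pH\|^2$.

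The terminal term $-\mathbb{E}[\delta X(T)\cdot\delta D_xg]\le0$ by \eqref{ass-g:disp-mono-cond-g}. Rearranging then gives
\[
c\,\tau_k\sum_n\|\delta_pH|_{I_n}\|^2\le \|\delta X(0)\|\,\|\delta Y(0)\|+\tau_k^2\sum_n\|\delta_pH|_{I_n}\|\,\|\delta_xH|_{I_n}\|.
\]
The cross terms are $O(\tau_k^2)$ products of jumps, and Lipschitz continuity of $D_pH$ and $D_xH$ lets me bound them by $C\tau_k\cdot\tau_k\sum_n(\|x_n\|^2+\|y_n\|^2)$.

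Next I use a discrete Gronwall closure. From the recursion $x_n=x_{n-1}+\tau_k\delta_pH|_{I_n}$ (with the appropriate indexing), I get $\sup\|\delta X\|\le\|\delta X(0)\|+\sqrt{T}\,(\tau_k\sum\|\delta_pH\|^2)^{1/2}$. The backward recursion for $\delta Y$ with terminal data controlled by $\Lip(D_xg)\|\delta X(T)\|$ gives, by a backward discrete Gronwall (valid for $\tau_k\Lip(D_xH)<1/2$), $\sup\|\delta Y\|\lesssim\sup\|\delta X\|$. Inserting both into the energy inequality and absorbing the $O(\tau_k)$ cross terms for $k\ge k_\dagger$, I obtain $\tau_k\sum\|\delta_pH\|^2\lesssim\|\delta X(0)\|^2$ via Young's inequality. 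This yields the bound on $\sup\|\delta X\|$ and $\sup\|\delta Y\|$, and $\mathcal{W}_2(\rho_1,\rho_2)\le\|\delta X\|$ finishes \eqref{theorem-uniqueness-bound}. Uniqueness follows by taking $X_{0,1}^N=X_{0,2}^N$.

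The main obstacle I expect is the summation-by-parts step. The left/right-continuity conventions must be matched so that the D-monotone bracket appears with \emph{identical} time arguments for $X$ and $Y$, and the leftover jump-product terms must be shown to be genuinely $O(\tau_k)$-small relative to the dissipated quantity, uniformly in $N$. The strong convexity gain is also needed to obtain a coercive term at all, since plain monotonicity gives only $\ge0$. If the quantitative gain is hard to extract, I would instead perturb the problem: I would view one solution as an affine perturbation of the other, in the spirit of the announced Theorem \ref{theorem-cont-dependence-discrete-Hamiltonian-sys}, and run a continuation argument in the data.
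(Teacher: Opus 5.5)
\emph{Verdict: the proposal has a genuine gap.} The failing step is the coercive gain you attribute to D-monotonicity plus strong convexity. Hypotheses \eqref{ass-H:2}, \eqref{ass-H:4} and \eqref{ass-H:disp-mono-cond-H} do not imply $\mathbb{E}[\delta_pH\cdot\delta Y-\delta_xH\cdot\delta X]\ge c\,\mathbb{E}|\delta_pH|^2$.

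A counterexample is $H(x,p,\mu)=\tfrac12|p|^2+a\,x\cdot p$ with $a\neq0$, which satisfies all hypotheses of Theorem \ref{theorem-existence-num-scheme}. Here $\delta_pH=\delta Y+a\,\delta X$ and $\delta_xH=a\,\delta Y$, so the bracket equals $\mathbb{E}|\delta Y|^2$. It vanishes for $\delta Y=0\neq\delta X$, although $\delta_pH=a\,\delta X\neq0$. The map $(X,Y)\mapsto(-D_xH,D_pH)$ is monotone and Lipschitz but not a gradient, so there is no cocoercivity of Baillon--Haddad type to exploit.

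With a mean-field drift, the bracket is not even coercive in $\delta Y$. Take $H=\tfrac12|p|^2+p\cdot\int\psi\,\mathrm{d}\mu-\tfrac18|x|^2$, with $\psi$ the projection onto the closed unit ball; all standing assumptions hold. The bracket is $\mathbb{E}|\delta Y|^2+\mathbb{E}[\psi(X_1)-\psi(X_2)]\cdot\mathbb{E}[\delta Y]+\tfrac14\mathbb{E}|\delta X|^2\ge0$. It vanishes for $X_1\equiv v$, $X_2\equiv0$ and $\delta Y\equiv-\tfrac12 v$ with $0<|v|\le1$.

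Your bound $\tau_k\sum_n\|\delta_pH|_{I_n}\|^2\lesssim\|\delta X(0)\|^2$ rests on this gain, and through the forward recursion so does your control of $\sup_t\|\delta X(t)\|$. So the argument does not close. On the other hand, the cross terms you anticipate do not occur. Because $\delta X$ is left-continuous and $\delta Y$ right-continuous, \eqref{eq:discrete_ibp-expectation_pn} gives the exact identity $\mathbb{E}[\delta X(t_n)\cdot\delta Y(t_n)]-\mathbb{E}[\delta X(t_{n-1})\cdot\delta Y(t_{n-1})]=\tau_k\mathbb{E}[\delta_pH|_{I_n}\cdot\delta Y|_{I_n}-\delta_xH|_{I_n}\cdot\delta X|_{I_n}]$.

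What strong convexity actually provides is coercivity in $\delta Y$ modulo $\delta X$. Insert $D_pH(X_1,Y_2,\rho_1)$ and use \eqref{ass-H:2} with $\mathcal{W}_2(\rho_1,\rho_2)\le\|\delta X\|$. The bracket is then bounded below by $c_0\mathbb{E}|\delta Y|^2-C\,\mathbb{E}[|\delta X|(|\delta X|+|\delta Y|)]$, and these remainders have no sign.

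To absorb them for arbitrary $T$ you need the localisation in Lemma \ref{lemma-step-1-discrete-uniqueness}. By \eqref{ass-H:disp-mono-cond-H}, the bracket integrated over $[t_n,T]$ is nonnegative and can be dropped. Then for \emph{every} $n$ one gets $\int_0^{t_n}\|\delta Y\|^2\,\mathrm{d}s\lesssim\|\delta X(0)\|\,\|\delta Y(0)\|+\int_0^{t_n}\|\delta X\|^2\,\mathrm{d}s$ (cf. Lemma \ref{lemma-X-bound-Y-discr-bound}). Summing over all of $[0,T]$, as you do, leaves $\int_0^T\|\delta X\|^2$ on the right, which can only be absorbed under a smallness condition on $T$.

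With the localised bound, the rest of your plan works. Your forward recursion and the Lipschitz bound on $D_pH$ give a forward discrete Gronwall inequality for $\|\delta X(t_n)\|^2$. Your backward argument giving $\sup_t\|\delta Y(t)\|\lesssim\sup_t\|\delta X(t)\|$ is fine. Young's inequality on $\|\delta X(0)\|\,\|\delta Y(0)\|$ then closes the estimate.

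This is the proof of Theorem \ref{theorem-cont-dependence-discrete-Hamiltonian-sys}. The paper reads off the statement from it with $\lambda_1=\lambda_2=1$ and $F_i=G_i=0$. No continuation argument is involved, and invoking that theorem as your fallback only relocates the missing estimate.
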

The proofs of Theorem \ref{theorem-existence-num-scheme} and Theorem \ref{theorem-uniqueness-numerical-sheme} are given in Section \ref{sec-existence} and Section \ref{sec-cont-dep-gen-discrete-ham-sys}, respectively. In both instances, the analysis we present is based on a continuous dependence bound that we prove in Theorem \ref{theorem-cont-dependence-discrete-Hamiltonian-sys} of Section \ref{sec-cont-dep-gen-discrete-ham-sys} for a class of discrete Hamiltonian systems under displacement monotonicity. 

Furthermore, we prove the convergence of the numerical scheme \eqref{numerical-scheme} together with an asymptotic error bound for the approximations.
\begin{theorem}[Convergence of numerical approximations]\label{theorem-convergence-k-N-joint}
	In addition to the hypotheses of Theorem \ref{theorem-existence-num-scheme}, assume that the Hamiltonian satisfies \eqref{ass-H:5} and that the Lagrangian $L$ and $g$ satisfy the growth assumption \eqref{ass-L-g}. Let $X_0$ be an initial random variable distributed according to $\rho_0$ and assume that $X_0^N\to X_0$ in $\LLspace$ as $N\to\infty$. Furthermore, let $(X,Y,\rho)\in C^1([0,T];L^2(\Omega;\mathbb{R}^d))\times C^1([0,T];L^2(\Omega;\mathbb{R}^d))\times C([0,T];\mathcal{P}_2(\mathbb{R}^d))$ denote the triple which uniquely satisfies the continuous Hamiltonian system \eqref{definition-soln-of-continuous-pb} with initial random variable $X_0$. Then, there exists $k_{\dagger}\in\mathbb{N}$ such that
	\begin{multline}\label{main-error-bound-num-scheme}
				\sup_{t\in [0,T]}\mathcal{W}_2\left(\rho(t),\rho^{k,N}(t)\right)+\sup_{t\in [0,T]}\|(-D_xu(\cdot,X)-Y^{k,N})(t)\|_{L^2(\Omega;\mathbb{R}^d)}+\sup_{t\in [0,T]}\|(X-X^{k,N})(t)\|_{L^2(\Omega;\mathbb{R}^d)}\\
			\lesssim \|X_0-X_0^N\|_{\LLspace}+ \tau_k
	\end{multline}
	for all $N\in\mathbb{N}$ and $k\geq k_{\dagger}$, 
	where $k_{\dagger}$ and the hidden constant above depend only on the time horizon $T$, the uniform convexity constant $c_0>0$, and the Lipschitz constants associated with $D_pH$, $D_xH$ and $D_xg$. In particular, 
	\begin{equation}\label{Y-rho-convergence}
		Y^{k,N}\to Y=-D_xu(\cdot,X)\text{ in }L^{\infty}([0,T];L^2(\Omega;\mathbb{R}^d)),\quad\text{ and }\quad \rho^{k,N}\to \rho\text{ in }L^{\infty}([0,T];\mathcal{P}_2(\mathbb{R}^d))
	\end{equation}
	as $k,N\to\infty$.
\end{theorem}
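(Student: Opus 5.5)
We will split the error with the intermediate continuous solution $(X^N,Y^N,\rho^N)$ of \eqref{definition-soln-of-continuous-pb} started from $X_0^N$. By Lemma \ref{lemma-convegrence-fixed-N-inf}, $(X,Y,\rho)$ and $(X^N,Y^N,\rho^N)$ differ in $C([0,T];\LLspace)$ and $C([0,T];\mathcal{P}_2)$ by at most a multiple of $\|X_0-X_0^N\|_{\LLspace}$. It then suffices to bound $X^N-X^{k,N}$, $Y^N-Y^{k,N}$ (and hence $\mathcal{W}_2(\rho^N,\rho^{k,N})\le\|X^N-X^{k,N}\|_{\LLspace}$) by $C\tau_k$ uniformly in $N$. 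The triangle inequality then gives \eqref{main-error-bound-num-scheme}, and \eqref{Y-rho-convergence} follows at once.

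For the temporal part we construct piecewise constant projections of the continuous solution. We set $\hat X^{N}\in\mathbb{V}_k^+([0,T];\LLspace)$ by $\hat X^N(t)\coloneqq X^N(t_n)$ for $t\in(t_{n-1},t_n]$ with $\hat X^N(0)=X_0^N$. We set $\hat Y^N\in\mathbb{V}_k^-$ by $\hat Y^N(t)\coloneqq Y^N(t_{n-1})$ for $t\in[t_{n-1},t_n)$ with $\hat Y^N(T)=Y^N(T)$. Since $X^N(t)$ stays a simple random variable in $\mathcal{R}_d(\mathcal{A}_N,\mathcal{O}_N)$ (no crossing of characteristics, $D_xu^N$ Lipschitz), these projections live in the same finite dimensional space as the scheme. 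Then $\mathcal{I}_+^k\hat X^N$ is the piecewise linear interpolant of $X^N$ at the nodes, and it satisfies the first equation of \eqref{numerical-scheme} up to a residual
$$R_X(t)=\int_0^t\big[D_pH(X^N,Y^N,\rho^N)-D_pH(\hat X^N,\hat Y^N,\hat\rho^N)\big]\,\mathrm{d}s+\big(\mathcal{I}_+^k\hat X^N-X^N\big)(t).$$
There is an analogous residual $R_Y$ for the backward equation, where the terminal condition is exact. The projections agree with the continuous solution exactly at the nodes, so the scheme residuals are affine perturbations. By the Lipschitz assumptions \eqref{ass-H:2}, their size is controlled by $\sup_t\|X^N(t)-\hat X^N(t)\|+\|Y^N(t)-\hat Y^N(t)\|\le\tau_k(\|\partial_tX^N\|_{L^\infty(L^2)}+\|\partial_tY^N\|_{L^\infty(L^2)})$.

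We would then apply the continuous dependence bound of Theorem \ref{theorem-cont-dependence-discrete-Hamiltonian-sys} for discrete Hamiltonian systems under affine perturbations. This compares $(\hat X^N,\hat Y^N)$, a solution of the perturbed discrete system, with $(X^{k,N},Y^{k,N})$, which exists and is unique for $k\ge k_\dagger$ by Theorems \ref{theorem-existence-num-scheme} and \ref{theorem-uniqueness-numerical-sheme}. The comparison yields $\sup_t\|\hat X^N-X^{k,N}\|+\|\hat Y^N-Y^{k,N}\|\lesssim\tau_k\,\Lambda_N$, where $\Lambda_N$ is the $W^{1,\infty}(0,T;L^2)$ seminorm of $(X^N,Y^N)$. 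Adding the projection error, which is again $\le\tau_k\Lambda_N$, closes the temporal estimate.

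The main obstacle is a uniform-in-$N$ bound on $\Lambda_N$, i.e.\ $\|\partial_tX^N\|+\|\partial_tY^N\|\lesssim 1$. From the equations, $\partial_tX^N=D_pH(X^N,Y^N,\rho^N)$ and $\partial_tY^N=-D_xH(\cdot)$. The linear growth bounds \eqref{linear-growth-D_pH-D_xH} reduce this to uniform bounds on $\sup_t\|X^N(t)\|_{\LLspace}+\|Y^N(t)\|_{\LLspace}$ (note $\mathcal{W}_2(\rho^N,\delta_0)=\|X^N\|$). These bounds follow from Lemma \ref{lemma-convegrence-fixed-N-inf} together with the corresponding bound for $(X,Y)$ and the convergence $\|X_0^N\|\to\|X_0\|$. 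Alternatively, one can obtain them directly via the displacement monotonicity energy estimate comparing with the solution started at $0$.

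A remaining subtlety is that the hidden constant would then depend on $\|X_0\|$, whereas the statement lists only the structural constants. We would track this by absorbing the a priori bound of $(X^N,Y^N)$ into the $W^{1,\infty}$ norm, or by noting that the constant in front of $\tau_k$ is allowed to depend on the solution's time regularity. We would also check that the affine-perturbation structure in Theorem \ref{theorem-cont-dependence-discrete-Hamiltonian-sys} accommodates residuals of the above integral form, rather than only pointwise ones, before committing to this route.
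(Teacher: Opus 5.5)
Your proposal is correct and is essentially the paper's proof. It uses the same splitting through $(X^N,Y^N,\rho^N)$ with Lemma~\ref{lemma-convegrence-fixed-N-inf}, the same nodal projections $\Pi_{\pm}^{k,N}$, and then Theorem~\ref{theorem-cont-dependence-discrete-Hamiltonian-sys} with the uniform-in-$N$ bound on $\sup_t(\|X^N(t)\|_{\LLspace}+\|Y^N(t)\|_{\LLspace})$; your worry about the integral-form residual dissolves because $\partial_t\mathcal{I}_+^k\hat X^N|_{I_n}=\tau_k^{-1}(X^N(t_n)-X^N(t_{n-1}))$, so the perturbation is the piecewise-constant cell average $F|_{I_n}=\tau_k^{-1}\int_{I_n}\big[D_pH(X^N,Y^N,\rho^N)-D_pH(\hat X^N,\hat Y^N,\hat\rho^N)\big]\,\mathrm{d}s$, as in the paper. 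Your remark on the constant is also accurate: in the paper the constant in front of $\tau_k$ also depends on this a priori bound (hence on $X_0$, $\sup_N\|X_0^N\|_{\LLspace}$, $|D_pH(0,0,\delta_0)|$ and $|D_xH(0,0,\delta_0)|$), not only on the structural constants listed in the statement.
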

If in addition $L$ and $g$ are locally Lipschitz continuous in the measure variable in the sense of \eqref{ass-H:5} and \eqref{ass-g:2}, we obtain an error estimate for the value function approximation $u^{k,N}$ given by \eqref{u-opt-trajectory-rep-N-k}.
\begin{theorem}[Approximation of the value function]\label{theorem-convergence-k-N-joint-u-val}
{Let  $u$ denote the value function in the continuous MFG, represented along optimal trajectories in \eqref{u-opt-trajectory-rep} and let $u^{k,N}$ stand for the discretised value function defined in \eqref{u-opt-trajectory-rep-N-k}.}
    In addition to the hypotheses of Theorem \ref{theorem-convergence-k-N-joint}, assume that $L$ and $g$ satisfy  \eqref{L-loc-meas-contin} and \eqref{g-loc-meas-contin}, respectively. Then, there exist $k_{\dagger}^*,N_{\dagger}\in\mathbb{N}$ such that 
    \begin{equation}\label{val-func-error-bound}
        \sup_{t\in [0,T]}\|(u(\cdot,X)-u^{k,N})(t)\|_{L^1(\Omega;\mathbb{R}^d)}
			\lesssim \|X_0-X_0^N\|_{\LLspace}+ \tau_k,
    \end{equation} where $u^{k,N}$ is defined by \eqref{u-opt-trajectory-rep-N-k}, for each $N\geq N_{\dagger}$ and $k\geq k_{\dagger}^*$.
\end{theorem}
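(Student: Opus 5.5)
We compare the representation formula \eqref{u-opt-trajectory-rep} for $u(t,X(t))$ with the definition \eqref{u-opt-trajectory-rep-N-k} of $u^{k,N}(t)$ term by term. Write $V(s)\coloneqq D_pH(X(s),Y(s),\rho(s))$ and $V^{k,N}(s)\coloneqq D_pH(X^{k,N}(s),Y^{k,N}(s),\rho^{k,N}(s))$. Then, for every $t\in[0,T]$,
\begin{equation}
\|(u(\cdot,X)-u^{k,N})(t)\|_{L^1(\Omega)}\leq \int_0^T\mathbb{E}\bigl[|L(X,V,\rho)-L(X^{k,N},V^{k,N},\rho^{k,N})|(s)\bigr]\mathrm{d}s+\mathbb{E}\bigl[|g(X(T),\rho(T))-g(X^{k,N}(T),\rho^{k,N}(T))|\bigr],
\end{equation}
so it suffices to bound the right-hand side, which no longer depends on $t$.

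\textbf{Step 1: bound each integrand pointwise.} We apply the local Lipschitz hypotheses \eqref{L-loc-meas-contin} and \eqref{g-loc-meas-contin} $\omega$-wise and then use Cauchy--Schwarz in $\Omega$. This gives
\begin{equation}
\mathbb{E}\bigl[|L(X,V,\rho)-L(X^{k,N},V^{k,N},\rho^{k,N})|\bigr]\leq C_R\,\|1+|X|+|X^{k,N}|+|V|+|V^{k,N}|+M_1(\rho)+M_1(\rho^{k,N})\|_{L^2}\bigl(\|X-X^{k,N}\|_{L^2}+\|V-V^{k,N}\|_{L^2}+\mathcal{W}_1(\rho,\rho^{k,N})\bigr),
\end{equation}
and an analogous bound holds for $g$.

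\textbf{Step 2: control the difference factor.} The Lipschitz continuity \eqref{ass-H:2} of $D_pH$ gives
\begin{equation}
\|V-V^{k,N}\|_{L^2}\lesssim \|X-X^{k,N}\|_{L^2}+\|Y-Y^{k,N}\|_{L^2}+\mathcal{W}_2(\rho,\rho^{k,N}).
\end{equation}
We also use $\mathcal{W}_1\le\mathcal{W}_2$. Theorem \ref{theorem-convergence-k-N-joint} then bounds the whole difference factor, uniformly in $s\in[0,T]$, by $C(\|X_0-X_0^N\|_{L^2}+\tau_k)$.

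\textbf{Step 3: bound the weight factor uniformly in $k,N$.} We need a bound that does not depend on $k$ or $N$.
\begin{itemize}
\item Since $X,Y\in C([0,T];L^2)$, the quantities $\sup_s\|X(s)\|_{L^2}$ and $\sup_s\|Y(s)\|_{L^2}$ are finite. The linear growth \eqref{linear-growth-D_pH} then bounds $\sup_s\|V(s)\|_{L^2}$, and $M_1(\rho(s))\le\|X(s)\|_{L^2}$.
\item For the discrete quantities, the triangle inequality and Step 2 give, for instance, $\|X^{k,N}\|_{L^2}\le\|X\|_{L^2}+C(\|X_0-X_0^N\|_{L^2}+\tau_k)$, and the same for $Y^{k,N}$ and $V^{k,N}$.
\item Since $X_0^N\to X_0$ in $L^2$ and $\tau_k\to0$, we choose $N_\dagger$ and $k_\dagger^*\ge k_\dagger$ such that $\|X_0-X_0^N\|_{L^2}+\tau_k\le 1$ for $N\ge N_\dagger$ and $k\ge k_\dagger^*$. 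Then all discrete quantities are bounded by a constant depending only on the continuous solution.
\end{itemize}

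\textbf{Step 4: the locality constraint.} The hypotheses \eqref{L-loc-meas-contin} and \eqref{g-loc-meas-contin} only apply when the differences of the arguments are below $R$. I expect this to be the main subtlety. Pointwise in $\omega$ the differences $|X-X^{k,N}|(\omega)$ need not be small, so we cannot simply fix $R$ from smallness. I would first read the hypothesis as giving a fixed $R$, say $R=1$, and split $\Omega$ into the set where the differences are below $R$ and its complement.
\begin{itemize}
\item On the complement, a Chebyshev-type argument applies: the measure of this set is at most $R^{-2}\|\mathrm{diff}\|_{L^2}^2$.
\item On the complement we would use crude bounds, namely $L$ of quadratic growth via \eqref{property-L:2}, integrated against the indicator. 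This gives a contribution of at most $C\|\mathrm{diff}\|_{L^2}$ times higher moments.
\end{itemize}
If that fails, the alternative is to use the growth form of the estimate as stated, with its constant $C_R$ chosen for $R$ a bound on $\|X\|_{L^\infty_tL^2}+1$, once the differences are small in $L^2$. In the end the paper's intended argument presumably mirrors the proof of Corollary \ref{cor-val-func-contin-dep-bound} with $(X^N,Y^N,\rho^N)$ replaced by $(X^{k,N},Y^{k,N},\rho^{k,N})$ and Lemma \ref{lemma-convegrence-fixed-N-inf} replaced by Theorem \ref{theorem-convergence-k-N-joint}.

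\textbf{Step 5: conclude.} Integrating the bound from Steps 1--3 over $[0,T]$ and adding the terminal term yields \eqref{val-func-error-bound}.
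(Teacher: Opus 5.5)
There is a genuine gap, and it is where you suspected. Step 1 applies \eqref{L-loc-meas-contin} and \eqref{g-loc-meas-contin} $\omega$-wise to the joint perturbation $(X,V,\rho)\mapsto(X^{k,N},V^{k,N},\rho^{k,N})$. These hypotheses only hold under the pointwise constraint $|x-y|+|v-w|+\mathcal{W}_1(\mu,\nu)<R$, and $|X(s,\omega)-X^{k,N}(s,\omega)|$ is not uniformly small in $\omega$: with only $\rho_0\in\mathcal{P}_2(\mathbb{R}^d)$, $X(s)$ is in general unbounded. Neither of your repairs closes this.

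On the Chebyshev bad set $A^c$, crude quadratic growth of $L$ leaves you with $\int_{A^c}(1+|X|^2+|V|^2+|X^{k,N}|^2+|V^{k,N}|^2)\,\mathrm{d}\mathbb{P}$. With only $L^2$ control this tends to zero by uniform integrability, but it is not $O(\|X_0-X_0^N\|_{\LLspace}+\tau_k)$. H\"older gives a factor $\mathbb{P}(A^c)^{1-2/p}$, so you would need roughly $p\geq 4$ moments, which the theorem does not assume. The alternative of choosing $R$ from $\sup_t\|X(t)\|_{L^2}$ misreads the hypothesis: the constraint is on the pointwise distance between the arguments, and $L^2$-smallness of $X-X^{k,N}$ does not provide it. Saying the argument ``presumably mirrors'' Corollary \ref{cor-val-func-contin-dep-bound} does not supply the mechanism that makes that proof work.

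The missing idea is to never perturb the measure and the state at the same time. Split
\begin{equation*}
L(X,V,\rho)-L(X^{k,N},V^{k,N},\rho^{k,N})=\bigl[L(X,V,\rho)-L(X,V,\rho^{k,N})\bigr]+\bigl[L(X,V,\rho^{k,N})-L(X^{k,N},V^{k,N},\rho^{k,N})\bigr].
\end{equation*}

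In the first bracket $x=y$ and $v=w$, so the only difference is $\mathcal{W}_1(\rho(s),\rho^{k,N}(s))$. This is deterministic and, by Theorem \ref{theorem-convergence-k-N-joint}, below $R=1$ for all $s$ once $k,N$ are large. Hence \eqref{L-loc-meas-contin} applies for every $\omega$, and Cauchy--Schwarz bounds the term by $\sup_s\mathcal{W}_2(\rho,\rho^{k,N})$ times a uniformly bounded factor.

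In the second bracket the measure is frozen. Write it as $\int_0^1$ of $D_xL$ and $D_vL$ along the segment joining $(X^{k,N},V^{k,N})$ to $(X,V)$, and use the linear growth \eqref{linear-growth-D_qL-D_xL}. This is a global consequence of the standing assumptions and needs no locality. Cauchy--Schwarz together with your Steps 2--3 then finishes the estimate. The terminal term is handled the same way, using \eqref{g-loc-meas-contin} for the measure change and \eqref{linear-growth-D_xg} for the state change.

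With this splitting your direct comparison goes through. The paper instead passes through the intermediate $u^N(\cdot,X^N)$. It uses Corollary \ref{cor-val-func-contin-dep-bound} for the $\|X_0-X_0^N\|_{\LLspace}$ part. For the $\tau_k$ part it uses the same four-term splitting together with the $N$-uniform bound of Lemma \ref{theorem-time-err-bound-fixed-N}.
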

The proofs of Theorem \ref{theorem-convergence-k-N-joint} and Theorem \ref{theorem-convergence-k-N-joint-u-val} are given in Section \ref{sec-convergence}.
\begin{remark}
	For the case when the initial measure $\rho_0$ is an measure supported on a finite number of points, the sequence $\{X_0^N\}_{N\in\mathbb{N}}$ can be taken to be constant and equal to $X_0$ (see Remark \ref{rem-rv-finite-support-rho-N}), and so the spatial error term $\|X_0-X_0^N\|_{\LLspace}$ is not present in the error bounds \eqref{main-error-bound-num-scheme},\eqref{val-func-error-bound} for all $k$ sufficiently large. Thus, the asymptotic rate of convergence of the numerical scheme is order $\tau_k$, which is optimal in view of the $H^1$-regularity in time for both the state flow $X$ and momentum flow $Y$ when $\rho_0$ is supported on a finite number of points. Otherwise, the spatial error term in \eqref{main-error-bound-num-scheme} is non-zero in general when $\text{card}(\text{spt}(\rho_0))=\infty$.
\end{remark}

As a first corollary of Theorem \ref{theorem-convergence-k-N-joint}, we deduce rates of convergence for the numerical scheme \eqref{numerical-scheme} when the approximations $\{X_0^N\}_{N\in\mathbb{N}}$ are given by application of suitable discrete optimal transport maps to $X_0$, as illustrated in Example \ref{eg-2}.

\begin{corollary}\label{cor-semi-discrete-opt-transport}
	In addition to the hypothesis of Theorem \ref{theorem-convergence-k-N-joint}, suppose that $\rho_0\in\mathcal{P}_q(\mathbb{R}^d)$, for some $q>2$, is absolutely continuous with respect to $\mathbb{P}$. Furthermore, let $(X,Y,\rho)\in C^1(0,T;L^2(\Omega;\mathbb{R}^d))\times C^1(0,T;L^2(\Omega;\mathbb{R}^d))\times C([0,T];\mathcal{P}_2(\mathbb{R}^d))$ denote the triple which uniquely satisfies the continuous Hamiltonian system \eqref{definition-soln-of-continuous-pb} with initial random variable $X_0$ distributed according to $\rho_0$. Suppose that the sequences $\{\rho_0^N\}_{N\in\mathbb{N}}$ and $\{X_0^N\}_{N\in\mathbb{N}}$ are constructed as in Example \ref{eg-2}.  Then, there exists $k_{\dagger}\in\mathbb{N}$ such that
	\begin{multline}\label{cor-1-eg-rate-of-conv}
		\sup_{t\in [0,T]}\mathcal{W}_2\left(\rho(t),\rho^{k,N}(t)\right)+\sup_{t\in [0,T]}\|(Y-Y^{k,N})(t)\|_{L^2(\Omega;\mathbb{R}^d)}+\sup_{t\in [0,T]}\|(X-X^{k,N})(t)\|_{L^2(\Omega;\mathbb{R}^d)}
		\\ 
		\lesssim 
		\begin{dcases}
			\max\{N^{-1/d},  N^{-1/2 + 1/q}\} +\tau_k, & {\rm if\ } d \ne \frac{2q}{q-2},\\
			N^{-1/d} \log(1+N)^{1/d} + \tau_k,& {\rm if\ } d = \frac{2q}{q-2},
		\end{dcases} 
	\end{multline}
	for all $k\geq k_{\dagger}$ and $N\in\mathbb{N}$. The hidden constant above depends only on $M_q(\rho_0)$, the time horizon $T$, the uniform convexity constant $c_0>0$, and the Lipschitz constants associated with $D_pH$, $D_xH$ and $D_xg$. If in addition $L$ and $g$ satisfy  \eqref{L-loc-meas-contin} and \eqref{g-loc-meas-contin}, respectively, then  
    \begin{equation}\label{cor-1-eg-rate-of-conv-u-val}
		\sup_{t\in [0,T]}\|(u(\cdot,X)-u^{k,N})(t)\|_{L^1(\Omega;\mathbb{R}^d)}
		\lesssim 
		\begin{dcases}
			\max\{N^{-1/d},  N^{-1/2 + 1/q}\} +\tau_k, & {\rm if\ } d \ne \frac{2q}{q-2},\\
			N^{-1/d} \log(1+N)^{1/d} + \tau_k,& {\rm if\ } d = \frac{2q}{q-2},
		\end{dcases} 
	\end{equation} for all $k,N$ sufficiently large.
\end{corollary}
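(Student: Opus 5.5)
This corollary follows by inserting the rates of Example \ref{eg-2} into the abstract error bounds of Theorems \ref{theorem-convergence-k-N-joint} and \ref{theorem-convergence-k-N-joint-u-val}. The only quantity left to control is the initial sampling error $\|X_0-X_0^N\|_{\LLspace}$, and the construction in Example \ref{eg-2} makes this error equal to a Wasserstein distance for which \cite[Theorem 1]{seeger2025error} gives explicit rates.

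\textbf{Step 1: realise the discrete initial data.} Since $\rho_0=\mathcal{L}_{X_0}$ is absolutely continuous, Brenier's theorem gives, for each $N$, an optimal map $\mathcal{T}^N$ from $\rho_0$ to the deterministic empirical measure $\rho_0^N$ of \cite[Theorem 1]{seeger2025error}. We set $X_0^N\coloneqq\mathcal{T}^N(X_0)$. This is a simple random variable with law $\rho_0^N$, and its range has exactly $N$ points. It therefore lies in $\mathcal{R}_d(\mathcal{A}_N,\mathcal{O}_N)$, where $\mathcal{O}_N$ is the preimage under $X_0$ of the Laguerre cells and $\mathcal{A}_N$ is the corresponding uniform weights. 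We need $\mathcal{O}_N$ to be an admissible partition: the cell boundaries are $\rho_0$-null because $\rho_0$ is absolutely continuous, so the preimage sets have the required measures. Optimality of $\mathcal{T}^N$ then gives
\begin{equation}
\|X_0-X_0^N\|_{\LLspace}=\mathcal{W}_2(\rho_0,\rho_0^N).
\end{equation}

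\textbf{Step 2: explicit rate and convergence of the initial data.} By \cite[Theorem 1]{seeger2025error}, $\mathcal{W}_2(\rho_0,\rho_0^N)\le C M_q(\rho_0)\,r_N$, where $r_N$ is the rate displayed in \eqref{cor-1-eg-rate-of-conv} (without the $\tau_k$ term) and $C$ depends only on $d$ and $q$. Since $q>2$, we have $r_N\to0$. Hence $X_0^N\to X_0$ in $\LLspace$, which is exactly the convergence hypothesis of Theorem \ref{theorem-convergence-k-N-joint}.

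\textbf{Step 3: trajectory bound.} Theorem \ref{theorem-convergence-k-N-joint} now applies with some $k_\dagger$ that is independent of $N$. Substituting the bound of Step 2 into \eqref{main-error-bound-num-scheme}, and using $Y=-D_xu(\cdot,X)$, gives \eqref{cor-1-eg-rate-of-conv} for all $k\ge k_\dagger$ and all $N\in\mathbb{N}$. The constant depends on $M_q(\rho_0)$ together with the data constants of the theorem.

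\textbf{Step 4: value function bound.} Under the extra hypotheses \eqref{L-loc-meas-contin} and \eqref{g-loc-meas-contin}, we substitute the same bound into \eqref{val-func-error-bound} from Theorem \ref{theorem-convergence-k-N-joint-u-val}. This yields \eqref{cor-1-eg-rate-of-conv-u-val} for $N\ge N_\dagger$ and $k\ge k_\dagger^*$, which is the meaning of ``$k,N$ sufficiently large''.

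The only point needing care is the measurability and admissibility of the partition $\mathcal{O}_N$ in Step 1; the rest is direct substitution.
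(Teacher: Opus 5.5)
Your proposal is correct and follows the route the paper intends; the paper states the corollary without a separate proof. The argument is to insert the Example~\ref{eg-2} identity $\|X_0-X_0^N\|_{L^2(\Omega;\mathbb{R}^d)}=\mathcal{W}_2(\rho_0,\rho_0^N)$, together with the rate from \cite{seeger2025error}, into Theorems~\ref{theorem-convergence-k-N-joint} and~\ref{theorem-convergence-k-N-joint-u-val}. Your checks that the Laguerre-cell preimages form an admissible partition and that the rate tends to zero because $q>2$ fill in details the paper leaves implicit.
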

 
In view of Example \ref{eg-3}, we obtain an explicit rate of convergence in the alternative case where the approximations $\{X_0^N\}_{N\in\mathbb{N}}$ are constructed via quantization of $X_0$.
\begin{corollary}\label{cor-quantization-of-rvs}
	In addition to the hypothesis of Theorem \ref{theorem-convergence-k-N-joint}, suppose that $\rho_0$ has a non-trivial absolutely continuous part, and that the initial random variable $X_0$ is in $L^{2+\delta}(\Omega;\mathbb{R}^d)$ for some $\delta>0$. Furthermore, let $(X,Y,\rho)\in C^1(0,T;L^2(\Omega;\mathbb{R}^d))\times C^1(0,T;L^2(\Omega;\mathbb{R}^d))\times C([0,T];\mathcal{P}_2(\mathbb{R}^d))$ denote the triple which uniquely satisfies the continuous Hamiltonian system \eqref{definition-soln-of-continuous-pb} with initial random variable $X_0$.  Suppose that the sequences $\{\rho_0^N\}_{N\in\mathbb{N}}$ and $\{X_0^N\}_{N\in\mathbb{N}}$ are constructed as in Example \ref{eg-3} for some $N_{\dagger}\in\mathbb{N}$ and sequences of weights $\{\mathcal{A}_N\}_{N\geq N_{\dagger}}$, partitions $\{\mathcal{O}_N\}_{N\geq N_{\dagger}}$ of $\Omega$ and simple random variables $\{X_0^N\}_{N\geq N_{\dagger}}$ such that $X_0^N\in \mathcal{R}_d(\mathcal{A}_N,\mathcal{O}_N)$ for each $N\geq N_{\dagger}$. Then, there exists $k_{\dagger}\in\mathbb{N}$ such that
	\begin{multline}\label{cor-2-eg-rate-of-conv}
		\sup_{t\in [0,T]}\mathcal{W}_2\left(\rho(t),\rho^{k,N}(t)\right)+\sup_{t\in [0,T]}\|(Y-Y^{k,N})(t)\|_{L^2(\Omega;\mathbb{R}^d)}+\sup_{t\in [0,T]}\|(X-X^{k,N})(t)\|_{L^2(\Omega;\mathbb{R}^d)}
		\lesssim N^{-1/d}+\tau_k 
	\end{multline}
	for all $k\geq k_{\dagger}$ and $N\geq N_{\dagger}$. The hidden constant above depends only on $\|X_0\|_{L^{2+\delta}(\Omega;\mathbb{R}^d)}$, $k_{\dagger}$, the time horizon $T$, the uniform convexity constant $c_0>0$, and the Lipschitz constants associated with $D_pH$, $D_xH$ and $D_xg$.  If in addition $L$ and $g$ satisfy  \eqref{L-loc-meas-contin} and \eqref{g-loc-meas-contin}, respectively, then  
    \begin{equation}\label{cor-2-eg-rate-of-conv-u-val}
		\sup_{t\in [0,T]}\|(u(\cdot,X)-u^{k,N})(t)\|_{L^1(\Omega;\mathbb{R}^d)}
		\lesssim N^{-1/d}+\tau_k 
	\end{equation} for all $k,N$ sufficiently large.
\end{corollary}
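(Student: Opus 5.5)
This corollary follows by combining Theorem \ref{theorem-convergence-k-N-joint} and Theorem \ref{theorem-convergence-k-N-joint-u-val} with the quantization rate of Example \ref{eg-3}. No new analysis of the scheme is needed. The only work is to check that the quantizers fit the framework of those theorems and to make the rate $\|X_0-X_0^N\|_{\LLspace}\lesssim N^{-1/d}$ explicit.

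\emph{Step 1: the quantizers are admissible.} Since $X_0\in L^{2+\delta}(\Omega;\mathbb{R}^d)$, we have $\rho_0\in\mathcal{P}_{2+\delta}(\mathbb{R}^d)$. By \cite[Theorem 4.12]{graf2000foundations}, for each $N\geq N_{\dagger}$ there is an optimal $N$-quantizer $f_N$. Its Voronoi cells pulled back by $X_0$ give a partition $\mathcal{O}_N$ of $\Omega$ with weights $\mathcal{A}_N$, and $X_0^N=f_N(X_0)\in\mathcal{R}_d(\mathcal{A}_N,\mathcal{O}_N)$. If some cells carry zero mass or some codepoints coincide, we discard them. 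Then $\rho_0^N$ is a finitely supported measure with at most $N$ atoms. The theorems are insensitive to relabelling $N$ as the actual cardinality, because their constants do not depend on $N$.

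\emph{Step 2: the rate.} Because $\rho_0$ has a nontrivial absolutely continuous part and a finite $(2+\delta)$-moment, Zador's theorem \cite{zador1964development} together with \cite[Corollary 11.4(c)]{graf2000foundations} gives
\begin{equation}
\|X_0-X_0^N\|_{\LLspace}=V_N(\rho_0)^{1/2}\leq C N^{-1/d}\qquad\forall N\geq N_{\dagger},
\end{equation}
as recorded in Example \ref{eg-3}. Here $C$ depends on $\delta$, $d$ and the moment $\|X_0\|_{L^{2+\delta}(\Omega;\mathbb{R}^d)}$ (through the Pierce-type nonasymptotic bound) and on the absolutely continuous part of $\rho_0$. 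In particular $X_0^N\to X_0$ in $\LLspace$, so the hypotheses of Theorem \ref{theorem-convergence-k-N-joint} hold for the sequence $\{X_0^N\}_{N\geq N_{\dagger}}$.

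\emph{Step 3: conclusion.} Inserting the bound of Step 2 into \eqref{main-error-bound-num-scheme}, with $k_{\dagger}$ from Theorem \ref{theorem-convergence-k-N-joint}, gives \eqref{cor-2-eg-rate-of-conv} for $k\geq k_\dagger$ and $N\geq N_\dagger$. Here we use $Y=-D_xu(\cdot,X)$ from \cite[Theorem 4.1]{meszaros2024mean}. Under \eqref{L-loc-meas-contin} and \eqref{g-loc-meas-contin}, Theorem \ref{theorem-convergence-k-N-joint-u-val} yields \eqref{val-func-error-bound} for $k\geq k_\dagger^*$ and $N\geq N_\dagger$ (taking the larger of the two thresholds $N_\dagger$). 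Inserting the same bound gives \eqref{cor-2-eg-rate-of-conv-u-val}.

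The only delicate point is Step 2. One must make sure the sharp $N^{-1/d}$ rate holds for all $N\geq N_{\dagger}$ with an explicit constant, and not merely asymptotically. The plan is to cite the nonasymptotic (Pierce-type) form of Zador's theorem, which controls $\sup_N N^{1/d}V_N(\rho_0)^{1/2}$ by the $(2+\delta)$-moment. This is what accounts for the stated dependence of the constant on $\|X_0\|_{L^{2+\delta}}$.
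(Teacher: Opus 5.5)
Your proposal is correct and follows the paper's route. The paper obtains this corollary simply by inserting the quantization rate $\|X_0-X_0^N\|_{\LLspace}\lesssim N^{-1/d}$ from Example \ref{eg-3} into the error bounds of Theorem \ref{theorem-convergence-k-N-joint} and Theorem \ref{theorem-convergence-k-N-joint-u-val}. Your appeal to the nonasymptotic Pierce-type bound is a sensible way to justify that the constant depends on $\|X_0\|_{L^{2+\delta}(\Omega;\mathbb{R}^d)}$; that upper bound does not even need the absolutely continuous part, which only guarantees that the rate $N^{-1/d}$ is sharp.
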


\begin{remark}(On the composite rates of convergence)
    With respect to the temporal component of the error in the upper bound in Theorem \ref{theorem-convergence-k-N-joint}, the $\tau_k$ term is asymptotically optimal for piecewise constant discretizations since $X$ and $Y$ are $W^{1,\infty}$-in-time flows. The asymptotic convergence quality of the approximations therefore, in general, boils down to the freedom in the convergence behaviour of the spatial error term $\|X_0-X_0^N\|_{\LLspace}$ for $N$ large. For instance, the convergence error bound achieved by the  semi-discrete optimal transport approximations for $X_0$ given in Example \ref{eg-2} is optimal among initial measures $\rho_0$ in $\mathcal{P}_q(\mathbb{R}^d)$, $q>2$, except for some critical cases \cite{seeger2025error}. In addition, by Example \ref{eg-3} approximations of $X_0$ that are obtained by piecewise constant optimal vector quantization converge strongly in the $L^2$-norm, with the error asymptotically residing within the closed interval $[N^{-\frac{1}{\mathcal(1-\epsilon){D}_L(\rho_0)}},N^{-\frac{1}{\mathcal{D}_U(\rho_0)+\epsilon\mathcal{D}_L(\rho_0)}}]$, given $0<\epsilon<1$, which depends on the lower and upper quantization dimensions of $\rho_0.$
\end{remark}

\section{Uniqueness for a class of discrete Hamiltonian systems}\label{sec-cont-dep-gen-discrete-ham-sys}
The main result we establish in this section is a continuous dependence bound for a class of generalized discrete Hamiltonian systems, which includes the numerical scheme \eqref{numerical-scheme} as a particular case. This class of systems {is} given as follows. Recall that the sequences $\{\rho_0^N\}_{N\in\mathbb{N}}$ and $\{X_0^N\}_{N\in\mathbb{N}}$, and hence the sequences $\{\mathcal{A}_N\}_{N\in\mathbb{N}}$ and $\{\mathcal{O}_N\}_{N\in\mathbb{N}}$, are assumed given and fixed. Let $k,N\in\mathbb{N}$ and let $\iota_k\coloneqq \{t_1,t_2,\cdots,t_{M_k-1}\}$. For each $\lambda \in [0,1]$, $F,G\in \mathbb{V}_k(0,T;\mathcal{R}_{d}(\mathcal{A}_N,\mathcal{O}_N))$ and $X_0^N\in  \mathcal{R}_{d}(\mathcal{A}_N,\mathcal{O}_N)$, we define the following problem: \emph{find  $\mathscr{X}^{k,N}\in \mathbb{V}_k^+([0,T];\mathcal{R}_{d}(\mathcal{A}_N,\mathcal{O}_N))$, $\mathscr{Y}^{k,N}\in \mathbb{V}_k^-([0,T];\mathcal{R}_{d}(\mathcal{A}_N,\mathcal{O}_N))$ and $\varrho^{k,N}\in\mathbb{P}_k([0,T];\mathcal{P}_{d,N})$ such that}
\begin{equation}\label{numerical-scheme-gen}
	\left\{\begin{aligned}
		\partial_t\mathcal{I}_+^k\mathscr{X}^{k,N}(t) &= \lambda D_pH\left(\mathscr{X}^{k,N}(t),\mathscr{Y}^{k,N}(t),\varrho^{k,N}(t)\right)+F(t),&&t\in (0,T)\backslash \iota_k, 
		\\
		\partial_t\mathcal{I}_-^k\mathscr{Y}^{k,N}(t) &=-\lambda D_xH\left(\mathscr{X}^{k,N}(t),\mathscr{Y}^{k,N}(t),\varrho^{k,N}(t)\right)+G(t),&&t\in (0,T)\backslash \iota_k, 
		\\
		\varrho^{k,N}(t) &= (\mathscr{X}^{k,N}(t))_{\#}\mathbb{P}, && t\in[0,T],
		\\
		\mathscr{X}^{k,N}(0)&=\lambda X_0^N,\quad \mathscr{Y}^{k,N}(T) = - \lambda D_xg(\mathscr{X}^{k,N}(T),\varrho^{k,N}(T))&&\text{ }
	\end{aligned}\right.
\end{equation}\emph{where $\iota_k\coloneqq \{t_{m}:q=1,2,\cdots, M_k-1\}$.} Systems of this form enjoy the following continuous dependence property, which is the main result of this section.

\begin{theorem}[Continuous dependence for generalized discrete Hamiltonian system]\label{theorem-cont-dependence-discrete-Hamiltonian-sys}
	Let the Hamiltonian $H$ satisfy the regularity assumptions \eqref{ass-H:1}, \eqref{ass-H:2}, \eqref{ass-H:3}, \eqref{ass-H:4} and the displacement monotonicity condition \eqref{ass-H:disp-mono-cond-H}. Let the terminal cost $g$ satisfy the regularity assumptions \eqref{ass-g:1}, \eqref{ass-g:2} and the displacement monotonicity condition \eqref{ass-g:disp-mono-cond-g}. Let $N,M,k\in\mathbb{N}$ and 
	\begin{subequations}
		\begin{align}
			&\lambda_1\in (0,1],  \quad F_1,G_1\in \mathbb{V}_k(0,T;\mathcal{R}_{d}(\mathcal{A}_N,\mathcal{O}_N)), \quad X_{0,1}^N\in  \mathcal{R}_{d}(\mathcal{A}_N,\mathcal{O}_N),\label{data-line-1}
			\\
			&\lambda_2\in [0,1], \quad F_2,G_2\in \mathbb{V}_k(0,T;\mathcal{R}_{d}(\mathcal{A}_M,\mathcal{O}_M)),\quad X_{0,2}^M\in  \mathcal{R}_{d}(\mathcal{A}_M,\mathcal{O}_M)\label{data-line-2}
		\end{align}
	\end{subequations}
	be given, and suppose that the triples
	\begin{subequations}
		\begin{align}
			(\mathscr{X}_1^{k,N},\mathscr{Y}_1^{k,N},\varrho_1^{k,N})&\in \mathbb{V}_k^+([0,T];\mathcal{R}_{d}(\mathcal{A}_N,\mathcal{O}_N))\times \mathbb{V}_k^-([0,T];\mathcal{R}_{d}(\mathcal{A}_N,\mathcal{O}_N))\times \mathbb{P}_k([0,T];\mathcal{P}_{d,N})\\
			(\mathscr{X}_2^{k,M},\mathscr{Y}_2^{k,M},\varrho_{2}^{k,M})&\in \mathbb{V}_k^+([0,T];\mathcal{R}_{d}(\mathcal{A}_M,\mathcal{O}_M))\times \mathbb{V}_k^-([0,T];\mathcal{R}_{d}(\mathcal{A}_M,\mathcal{O}_M))\times \mathbb{P}_k([0,T];\mathcal{P}_{d,M})
		\end{align}
	\end{subequations}  each satisfy the discrete Hamiltonian system \eqref{numerical-scheme-gen} with data \eqref{data-line-1} and \eqref{data-line-2}, respectively. There exists $k_{\dagger}\in\mathbb{N}$ such that, for each $k\geq k_{\dagger}$, we have
	\begin{equation}\label{gen-disp-mono-cont-bound}
		\begin{split}
			\sup_{t\in [0,T]}\mathcal{W}_2&\left(\varrho_1^{k,N}(t),\varrho_{2}^{k,M}(t)\right)+\sup_{0\leq t\leq T}\left(\norm{(\mathscr{X}_1^{k,N}-\mathscr{X}_2^{k,M})(t)}_{L^2(\Omega;\mathbb{R}^d)}+\norm{(\mathscr{Y}_1^{k,N}-\mathscr{Y}_2^{k,M})(t)}_{L^2(\Omega;\mathbb{R}^d)}\right)
			\\
			&\lesssim \norm{X_{0,1}^N-\lambda_2\lambda_1^{-1}X_{0,2}^M}_{L^2(\Omega;\mathbb{R}^d)}+\lambda_1^{-1}\left(\|F_1-F_2\|_{L^2(0,T;\LLspace)}+\|G_1-G_2\|_{L^2(0,T;\LLspace)}\right)
			\\
            &+\abs{\lambda_2\lambda_1^{-1}-1}\left(\norm{D_xg(\mathscr{X}_2^{k,M}(T),\varrho_{2}^{k,M}(T))}_{L^{2}(\Omega;\mathbb{R}^d)}+\left\|D_pH(\mathscr{X}_2^{k,M},\mathscr{Y}_2^{k,M},\varrho_{2}^{k,M})\right\|_{L^2(0,T;\LLspace)}\right.
			\\
			&\left.\qquad\qquad\qquad\qquad\qquad\qquad\qquad\qquad+\left\|D_xH(\mathscr{X}_2^{k,M},\mathscr{Y}_2^{k,M},\varrho_{2}^{k,M})\right\|_{L^2(0,T;\LLspace)}\right)
		\end{split}
	\end{equation}
	The number $k_{\dagger}\in\mathbb{N}$ and the hidden constant in \eqref{gen-disp-mono-cont-bound} depend only on the uniform convexity constant $c_0$, $T$, and the Lipschitz constants of $D_pH, D_xH$ and $D_xg$. 
\end{theorem}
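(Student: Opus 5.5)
The plan is to run a discrete version of the displacement-monotonicity energy argument of \cite{meszaros2024mean}, with the extra discretization and jump terms absorbed for $\tau_k$ small. First rescale the second system so that both carry the factor $\lambda_1$. Set $\tilde{\mathscr X}_2\coloneqq\mathscr X_2^{k,M}$ and write
\[
\lambda_2 D_pH(\cdot)=\lambda_1D_pH(\cdot)+(\lambda_2-\lambda_1)D_pH(\cdot),
\]
and similarly for $D_xH$ and $D_xg$. The differences $(\lambda_2-\lambda_1)D_pH(\mathscr X_2,\mathscr Y_2,\varrho_2)$ etc.\ are then treated as extra forcings $F,G$ and as a terminal perturbation. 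Dividing everything by $\lambda_1$ produces the factors $\lambda_1^{-1}$ and $|\lambda_2\lambda_1^{-1}-1|$ in \eqref{gen-disp-mono-cont-bound}. Both random variables live in $L^2(\Omega;\R^d)$, which contains $\mathcal R_d(\mathcal A_N,\mathcal O_N)$ and $\mathcal R_d(\mathcal A_M,\mathcal O_M)$, so differences make sense. Write $\delta X=\mathscr X_1-\mathscr X_2$, $\delta Y=\mathscr Y_1-\mathscr Y_2$, and recall $\mathcal W_2(\varrho_1,\varrho_2)\le\|\delta X\|_{L^2}$, so the measure term is controlled by the $X$ term.

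\textbf{Step 1 (monotonicity identity).} Test the discrete equations against each other: compute
\[
\sum_n\mathbb E\bigl[(\delta X(t_n)-\delta X(t_{n-1}))\cdot\delta Y|_{I_n}\bigr]+\sum_n\mathbb E\bigl[\delta X|_{I_n}\cdot(\delta Y(t_{n})-\delta Y(t_{n-1}))\bigr].
\]
A discrete summation by parts (using left-continuity of $X$ and right-continuity of $Y$) turns this into the boundary term $\mathbb E[\delta X(T)\cdot\delta Y(T)]-\mathbb E[\delta X(0)\cdot\delta Y(0)]$, plus a remainder of the form $\sum_n\mathbb E[\jump{\delta X}\cdot\jump{\delta Y}]$-type cross terms. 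Inserting the equations, the bulk becomes $\lambda_1\tau_k\sum_n\mathbb E[(\delta D_pH)\cdot\delta Y-(\delta D_xH)\cdot\delta X]$. By \eqref{ass-H:disp-mono-cond-H} applied to the piecewise constant values this is nonnegative.

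Strong convexity \eqref{ass-H:4} should in fact be used in the quantitative refinement standard in \cite{meszaros2024mean}: together with the Lipschitz bounds it gives a lower bound $\gtrsim\|\delta D_pH\|^2$, i.e.\ control of $\|\partial_t\mathcal I_+^k\delta X\|_{L^2(L^2)}^2$ up to the forcings. The terminal term has a sign by \eqref{ass-g:disp-mono-cond-g}, and $\delta X(0)=\lambda_1(X^N_{0,1}-\lambda_2\lambda_1^{-1}X^M_{0,2})$. This yields
\[
\lambda_1\|\partial_t\mathcal I_+^k\delta X\|^2_{L^2(0,T;L^2)}\lesssim \text{data terms}\times\bigl(\sup_t\|\delta X\|+\sup_t\|\delta Y\|\bigr)+\text{jump remainder}.
\]

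\textbf{Step 2 (Gronwall-type recovery).} From the forward equation, $\sup_t\|\delta X\|\le\|\delta X(0)\|+\sqrt T\|\partial_t\mathcal I_+^k\delta X\|_{L^2L^2}$. From the backward equation with Lipschitz $D_xH$, $D_xg$, a discrete backward Gronwall bounds $\sup_t\|\delta Y\|$ by $\sup_t\|\delta X\|$ plus data. This requires $\tau_k\,\Lip(D_xH)<1/2$, which is the first source of $k_\dagger$. Plugging these into Step 1 and using Young's inequality closes the estimate.

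\textbf{Main obstacle.} The main difficulty is the jump/commutator remainder from the discrete product rule: implicit Euler in $X$ paired with the backward scheme for $Y$ evaluated at the same piecewise constant values. I would first try to show that the remainder equals $\sum_n\mathbb E[\jump{\delta X}_{n}\cdot\jump{\delta Y}_n]$. This is bounded by $\tau_k^2\sum_n\|\partial_t\mathcal I_+^k\delta X\|\,\|\partial_t\mathcal I_-^k\delta Y\|$. Each derivative is Lipschitz-controlled by $\|\delta X\|+\|\delta Y\|+$ forcings, so the remainder is $\lesssim\tau_k(\sup\|\delta X\|^2+\sup\|\delta Y\|^2+\text{forcings}^2)$. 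It can then be absorbed for $k\ge k_\dagger$, with $k_\dagger$ depending only on $c_0$, $T$ and the Lipschitz constants, as claimed.
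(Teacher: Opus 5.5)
\textbf{There is a genuine gap in Step~1.} Write $D_pH(i)$, $D_xH(i)$ for the evaluations at $(\mathscr X_i,\mathscr Y_i,\varrho_i)$, and $I\coloneqq\mathbb{E}\bigl[(D_pH(1)-D_pH(2))\cdot\delta Y-(D_xH(1)-D_xH(2))\cdot\delta X\bigr]$ for the displacement-monotonicity integrand.

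You claim that strong convexity plus the Lipschitz bounds give $I\gtrsim\|\delta D_pH\|^2$. This is false under the hypotheses of the theorem.
\begin{itemize}
\item For $H(x,p)=\frac12|p|^2+b\,x\cdot p$ every assumption holds, and $I=\mathbb{E}|\delta Y|^2$ exactly. But $\delta D_pH=\delta Y+b\,\delta X$, so $I=0$ while $\delta D_pH\neq0$ whenever $\delta Y=0\neq\delta X$.
\item Once the measure enters, even $I\gtrsim\mathbb{E}|\delta Y|^2$ fails. For $H(x,p,\mu)=\frac12|p|^2-p\cdot\int y\,\mathrm{d}\mu(y)-\frac18|x|^2$ one computes $I=\|\delta Y\|^2+\frac14\|\delta X\|^2-\mathbb{E}[\delta X]\cdot\mathbb{E}[\delta Y]\geq(\|\delta Y\|-\frac12\|\delta X\|)^2\geq0$. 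This vanishes for deterministic $\delta X=2\delta Y\neq0$.
\end{itemize}
What strong convexity actually gives, after splitting off $D_pH(\mathscr X_1,\mathscr Y_1,\varrho_1)-D_pH(\mathscr X_1,\mathscr Y_2,\varrho_1)$, is only the pointwise-in-time bound $I\geq c_0\|\delta Y\|^2-C\|\delta X\|(\|\delta X\|+\|\delta Y\|)$, using $\mathcal W_2(\varrho_1,\varrho_2)\le\|\delta X\|$.

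If you integrate this over all of $[0,T]$, as your Step~1 does, the negative term is of the same order as the quantity you want to control. It can be absorbed only for short horizons. Step~2 cannot compensate: forward and backward Gronwall alone form a circular estimate whose constants $e^{CT}$ are not small.

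\textbf{The missing idea} is to use the sign of $I$ only on $[t_n,T]$.
\begin{itemize}
\item Together with the monotonicity of $g$ at $t=T$ and exact summation by parts, this bounds $\int_0^{t_n}\lambda_1 I$ by a right-hand side independent of $n$ (Lemma~\ref{lemma-step-1-discrete-uniqueness}).
\item Applying the strong-convexity splitting only on $[0,t_n]$ then gives, for every $n$, $\int_0^{t_n}\|\delta Y\|^2\lesssim\epsilon^{-1}\mathfrak E_1+\epsilon\mathfrak E_2+\int_0^{t_n}\|\delta X\|^2$. Here $\mathfrak E_1$ collects the data and $\mathfrak E_2$ the global norms of $(\delta X,\delta Y)$ (Lemma~\ref{lemma-X-bound-Y-discr-bound}).
\item Inserting this into the energy inequality for $\|\delta X(t_n)\|^2$ gives a closed forward discrete Gronwall inequality. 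A backward Gronwall then controls $\delta Y$, and choosing $\epsilon$ small absorbs $\epsilon\mathfrak E_2$.
\end{itemize}
The threshold $k_\dagger$ comes solely from these implicit discrete Gronwall lemmas.

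Conversely, the ``main obstacle'' you single out does not exist. Pairing $V\in\mathbb V_k^+$ with $W\in\mathbb V_k^-$ telescopes exactly: your own sum equals $\sum_n\bigl(\mathbb{E}[\delta X(t_n)\cdot\delta Y(t_n)]-\mathbb{E}[\delta X(t_{n-1})\cdot\delta Y(t_{n-1})]\bigr)$, which is \eqref{eq:discrete_ibp-expectation_pn}. Jump terms appear only in the same-type pairings used for $\|\delta X(t_n)\|^2$ and $\|\delta Y(t_n)\|^2$. There they have a favourable sign and are simply dropped.
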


We therefore obtain Theorem \ref{theorem-uniqueness-numerical-sheme} as a particular corollary of Theorem  \ref{theorem-cont-dependence-discrete-Hamiltonian-sys}.
\begin{proof}[Proof of Theorem \ref{theorem-uniqueness-numerical-sheme}]
	We deduce the desired bound \eqref{theorem-uniqueness-bound} by setting $N=M\in\mathbb{N}$, $\lambda_1=\lambda_2=1$, $F_1=F_2=G_1=G_2=0$ in $\mathbb{V}_k(0,T;\mathcal{R}_{d}(\mathcal{A}_N,\mathcal{O}_N))$ in \eqref{gen-disp-mono-cont-bound} of Theorem . If in addition we set $X_{0,1}^N\coloneqq X_{0,2}^N$ in $\mathcal{R}_{d}(\mathcal{A}_N,\mathcal{O}_N)$, the uniqueness of solutions to the numerical scheme \eqref{numerical-scheme} is immediate.   
\end{proof}

\begin{remark}[Uniqueness of solutions for generalized discrete Hamiltonian system]\label{remark-uniqueness-gen-ham-sys}
	Theorem  \ref{theorem-cont-dependence-discrete-Hamiltonian-sys} implies that the generalized discrete Hamiltonian system \eqref{numerical-scheme-gen} can admit at most one solution for each fixed $N\in\mathbb{N}$, $\lambda \in (0,1]$, $F,G\in \mathbb{V}_k(0,T;\mathcal{R}_{d}(\mathcal{A}_N,\mathcal{O}_N))$ and $X_0^N\in  \mathcal{R}_{d}(\mathcal{A}_N,\mathcal{O}_N)$, where $k\geq k_{\dagger}$. A direct calculation shows that uniqueness for the system also extends to the case when $\lambda=0$, with the explicit solution being linear in the terms $F$ and $G$. We note, in particular, that in the special case when $F=G=0$ in $\mathbb{V}_k(0,T;\mathcal{R}_{d}(\mathcal{A}_N,\mathcal{O}_N))$, $\lambda=0$ and any $X_0^N$ is fixed, the triple $(\mathscr{X}^{k,N},\mathscr{Y}^{k,N},\varrho^{k,N})=(0,0,\delta_0)$ uniquely satisfies the system \eqref{numerical-scheme-gen}.
\end{remark}

The proof of Theorem  \ref{theorem-cont-dependence-discrete-Hamiltonian-sys} that we present can be considered an extension of \cite[Theorem 4.5]{meszaros2024mean} to discrete Hamiltonian systems of the form \eqref{numerical-scheme-gen}. In particular, the argument we employ shows how displacement monotonicity conditions on $H$ and $g$, together with strong convexity of the Hamiltonian, ensure stability of solutions to discrete Hamiltonian systems under continuous affine perturbations. 

The first step in the proof of Theorem  \ref{theorem-cont-dependence-discrete-Hamiltonian-sys} is the following preliminary stability result. 
\begin{lemma}[Displacement monotonicity stability]\label{lemma-step-1-discrete-uniqueness}
		Let the Hamiltonian $H$ satisfy the regularity assumption \eqref{ass-H:1} and the displacement monotonicity condition \eqref{ass-H:disp-mono-cond-H}. Let the terminal cost $g$ satisfy the regularity assumption \eqref{ass-g:1} and the displacement monotonicity condition \eqref{ass-g:disp-mono-cond-g}. Let $N,M,k\in\mathbb{N}$ and 
	\begin{subequations}
		\begin{align}
			&\lambda_1\in (0,1],  \quad F_1,G_1\in \mathbb{V}_k(0,T;\mathcal{R}_{d}(\mathcal{A}_N,\mathcal{O}_N)), \quad X_{0,1}^N\in  \mathcal{R}_{d}(\mathcal{A}_N,\mathcal{O}_N),\label{data-line-1'}
			\\
			&\lambda_2\in [0,1], \quad F_2,G_2\in \mathbb{V}_k(0,T;\mathcal{R}_{d}(\mathcal{A}_M,\mathcal{O}_M)),\quad X_{0,2}^M\in  \mathcal{R}_{d}(\mathcal{A}_M,\mathcal{O}_M)\label{data-line-2'}
		\end{align}
	\end{subequations}
	be given, and suppose that the triples
	\begin{subequations}
		\begin{align}
			(\mathscr{X}_1^{k,N},\mathscr{Y}_1^{k,N},\varrho_1^{k,N})&\in \mathbb{V}_k^+([0,T];\mathcal{R}_{d}(\mathcal{A}_N,\mathcal{O}_N))\times \mathbb{V}_k^-([0,T];\mathcal{R}_{d}(\mathcal{A}_N,\mathcal{O}_N))\times \mathbb{P}_k([0,T];\mathcal{P}_{d,N}),\label{triple-1}\\
			(\mathscr{X}_2^{k,M},\mathscr{Y}_2^{k,M},\varrho_{2}^{k,M})&\in \mathbb{V}_k^+([0,T];\mathcal{R}_{d}(\mathcal{A}_M,\mathcal{O}_M))\times \mathbb{V}_k^-([0,T];\mathcal{R}_{d}(\mathcal{A}_M,\mathcal{O}_M))\times \mathbb{P}_k([0,T];\mathcal{P}_{d,M})\label{triple-2}
		\end{align}
	\end{subequations}  each satisfy the discrete Hamiltonian system \eqref{numerical-scheme-gen} with data \eqref{data-line-1'} and \eqref{data-line-2'}, respectively.
	For all $n\in\{0,1,2,\cdots,M_k\}$, we have
		\begin{equation}\label{step-one-bound}
		\begin{split}
			&\int_0^{t_n} \lambda_1 \mathbb{E}\left[\left( D_pH(\mathscr{X}_1^{k,N},\mathscr{Y}_1^{k,N},\varrho_1^{k,N}) -  D_pH(\mathscr{X}_2^{k,M},\mathscr{Y}_2^{k,M},\varrho_{2}^{k,M})\right)\cdot (\mathscr{Y}_1^{k,N}-\mathscr{Y}_2^{k,M})\right.
			\\
			&\left.\qquad\qquad-(\mathscr{X}_1^{k,N}-\mathscr{X}_2^{k,M})\cdot\left( D_xH(\mathscr{X}_1^{k,N},\mathscr{Y}_1^{k,N},\varrho_1^{k,N}) -  D_xH(\mathscr{X}_2^{k,M},\mathscr{Y}_2^{k,M},\varrho_{2}^{k,M})\right)\right]\mathrm{d}s
			\\
			&\leq
			(\lambda_2-\lambda_1)\expectation{D_xg(\mathscr{X}_2^{k,M}(T),\varrho_{2}^{k,M}(T))\cdot (\mathscr{X}_1^{k,N}-\mathscr{X}_2^{k,M})(T)} \\&\qquad- \mathbb{E}\left[(\mathscr{X}_1^{k,N}-\mathscr{X}_2^{k,M})(0)\cdot (\mathscr{Y}_1^{k,N}-\mathscr{Y}_2^{k,M})(0)\right]-\mathfrak{I}(0,T),
		\end{split}
	\end{equation} 
	where the map $\mathfrak{I}:[0,T]^2\to\mathbb{R}$ is defined via
	\begin{equation}\label{I-def}
		\begin{split}
			\mathfrak{I}(a,b)\coloneqq &
			\int_{a}^b (\lambda_1-\lambda_2) \mathbb{E}\left[  D_pH(\mathscr{X}_2^{k,M},\mathscr{Y}_2^{k,M},\varrho_{2}^{k,M})\cdot (\mathscr{Y}_1^{k,N}-\mathscr{Y}_2^{k,M})\right.\\
			&\left.\qquad\qquad\qquad\qquad-(\mathscr{X}_1^{k,N}-\mathscr{X}_2^{k,M})\cdot  D_xH(\mathscr{X}_2^{k,M},\mathscr{Y}_2^{k,M},\varrho_{2}^{k,M})\right]\mathrm{d}s
			\\
			&\qquad+\int_{a}^b\mathbb{E}\left[(F_1-F_2)\cdot (\mathscr{Y}_1^{k,N}-\mathscr{Y}_2^{k,M})+(\mathscr{X}_1^{k,N}-\mathscr{X}_2^{k,M})\cdot (G_1-G_2)\right]\mathrm{d}s\\
			&\qquad\qquad\qquad\qquad\qquad\quad \forall a,b\in [0,T].
		\end{split}
	\end{equation}
\end{lemma}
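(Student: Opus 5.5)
The plan is to prove a discrete, exact analogue of the continuous identity $\frac{\mathrm{d}}{\mathrm{d}t}\mathbb{E}[\delta X\cdot\delta Y]=\mathbb{E}[\dot{\delta X}\cdot\delta Y+\delta X\cdot\dot{\delta Y}]$, substitute the two discrete Hamiltonian systems into it, and then use displacement monotonicity of $g$ at the terminal time.

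\textbf{Notation.} Write $\delta X\coloneqq\mathscr{X}_1^{k,N}-\mathscr{X}_2^{k,M}$, $\delta Y\coloneqq\mathscr{Y}_1^{k,N}-\mathscr{Y}_2^{k,M}$, $\delta F\coloneqq F_1-F_2$ and $\delta G\coloneqq G_1-G_2$. All of these lie in $L^2(\Omega;\mathbb{R}^d)$. We do not need $N=M$: both $\mathcal{R}_d(\mathcal{A}_N,\mathcal{O}_N)$ and $\mathcal{R}_d(\mathcal{A}_M,\mathcal{O}_M)$ are subspaces of $L^2(\Omega;\mathbb{R}^d)$, and \eqref{ass-H:disp-mono-cond-H}, \eqref{ass-g:disp-mono-cond-g} hold for arbitrary $L^2$ random variables. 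In the same sense, $\delta X\in\mathbb{V}_k^+([0,T];L^2)$ and $\delta Y\in\mathbb{V}_k^-([0,T];L^2)$, and by linearity of $\mathcal{I}^k_\pm$ the differences satisfy the differenced equations.

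\textbf{Step 1: discrete summation by parts.} Set $x_0\coloneqq\delta X(0)$, $x_n\coloneqq\delta X|_{I_n}$, $y_n\coloneqq\delta Y|_{I_n}$ for $n\ge1$, and $y_{M_k+1}\coloneqq\delta Y(T)$. Left-continuity of $\delta X$ gives $\delta X(t_{n-1})=x_{n-1}$, and right-continuity of $\delta Y$ gives $\delta Y(t_n)=y_{n+1}$. By \eqref{derivatives_Ip}, on $I_n$ we have
\[
\partial_t\mathcal{I}_+^k\delta X=\frac{x_n-x_{n-1}}{\tau_k},\qquad \partial_t\mathcal{I}_-^k\delta Y=\frac{y_{n+1}-y_n}{\tau_k}.
\]
Hence
\[
\int_{I_n}\mathbb{E}\bigl[\partial_t\mathcal{I}_+^k\delta X\cdot\delta Y+\delta X\cdot\partial_t\mathcal{I}_-^k\delta Y\bigr]\,\mathrm{d}s=\mathbb{E}[x_n\cdot y_{n+1}-x_{n-1}\cdot y_n].
\]
This telescopes exactly, giving
\[
\int_0^{t_n}\mathbb{E}\bigl[\cdots\bigr]\,\mathrm{d}s=\mathbb{E}[x_n\cdot y_{n+1}]-\mathbb{E}[\delta X(0)\cdot\delta Y(0)].
\]
For $n=M_k$ the right-hand side is $\mathbb{E}[\delta X(T)\cdot\delta Y(T)]-\mathbb{E}[\delta X(0)\cdot\delta Y(0)]$. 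It is this endpoint case that yields the stated bound, since its right-hand side involves quantities at $T$. For intermediate $t_n$ the same identity holds with boundary term $\mathbb{E}[\delta X(t_n)\cdot\delta Y(t_n)]$.

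\textbf{Step 2: substitute the equations.} From \eqref{numerical-scheme-gen}, split
\[
\partial_t\mathcal{I}_+^k\delta X=\lambda_1\bigl(D_pH_1-D_pH_2\bigr)+(\lambda_1-\lambda_2)D_pH_2+\delta F,
\]
\[
\partial_t\mathcal{I}_-^k\delta Y=-\lambda_1\bigl(D_xH_1-D_xH_2\bigr)-(\lambda_1-\lambda_2)D_xH_2+\delta G,
\]
where $H_i$ denotes evaluation along the $i$-th triple. Inserting these into Step 1 shows that the integrand equals $\lambda_1$ times the integrand on the left of \eqref{step-one-bound}, plus exactly the integrand of $\mathfrak{I}$ in \eqref{I-def}. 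Therefore
\[
\lambda_1\int_0^T\mathbb{E}[\cdots]\,\mathrm{d}s=\mathbb{E}[\delta X(T)\cdot\delta Y(T)]-\mathbb{E}[\delta X(0)\cdot\delta Y(0)]-\mathfrak{I}(0,T).
\]

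\textbf{Step 3: the terminal term.} Write
\[
\delta Y(T)=-\lambda_1\bigl(D_xg(\mathscr{X}_1^{k,N}(T),\varrho_1^{k,N}(T))-D_xg(\mathscr{X}_2^{k,M}(T),\varrho_2^{k,M}(T))\bigr)+(\lambda_2-\lambda_1)D_xg(\mathscr{X}_2^{k,M}(T),\varrho_2^{k,M}(T)).
\]
Since $\lambda_1>0$ and the laws satisfy $\varrho_i(T)=\mathcal{L}_{\mathscr{X}_i(T)}$, condition \eqref{ass-g:disp-mono-cond-g} makes the first contribution to $\mathbb{E}[\delta X(T)\cdot\delta Y(T)]$ nonpositive. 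This leaves $(\lambda_2-\lambda_1)\mathbb{E}[D_xg(\mathscr{X}_2^{k,M}(T),\varrho_2^{k,M}(T))\cdot\delta X(T)]$ and gives \eqref{step-one-bound}.

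\textbf{Main obstacle.} The only delicate point is Step 1. One has to match the left- and right-continuity conventions and the endpoint conventions ($\mathscr{X}(0)$ and $\mathscr{Y}(T)$) so that the discrete product rule telescopes exactly, with no leftover jump-squared terms. The pairing of the backward reconstruction $\mathcal{I}_+^k$ for $X$ with the forward reconstruction $\mathcal{I}_-^k$ for $Y$ is precisely what achieves this. Everything else is algebra.
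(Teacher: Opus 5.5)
Your Steps 1--3 are correct, but together they prove only the case $n=M_k$. The lemma asserts the bound for every $n\in\{0,1,\dots,M_k\}$. The left-hand side is integrated over $[0,t_n]$, while the right-hand side does not depend on $n$: it still contains the terminal term at $T$ and $\mathfrak{I}(0,T)$. This uniformity in $n$ is what Lemma \ref{lemma-X-bound-Y-discr-bound} needs later for the Gr\"onwall argument.

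Your sentence that ``this endpoint case yields the stated bound'' therefore misreads the statement. Noting that the identity also holds at intermediate $t_n$ with boundary term $\mathbb{E}[\delta X(t_n)\cdot\delta Y(t_n)]$ does not close the gap, because you never bound that term. Note also that the proposal never uses the displacement monotonicity \eqref{ass-H:disp-mono-cond-H} of $H$. That is a hypothesis of the lemma, and it is exactly the missing ingredient.

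The fix is short. For each $s$, apply \eqref{ass-H:disp-mono-cond-H} with $X_i=\mathscr{X}_i(s)$ and $Y_i=\mathscr{Y}_i(s)$, using $\varrho_i(s)=\mathcal{L}_{\mathscr{X}_i(s)}$, together with $\lambda_1>0$. This shows the integrand on the left of \eqref{step-one-bound} is nonnegative. Hence $\int_0^{t_n}\le\int_0^T$, and your $n=M_k$ bound gives every case, including $n=0$, where the left side vanishes.

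The paper does the equivalent thing by splitting at $t_n$. It applies the summation-by-parts identity on $[t_n,T]$, drops the nonnegative $H$-monotonicity term there, and uses monotonicity of $g$ to obtain
\[
\mathbb{E}[\delta X(t_n)\cdot\delta Y(t_n)]\le(\lambda_2-\lambda_1)\mathbb{E}\bigl[D_xg(\mathscr{X}_2^{k,M}(T),\varrho_2^{k,M}(T))\cdot\delta X(T)\bigr]-\mathfrak{I}(t_n,T).
\]
It then combines this with the exact identity on $[0,t_n]$. Once you add the truncation step, your single identity on $[0,T]$ is a slightly more streamlined version of the same argument.
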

\begin{remark} 
Notice that the upper bound in \eqref{step-one-bound} is uniform in the temporal node index $n\in \{0,1,2,\cdots, M_k\}$. As we now show, this result is a basic stability property of the discrete Hamiltonian system \eqref{numerical-scheme-gen} that is due solely to the displacement monotonicity of $H$ and $g$. 
\end{remark} 

The proofs of the upcoming lemmata will employ the following discrete integration-by-parts formulae that are satisfied by the time reconstruction operators $\Ip$ and $\In$ in \eqref{Ip-op}, \eqref{In-op} in the case when $A=L^2(\Omega;\mathbb{R}^d)$.
\begin{lemma}\label{lemma-ibp-time-formulae} Given $n,m\in \{0,1,\cdots,{M_k}\}$ we have
	\begin{multline}\label{eq:discrete_ibp-expectation_pn}
		\int_{t_m}^{t_n}\mathbb{E}\left[\partial_s\Ip V_1(s)\cdot W_1(s)\right] + \mathbb{E}\left[V_1(s) \cdot\p_s \In W_1(s)\right]\ds
		=\mathbb{E}[V_1(t_n)\cdot W_1(t_n)]-\mathbb{E}[V_1(t_m)\cdot W_1(t_m)], 
	\end{multline}

    \begin{multline}\label{eq:discrete_ibp-expectation_pp}
		\int_{t_m}^{t_n}\mathbb{E}\left[\partial_s\Ip V_1(s)\cdot V_2(s)\right] + \mathbb{E}\left[V_1(s) \cdot\p_s \Ip V_2(s)\right]\ds \\=\mathbb{E}[V_1(t_n)\cdot V_2(t_n)]-\mathbb{E}[V_1(t_m)\cdot V_2(t_m)] + \sum_{j=m+1}^n\mathbb{E}\left[\jump{V_1}_{j-1}\cdot\jump{V_2}_{j-1}\right],
	\end{multline}
	\begin{multline}\label{eq:discrete_ibp-expectation_nn}
		\int_{t_m}^{t_n}\mathbb{E}\left[\partial_s\In W_1(s)\cdot W_2(s)\right] + \mathbb{E}\left[W_1(s) \cdot\p_s \In W_2(s)\right]\ds \\=\mathbb{E}[W_1(t_n)\cdot W_2(t_n)]-\mathbb{E}[W_1(t_m)\cdot W_2(t_m)] - \sum_{j=m+1}^n\mathbb{E}\left[\jump{W_1}_{j}\cdot\jump{W_2}_{j}\right],
	\end{multline} 
	for all $V_1,V_2\in \mathbb{V}_k^+([0,T];L^2(\Omega;\mathbb{R}^d))$ and $W_1,W_2\in  \mathbb{V}_k^-([0,T];L^2(\Omega;\mathbb{R}^d))$ .
\end{lemma}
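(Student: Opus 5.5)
The statement to prove is Lemma~\ref{lemma-ibp-time-formulae}, the three discrete integration-by-parts identities. The plan is to reduce each identity to a single subinterval $I_j=(t_{j-1},t_j)$ and then sum over $j=m+1,\dots,n$. Without loss of generality $m<n$; the case $m=n$ is trivial, and $m>n$ follows by swapping the limits.

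On $I_j$ every $V\in\mathbb{V}_k^+$ equals its left-continuous value $V(t_j)$, and every $W\in\mathbb{V}_k^-$ equals its right-continuous value $W(t_{j-1})$. By \eqref{derivatives_Ip}, the derivatives are constant on $I_j$:
\begin{equation}
\partial_s\Ip V=-\tau_k^{-1}\jump{V}_{j-1}=\tau_k^{-1}\big(V(t_j)-V(t_{j-1})\big),\qquad
\partial_s\In W=-\tau_k^{-1}\jump{W}_j=\tau_k^{-1}\big(W(t_j)-W(t_{j-1})\big).
\end{equation}
Here we use the conventions $V(t_{j-1}^-)=V(t_{j-1})$ and $W(t_j^+)=W(t_j)$, including at the endpoints $t_0$ and $t_{M_k}$ as fixed in the definitions. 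After multiplying by $\tau_k$, each integrand over $I_j$ becomes an algebraic expression in the nodal values, and the bilinearity of $\mathbb{E}[\cdot\,\cdot\,\cdot]$ does the rest.

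\emph{Identity \eqref{eq:discrete_ibp-expectation_pn}.} The integral over $I_j$ equals
\begin{equation}
\mathbb{E}\big[(V_1(t_j)-V_1(t_{j-1}))\cdot W_1(t_{j-1})\big]+\mathbb{E}\big[V_1(t_j)\cdot (W_1(t_j)-W_1(t_{j-1}))\big]
=\mathbb{E}[V_1(t_j)\cdot W_1(t_j)]-\mathbb{E}[V_1(t_{j-1})\cdot W_1(t_{j-1})].
\end{equation}
Summing over $j$ telescopes to the claimed right-hand side with no remainder.

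\emph{Identity \eqref{eq:discrete_ibp-expectation_pp}.} Write $a=V_1(t_j)$, $a'=V_1(t_{j-1})$, $b=V_2(t_j)$, $b'=V_2(t_{j-1})$. The integral over $I_j$ is $(a-a')\cdot b+a\cdot(b-b')$. Using the elementary identity
\begin{equation}
(a-a')\cdot b+a\cdot(b-b')=a\cdot b-a'\cdot b'+(a-a')\cdot(b-b'),
\end{equation}
the extra term is $\jump{V_1}_{j-1}\cdot\jump{V_2}_{j-1}$, since the sign squares away. Telescoping in $j$ then gives the formula.

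\emph{Identity \eqref{eq:discrete_ibp-expectation_nn}.} The same computation applies with both functions sampled at the left node $t_{j-1}$. With $a=W_1(t_j)$, $a'=W_1(t_{j-1})$, $b=W_2(t_j)$, $b'=W_2(t_{j-1})$, the integral over $I_j$ is
\begin{equation}
(a-a')\cdot b'+a'\cdot(b-b')=a\cdot b-a'\cdot b'-(a-a')\cdot(b-b'),
\end{equation}
and $(a-a')\cdot(b-b')=\jump{W_1}_j\cdot\jump{W_2}_j$.

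There is no real obstacle here. The only care needed is the bookkeeping at the boundary nodes: $t_0$ for $\mathbb{V}_k^+$, where $\jump{V}_0=V(0)-V(0^+)$, and $t_{M_k}$ for $\mathbb{V}_k^-$. One must check that the nodal values used above match these definitions, in particular that $\Ip V(t_{j-1})=V(t_{j-1})$, including $\Ip V(t_0)=V(t_0)$, which ensures the correct sign of each jump.
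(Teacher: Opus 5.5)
Your proof is correct and follows the paper's argument: on each cell $I_j$ the reconstructed derivatives are constant differences of adjacent nodal values, so the cellwise integrand telescopes, with the extra jump-product term in the second and third identities coming from the polarization identities you wrote; summing over $j=m+1,\dots,n$ then gives the result. The paper writes this out only for the first identity, using the cell restrictions $V|_{I_i}$, $W|_{I_{i+1}}$, and it also assumes $m<n$ without loss of generality; for $m>n$ the jump sums in the second and third identities must be read as signed sums, so \emph{swapping the limits} by itself only settles the first identity.
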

For completeness, the proof of this lemma is included in Appendix \ref{app-aux-disc-temp-ana}. 

\begin{proof}[Proof of Lemma \ref{lemma-step-1-discrete-uniqueness}]
	Let $k,N,M\in\mathbb{N}$ be given. Suppose that the triples $$(\mathscr{X}_1^{k,N},\mathscr{Y}_1^{k,N},\varrho_1^{k,N})\ {\rm and}\   (\mathscr{X}_2^{k,M},\mathscr{Y}_2^{k,M},\varrho_{2}^{k,M})$$ in \eqref{triple-1}, \eqref{triple-2} each satisfy the discrete Hamiltonian system \eqref{numerical-scheme-gen} with data \eqref{data-line-1} and  \eqref{data-line-2}, respectively. Let $\Delta \mathscr{X}_{N,M}^{k}\coloneqq \mathscr{X}_1^{k,N}-\mathscr{X}_2^{k,M}\in \mathbb{V}_k^+([0,T];L^2(\Omega;\mathbb{R}^d))$ and $\Delta\mathscr{Y}_{N,M}^{k}\coloneqq \mathscr{Y}_1^{k,N}-\mathscr{Y}_2^{k,M}\in \mathbb{V}_k^-([0,T];L^2(\Omega;\mathbb{R}^d))$. For each $n\in \{0,1,2,\cdots,M_k\}$ the systems satisfied by $(\mathscr{X}_1^{k,N},\mathscr{Y}_1^{k,N},\varrho_1^{k,N}),(\mathscr{X}_2^{k,M},\mathscr{Y}_2^{k,M},\varrho_{2}^{k,M})$ respectively imply
	\begin{equation}
		\begin{split}
			&\int_{t_n}^T\mathbb{E}\left[\partial_t\Ip \Delta \mathscr{X}_{N,M}^{k}\cdot \Delta\mathscr{Y}_{N,M}^{k} \right]\mathrm{d}s
			\\
			&\qquad=\int_{t_n}^T \mathbb{E}\left[\left(\lambda_1 D_pH(\mathscr{X}_1^{k,N},\mathscr{Y}_1^{k,N},\varrho_1^{k,N}) - \lambda_2 D_pH(\mathscr{X}_2^{k,M},\mathscr{Y}_2^{k,M},\varrho_{2}^{k,M})\right)\cdot \Delta\mathscr{Y}_{N,M}^{k}\right]\mathrm{d}s
			\\
			&\qquad\qquad+\int_{t_n}^T\expectation{(F_1-F_2)\cdot \Delta\mathscr{Y}_{N,M}^{k}}\ds
		\end{split}
	\end{equation}and 
	\begin{equation}
		\begin{split}
			&\int_{t_n}^T\mathbb{E}\left[ \Delta \mathscr{X}_{N,M}^{k} \cdot\p_t \In \Delta\mathscr{Y}_{N,M}^{k}\right]\mathrm{d}s
			\\
			&=-\int_{t_n}^T \mathbb{E}\left[\left(\lambda_1 D_xH(\mathscr{X}_1^{k,N},\mathscr{Y}_1^{k,N},\varrho_1^{k,N}) - \lambda_2 D_xH(\mathscr{X}_2^{k,M},\mathscr{Y}_2^{k,M},\varrho_{2}^{k,M})\right)\cdot \Delta \mathscr{X}_{N,M}^{k}\right]\mathrm{d}s
			\\
			&\qquad+\int_{t_n}^T\expectation{(G_1-G_2)\cdot \Delta\mathscr{Y}_{N,M}^{k}}\ds.
		\end{split} 
	\end{equation}
	 By summing these two equations we get
	\begin{equation}
		\begin{split}
			&\int_{t_n}^T\mathbb{E}\left[\partial_t\Ip \Delta \mathscr{X}_{N,M}^{k}\cdot \Delta\mathscr{Y}_{N,M}^{k} + \Delta \mathscr{X}_{N,M}^{k} \cdot\p_t \In \Delta\mathscr{Y}_{N,M}^{k}\right]\mathrm{d}s
			\\
			&=\int_{t_n}^T \mathbb{E}\left[\left(\lambda_1 D_pH(\mathscr{X}_1^{k,N},\mathscr{Y}_1^{k,N},\varrho_1^{k,N}) - \lambda_2 D_pH(\mathscr{X}_2^{k,M},\mathscr{Y}_2^{k,M},\varrho_{2}^{k,M})\right)\cdot \Delta\mathscr{Y}_{N,M}^{k}\right.
			\\
			&\left.\qquad\qquad-\Delta \mathscr{X}_{N,M}^{k}\cdot\left(\lambda_1 D_xH(\mathscr{X}_1^{k,N},\mathscr{Y}_1^{k,N},\varrho_1^{k,N}) - \lambda_2 D_xH(\mathscr{X}_2^{k,M},\mathscr{Y}_2^{k,M},\varrho_{2}^{k,M})\right)\right]\mathrm{d}s
			\\
			&\qquad+\int_{t_n}^T\mathbb{E}\left[(F_1-F_2)\cdot \Delta\mathscr{Y}_{N,M}^{k}+\Delta \mathscr{X}_{N,M}^{k}\cdot (G_1-G_2)\right]\mathrm{d}s,
		\end{split}
	\end{equation}
	which is the same as
	\begin{equation}\label{pre-disp-mono-app-one}
		\begin{split}
			&\int_{t_n}^T\mathbb{E}\left[\partial_t\Ip \Delta \mathscr{X}_{N,M}^{k}\cdot \Delta\mathscr{Y}_{N,M}^{k} + \Delta \mathscr{X}_{N,M}^{k} \cdot\p_t \In \Delta\mathscr{Y}_{N,M}^{k}\right]\mathrm{d}s
			\\
			&=\int_{t_n}^T \lambda_1 \mathbb{E}\left[\left( D_pH(\mathscr{X}_1^{k,N},\mathscr{Y}_1^{k,N},\varrho_1^{k,N}) -  D_pH(\mathscr{X}_2^{k,M},\mathscr{Y}_2^{k,M},\varrho_{2}^{k,M})\right)\cdot (\mathscr{Y}_1^{k,N}-\mathscr{Y}_2^{k,M})\right.
			\\
			&\left.\qquad\qquad-(\mathscr{X}_1^{k,N}-\mathscr{X}_2^{k,M})\cdot\left( D_xH(\mathscr{X}_1^{k,N},\mathscr{Y}_1^{k,N},\varrho_1^{k,N}) -  D_xH(\mathscr{X}_2^{k,M},\mathscr{Y}_2^{k,M},\varrho_{2}^{k,M})\right)\right]\mathrm{d}s
			\\
			&\qquad+\mathfrak{I}(t_n,T)
		\end{split}
	\end{equation}
	for all $n\in\{0,1,2,\cdots,M_k\}$ where $\mathfrak{I}$ is defined as in \eqref{I-def}.
	Since $H$ satisfies the displacement monotonicity condition \eqref{ass-H:disp-mono-cond-H} and $\lambda_1>0$ we deduce from \eqref{pre-disp-mono-app-one} that 
	\begin{equation}\label{disp-mono-app-one}
		\begin{split}
			&\int_{t_n}^T\mathbb{E}\left[\partial_t\Ip \Delta \mathscr{X}_{N,M}^{k}\cdot \Delta\mathscr{Y}_{N,M}^{k} + \Delta \mathscr{X}_{N,M}^{k} \cdot\p_t \In \Delta\mathscr{Y}_{N,M}^{k}\right]\mathrm{d}s\geq \mathfrak{I}(t_n,T)
		\end{split}
	\end{equation}
	for all $n\in\{0,1,2,\cdots,M_k\}$.

	Furthermore,  we infer from displacement monotonicity of $g$ via \eqref{ass-g:disp-mono-cond-g} and non-negativity of $\lambda_1\in (0,1]$ that
	\begin{equation}
		\lambda_1 \mathbb{E}\left[(D_xg(\mathscr{X}_1^{k,N}(T),\varrho_1^{k,N}(T)) - D_xg(\mathscr{X}_2^{k,M}(T),\varrho_{2}^{k,M}(T)))\cdot \left(\mathscr{X}_1^{k,N}(T)-\mathscr{X}_2^{k,M}(T)\right)\right]\geq 0
	\end{equation} which is equivalent to
	\begin{multline}
		(\lambda_2-\lambda_1)\expectation{D_xg(\mathscr{X}_2^{k,M}(T),\varrho_{2}^{k,M}(T))\cdot (\mathscr{X}_1^{k,N}(T)-\mathscr{X}_2^{k,M}(T))}
		\\
		\geq \mathbb{E}\left[\left(-\lambda_1 D_xg(\mathscr{X}_1^{k,N}(T),\varrho_1^{k,N}(T))+\lambda_2 D_xg(\mathscr{X}_2^{k,M}(T),\varrho_{2}^{k,M}(T))\right)\cdot\left(\mathscr{X}_1^{k,N}(T)-\mathscr{X}_2^{k,M}(T)\right)\right]\\=\mathbb{E}\left[\Delta\mathscr{Y}_{N,M}^{k}(T)\cdot \Delta \mathscr{X}_{N,M}^{k}(T)\right]
	\end{multline}
	since $\Delta \mathscr{X}_{N,M}^{k}(T)=\mathscr{X}_1^{k,N}(T)-\mathscr{X}_2^{k,M}(T)$ and we have the terminal time expression $$\Delta\mathscr{Y}_{N,M}^{k}(T)=\mathscr{Y}_1^{k,N}(T)-\mathscr{Y}_2^{k,M}(T)=-\lambda_1 D_xg(\mathscr{X}_1^{k,N}(T),\varrho_1^{k,N}(T))+\lambda_2 D_xg(\mathscr{X}_2^{k,M}(T),\varrho_{2}^{k,M}(T)).$$ 
	This bound and the integration-by-parts formula \eqref{eq:discrete_ibp-expectation_pn}  then imply that
	\begin{equation}
		\begin{split}
			&(\lambda_2-\lambda_1)\expectation{D_xg(\mathscr{X}_2^{k,M}(T),\varrho_{2}^{k,M}(T))\cdot (\mathscr{X}_1^{k,N}(T)-\mathscr{X}_2^{k,M}(T))}
			\\
			&\geq \mathbb{E}\left[\Delta\mathscr{Y}_{N,M}^{k}(T)\cdot \Delta \mathscr{X}_{N,M}^{k}(T)\right]
			\\
			&=\mathbb{E}\left[\Delta \mathscr{X}_{N,M}^{k}(t_n)\cdot \Delta\mathscr{Y}_{N,M}^{k}(t_n)\right]+\int_{t_n}^T\mathbb{E}\left[\partial_t\Ip \Delta \mathscr{X}_{N,M}^{k}\cdot \Delta\mathscr{Y}_{N,M}^{k} + \Delta \mathscr{X}_{N,M}^{k} \cdot\p_t \In \Delta\mathscr{Y}_{N,M}^{k}\right]\mathrm{d}s
		\end{split}
	\end{equation} for $n\in\{0,1,2,\cdots,M_k\}$. We then deduce from this and \eqref{disp-mono-app-one} that
	\begin{equation}
		\begin{split}
			&(\lambda_2-\lambda_1)\expectation{D_xg(\mathscr{X}_2^{k,M}(T),\varrho_{2}^{k,M}(T))\cdot \Delta \mathscr{X}_{N,M}^{k}(T)}
			\geq \mathbb{E}\left[\Delta \mathscr{X}_{N,M}^{k}(t_n)\cdot \Delta\mathscr{Y}_{N,M}^{k}(t_n)\right] +\mathfrak{I}(t_n,T)
		\end{split}
	\end{equation}  and thus
	\begin{equation}\label{time-error-joint-mono-bound-gen}
	\begin{split}
		&\mathbb{E}\left[\Delta \mathscr{X}_{N,M}^{k}(t_n)\cdot \Delta\mathscr{Y}_{N,M}^{k}(t_n)\right] 
		\leq (\lambda_2-\lambda_1)\expectation{D_xg(\mathscr{X}_2^{k,M}(T),\varrho_{2}^{k,M}(T))\cdot \Delta \mathscr{X}_{N,M}^{k}(T)} -
	\mathfrak{I}(t_n,T)
	\end{split}
	\end{equation}
	for all $n\in\{0,1,2,\cdots,M_k\}$. 
	
	We also have from the systems satisfied by $(\mathscr{X}_1^{k,N},\mathscr{Y}_1^{k,N},\varrho_1^{k,N}),(\mathscr{X}_2^{k,M},\mathscr{Y}_2^{k,M},\varrho_{2}^{k,M})$ respectively that
	\begin{equation}\label{other-time-interval-formula}
		\begin{split}
			&\int_0^{t_n}\mathbb{E}\left[\partial_t\Ip \Delta \mathscr{X}_{N,M}^{k}\cdot \Delta\mathscr{Y}_{N,M}^{k} + \Delta \mathscr{X}_{N,M}^{k} \cdot\p_t \In \Delta\mathscr{Y}_{N,M}^{k}\right]\mathrm{d}s
			\\
			&=\int_0^{t_n} \lambda_1 \mathbb{E}\left[\left( D_pH(\mathscr{X}_1^{k,N},\mathscr{Y}_1^{k,N},\varrho_1^{k,N}) -  D_pH(\mathscr{X}_2^{k,M},\mathscr{Y}_2^{k,M},\varrho_{2}^{k,M})\right)\cdot \Delta\mathscr{Y}_{N,M}^{k}\right.
			\\
			&\left.\qquad\qquad\qquad-\Delta \mathscr{X}_{N,M}^{k}\cdot\left( D_xH(\mathscr{X}_1^{k,N},\mathscr{Y}_1^{k,N},\varrho_1^{k,N}) -  D_xH(\mathscr{X}_2^{k,M},\mathscr{Y}_2^{k,M},\varrho_{2}^{k,M})\right)\right]\mathrm{d}s
			\\
			&\qquad+\mathfrak{I}(0,t_n)
		\end{split}
	\end{equation} for all $n\in\{0,1,2,\cdots,M_k\}$. Consequently, \eqref{time-error-joint-mono-bound-gen} and the integration-by-parts formula \eqref{eq:discrete_ibp-expectation_pn} give
	\begin{equation}
	\begin{split}
		&(\lambda_2-\lambda_1)\expectation{D_xg(\mathscr{X}_2^{k,M}(T),\varrho_{2}^{k,M}(T))\cdot \Delta \mathscr{X}_{N,M}^{k}(T)} -
		\mathfrak{I}(t_n,T)
		\\
		&\geq \mathbb{E}\left[\Delta \mathscr{X}_{N,M}^{k}(t_n)\cdot \Delta\mathscr{Y}_{N,M}^{k}(t_n)\right]
		\\
		&=\mathbb{E}\left[\Delta \mathscr{X}_{N,M}^{k}(0)\cdot \Delta\mathscr{Y}_{N,M}^{k}(0)\right]+\int_0^{t_n}\mathbb{E}\left[\partial_t\Ip \Delta \mathscr{X}_{N,M}^{k}\cdot \Delta\mathscr{Y}_{N,M}^{k} + \Delta \mathscr{X}_{N,M}^{k} \cdot\p_t \In \Delta\mathscr{Y}_{N,M}^{k}\right]\mathrm{d}s.
	\end{split}
	\end{equation} We then deduce from this and \eqref{other-time-interval-formula}  that
	\begin{equation}
		\begin{split}
			&(\lambda_2-\lambda_1)\expectation{D_xg(\mathscr{X}_2^{k,M}(T),\varrho_{2}^{k,M}(T))\cdot \Delta \mathscr{X}_{N,M}^{k}(T)} -
			\mathfrak{I}(t_n,T)
			\\
			&\geq \mathbb{E}\left[\Delta \mathscr{X}_{N,M}^{k}(0)\cdot \Delta\mathscr{Y}_{N,M}^{k}(0)\right]\\
			&\qquad+\int_0^{t_n} \lambda_1 \mathbb{E}\left[\left( D_pH(\mathscr{X}_1^{k,N},\mathscr{Y}_1^{k,N},\varrho_1^{k,N}) -  D_pH(\mathscr{X}_2^{k,M},\mathscr{Y}_2^{k,M},\varrho_{2}^{k,M})\right)\cdot \Delta\mathscr{Y}_{N,M}^{k}\right.\\
			&\left.\qquad\qquad\qquad-\Delta \mathscr{X}_{N,M}^{k}\cdot\left( D_xH(\mathscr{X}_1^{k,N},\mathscr{Y}_1^{k,N},\varrho_1^{k,N}) -  D_xH(\mathscr{X}_2^{k,M},\mathscr{Y}_2^{k,M},\varrho_{2}^{k,M})\right)\right]\mathrm{d}s
			\\
			&\qquad\qquad+\mathfrak{I}(0,t_n).
		\end{split}
	\end{equation} We then conclude the required bound after rearranging and noting that $\mathfrak{I}(0,t_n)+\mathfrak{I}(t_n,T)=\mathfrak{I}(0,T)$.
\end{proof}
 
 The second step in proving Theorem  \ref{theorem-cont-dependence-discrete-Hamiltonian-sys} showcases a stronger stability property obtained under the additional assumption on strong convexity of $H$ and Lipschitz continuity of both $D_pH$ and $D_xH$.
 \begin{lemma}[Strong convexity stability]\label{lemma-X-bound-Y-discr-bound}
 	Let the Hamiltonian $H$ satisfy the regularity assumptions \eqref{ass-H:1}, \eqref{ass-H:2}, \eqref{ass-H:3}, \eqref{ass-H:4} and the displacement monotonicity condition \eqref{ass-H:disp-mono-cond-H}. Let the terminal cost $g$ satisfy the regularity assumption \eqref{ass-g:1} and the displacement monotonicity condition \eqref{ass-g:disp-mono-cond-g}. Let $N,M,k\in\mathbb{N}$ and 
 	\begin{subequations}
 		\begin{align}
 			&\lambda_1\in (0,1],  \quad F_1,G_1\in \mathbb{V}_k(0,T;\mathcal{R}_{d}(\mathcal{A}_N,\mathcal{O}_N)), \quad X_{0,1}^N\in  \mathcal{R}_{d}(\mathcal{A}_N,\mathcal{O}_N),\label{data-line-1''}
 			\\
 			&\lambda_2\in [0,1], \quad F_2,G_2\in \mathbb{V}_k(0,T;\mathcal{R}_{d}(\mathcal{A}_M,\mathcal{O}_M)),\quad X_{0,2}^M\in  \mathcal{R}_{d}(\mathcal{A}_M,\mathcal{O}_M)\label{data-line-2''}
 		\end{align}
 	\end{subequations}
 	be given, and suppose that the triples
 	\begin{subequations}
 		\begin{align}
 			(\mathscr{X}_1^{k,N},\mathscr{Y}_1^{k,N},\varrho_1^{k,N})&\in \mathbb{V}_k^+([0,T];\mathcal{R}_{d}(\mathcal{A}_N,\mathcal{O}_N))\times \mathbb{V}_k^-([0,T];\mathcal{R}_{d}(\mathcal{A}_N,\mathcal{O}_N))\times \mathbb{P}_k([0,T];\mathcal{P}_{d,N}),\label{triple-1'}\\
 			(\mathscr{X}_2^{k,M},\mathscr{Y}_2^{k,M},\varrho_{2}^{k,M})&\in \mathbb{V}_k^+([0,T];\mathcal{R}_{d}(\mathcal{A}_M,\mathcal{O}_M))\times \mathbb{V}_k^-([0,T];\mathcal{R}_{d}(\mathcal{A}_M,\mathcal{O}_M))\times \mathbb{P}_k([0,T];\mathcal{P}_{d,M})\label{triple-2'}
 		\end{align}
 	\end{subequations}  each satisfy the discrete Hamiltonian system \eqref{numerical-scheme-gen} with data \eqref{data-line-1''} and \eqref{data-line-2''}, respectively. Then, for all $n\in\{0,1,\cdots,M_k\}$ and all  $0<\epsilon<1$, there holds 
 		\begin{equation}\label{y-diff-bound-tn-gen}
 		\begin{split}
 			&\int_0^{t_n}\mathbb{E}[|\mathscr{Y}_1^{k,N}-\mathscr{Y}_2^{k,M}|^2]\mathrm{d}s
 			\lesssim \frac{1}{\epsilon}\mathfrak{E}_1
 			+\epsilon\mathfrak{E}_2+\int_0^{t_n}\mathbb{E}\left[|\mathscr{X}_1^{k,N}-\mathscr{X}_2^{k,M}|^2\right]\ds
 		\end{split}
 	\end{equation}
 	where the hidden constant in \eqref{y-diff-bound-tn-gen}  depends only on $c_0$ and the Lipschitz constants of $D_pH$ and $D_xH$, and where we define the quantities
 	\begin{equation}\label{defn-E_*}
 		\begin{split}
 			&\mathfrak{E}_1\coloneqq
 			\frac{1}{\lambda_1^2}\Bigl(\mathbb{E}\left[|(\mathscr{X}_1^{k,N}-\mathscr{X}_2^{k,M})(0)|^2+(\lambda_1-\lambda_2)^2\abs{D_xg(\mathscr{X}_2^{k,M}(T),\varrho_{2}^{k,M}(T))}^2\right]\\
 			&\qquad\qquad\qquad+\int_0^T\expectation{\abs{F_1-F_2}^2+\abs{G_1-G_2}^2}\ds
 			\\
 			& \quad\qquad+(\lambda_1-\lambda_2)^2\int_0^T\expectation{\abs{D_pH(\mathscr{X}_2^{k,M},\mathscr{Y}_2^{k,M},\varrho_{2}^{k,M})}^2+\abs{D_xH(\mathscr{X}_2^{k,M},\mathscr{Y}_2^{k,M},\varrho_{2}^{k,M})}^2}\ds\Bigr),
 		\end{split}
 	\end{equation}
 	\begin{multline}\label{defn-E_*2}
 		\mathfrak{E}_2\coloneqq \mathbb{E}\left[|(\mathscr{Y}_1^{k,N}-\mathscr{Y}_2^{k,M})(0)|^2+|(\mathscr{X}_1^{k,N}-\mathscr{X}_2^{k,M})(T)|^2\right]\\+\int_0^T\mathbb{E}\left[|\mathscr{X}_1^{k,N}-\mathscr{X}_2^{k,M}|^2+|\mathscr{Y}_1^{k,N}-\mathscr{Y}_2^{k,M}|^2\right]\ds.
 	\end{multline}
 \end{lemma}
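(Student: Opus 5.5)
The proof builds on Lemma \ref{lemma-step-1-discrete-uniqueness}, which gives an upper bound for the time-integrated "displacement monotonicity" expression over $[0,t_n]$. The missing ingredient is a quantitative lower bound for that integrand in terms of $|\Delta\mathscr{Y}|^2$, where $\Delta\mathscr{Y}:=\mathscr{Y}_1^{k,N}-\mathscr{Y}_2^{k,M}$ and $\Delta\mathscr{X}:=\mathscr{X}_1^{k,N}-\mathscr{X}_2^{k,M}$. Strong convexity will supply it.

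\textbf{Step 1 (pointwise lower bound).} Write $\mu_i=\mathcal{L}_{\mathscr{X}_i}$. Insert the intermediate point $(\mathscr{X}_1,\mathscr{Y}_2,\mu_1)$ in the $D_pH$ difference. By \eqref{ass-H:4},
\[
\bigl(D_pH(\mathscr{X}_1,\mathscr{Y}_1,\mu_1)-D_pH(\mathscr{X}_1,\mathscr{Y}_2,\mu_1)\bigr)\cdot\Delta\mathscr{Y}\ge c_0|\Delta\mathscr{Y}|^2 .
\]
The remaining part $D_pH(\mathscr{X}_1,\mathscr{Y}_2,\mu_1)-D_pH(\mathscr{X}_2,\mathscr{Y}_2,\mu_2)$ is bounded by $\Lip(D_pH)\bigl(|\Delta\mathscr{X}|+\mathcal{W}_1(\mu_1,\mu_2)\bigr)$ via \eqref{ass-H:2}. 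The $D_xH$ term is bounded by $\Lip(D_xH)\bigl(|\Delta\mathscr{X}|+|\Delta\mathscr{Y}|+\mathcal{W}_1(\mu_1,\mu_2)\bigr)|\Delta\mathscr{X}|$. Taking expectations, use $\mathcal{W}_1(\mu_1,\mu_2)\le\mathcal{W}_2(\mu_1,\mu_2)\le\|\Delta\mathscr{X}\|_{L^2}$ and apply Young's inequality to absorb the cross terms. This yields
\[
\mathbb{E}[\text{integrand}]\ge \tfrac{c_0}{2}\mathbb{E}|\Delta\mathscr{Y}|^2-C\,\mathbb{E}|\Delta\mathscr{X}|^2 .
\]
Integrate over $(0,t_n)$ and multiply by $\lambda_1$.

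\textbf{Step 2 (bounding the right-hand side of \eqref{step-one-bound}).} Divide the combined inequality by $\lambda_1 c_0/2$. Each remaining term on the right of \eqref{step-one-bound} is a product of a "data" factor and an "error" factor, and we split each by Young's inequality with parameter $\epsilon$:
\begin{itemize}
\item $\lambda_1^{-1}(\lambda_2-\lambda_1)\mathbb{E}[D_xg\cdot\Delta\mathscr{X}(T)]\le \frac{1}{\epsilon}\lambda_1^{-2}(\lambda_1-\lambda_2)^2\mathbb{E}|D_xg|^2+\epsilon\,\mathbb{E}|\Delta\mathscr{X}(T)|^2$;
\item $\lambda_1^{-1}\mathbb{E}[\Delta\mathscr{X}(0)\cdot\Delta\mathscr{Y}(0)]\le \frac{1}{\epsilon}\lambda_1^{-2}\mathbb{E}|\Delta\mathscr{X}(0)|^2+\epsilon\,\mathbb{E}|\Delta\mathscr{Y}(0)|^2$;
\item $\lambda_1^{-1}|\mathfrak{I}(0,T)|$, whose terms pair $(\lambda_1-\lambda_2)D_pH_2$, $(\lambda_1-\lambda_2)D_xH_2$, $F_1-F_2$ and $G_1-G_2$ against $\Delta\mathscr{Y}$ or $\Delta\mathscr{X}$ in $L^2(0,T;L^2)$, bounded by $\frac{1}{\epsilon}\times$(data)$/\lambda_1^2$ plus $\epsilon\int_0^T\mathbb{E}[|\Delta\mathscr{X}|^2+|\Delta\mathscr{Y}|^2]$.
\end{itemize}
Collecting terms reproduces exactly $\frac{1}{\epsilon}\mathfrak{E}_1+\epsilon\mathfrak{E}_2$. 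Because $\epsilon<1$, the numerical factors can be absorbed into the hidden constant.

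\textbf{Main obstacle.} The delicate point is Step 1: handling the measure argument and the $|\Delta\mathscr{Y}||\Delta\mathscr{X}|$ cross term so that only $c_0$ and the Lipschitz constants enter the constant. One could also try to avoid the intermediate point by using the monotonicity splitting directly, but the intermediate point with strong convexity is the cleanest route. The terminal-cost regularity \eqref{ass-g:2} is not needed here; only the monotonicity already encoded in Lemma \ref{lemma-step-1-discrete-uniqueness} is used.
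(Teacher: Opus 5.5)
Your proposal is correct and follows essentially the same route as the paper. You insert the intermediate point $(\mathscr{X}_1^{k,N},\mathscr{Y}_2^{k,M},\varrho_1^{k,N})$ on the left of \eqref{step-one-bound}, use strong convexity together with the Lipschitz bounds, the estimate $\mathcal{W}_1\le\mathcal{W}_2\le\|\mathscr{X}_1^{k,N}-\mathscr{X}_2^{k,M}\|_{L^2(\Omega;\mathbb{R}^d)}$ and Young's inequality to absorb the cross terms, and then split the boundary terms and $\mathfrak{I}(0,T)$ by Young's inequality to obtain $\frac{1}{\epsilon}\mathfrak{E}_1+\epsilon\mathfrak{E}_2$. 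The differences are cosmetic: the paper uses an explicit parameter $\gamma$ with constant $C_\gamma$ and Young's parameter $\lambda_1\epsilon$ before dividing by $\lambda_1$, whereas you divide by $\lambda_1 c_0/2$ first; both give the same bound.
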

 
 \begin{proof}
 	 	We prove the result in the case when the sum $\Lip(D_pH)+\Lip(D_xH)$ of the Lipschitz constants for $D_pH$ and $D_xH$ is positive. The proof in the the case when one or both of these constants are zero follows similarly by a more direct argument. 
 	 	
 	 	Let $k,N,M\in\mathbb{N}$ be given. We suppose that the triples $(\mathscr{X}_1^{k,N},\mathscr{Y}_1^{k,N},\varrho_1^{k,N})$   $(\mathscr{X}_2^{k,M},\mathscr{Y}_2^{k,M},\varrho_{2}^{k,M})$ with \eqref{triple-1'}, \eqref{triple-2'} each satisfy the discrete system \eqref{numerical-scheme-gen} with data \eqref{data-line-1'} and  \eqref{data-line-2'}, respectively. We then let $\Delta \mathscr{X}_{N,M}^{k}\coloneqq \mathscr{X}_1^{k,N}-\mathscr{X}_2^{k,M}\in \mathbb{V}_k^+([0,T];L^2(\Omega;\mathbb{R}^d))$ and $\Delta\mathscr{Y}_{N,M}^{k}\coloneqq \mathscr{Y}_1^{k,N}-\mathscr{Y}_2^{k,M}\in \mathbb{V}_k^-([0,T];L^2(\Omega;\mathbb{R}^d))$.  We begin the proof by  employing the strong convexity of $H$ w.r.t.\ $p$ to obtain an estimate for the generalized momentum difference $W_{N,M}^k=\mathscr{Y}_1^{k,N}-\mathscr{Y}_2^{k,M}$. This this end, we rearrange the bound \eqref{step-one-bound} from Lemma \ref{lemma-step-1-discrete-uniqueness} and introduce the term $ D_pH(\mathscr{X}_1^{k,N},\mathscr{Y}_2^{k,M},\varrho_1^{k,N})\cdot (\mathscr{Y}_1^{k,N}-\mathscr{Y}_2^{k,M})$ on both sides of the bound to get
 	\begin{equation}
 		\begin{split}
 			&\lambda_1 \int_0^{t_n} \mathbb{E}\left[\left(D_pH(\mathscr{X}_1^{k,N},\mathscr{Y}_1^{k,N},\varrho_1^{k,N}) - D_pH(\mathscr{X}_1^{k,N},\mathscr{Y}_2^{k,M},\varrho_1^{k,N})\right)\cdot (\mathscr{Y}_1^{k,N}-\mathscr{Y}_2^{k,M})\right]\mathrm{d}s
 			\\
 			&\leq (\lambda_2-\lambda_1)\expectation{D_xg(\mathscr{X}_2^{k,M}(T),\varrho_{2}^{k,M}(T))\cdot \Delta \mathscr{X}_{N,M}^{k}(T)} -\mathbb{E}\left[\Delta \mathscr{X}_{N,M}^{k}(0)\cdot \Delta\mathscr{Y}_{N,M}^{k}(0)\right]-\mathfrak{I}(0,T)
 			\\
 			&\quad\quad+\lambda_1\int_0^{t_n} \mathbb{E}\left[\left(D_pH(\mathscr{X}_2^{k,M},\mathscr{Y}_2^{k,M},\varrho_{2}^{k,M}) - D_pH(\mathscr{X}_1^{k,N},\mathscr{Y}_2^{k,M},\varrho_1^{k,N})\right)\cdot \Delta\mathscr{Y}_{N,M}^{k}\right]\mathrm{d}s
 			\\
 			&\quad\quad+\lambda_1\int_{0}^{t_n} \mathbb{E}\left[\Delta \mathscr{X}_{N,M}^{k}\cdot\left(D_xH(\mathscr{X}_1^{k,N},\mathscr{Y}_1^{k,N},\varrho_1^{k,N}) - D_xH(\mathscr{X}_2^{k,M},\mathscr{Y}_2^{k,M},\varrho_{2}^{k,M})\right)\right]\mathrm{d}s
 		\end{split}
 	\end{equation} for all $n\in\{0,1,\cdots,M_k\}$.  Using the strong convexity of $H$ w.r.t.\ $p$ via \eqref{ass-H:4}, we get
 	\begin{equation}\label{bound-1}
 		\begin{split}
 			&\lambda_1 c_0\int_0^{t_n}\mathbb{E}[|\Delta\mathscr{Y}_{N,M}^{k}|^2]\mathrm{d}s
 			\\
 			&\leq |\lambda_2-\lambda_1|\expectation{|D_xg(\mathscr{X}_2^{k,M}(T),\varrho_{2}^{k,M}(T))|| \Delta \mathscr{X}_{N,M}^{k}(T)|}+\mathbb{E}\left[|\Delta \mathscr{X}_{N,M}^{k}(0)| |\Delta\mathscr{Y}_{N,M}^{k}(0)|\right]
 			\\
 			&\quad+|\mathfrak{I}(0,T)|+\lambda_1\int_0^{t_n} \mathbb{E}\left[\left |D_pH(\mathscr{X}_2^{k,M},\mathscr{Y}_2^{k,M},\varrho_{2}^{k,M}) - D_pH(\mathscr{X}_1^{k,N},\mathscr{Y}_2^{k,M},\varrho_1^{k,N})\right || \Delta\mathscr{Y}_{N,M}^{k}|\right]\mathrm{d}s
 			\\
 			&\quad+\lambda_1\int_{0}^{t_n} \mathbb{E}\left[|\Delta \mathscr{X}_{N,M}^{k}|\left |D_xH(\mathscr{X}_1^{k,N},\mathscr{Y}_1^{k,N},\varrho_1^{k,N}) - D_xH(\mathscr{X}_2^{k,M},\mathscr{Y}_2^{k,M},\varrho_{2}^{k,M})\right |\right]\mathrm{d}s
 		\end{split}
 	\end{equation} for all $n\in\{0,1,\cdots,M_k\}$. Note that the uniform Lipschitz continuity of both $D_pH$, $D_xH$ with respect to the Euclidean norm in the vector arguments and $\mathcal{W}_1$-metric in the measure argument via \eqref{ass-H:2}, together with the Cauchy--Schwarz inequality applied to the integral in the definition of $\mathcal{W}_1$, implies the uniform Lipschitz continuity of both $D_pH$, $D_xH$ with respect to the Euclidean norm in the vector arguments and $\mathcal{W}_2$-metric in the measure argument. This leads to
 	\begin{multline}\label{bound-2}
 			\lambda_1\int_0^{t_n} \mathbb{E}\left[\left |D_pH(\mathscr{X}_2^{k,M},\mathscr{Y}_2^{k,M},\varrho_{2}^{k,M}) - D_pH(\mathscr{X}_1^{k,N},\mathscr{Y}_2^{k,M},\varrho_1^{k,N})\right || \Delta\mathscr{Y}_{N,M}^{k}|\right]\mathrm{d}s\\
 			\leq \lambda_1 \Lip(D_pH)\int_0^{t_n} \mathbb{E}\left[\left (|\Delta \mathscr{X}_{N,M}^{k}|+\mathcal{W}_2(\varrho_1^{k,N},\varrho_{2}^{k,M})\right)| \Delta\mathscr{Y}_{N,M}^{k}|\right]\mathrm{d}s
 	\end{multline}
 	\begin{multline}\label{bound-3}
 		\lambda_1\int_{0}^{t_n} \mathbb{E}\left[|\Delta \mathscr{X}_{N,M}^{k}|\left |D_xH(\mathscr{X}_1^{k,N},\mathscr{Y}_1^{k,N},\varrho_1^{k,N}) - D_xH(\mathscr{X}_2^{k,M},\mathscr{Y}_2^{k,M},\varrho_{2}^{k,M})\right |\right]\mathrm{d}s
 		\\
 		\leq \lambda_1\Lip(D_xH)\int_0^{t_n}\mathbb{E}\left[|\Delta \mathscr{X}_{N,M}^{k}|\left (|\Delta \mathscr{X}_{N,M}^{k}|+|\Delta\mathscr{Y}_{N,M}^{k}|+\mathcal{W}_2(\varrho_1^{k,N},\varrho_{2}^{k,M})\right)\right]\mathrm{d}s.
 	\end{multline}

 	With $n\in\{0,1,\cdots,M_k\}$ fixed, we now aim to extract the integral $\int_0^{t_n}\mathbb{E}[|\Delta\mathscr{Y}_{N,M}^{k}|^2]\mathrm{d}s$ from the upper bounds in \eqref{bound-2} and \eqref{bound-3} by applying Young's inequality repeatedly. 
 	Indeed, by introducing  a parameter $\gamma\in (0,1)$, we apply Young's inequality to the integrand in the upper bound of \eqref{bound-2} to get
 	\begin{multline}\label{bound-2-youngs}
 		\lambda_1\int_0^{t_n} \mathbb{E}\left[\left |D_pH(\mathscr{X}_2^{k,M},\mathscr{Y}_2^{k,M},\varrho_{2}^{k,M}) - D_pH(\mathscr{X}_1^{k,N},\mathscr{Y}_2^{k,M},\varrho_1^{k,N})\right || \Delta\mathscr{Y}_{N,M}^{k}|\right]\mathrm{d}s\\
 		\leq {\lambda_1 \Lip(D_pH)}\int_0^{t_n} \mathbb{E}\left[\left (\frac{1}{2\gamma}|\Delta \mathscr{X}_{N,M}^{k}|^2+\frac{1}{2\gamma}\mathcal{W}_2^2(\varrho_1^{k,N},\varrho_{2}^{k,M})\right)+\frac{\gamma}{2}| \Delta\mathscr{Y}_{N,M}^{k}|^2\right]\mathrm{d}s,
 	\end{multline}
 	 and likewise Young's inequality applied to the integrand in the upper bound of \eqref{bound-3} gives
 	 \begin{multline}\label{bound-3-youngs}
 	 	\lambda_1\int_{0}^{t_n} \mathbb{E}\left[|\Delta \mathscr{X}_{N,M}^{k}|\left |D_xH(\mathscr{X}_1^{k,N},\mathscr{Y}_1^{k,N},\varrho_1^{k,N}) - D_xH(\mathscr{X}_2^{k,M},\mathscr{Y}_2^{k,M},\varrho_{2}^{k,M})\right |\right]\mathrm{d}s
 	 	\\
 	 	\leq \lambda_1\Lip(D_xH)\int_0^{t_n}\mathbb{E}\left[\left(\frac{3}{2}+\frac{1}{2\gamma}\right)|\Delta \mathscr{X}_{N,M}^{k}|^2+\frac{\gamma}{2}|\Delta\mathscr{Y}_{N,M}^{k}|^2+\frac{1}{2}\mathcal{W}_2^2(\varrho_1^{k,N},\varrho_{2}^{k,M})\right]\mathrm{d}s
 	 \end{multline}
 	 Adding these bounds \eqref{bound-2-youngs}, \eqref{bound-3-youngs} together gives
 	 \begin{multline}\label{bound-2+3-youngs}
 	 	\lambda_1\int_0^{t_n} \mathbb{E}\left[\left |D_pH(\mathscr{X}_2^{k,M},\mathscr{Y}_2^{k,M},\varrho_{2}^{k,M}) - D_pH(\mathscr{X}_1^{k,N},\mathscr{Y}_2^{k,M},\varrho_1^{k,N})\right || \Delta\mathscr{Y}_{N,M}^{k}|\right]\mathrm{d}s\\+\lambda_1\int_{0}^{t_n} \mathbb{E}\left[|\Delta \mathscr{X}_{N,M}^{k}|\left |D_xH(\mathscr{X}_1^{k,N},\mathscr{Y}_1^{k,N},\varrho_1^{k,N}) - D_xH(\mathscr{X}_2^{k,M},\mathscr{Y}_2^{k,M},\varrho_{2}^{k,M})\right |\right]\mathrm{d}s\\
 	 	\leq {\lambda_1}\int_0^{t_n} \mathbb{E}\left[\left(\frac{3\Lip(D_xH)}{2}+\frac{\Lip(D_pH)+\Lip(D_xH)}{2\gamma}\right)|\Delta \mathscr{X}_{N,M}^{k}|^2\right]
 	 	\\
 	 	+\lambda_1\int_0^{t_n}\left[\left(\frac{\Lip(D_pH)}{2\gamma}+\frac{\Lip(D_xH)}{2}\right)\mathcal{W}_2^2(\varrho_1^{k,N},\varrho_{2}^{k,M})\right]\mathrm{d}s\\+\lambda_1\int_0^{t_n}\mathbb{E}\left[\left(\frac{\left(\Lip(D_pH)+\Lip(D_xH)\right)\gamma}{2}\right)| \Delta\mathscr{Y}_{N,M}^{k}|^2\right]\mathrm{d}s,
 	 \end{multline}
 	 To obtain coefficients in the upper bound that are more compact, let us assume further that $0<\gamma<1/3$, to have
 	 \begin{equation}
 	 	\frac{3\Lip(D_xH)}{2}+\frac{\Lip(D_pH)+\Lip(D_xH)}{2\gamma}\leq \frac{\Lip(D_pH)+2\Lip(D_xH)}{2\gamma}\leq \frac{\Lip(D_pH)+\Lip(D_xH)}{\gamma}
 	 \end{equation}
 	 \begin{equation}
 	 	\frac{\Lip(D_pH)}{2\gamma}+\frac{\Lip(D_xH)}{2}\leq \frac{\Lip(D_pH)+\Lip(D_xH)}{\gamma}
 	 \end{equation}
 	 and clearly $\left(\Lip(D_pH)+\Lip(D_xH)\right)\gamma/2\leq \left(\Lip(D_pH)+\Lip(D_xH)\right)\gamma$.
 	 We thus obtain
 	  \begin{multline}\label{bound-2+3-youngs-scaled}
 	 	\lambda_1\int_0^{t_n} \mathbb{E}\left[\left |D_pH(\mathscr{X}_2^{k,M},\mathscr{Y}_2^{k,M},\varrho_{2}^{k,M}) - D_pH(\mathscr{X}_1^{k,N},\mathscr{Y}_2^{k,M},\varrho_1^{k,N})\right || \Delta\mathscr{Y}_{N,M}^{k}|\right]\mathrm{d}s\\+\lambda_1\int_{0}^{t_n} \mathbb{E}\left[|\Delta \mathscr{X}_{N,M}^{k}|\left |D_xH(\mathscr{X}_1^{k,N},\mathscr{Y}_1^{k,N},\varrho_1^{k,N}) - D_xH(\mathscr{X}_2^{k,M},\mathscr{Y}_2^{k,M},\varrho_{2}^{k,M})\right |\right]\mathrm{d}s\\
 	 	\leq {\lambda_1}\frac{\Lip(D_pH)+\Lip(D_xH)}{\gamma}\int_0^{t_n} \left(\mathbb{E}\left[|\Delta \mathscr{X}_{N,M}^{k}|^2\right]+\mathcal{W}_2^2(\varrho_1^{k,N},\varrho_{2}^{k,M})\right)\mathrm{d}s\\+\lambda_1 \left(\Lip(D_pH)+\Lip(D_xH)\right)\gamma\int_0^{t_n}\mathbb{E}\left[|\Delta\mathscr{Y}_{N,M}^{k}|^2\right]\mathrm{d}s,
 	 \end{multline}
 	 With this simplified upper bound, we return to the bound \eqref{bound-1} and rearrange to get
 	 	\begin{equation}\label{bound-1-update}
 	 	\begin{split}
 	 		&\lambda_1\left( c_0- \left(\Lip(D_pH)+\Lip(D_xH)\right)\gamma\right)\int_0^{t_n}\mathbb{E}[|\Delta\mathscr{Y}_{N,M}^{k}|^2]\mathrm{d}s
 	 		\\
 	 		&\leq |\lambda_2-\lambda_1|\expectation{|D_xg(\mathscr{X}_2^{k,M}(T),\varrho_{2}^{k,M}(T))|| \Delta \mathscr{X}_{N,M}^{k}(T)|}+\mathbb{E}\left[|\Delta \mathscr{X}_{N,M}^{k}(0)| |\Delta\mathscr{Y}_{N,M}^{k}(0)|\right]
 	 		+|\mathfrak{I}(0,T)|
 	 		\\
 	 		&\quad+
 	 		 {\lambda_1}\frac{\Lip(D_pH)+\Lip(D_xH)}{\gamma}\int_0^{t_n} \left(\mathbb{E}\left[|\Delta \mathscr{X}_{N,M}^{k}|^2\right]+\mathcal{W}_2^2(\varrho_1^{k,N},\varrho_{2}^{k,M})\right)\mathrm{d}s
 	 	\end{split}
 	 \end{equation} for all $n\in\{0,1,\cdots,M_k\}$ and $0<\gamma<1/3$. By fixing $0<\gamma<\min\left\{\frac{1}{3}, \frac{c_0}{\Lip(D_pH)+\Lip(D_xH)}\right\}$ we obtain the bound
 	 \begin{multline}\label{final-young-bound-1}
 	 	\lambda_1\int_0^{t_n}\mathbb{E}[|\Delta\mathscr{Y}_{N,M}^{k}|^2]\mathrm{d}s
 	 	\\
 	 	\leq C_{\gamma}\left( |\lambda_2-\lambda_1|\expectation{|D_xg(\mathscr{X}_2^{k,M}(T),\varrho_{2}^{k,M}(T))|| \Delta \mathscr{X}_{N,M}^{k}(T)|}+\mathbb{E}\left[|\Delta \mathscr{X}_{N,M}^{k}(0)| |\Delta\mathscr{Y}_{N,M}^{k}(0)|\right]
 	 	\right.
 	 	\\\left.
 	 	+|\mathfrak{I}(0,T)|+
 	 	{\lambda_1}\int_0^{t_n} \left(\mathbb{E}\left[|\Delta \mathscr{X}_{N,M}^{k}|^2\right]+\mathcal{W}_2^2(\varrho_1^{k,N},\varrho_{2}^{k,M})\right)\mathrm{d}s\right)
 	 \end{multline}
 	 for all $n\in\{0,1,\cdots,M_k\}$, with the positive constant 
 	 \begin{equation}\label{Cgamma-constant}
 	 	C_{\gamma}\coloneqq \max\left\{\frac{1}{c_0-(\Lip(D_pH)+\Lip(D_xH))\gamma},\frac{\Lip(D_pH)+\Lip(D_xH)}{\left(c_0-(\Lip(D_pH)+\Lip(D_xH))\gamma\right)\gamma}\right\}
 	 \end{equation}
 	 with fixed constant $0<\gamma<\min\left\{\frac{1}{3}, \frac{c_0}{\Lip(D_pH)+\Lip(D_xH)}\right\}$. 
 	 
 	Next, we aim to bound the term $$ |\lambda_2-\lambda_1|\expectation{|D_xg(\mathscr{X}_2^{k,M}(T),\varrho_{2}^{k,M}(T))|| \Delta \mathscr{X}_{N,M}^{k}(T)|}+\mathbb{E}\left[|\Delta \mathscr{X}_{N,M}^{k}(0)| |\Delta\mathscr{Y}_{N,M}^{k}(0)|\right]
 	+|\mathfrak{I}(0,T)|.$$ By Young's inequality with parameter $0<\lambda_1\epsilon<1$ where $\epsilon \in (0,1)$, have 
 	\begin{multline}\label{bound-4}
 		|\mathfrak{I}(0,T)|\leq 
 		|\lambda_1-\lambda_2|\int_{0}^T  \mathbb{E}\left[  |D_pH(\mathscr{X}_2^{k,M},\mathscr{Y}_2^{k,M},\varrho_{2}^{k,M})||\Delta\mathscr{Y}_{N,M}^{k}|\right.\\
 		\left.\qquad\qquad\qquad+|\Delta \mathscr{X}_{N,M}^{k}|  |D_xH(\mathscr{X}_2^{k,M},\mathscr{Y}_2^{k,M},\varrho_{2}^{k,M})|\right]\mathrm{d}s\\
 		+\int_{0}^T\mathbb{E}\left[|F_1-F_2||\Delta\mathscr{Y}_{N,M}^{k}|+|\Delta \mathscr{X}_{N,M}^{k}| |G_1-G_2|\right]\mathrm{d}s
 		\\
 		\leq \frac{(\lambda_1-\lambda_2)^2}{2\lambda_1\epsilon}\int_0^T\mathbb{E}\left[ |D_pH(\mathscr{X}_2^{k,M},\mathscr{Y}_2^{k,M},\varrho_{2}^{k,M})|^2+ |D_xH(\mathscr{X}_2^{k,M},\mathscr{Y}_2^{k,M},\varrho_{2}^{k,M})|^2\right]\mathrm{d}s \\+
 		\frac{1}{2\lambda_1\epsilon}\int_0^T\mathbb{E}\left[|F_1-F_2|^2+|G_1-G_2|^2\right]\mathrm{d}s
 		+{\lambda_1\epsilon}\int_0^T\mathbb{E}\left[|\Delta\mathscr{Y}_{N,M}^{k}|^2+|\Delta \mathscr{X}_{N,M}^{k}|^2\right]\mathrm{d}s
 	\end{multline}and
 	\begin{multline}
 		 |\lambda_2-\lambda_1|\expectation{|D_xg(\mathscr{X}_2^{k,M}(T),\varrho_{2}^{k,M}(T))|| \Delta \mathscr{X}_{N,M}^{k}(T)|}+\mathbb{E}\left[|\Delta \mathscr{X}_{N,M}^{k}(0)| |\Delta\mathscr{Y}_{N,M}^{k}(0)|\right]\\
 		 \leq \frac{(\lambda_1-\lambda_2)^2}{2\lambda_1\epsilon}\mathbb{E}\left[|D_xg(\mathscr{X}_2^{k,M}(T),\varrho_{2}^{k,M}(T))|^2\right]\\ + \frac{\lambda_1\epsilon}{2}\mathbb{E}\left[| \Delta \mathscr{X}_{N,M}^{k}(T)|^2\right]
 		 +\frac{1}{2\lambda_1\epsilon}\mathbb{E}\left[|\Delta \mathscr{X}_{N,M}^{k}(0)|^2\right]+\frac{\lambda_1\epsilon}{2}\mathbb{E}\left[|\Delta\mathscr{Y}_{N,M}^{k}(0)|^2\right].
 	\end{multline}
 	We then obtain
 	\begin{multline}
 		 |\lambda_2-\lambda_1|\expectation{|D_xg(\mathscr{X}_2^{k,M}(T),\varrho_{2}^{k,M}(T))|| \Delta \mathscr{X}_{N,M}^{k}(T)|}+\mathbb{E}\left[|\Delta \mathscr{X}_{N,M}^{k}(0)| |\Delta\mathscr{Y}_{N,M}^{k}(0)|\right]
 		+|\mathfrak{I}(0,T)|
 		\\\leq \mathscr{A}_1+\mathscr{A}_2+\mathscr{A}_3
 	\end{multline}
 	where
 	\begin{multline}
 		\mathscr{A}_1\coloneqq 
 		\frac{(\lambda_1-\lambda_2)^2}{2\lambda_1\epsilon}\left(\int_0^T\mathbb{E}\left[ |D_pH(\mathscr{X}_2^{k,M},\mathscr{Y}_2^{k,M},\varrho_{2}^{k,M})|^2+ |D_xH(\mathscr{X}_2^{k,M},\mathscr{Y}_2^{k,M},\varrho_{2}^{k,M})|^2\right]\mathrm{d}s\right.\\\left.+\mathbb{E}\left[|D_xg(\mathscr{X}_2^{k,M}(T),\varrho_{2}^{k,M}(T))|^2\right] \right),
 	\end{multline}
 	\begin{equation}
 		\mathscr{A}_2\coloneqq 
 		\frac{1}{2\lambda_1\epsilon}\left(\int_0^T\mathbb{E}\left[|F_1-F_2|^2+|G_1-G_2|^2\right]\mathrm{d}s+\mathbb{E}\left[|\Delta \mathscr{X}_{N,M}^{k}(0)|^2\right]\right),
 	\end{equation}
 	and
 	\begin{equation}
 		\mathscr{A}_3\coloneqq
 		{\lambda_1\epsilon}\left(\int_0^T\mathbb{E}\left[|\Delta\mathscr{Y}_{N,M}^{k}|^2+|\Delta \mathscr{X}_{N,M}^{k}|^2\right]\mathrm{d}s+\mathbb{E}\left[|\Delta\mathscr{Y}_{N,M}^{k}(0)|^2\right]+\mathbb{E}\left[| \Delta \mathscr{X}_{N,M}^{k}(T)|^2\right]\right).
 	\end{equation}
 	In view of the definition of $\mathfrak{E}_1$ in \eqref{defn-E_*} and $\mathfrak{E}_2$ in \eqref{defn-E_*2}, we see that $\mathscr{A}_1+\mathscr{A}_2 = \frac{\lambda_1}{2\epsilon}\mathfrak{E}_1$ and $\mathscr{A}_3 = \lambda_1\epsilon\mathfrak{E}_2$. Therefore,
 	\begin{multline}\label{final-young-bound-2}
 		|\lambda_2-\lambda_1|\expectation{|D_xg(\mathscr{X}_2^{k,M}(T),\varrho_{2}^{k,M}(T))|| \Delta \mathscr{X}_{N,M}^{k}(T)|}+\mathbb{E}\left[|\Delta \mathscr{X}_{N,M}^{k}(0)| |\Delta\mathscr{Y}_{N,M}^{k}(0)|\right]
 		+|\mathfrak{I}(0,T)|
 		\\\leq  \frac{\lambda_1}{2\epsilon}\mathfrak{E}_1+ \lambda_1\epsilon\mathfrak{E}_2.
 	\end{multline}
 	
 	In all, combining \eqref{final-young-bound-1} and \eqref{final-young-bound-2},  we get
 		\begin{equation}
 		\begin{split}
 			&\lambda_1\int_0^{t_n}\mathbb{E}[|\Delta\mathscr{Y}_{N,M}^{k}|^2]\mathrm{d}s
 			\leq  \frac{C_{\gamma}}{2\epsilon}\lambda_1\mathfrak{E}_1+{C_{\gamma}}\epsilon\lambda_1\mathfrak{E}_2+ C_{\gamma}\lambda_1\int_0^{t_n}\left(\mathbb{E}\left[|\Delta \mathscr{X}_{N,M}^{k}|^2\right]+\mathcal{W}_2^2(\varrho_1^{k,N},\varrho_{2}^{k,M})\right)\ds 
 		\end{split}
 	\end{equation}
 	for all $n\in\{0,1,\cdots,M_k\}$ and all constant $0<\epsilon<1$, where the quantities $\mathfrak{E}_1$ and $\mathfrak{E}_2$ are defined as in \eqref{defn-E_*} and \eqref{defn-E_*2}, respectively, and $C_{\gamma}$ is the constant defined in \eqref{Cgamma-constant}  which depends only on the Lipschitz constants $\Lip(D_pH)$ and $\Lip(D_xH)$, and the strong convexity constant $c_0>0$. Since $\varrho_1^{k,N}(t)$ and $\varrho_{2}^{k,M}(t)$ are laws for $\mathscr{X}_1^{k,N}(t)$ and $\mathscr{X}_2^{k,M}(t)$ for all $t\in [0,T]$, respectively, from the definition of $\mathcal{W}_2$ we have the bound
 	\begin{equation}\label{dist-diff-time-err-pf-gen}
 		\mathcal{W}_2^2(\varrho_1^{k,N}(t),\varrho_{2}^{k,M}(t))\leq \expectation{\abs{\Delta \mathscr{X}_{N,M}^{k}(t)}^2}\quad\forall t\in [0,T].
 	\end{equation}
	It then follows that 
 	\begin{equation}
 		\begin{split}
 			&\lambda_1\int_0^{t_n}\mathbb{E}[|\Delta\mathscr{Y}_{N,M}^{k}|^2]\mathrm{d}s
 			\leq  \frac{C_{\gamma}}{2\epsilon}\lambda_1\mathfrak{E}_1+{C_{\gamma}}\epsilon\lambda_1\mathfrak{E}_2+ 2C_{\gamma}\lambda_1\int_0^{t_n}\mathbb{E}\left[|\Delta \mathscr{X}_{N,M}^{k}|^2\right]\ds 
 		\end{split}
 	\end{equation}
 	for all $n\in \{0,1,\cdots,M_k\}$ and all constant $0<\epsilon<1$. Divide this bound by $\lambda_1\in (0,1]$ to obtain \eqref{y-diff-bound-tn-gen}, as desired.
\end{proof}

If in addition one has that $D_xg$ is Lipschitz continuous, we show that the preceding lemma and a discrete Gr\"onwall's inequality allow us to deduce an $L^{\infty}$-in-time stability bound for the discrete Hamiltonian system \eqref{numerical-scheme-gen}.
\begin{lemma}[Uniform $L^{\infty}$-stability in time as $k\to \infty$]\label{X-Y-diff-Linf-bound}
		Let the Hamiltonian $H$ satisfy the regularity assumptions \eqref{ass-H:1}, \eqref{ass-H:2}, \eqref{ass-H:3}, \eqref{ass-H:4} and the displacement monotonicity condition \eqref{ass-H:disp-mono-cond-H}. Let the terminal cost $g$ satisfy the regularity assumptions \eqref{ass-g:1}, \eqref{ass-g:2} and the displacement monotonicity condition \eqref{ass-g:disp-mono-cond-g}. Let $N,M,k\in\mathbb{N}$ and 
	\begin{subequations}
		\begin{align}
			&\lambda_1\in (0,1],  \quad F_1,G_1\in \mathbb{V}_k(0,T;\mathcal{R}_{d}(\mathcal{A}_N,\mathcal{O}_N)), \quad X_{0,1}^N\in  \mathcal{R}_{d}(\mathcal{A}_N,\mathcal{O}_N),\label{data-line-1'''}
			\\
			&\lambda_2\in [0,1], \quad F_2,G_2\in \mathbb{V}_k(0,T;\mathcal{R}_{d}(\mathcal{A}_M,\mathcal{O}_M)),\quad X_{0,2}^M\in  \mathcal{R}_{d}(\mathcal{A}_M,\mathcal{O}_M)\label{data-line-2'''}
		\end{align}
	\end{subequations}
	be given, and suppose that the triples
	\begin{subequations}
		\begin{align}
			(\mathscr{X}_1^{k,N},\mathscr{Y}_1^{k,N},\varrho_1^{k,N})&\in \mathbb{V}_k^+([0,T];\mathcal{R}_{d}(\mathcal{A}_N,\mathcal{O}_N))\times \mathbb{V}_k^-([0,T];\mathcal{R}_{d}(\mathcal{A}_N,\mathcal{O}_N))\times \mathbb{P}_k([0,T];\mathcal{P}_{d,N}),\label{triple-1''}\\
			(\mathscr{X}_2^{k,M},\mathscr{Y}_2^{k,M},\varrho_{2}^{k,M})&\in \mathbb{V}_k^+([0,T];\mathcal{R}_{d}(\mathcal{A}_M,\mathcal{O}_M))\times \mathbb{V}_k^-([0,T];\mathcal{R}_{d}(\mathcal{A}_M,\mathcal{O}_M))\times \mathbb{P}_k([0,T];\mathcal{P}_{d,M})\label{triple-2''}
		\end{align}
	\end{subequations}  each satisfy the discrete Hamiltonian system \eqref{numerical-scheme-gen} with data \eqref{data-line-1'''} and \eqref{data-line-2'''}, respectively. There exists $k_{\dagger}\in\mathbb{N}$ such that
	\begin{equation}\label{pre-X-Y-Linf-bound}
		\sup_{0\leq t\leq T}\expectation{\abs{(\mathscr{X}_1^{k,N}-\mathscr{X}_2^{k,M})(t)}^2}+\sup_{0\leq t\leq T}\expectation{\abs{(\mathscr{Y}_1^{k,N}-\mathscr{Y}_2^{k,M})(t)}^2}\lesssim \frac{1}{\epsilon}\mathfrak{E}_1
		+\epsilon\mathfrak{E}_2
	\end{equation}
	for each $k\geq k_{\dagger}$ and all $0<\epsilon<1$, where $\mathfrak{E}_1$ and $\mathfrak{E}_2$ are defined as in \eqref{defn-E_*} and \eqref{defn-E_*2}, respectively. The hidden constant in \eqref{pre-X-Y-Linf-bound}  depends only on $k_{\dagger}$, $T$, $c_0$ and the Lipschitz constants of $D_pH$, $D_xH$, and $D_xg$.
\end{lemma}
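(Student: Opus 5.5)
We prove Lemma \ref{X-Y-diff-Linf-bound} by turning the discrete equations for $\Delta \mathscr{X}_{N,M}^{k}$ and $\Delta\mathscr{Y}_{N,M}^{k}$ into nodal energy identities. We then close them with a discrete Gr\"onwall argument: forward in time for $\Delta\mathscr{X}$, and backward in time for $\Delta\mathscr{Y}$. Lemma \ref{lemma-X-bound-Y-discr-bound} supplies the integrated $\Delta\mathscr{Y}$ control that couples the two.

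\textbf{Step 1 ($\Delta\mathscr{X}$, forward).} Test the first equation of \eqref{numerical-scheme-gen} (difference of the two systems) with $\Delta \mathscr{X}_{N,M}^{k}$ on $(0,t_n)$. Apply \eqref{eq:discrete_ibp-expectation_pp} with $V_1=V_2=\Delta\mathscr{X}$. The jump terms have a good sign, so they can be dropped, which gives
\[
\tfrac12\mathbb{E}|\Delta\mathscr{X}(t_n)|^2\le \tfrac12\mathbb{E}|\Delta\mathscr{X}(0)|^2+\int_0^{t_n}\mathbb{E}\bigl[(\lambda_1D_pH_1-\lambda_2D_pH_2+F_1-F_2)\cdot\Delta\mathscr{X}\bigr]\ds .
\]
Next split $\lambda_1D_pH_1-\lambda_2D_pH_2=\lambda_1(D_pH_1-D_pH_2)+(\lambda_1-\lambda_2)D_pH_2$. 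Use the Lipschitz property \eqref{ass-H:2} together with \eqref{dist-diff-time-err-pf-gen}, and Young's inequality. Every term not involving $\Delta\mathscr{X},\Delta\mathscr{Y}$ is then bounded by $\lambda_1^2\mathfrak{E}_1\le\mathfrak{E}_1$. Since $\Delta\mathscr{X}$ is piecewise constant and left-continuous, $\int_0^{t_n}\mathbb{E}|\Delta\mathscr{X}|^2\ds=\tau_k\sum_{j\le n}\mathbb{E}|\Delta\mathscr{X}(t_j)|^2$. This sum contains the current node $n$, so the scheme is implicit. This is the main technical point: the implicit term is absorbed into the left-hand side once $\tau_k C<1/2$, which is exactly what forces $k\ge k_\dagger$. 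Insert \eqref{y-diff-bound-tn-gen} for $\int_0^{t_n}\mathbb{E}|\Delta\mathscr{Y}|^2$, whose extra term is again $\int_0^{t_n}\mathbb{E}|\Delta\mathscr{X}|^2$. The discrete Gr\"onwall lemma (valid for $\tau_k$ small) then yields
\[
\max_n\mathbb{E}|\Delta\mathscr{X}(t_n)|^2\lesssim \tfrac1\epsilon\mathfrak{E}_1+\epsilon\mathfrak{E}_2 .
\]
Left continuity extends this bound to all $t\in[0,T]$.

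\textbf{Step 2 ($\Delta\mathscr{Y}$, backward).} Test the second equation with $\Delta\mathscr{Y}_{N,M}^{k}$ on $(t_n,T)$ and use \eqref{eq:discrete_ibp-expectation_nn}. The jump sum appears with a favorable sign, and we obtain
\[
\mathbb{E}|\Delta\mathscr{Y}(t_n)|^2\le \mathbb{E}|\Delta\mathscr{Y}(T)|^2+C\int_{t_n}^T\mathbb{E}\bigl[|\Delta\mathscr{X}|^2+|\Delta\mathscr{Y}|^2\bigr]\ds+\text{data}.
\]
At the terminal time, $\Delta\mathscr{Y}(T)=-\lambda_1(D_xg_1-D_xg_2)+(\lambda_2-\lambda_1)D_xg_2$. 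The Lipschitz continuity of $D_xg$ \eqref{ass-g:2} (this is where that hypothesis enters) bounds $\mathbb{E}|\Delta\mathscr{Y}(T)|^2\lesssim \mathbb{E}|\Delta\mathscr{X}(T)|^2+\lambda_1^2\mathfrak{E}_1$, and $\mathbb{E}|\Delta\mathscr{X}(T)|^2$ is already controlled by Step 1. The $\Delta\mathscr{X}$ integral is controlled by Step 1 as well. The $\Delta\mathscr{Y}$ integral can either be bounded directly by \eqref{y-diff-bound-tn-gen} with $n=M_k$, or handled by a backward discrete Gr\"onwall argument. In the latter case the implicit node term is again absorbed for $k\ge k_\dagger$, since right-continuity places the node $t_n$ inside $[t_n,t_{n+1})$. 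Right-continuity also extends the nodal bound to all $t$.

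Summing the bounds from the two steps gives \eqref{pre-X-Y-Linf-bound}. The constants depend only on $T$, $c_0$, the Lipschitz constants, and the choice of $k_\dagger$, which in turn is fixed by the Gr\"onwall smallness conditions on $\tau_k$.
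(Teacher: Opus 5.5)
Your proposal is correct and follows the paper's proof essentially step for step. You use the forward energy inequality from \eqref{eq:discrete_ibp-expectation_pp}, closed with \eqref{y-diff-bound-tn-gen} and the implicit forward discrete Gr\"onwall lemma, and then the backward inequality from \eqref{eq:discrete_ibp-expectation_nn} with the Lipschitz bound on $D_xg$ at $T$. Your first option for the $\Delta\mathscr{Y}$ integral, applying \eqref{y-diff-bound-tn-gen} with $n=M_k$ together with the Step 1 bound on $\int_0^T\mathbb{E}|\Delta\mathscr{X}|^2$, is a valid shortcut: it avoids the backward Gr\"onwall argument and the second smallness condition on $\tau_k$ that the paper uses.
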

The proof of this result will rely on the following discrete version of the Gr\"onwall Lemma, c.f.\ \cite[Lemma 2]{jones1964fundamental}, \cite[Proposition 4.1]{Emmrich2000DiscreteVO}. 
\begin{lemma}[Discrete Gr\"onwall Lemma]\label{lemma-discrete-gronwall}
	Let $T>0$ be constant, and let $\tau_k>0$, $M_k\in\mathbb{N}$ satisfy $T=\tau_kM_k$ and $M_{k+1}\geq M_k$, for $k\in\mathbb{N}$, and $M_k\to\infty$ as $k\to\infty$. Let constants $b_x,b_y,c_x,c_y\geq 0$ and $k_*\in\mathbb{N}$ be given. For each $k\in\mathbb{N}$, let $\{x_n\}_{n=1}^{M_k}$ be a sequence of non-negative numbers which satisfy
	\begin{equation}\label{gronwall-sum-cond-x}
			x_n\leq b_x +c_x\tau_k\sum_{m=1}^nx_m,
	\end{equation}
	for $n\in \{0,1,2,\cdots,M_k\}$. Then, for each $k\geq k_{\dagger}^x$,
	\begin{equation}\label{gronwall-uniform-x-bound}
			\max_{0\leq n \leq M_k} x_{n}\leq b_x \exp\left(\frac{c_xT}{1-c_x\tau_{k_{\dagger}^x}}\right),
	\end{equation}
	where $k_{\dagger}^x$ is the smallest positive integer such that $1-c_{x}\tau_{k_{\dagger}^x}>0$. 
	
	For each $k\geq k_*$,  let $\{y_n\}_{n=1}^{M_k}$ be a sequence of non-negative numbers which satisfy
	\begin{equation}\label{gronwall-sum-cond-y}
		y_n\leq b_y +c_y\tau_k\sum_{m=n+1}^{M_k}y_{m-1}
	\end{equation} 
	for $n\in \{0,1,2,\cdots,M_k\}$. Then, for each $k\geq k_{\dagger}^y$,
	\begin{equation}\label{gronwall-uniform-y-bound}
		\max_{0\leq n \leq M_k} y_{n}\leq b_y \exp\left(\frac{c_yT}{1-c_y\tau_{k_{\dagger}^y}}\right),
	\end{equation}
	 where $k_{\dagger}^y$ is the smallest positive integer such that $1-c_{x}\tau_{k_{\dagger}^y}>0$ and $k_{\dagger}^y\geq k_*$.
\end{lemma}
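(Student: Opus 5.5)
The proof is an elementary induction, done directly for $\{x_n\}$ and reduced to the same argument for $\{y_n\}$ by reversing the time index. The only points needing care are (i) the implicit term $x_n$ on the right-hand side of \eqref{gronwall-sum-cond-x}, which forces the smallness condition $1-c_x\tau_k>0$, and (ii) obtaining a bound uniform in $k\geq k_{\dagger}^x$.

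\textbf{Step 1: uniformity in $k$.} Since $M_{k+1}\geq M_k$, we have $\tau_k=T/M_k\leq\tau_{k_{\dagger}^x}$ for all $k\geq k_{\dagger}^x$. Hence $1-c_x\tau_k\geq 1-c_x\tau_{k_{\dagger}^x}>0$, so $c_x\tau_k<1$ for every such $k$. Fix such a $k$ and abbreviate $\tau=\tau_k$. Set
\[
a\coloneqq \frac{c_x\tau}{1-c_x\tau},\qquad\text{so that}\qquad 1+a=\frac{1}{1-c_x\tau}.
\]

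\textbf{Step 2: absorb the implicit term.} Moving $c_x\tau x_n$ to the left in \eqref{gronwall-sum-cond-x} and dividing by $1-c_x\tau>0$ gives
\[
x_n\leq (1+a)\Bigl(b_x+c_x\tau\sum_{m=1}^{n-1}x_m\Bigr).
\]

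\textbf{Step 3: induction.} I claim $x_n\leq b_x(1+a)^n$ for all $n\in\{0,\dots,M_k\}$. For $n=0$ the sum in \eqref{gronwall-sum-cond-x} is empty, so $x_0\leq b_x$. For the inductive step, insert the hypothesis for $m\leq n-1$ and sum the geometric series:
\[
c_x\tau\sum_{m=1}^{n-1}b_x(1+a)^m=b_x\,\frac{c_x\tau}{a}\bigl((1+a)^n-(1+a)\bigr)=b_x(1-c_x\tau)\bigl((1+a)^n-(1+a)\bigr).
\]
Multiplying by $1+a=(1-c_x\tau)^{-1}$ and adding $(1+a)b_x$ gives exactly $b_x(1+a)^n$, which closes the induction. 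I checked that the identity is exact, so no loss occurs here.

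\textbf{Step 4: conclude for $\{x_n\}$.} Using $n\leq M_k=T/\tau$ and $\log(1+a)\leq a$,
\[
(1+a)^n\leq\exp(aM_k)=\exp\Bigl(\frac{c_xT}{1-c_x\tau_k}\Bigr)\leq\exp\Bigl(\frac{c_xT}{1-c_x\tau_{k_{\dagger}^x}}\Bigr),
\]
where the last step uses Step 1. This gives \eqref{gronwall-uniform-x-bound}.

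\textbf{Step 5: the backward sequence.} For $\{y_n\}$, reverse time by setting $z_j\coloneqq y_{M_k-j}$. Then
\[
\sum_{m=n+1}^{M_k}y_{m-1}=y_n+\dots+y_{M_k-1}.
\]
This sum includes the current term $y_n$ but not $y_{M_k}$, which in the $z$-indexing is the sum $z_1+\dots+z_j$ with $j=M_k-n$. So \eqref{gronwall-sum-cond-y} becomes exactly the form \eqref{gronwall-sum-cond-x} for $\{z_j\}$ with constants $b_y,c_y$. Steps 1 to 4 then apply verbatim for $k\geq k_{\dagger}^y$, where $k_{\dagger}^y$ is chosen so that $1-c_y\tau_{k_{\dagger}^y}>0$ and $k_{\dagger}^y\geq k_*$. (The statement writes $c_x$ in the definition of $k_{\dagger}^y$; presumably $c_y$ is intended.) This yields \eqref{gronwall-uniform-y-bound}.

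The main obstacle is only the bookkeeping in Step 3: keeping the factor $(1-c_x\tau)^{-1}$ inside the geometric growth rather than as an extra multiplicative constant, so that the final bound is exactly $b_x$ times the exponential.
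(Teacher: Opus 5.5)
Your proof is correct and follows the paper's argument. The paper obtains $x_n\le b_x(1-c_x\tau_k)^{-n}=b_x(1+a)^n$ by citing Emmrich's discrete Gr\"onwall inequality, where you prove it by induction; it then bounds this via $e^s\ge 1+s$ with $s=a$ and $\tau_k\le\tau_{k_{\dagger}^x}$, and treats $\{y_n\}$ with the same reversal $z_n=y_{M_k-n}$. You are also right that $c_x$ should read $c_y$ in the definition of $k_{\dagger}^y$, which is how the lemma is used later; the only nit is that your geometric-sum step divides by $a$, so the trivial case $c_x=0$ should be mentioned separately.
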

For completeness, the proof of this lemma is provided in Appendix \ref{app-aux-disc-temp-ana}.

\begin{proof}[Proof of Lemma \ref{X-Y-diff-Linf-bound}]
 	Let $\Delta \mathscr{X}_{N,M}^{k}\coloneqq \mathscr{X}_1^{k,N}-\mathscr{X}_2^{k,M}\in \mathbb{V}_k^+([0,T];\LLspace)$ and $\Delta\mathscr{Y}_{N,M}^{k}\coloneqq \mathscr{Y}_1^{k,N}-\mathscr{Y}_2^{k,M}\in \mathbb{V}_k^-([0,T];\LLspace)$.  The integration-by-parts formula \eqref{eq:discrete_ibp-expectation_pp} then implies that
 	\begin{multline}
 		\expectation{\abs{\Delta \mathscr{X}_{N,M}^{k}(t_n)}^2}\\= \expectation{\abs{\Delta \mathscr{X}_{N,M}^{k}(0)}^2}+2\int_0^{t_n}\expectation{\Delta \mathscr{X}_{N,M}^{k}\cdot\partial_t\Ip \Delta \mathscr{X}_{N,M}^{k}}\mathrm{d}s- \sum_{j=1}^n\mathbb{E}\left[\left|\jump{\Delta \mathscr{X}_{N,M}^{k}}_{j-1}\right|^2\right]
 		\\
 		\leq \expectation{\abs{\Delta \mathscr{X}_{N,M}^{k}(0)}^2}+2\int_0^{t_n}\expectation{\Delta \mathscr{X}_{N,M}^{k}\cdot\partial_t\Ip \Delta \mathscr{X}_{N,M}^{k}}\mathrm{d}s
 	\end{multline}
 	for all $n\in \{0,1,2,\cdots,M_k\}$. The systems satisfied by $(\mathscr{X}_1^{k,N},\mathscr{Y}_1^{k,N},\varrho_1^{k,N}),(\mathscr{X}_2^{k,M},\mathscr{Y}_2^{k,M},\varrho_{2}^{k,M})$ respectively then imply that
 	\begin{equation}
 		\begin{split}
 			&\expectation{\abs{\Delta \mathscr{X}_{N,M}^{k}(t_n)}^2}
 			\\
 			&\leq \expectation{\abs{\Delta \mathscr{X}_{N,M}^{k}(0)}^2}
 			\\
 			&\qquad+2\int_0^{t_n}\expectation{\left(\lambda_1 D_pH(\mathscr{X}_1^{k,N},\mathscr{Y}_1^{k,N},\varrho_1^{k,N}) - \lambda_2 D_pH(\mathscr{X}_2^{k,M},\mathscr{Y}_2^{k,M},\varrho_{2}^{k,M})\right)\cdot \Delta \mathscr{X}_{N,M}^{k}}\ds
 			\\
 			&\qquad\qquad\qquad\qquad+2\int_0^{t_n}\expectation{(F_1-F_2)\cdot \Delta \mathscr{X}_{N,M}^{k}}\mathrm{d}s
 			\\
 			&=\expectation{\abs{\Delta \mathscr{X}_{N,M}^{k}(0)}^2}\\
 			&\quad+2\lambda_1\int_0^{t_n}\expectation{\left( D_pH(\mathscr{X}_1^{k,N},\mathscr{Y}_1^{k,N},\varrho_1^{k,N}) -  D_pH(\mathscr{X}_2^{k,M},\mathscr{Y}_2^{k,M},\varrho_{2}^{k,M})\right)\cdot \Delta \mathscr{X}_{N,M}^{k}}\ds
 			\\
 			&\qquad +2(\lambda_1-\lambda_2)\int_0^{t_n}\expectation{\left( D_pH(\mathscr{X}_2^{k,M},\mathscr{Y}_2^{k,M},\varrho_{2}^{k,M})\right)\cdot \Delta \mathscr{X}_{N,M}^{k}}\ds\\
 			&\qquad\qquad+2\int_0^{t_n}\expectation{(F_1-F_2)\cdot \Delta \mathscr{X}_{N,M}^{k}}\mathrm{d}s
 		\end{split}
 	\end{equation}
 	for $n\in\{0,1,2,\cdots,M_k\}$.
 	Lipschitz continuity of $D_pH$ via \eqref{ass-H:2} implies that
 	\begin{equation}
 		\begin{split}
 			&\expectation{\abs{\Delta \mathscr{X}_{N,M}^{k}(t_n)}^2}
 			\\
 			&\leq \expectation{\abs{\Delta \mathscr{X}_{N,M}^{k}(0)}^2}\\
 			&\quad+2
 			\Lip(D_pH)\int_0^{t_n}\expectation{\abs{\Delta \mathscr{X}_{N,M}^{k}}^2+\abs{\Delta\mathscr{Y}_{N,M}^{k}}\abs{\Delta \mathscr{X}_{N,M}^{k}}+\mathcal{W}_2(\varrho_1^{k,N},\varrho_{2}^{k,M})\abs{\Delta \mathscr{X}_{N,M}^{k}}}\ds
 			\\
 			&\quad+ 2|\lambda_1-\lambda_2|\int_0^{t_n}\expectation{ |D_pH(\mathscr{X}_2^{k,M},\mathscr{Y}_2^{k,M},\varrho_{2}^{k,M})||\Delta \mathscr{X}_{N,M}^{k}|}\ds+2\int_0^{t_n}\expectation{|F_1-F_2||\Delta \mathscr{X}_{N,M}^{k}|}\mathrm{d}s
 		\end{split}
 	\end{equation}
 	where we used the fact that $0<\lambda_1\leq 1$. We apply Young's inequality, without parameter, to each of the latter three integral terms of the upper bound to get
 	\begin{multline}
 		2
 		\Lip(D_pH)\int_0^{t_n}\expectation{\abs{\Delta \mathscr{X}_{N,M}^{k}}^2+\abs{\Delta\mathscr{Y}_{N,M}^{k}}\abs{\Delta \mathscr{X}_{N,M}^{k}}+\mathcal{W}_2(\varrho_1^{k,N},\varrho_{2}^{k,M})\abs{\Delta \mathscr{X}_{N,M}^{k}}}\ds\\\leq 2\Lip(D_pH)\left(\int_0^{t_n}\mathbb{E}\left[2\abs{\Delta \mathscr{X}_{N,M}^{k}}^2+\frac{1}{2}\abs{\Delta \mathscr{Y}_{N,M}^{k}}^2+\frac{1}{2}\mathcal{W}_2^2(\varrho_1^{k,N},\varrho_{2}^{k,M})\right]\mathrm{d}s\right),
 	\end{multline}
 	\begin{multline}
 		2|\lambda_1-\lambda_2|\int_0^{t_n}\expectation{ |D_pH(\mathscr{X}_2^{k,M},\mathscr{Y}_2^{k,M},\varrho_{2}^{k,M})||\Delta \mathscr{X}_{N,M}^{k}|}\ds\\\leq (\lambda_1-\lambda_2)^2\int_0^{t_n}\mathbb{E}\left[ |D_pH(\mathscr{X}_2^{k,M},\mathscr{Y}_2^{k,M},\varrho_{2}^{k,M})|^2\right]\ds +\int_0^{t_n}\mathbb{E}\left[ |\Delta \mathscr{X}_{N,M}^{k}|^2\right]\ds, 
 	\end{multline}
 	\begin{equation}
 		2\int_0^{t_n}\expectation{|F_1-F_2||\Delta \mathscr{X}_{N,M}^{k}|}\mathrm{d}s\leq  \int_0^{t_n}\mathbb{E}\left[ |F_1-F_2|^2\right]\ds +\int_0^{t_n}\mathbb{E}\left[ |\Delta \mathscr{X}_{N,M}^{k}|^2\right]\ds
 	\end{equation}
 	for each $n\in\{0,1,2,\cdots, M_k\}$. Adding these bounds, we then obtain 
 	\begin{multline}
 		\expectation{\abs{\Delta \mathscr{X}_{N,M}^{k}(t_n)}^2}
 		\\
 		\leq (4\Lip(D_pH)+2)\int_0^{t_n}\mathbb{E}\left[ |\Delta \mathscr{X}_{N,M}^{k}|^2\right]\ds + \Lip(D_pH)\int_0^{t_n}\mathcal{W}_2^2(\varrho_1^{k,N},\varrho_{2}^{k,M})\ds \\+ \Lip(D_pH)\int_0^{t_n}\mathbb{E}\left[ |\Delta \mathscr{Y}_{N,M}^{k}|^2\right]\ds
 		+\expectation{\abs{\Delta \mathscr{X}_{N,M}^{k}(0)}^2}\\+ (\lambda_1-\lambda_2)^2\int_0^{t_n}\mathbb{E}\left[ |D_pH(\mathscr{X}_2^{k,M},\mathscr{Y}_2^{k,M},\varrho_{2}^{k,M})|^2\right]\ds+ \int_0^{t_n}\mathbb{E}\left[ |F_1-F_2|^2\right]\ds 
 	\end{multline}
 	for each $n\in\{0,1,2,\cdots, M_k\}$. Since $0<\lambda_1\leq 1$ and $t_n\in (0,T)$ for 
 	for each $n\in\{0,1,2,\cdots, M_k\}$, we have the bound
 	\begin{multline}
 		 \expectation{\abs{\Delta \mathscr{X}_{N,M}^{k}(0)}^2}+ (\lambda_1-\lambda_2)^2\int_0^{t_n}\mathbb{E}\left[ |D_pH(\mathscr{X}_2^{k,M},\mathscr{Y}_2^{k,M},\varrho_{2}^{k,M})|^2\right]\ds+ \int_0^{t_n}\mathbb{E}\left[ |F_1-F_2|^2\right]\ds \leq \mathfrak{E}_1
 	\end{multline}
 	after recalling the definition of $\mathfrak{E}_1$ in \eqref{defn-E_*}. We therefore deduce that 	
 	\begin{equation}
 		\begin{split}
 			&\expectation{\abs{\Delta \mathscr{X}_{N,M}^{k}(t_n)}^2}
 			\lesssim \mathfrak{E}_1+\int_0^{t_n}\expectation{\abs{\Delta \mathscr{X}_{N,M}^{k}}^2+\abs{\Delta\mathscr{Y}_{N,M}^{k}}^2+\mathcal{W}_2^2(\varrho_1^{k,N},\varrho_{2}^{k,M})}\ds,
 		\end{split}
 	\end{equation} for $n\in\{0,1,2,\cdots, M_k\}$, where the hidden constant depends only on $\Lip(D_pH)$ and where $\mathfrak{E}_1$ is defined in \eqref{defn-E_*}. Using \eqref{dist-diff-time-err-pf-gen}, together  with \eqref{y-diff-bound-tn-gen} from Lemma \ref{lemma-X-bound-Y-discr-bound}, gives for all $n\in\{0,1,2,\cdots,M_k\}$ and $0<\epsilon<1$
 	\begin{equation}\label{X-diff-pre-gronwall}
 		\begin{split}
 			&\expectation{\abs{\Delta \mathscr{X}_{N,M}^{k}(t_n)}^2}
 			\lesssim \int_0^{t_n}\expectation{\abs{\Delta \mathscr{X}_{N,M}^{k}}^2}\ds
 			+ \frac{1}{\epsilon}\mathfrak{E}_1
 			+\epsilon\mathfrak{E}_2
 		\end{split}
 	\end{equation}  where the hidden constant in \eqref{X-diff-pre-gronwall} depends on only $c_0$ and the Lipschitz constants of $D_pH,D_xH$. 
 	
 	Recall that $\Delta \mathscr{X}_{N,M}^{k}$ is piecewise constant in time with values in $\LLspace$ and is left-continuous. As such, the bound \eqref{X-diff-pre-gronwall} can be written equivalently as a discrete summation inequality of the form 
 	\begin{equation}
 		\expectation{\abs{\Delta \mathscr{X}_{N,M}^{k}(t_n)}^2}
 		\leq C_{X}\left(\tau_k\sum_{m=1}^{n}\expectation{\abs{\Delta \mathscr{X}_{N,M}^{k}(t_m)}^2}
 		+ \frac{1}{\epsilon}\mathfrak{E}_1
 		+\epsilon\mathfrak{E}_2\right)
 	\end{equation}
 	for all $n\in\{0,1,2,\cdots,M_k\}$ where $C_{X}$ is the hidden constant in \eqref{X-diff-pre-gronwall}. Take the smallest positive integer $k_{\dagger}^x\in\mathbb{N}$ such that $1-C_{X}\tau_{k_{\dagger}^x}>0$. Note then that such $k_{\dagger}^x$ depends only $c_0$ and the Lipschitz constants of $D_pH,D_xH$.  We can therefore apply the forward-in-time discrete Gronwall inequality \eqref{gronwall-uniform-x-bound} from Lemma \ref{lemma-discrete-gronwall} to deduce that
 	\begin{equation}
 		\expectation{\abs{\Delta \mathscr{X}_{N,M}^{k}(t_n)}^2}\leq C_X \exp\left(\frac{C_XT}{1-C_X\tau_{k_{\dagger}^x}}\right)\left(\frac{1}{\epsilon}\mathfrak{E}_1
 		+\epsilon\mathfrak{E}_2\right)
 	\end{equation} for all $n\in\{0,1,2,\cdots,M_k\}$, $0<\epsilon<1$ and $k\geq k_{\dagger}^x$, where the constant $C_X \exp\left(\frac{C_XT}{1-C_X\tau_{k_{\dagger}^x}}\right)$ above depends only on $k_{\dagger}^x$, $T$, $c_0$ and the Lipschitz constants of $D_pH,D_xH$.
 	Since $\Delta \mathscr{X}_{N,M}^{k}$ is left-continuous in $[0,T]$, this bound holds for all $t\in [0,T]$, i.e.\
 	\begin{equation}\label{sup-norm-diff-X-time-approx-gen}
 		\sup_{0\leq t\leq T}\expectation{\abs{\Delta \mathscr{X}_{N,M}^{k}(t)}^2}\lesssim \frac{1}{\epsilon}\mathfrak{E}_1
 		+\epsilon\mathfrak{E}_2
 	\end{equation} for all $0<\epsilon<1$ and all $k\geq k_{\dagger}^x$. The hidden constant in the above bound depends on only $k_{\dagger}^x$, $c_0$, $T$, and the Lipschitz constants of $D_pH,D_xH$.
 	
 	To prove the analogous bound for the $Y$-component of the stability bound we follow the same approach. Indeed, we infer from the integration by parts formula \eqref{eq:discrete_ibp-expectation_nn} and the systems satisfied by $$(\mathscr{X}_1^{k,N},\mathscr{Y}_1^{k,N},\varrho_1^{k,N}),(\mathscr{X}_2^{k,M},\mathscr{Y}_2^{k,M},\varrho_{2}^{k,M}),$$ respectively, that
 	\begin{equation}
 		\begin{split}
 			&\expectation{\abs{\Delta\mathscr{Y}_{N,M}^{k}(t_n)}^2}
 			\\
 			&= \expectation{\abs{\Delta\mathscr{Y}_{N,M}^{k}(T)}^2}-2\int_{t_n}^T\expectation{\Delta\mathscr{Y}_{N,M}^{k}\cdot\partial_t\In \Delta\mathscr{Y}_{N,M}^{k}}\mathrm{d}s- \sum_{m=n+1}^{M_k}\mathbb{E}\left[|\jump{\Delta\mathscr{Y}_{N,M}^{k}}_{m}|^2\right]
 			\\
 			&\leq \expectation{\abs{\Delta\mathscr{Y}_{N,M}^{k}(T)}^2}-2\int_{t_n}^T\expectation{\Delta\mathscr{Y}_{N,M}^{k}\cdot\partial_t\In \Delta\mathscr{Y}_{N,M}^{k}}\mathrm{d}s
 			\\
 			&=\expectation{\abs{\lambda_1 D_xg(\mathscr{X}_1^{k,N}(T),\varrho_1^{k,N}(T)) -\lambda_2 D_xg(\mathscr{X}_2^{k,M}(T),\varrho_{2}^{k,M}(T))}^2}
 			\\
 			&\qquad+2\int_{t_n}^T\expectation{\left(\lambda_1 D_xH(\mathscr{X}_1^{k,N},\mathscr{Y}_1^{k,N},\varrho_1^{k,N}) - \lambda_2 D_xH(\mathscr{X}_2^{k,M},\mathscr{Y}_2^{k,M},\varrho_{2}^{k,M})\right)\cdot \Delta\mathscr{Y}_{N,M}^{k}}\mathrm{d}s
 			\\
 			&\qquad\qquad-2\int_{t_n}^T\expectation{(G_1-G_2)\cdot \Delta\mathscr{Y}_{N,M}^{k}}\ds
 		\end{split}
 	\end{equation}
 	which implies
 	\begin{equation}
 		\begin{split}
 			&\expectation{\abs{\Delta\mathscr{Y}_{N,M}^{k}(t_n)}^2}
 			\\
 			&\leq 2\lambda_1^2 \expectation{\abs{ D_xg(\mathscr{X}_1^{k,N}(T),\varrho_1^{k,N}(T)) - D_xg(\mathscr{X}_2^{k,M}(T),\varrho_{2}^{k,M}(T))}^2}
 			\\
 			&\quad+2(\lambda_1-\lambda_2)^2\expectation{\abs{D_xg(\mathscr{X}_2^{k,M}(T),\varrho_{2}^{k,M}(T))}^2}
 			\\
 			&\quad+2\lambda_1\int_{t_n}^T\expectation{\left| D_xH(\mathscr{X}_1^{k,N},\mathscr{Y}_1^{k,N},\varrho_1^{k,N}) -  D_xH(\mathscr{X}_2^{k,M},\mathscr{Y}_2^{k,M},\varrho_{2}^{k,M})\right| | \Delta\mathscr{Y}_{N,M}^{k}|}\mathrm{d}s
 			\\
 			&\quad+2|\lambda_1-\lambda_2|\int_{t_n}^T\expectation{| D_xH(\mathscr{X}_2^{k,M},\mathscr{Y}_2^{k,M},\varrho_{2}^{k,M})| | \Delta\mathscr{Y}_{N,M}^{k}|}\mathrm{d}s+2\int_{t_n}^T\expectation{|G_1-G_2| | \Delta\mathscr{Y}_{N,M}^{k}|}\ds
 		\end{split}
 	\end{equation}
 	for $n\in\{0,1,2,\cdots,M_k\}$. 
 	
 	Lipschitz continuity of $D_xg$ and $D_xH$ via \eqref{ass-g:2},\eqref{ass-H:2} and the assumption $0<\lambda_1\leq 1$ imply that
 	\begin{multline}\label{Y-delta-bound}
 		\expectation{\abs{\Delta\mathscr{Y}_{N,M}^{k}(t_n)}^2}
 		\\
 		\leq 4\Lip(D_xg)^2\expectation{\abs{\Delta \mathscr{X}_{N,M}^k(T)}^2+\mathcal{W}_2^2(\varrho_1^{k,N}(T),\varrho_{2}^{k,M}(T))}
 		+2(\lambda_1-\lambda_2)^2\expectation{\abs{D_xg(\mathscr{X}_2^{k,M}(T),\varrho_{2}^{k,M}(T))}^2}
 		\\
 		\quad+2\Lip(D_xH)\int_{t_n}^T\expectation{\abs{\Delta \mathscr{Y}_{N,M}^{k}}^2+\abs{\Delta\mathscr{Y}_{N,M}^{k}}\abs{\Delta \mathscr{X}_{N,M}^{k}}+\mathcal{W}_2(\varrho_1^{k,N},\varrho_{2}^{k,M})\abs{\Delta \mathscr{Y}_{N,M}^{k}}}\mathrm{d}s
 		\\
 		\quad+2|\lambda_1-\lambda_2|\int_{t_n}^T\expectation{| D_xH(\mathscr{X}_2^{k,M},\mathscr{Y}_2^{k,M},\varrho_{2}^{k,M})| | \Delta\mathscr{Y}_{N,M}^{k}|}\mathrm{d}s+2\int_{t_n}^T\expectation{|G_1-G_2| | \Delta\mathscr{Y}_{N,M}^{k}|}\ds.
 	\end{multline}
 	Using Young's inequality, without parameter, we obtain that
 	\begin{multline}
 		2\Lip(D_xH)\int_{t_n}^T\expectation{\abs{\Delta \mathscr{Y}_{N,M}^{k}}^2+\abs{\Delta\mathscr{Y}_{N,M}^{k}}\abs{\Delta \mathscr{X}_{N,M}^{k}}+\mathcal{W}_2(\varrho_1^{k,N},\varrho_{2}^{k,M})\abs{\Delta \mathscr{Y}_{N,M}^{k}}}\mathrm{d}s\\
 		\leq  2\Lip(D_xH)\left(\int_{t_n}^T\mathbb{E}\left[2\abs{\Delta \mathscr{Y}_{N,M}^{k}}^2+\frac{1}{2}\abs{\Delta \mathscr{X}_{N,M}^{k}}^2+\frac{1}{2}\mathcal{W}_2^2(\varrho_1^{k,N},\varrho_{2}^{k,M})\right]\mathrm{d}s\right),
 	\end{multline}
 	\begin{multline}
 		2|\lambda_1-\lambda_2|\int_{t_n}^T\expectation{| D_xH(\mathscr{X}_2^{k,M},\mathscr{Y}_2^{k,M},\varrho_{2}^{k,M})| | \Delta\mathscr{Y}_{N,M}^{k}|}\mathrm{d}s\\\leq (\lambda_1-\lambda_2)^2\int_{t_n}^T\mathbb{E}\left[ |D_xH(\mathscr{X}_2^{k,M},\mathscr{Y}_2^{k,M},\varrho_{2}^{k,M})|^2\right]\ds +\int_{t_n}^T\mathbb{E}\left[ |\Delta \mathscr{Y}_{N,M}^{k}|^2\right]\ds, 
 	\end{multline}
 	\begin{equation}
 	2\int_{t_n}^T\expectation{|G_1-G_2| | \Delta\mathscr{Y}_{N,M}^{k}|}\ds\leq  \int_{t_n}^T\mathbb{E}\left[ |G_1-G_2|^2\right]\ds +\int_{t_n}^T\mathbb{E}\left[ |\Delta \mathscr{Y}_{N,M}^{k}|^2\right]\ds,
 	\end{equation}
 	for each $n\in\{0,1,2,\cdots, M_k\}$.  Adding these bounds we obtain from  \eqref{Y-delta-bound}
 	\begin{multline}
 		\expectation{\abs{\Delta\mathscr{Y}_{N,M}^{k}(t_n)}^2}
 		\\
 		\leq 4\Lip(D_xg)^2\expectation{\abs{\Delta \mathscr{X}_{N,M}^k(T)}^2+\mathcal{W}_2^2(\varrho_1^{k,N}(T),\varrho_{2}^{k,M}(T))} 		+
 		 (4\Lip(D_xH)+2)\int_{t_n}^T\mathbb{E}\left[ |\Delta \mathscr{Y}_{N,M}^{k}|^2\right]\ds  \\+\Lip(D_xH)\int_{t_n}^T\mathcal{W}_2^2(\varrho_1^{k,N},\varrho_{2}^{k,M})\ds 
 		+ \Lip(D_xH)\int_{t_n}^T\mathbb{E}\left[ |\Delta \mathscr{X}_{N,M}^{k}|^2\right]\ds
 		\\+2(\lambda_1-\lambda_2)^2\expectation{\abs{D_xg(\mathscr{X}_2^{k,M}(T),\varrho_{2}^{k,M}(T))}^2}+ (\lambda_1-\lambda_2)^2\int_{t_n}^T\mathbb{E}\left[ |D_xH(\mathscr{X}_2^{k,M},\mathscr{Y}_2^{k,M},\varrho_{2}^{k,M})|^2\right]\ds\\+\int_{t_n}^T\mathbb{E}\left[ |G_1-G_2|^2\right]\ds
 	\end{multline}
 	Since $0<\lambda_1\leq 1$ and $t_n\in (0,T)$ for each $n\in\{0,1,2,\cdots,M_k\}$, we have that 
 	\begin{multline}
 		2(\lambda_1-\lambda_2)^2\expectation{\abs{D_xg(\mathscr{X}_2^{k,M}(T),\varrho_{2}^{k,M}(T))}^2}+ (\lambda_1-\lambda_2)^2\int_{t_n}^T\mathbb{E}\left[ |D_xH(\mathscr{X}_2^{k,M},\mathscr{Y}_2^{k,M},\varrho_{2}^{k,M})|^2\right]\ds\\+\int_{t_n}^T\mathbb{E}\left[ |G_1-G_2|^2\right]\ds\leq 2\mathfrak{E}_1
 	\end{multline}
 	where we recall that $\mathfrak{E}_1$ is defined in \eqref{defn-E_*}. We have therefore shown that
 	\begin{multline}
 			\expectation{\abs{\Delta\mathscr{Y}_{N,M}^{k}(t_n)}^2}
 			\lesssim \expectation{\abs{\Delta \mathscr{X}_{N,M}^{k}(T)}^2+\mathcal{W}_2^2(\varrho_1^{k,N}(T),\rho_{2}(T))}\\\qquad\qquad\qquad\qquad\qquad\qquad+ \int_{t_n}^T\expectation{\abs{\Delta \mathscr{X}_{N,M}^{k}}^2+\abs{\Delta\mathscr{Y}_{N,M}^{k}}^2+\mathcal{W}_2^2(\varrho_1^{k,N},\varrho_{2}^{k,M})}\ds+\mathfrak{E}_1
 	\end{multline}
 	whence \eqref{dist-diff-time-err-pf-gen} implies 
 	\begin{equation}
 		\begin{split}
 			&\expectation{\abs{\Delta\mathscr{Y}_{N,M}^{k}(t_n)}^2}
 			\lesssim \expectation{\abs{\Delta \mathscr{X}_{N,M}^{k}(T)}^2}+\mathfrak{E}_1+ \int_{t_n}^T\expectation{\abs{\Delta \mathscr{X}_{N,M}^{k}}^2+\abs{\Delta\mathscr{Y}_{N,M}^{k}}^2}\ds
 		\end{split}
 	\end{equation} for all $n\in\{0,1,2,\cdots,M_k\}$, where the hidden constant depends only on the Lipschitz constants of $D_xH$ and $D_xg$. The uniform bound \eqref{sup-norm-diff-X-time-approx-gen} for the difference in the solutions to the discrete state equations then gives
 	\begin{equation}\label{Y-Delta-gronwall condition}
 		\begin{split}
 			&\expectation{\abs{\Delta\mathscr{Y}_{N,M}^{k}(t_n)}^2} \leq C_Y\int_{t_n}^T\expectation{\abs{\Delta\mathscr{Y}_{N,M}^{k}}^2}\ds+ \tilde{C}_X\left(\frac{1}{\epsilon}\mathfrak{E}_1
 			+\epsilon\mathfrak{E}_2\right)
 		\end{split}
 	\end{equation} for all $n\in\{0,1,2,\cdots,M_k\}$, $k\geq k_{\dagger}^x$, and all $0<\epsilon<1$, where the constant $C_Y$ depends only on the Lipschitz constants of $D_xH$ and $D_xg$, while the constant $\tilde{C}_X$ takes the form $\tilde{C}_X\coloneqq C_Y(1+T)C_X \exp\left(\frac{C_XT}{1-C_X\tau_{k_{\dagger}^x}}\right)$ and depends only on $k_{\dagger}^x, c_0,T$ and the Lipschitz constants of $D_pH$,$D_xH,D_xg$. 
 	Recalling that $\Delta \mathscr{Y}_{N,M}^k$ is piecewise constant in time with values in $\LLspace$ and is right-continuous, the above bound \eqref{Y-Delta-gronwall condition} is equivalent to
 	 \begin{equation}
 	 	\expectation{\abs{\Delta \mathscr{Y}_{N,M}^{k}(t_n)}^2}
 	 	\leq C_{Y}\tau_k\sum_{m=n+1}^{M_k}\expectation{\abs{\Delta \mathscr{Y}_{N,M}^{k}(t_{m-1})}^2}+
 	 	\tilde{C}_X\left(\frac{1}{\epsilon}\mathfrak{E}_1
 	 	+\epsilon\mathfrak{E}_2\right)
 	 \end{equation}
 	 for all $n\in\{0,1,2,\cdots,M_k\}$ and $k\geq k_{\dagger}^x$. Taking $k_{\dagger}^y$ to be the smallest positive integer such that $1-C_Y\tau_{k_{\dagger}^y}>0$ and $k_{\dagger}^y\geq k_{\dagger}^x$, we take $k_*\coloneqq k_{\dagger}^x$ and apply the backward-in-time discrete Gronwall inequality \eqref{gronwall-uniform-y-bound} from Lemma \ref{lemma-discrete-gronwall} to deduce that
 	\begin{equation}
 		\expectation{\abs{\Delta\mathscr{Y}_{N,M}^{k}(t_n)}^2}\leq \tilde{C}_X \exp\left(\frac{C_YT}{1-C_Y\tau_{k_{\dagger}^y}}\right) \left(\frac{1}{\epsilon}\mathfrak{E}_1
 		+\epsilon\mathfrak{E}_2\right)
 	\end{equation} for all $n\in\{0,1,2,\cdots,M_k\}$, $0<\epsilon<1$, and $k\geq k_{\dagger}^y$. Note that $\Delta\mathscr{Y}_{N,M}^{k}$ is right-continuous in $[0,T]$ since $\Delta\mathscr{Y}_{N,M}^{k}\in  \mathbb{V}_k^-([0,T];\LLspace)$. Consequently, this bound holds for all $t\in [0,T]$, thereby implying
 	\begin{equation}\label{sup-norm-diff-Y-time-approx-gen}
 		\sup_{0\leq t\leq T}\expectation{\abs{\Delta\mathscr{Y}_{N,M}^{k}(t)}^2}\lesssim \frac{1}{\epsilon}\mathfrak{E}_1
 		+\epsilon\mathfrak{E}_2
 	\end{equation} for all $0<\epsilon<1$ and  $k\geq k_{\dagger}\coloneqq k_{\dagger}^y$. The hidden constant in the above bound takes the form $C_Y(1+T)C_X \exp\left(\frac{C_XT}{1-C_X\tau_{k_{\dagger}^x}}\right) \exp\left(\frac{C_YT}{1-C_Y\tau_{k_{\dagger}^y}}\right)$ depends on only $k_{\dagger}^x,c_0,T$ and the Lipschitz constants of the functions $D_pH,D_xH,D_xg$. The proof is completed upon summing \eqref{sup-norm-diff-X-time-approx-gen} and \eqref{sup-norm-diff-Y-time-approx-gen} for each $k\geq k_{\dagger}$, noting then that the hidden constant in the resultant bound depends on only $c_0,T$ and the Lipschitz constants of $D_pH,D_xH,D_xg$ since $k_{\dagger}^x$ depends only on $c_0$ and the Lipschitz constants of $D_pH,D_xH$ and $D_xg$.
 \end{proof}
 
 We now complete the proof of the main result of this section, namely Theorem  \ref{theorem-cont-dependence-discrete-Hamiltonian-sys}.
 
\begin{proof}[Proof of Theorem  \ref{theorem-cont-dependence-discrete-Hamiltonian-sys}]
	
		Assume that the Hamiltonian $H$ satisfies the regularity assumptions \eqref{ass-H:1}, \eqref{ass-H:2}, \eqref{ass-H:3}, \eqref{ass-H:4} and the displacement monotonicity condition \eqref{ass-H:disp-mono-cond-H}. Further assume that the terminal cost $g$ satisfies the regularity assumptions \eqref{ass-g:1}, \eqref{ass-g:2} and the displacement monotonicity condition \eqref{ass-g:disp-mono-cond-g}.  Let $k,N,M\in\mathbb{N}$ be given. Suppose that the triples $(\mathscr{X}_1^{k,N},\mathscr{Y}_1^{k,N},\varrho_1^{k,N})$,  $(\mathscr{X}_2^{k,M},\mathscr{Y}_2^{k,M},\varrho_{2}^{k,M})$ in \eqref{triple-1}, \eqref{triple-2} each satisfy the discrete Hamiltonian system \eqref{numerical-scheme-gen} with data \eqref{data-line-1} and  \eqref{data-line-2}, respectively. 
		Let $\Delta \mathscr{X}_{N,M}^{k}\coloneqq \mathscr{X}_1^{k,N}-\mathscr{X}_2^{k,M}\in \mathbb{V}_k^+([0,T];\LLspace)$ and $\Delta\mathscr{Y}_{N,M}^{k}\coloneqq \mathscr{Y}_1^{k,N}-\mathscr{Y}_2^{k,M}\in \mathbb{V}_k^-([0,T];\LLspace)$. By  Lemma \ref{X-Y-diff-Linf-bound}, there exists $k_{\dagger}$ such that, for each $k\geq k_{\dagger}$, we have the bound \eqref{pre-X-Y-Linf-bound}. From this bound we immediately deduce that, for $k\geq k_{\dagger}$, 
		\begin{multline}
			\mathfrak{E}_2= \mathbb{E}\left[|(\mathscr{Y}_1^{k,N}-\mathscr{Y}_2^{k,M})(0)|^2+|(\mathscr{X}_1^{k,N}-\mathscr{X}_2^{k,M})(T)|^2\right]\\+\int_0^T\mathbb{E}\left[|\mathscr{X}_1^{k,N}-\mathscr{X}_2^{k,M}|^2+|\mathscr{Y}_1^{k,N}-\mathscr{Y}_2^{k,M}|^2\right]\ds
			\\
			\leq (2+T)\left(\sup_{0\leq t\leq T}\expectation{\abs{(\mathscr{X}_1^{k,N}-\mathscr{X}_2^{k,M})(t)}^2}+\sup_{0\leq t\leq T}\expectation{\abs{(\mathscr{Y}_1^{k,N}-\mathscr{Y}_2^{k,M})(t)}^2}\right)\\\lesssim (2+T)\left( \frac{1}{\epsilon}\mathfrak{E}_1
			+\epsilon\mathfrak{E}_2\right),
	\end{multline}
	since $\Delta \mathscr{X}_{N,M}^k,\Delta \mathscr{Y}_{N,M}^k\in L^{\infty}(0,T;L^2(\Omega;\mathbb{R}^d))$. We have hence shown that
	\begin{equation}
		\begin{split}
		&\mathfrak{E}_2
		\leq C_* \left(\frac{1}{\epsilon}\mathfrak{E}_1
		+\epsilon\mathfrak{E}_2\right)
		\end{split} 
	\end{equation} for all $0<\epsilon<1$ and $k\geq k_{\dagger}$, where the constant $C_*$ depends on only $c_0,T$ and the Lipschitz constants of $D_pH$, $D_xH$, and $D_xg$. In fact, from the proof of the previous Lemma \ref{X-Y-diff-Linf-bound}, this constant can be taken to be 
	\begin{equation}\label{C_*-defn}
		C_*=2(T+2)(T+1)\max\{C_X,C_Y\}\exp\left(\left(\frac{C_X}{1-C_X\tau_{k_{\dagger}^x}}+\frac{C_Y}{1-C_Y\tau_{k_{\dagger}^y}}\right)T\right)
	\end{equation} Therefore, taking $\epsilon=\frac{1}{2}C_*>0$, we conclude that
	\begin{equation}
		\mathbb{E}\left[|\Delta\mathscr{Y}_{N,M}^{k}(0)|^2+|\Delta \mathscr{X}_{N,M}^{k}(T)|^2\right]+\int_0^T\expectation{\abs{\Delta\mathscr{Y}_{N,M}^{k}}^2+\abs{\Delta \mathscr{X}_{N,M}^{k}}^2}\mathrm{d}s=\mathfrak{E}_2\leq 4C_*^2 \mathfrak{E}_1
	\end{equation} for all $k\geq k_{\dagger}$. It then follows from  \eqref{pre-X-Y-Linf-bound} and \eqref{dist-diff-time-err-pf-gen} that, for $k\geq k_{\dagger}$, there holds
	\begin{equation}\label{unif-Linf-X-Y-diff-bound}
		\begin{split}
			&\sup_{t\in [0,T]}\mathcal{W}_2(\varrho_1^{k,N}(t),\varrho_{2}^{k,M}(t))+\sup_{t\in [0,T]}\left(\mathbb{E}\left[|\Delta\mathscr{Y}_{N,M}^{k}(t)|^2\right]\right)^{\frac{1}{2}}+\sup_{t\in [0,T]}\left(\mathbb{E}\left[|\Delta \mathscr{X}_{N,M}^{k}(t)|^2\right]\right)^{\frac{1}{2}}\lesssim \mathfrak{E}_1^{\frac{1}{2}}
		\end{split}
	\end{equation}
	where $\mathfrak{E}_1$ is as defined in \eqref{defn-E_*}. The hidden constant here takes the form $3C_*$, where $C_*$ is defined in \eqref{C_*-defn}, which depends on only $c_0$, $T$, and the Lipschitz constants of $D_pH$, $D_xH,$ and $D_xg$. This completes the proof of the lemma.
\end{proof}

\section{Existence of numerical approximations}\label{sec-existence}
In this section we establish Theorem \ref{theorem-existence-num-scheme} on the existence of solutions to the numerical scheme \eqref{numerical-scheme}. The strategy we employ will be based on the Leray--Schauder--Schaefer fixed point theorem. For completeness, we formulate here the version of this theorem that is given in \cite[Theorem 11.3]{gilbarg2015elliptic}. 
\begin{theorem}[Leray--Schauder--Schaefer fixed point theorem]\label{lemma-leray--schauder--schaefer-fpt}
	Let $\mathcal{T}$ be a continuous, compact mapping of a Banach space $\mathfrak{B}$ into itself, and suppose there exists a constant $M$ such that $\|x\|_{\mathfrak{B}}< M$ for all $x\in \mathfrak{B}$ and $\beta\in [0,1]$ satisfying $x = \beta \mathcal{T}x$. Then $\mathcal{T}$ has a fixed point.
\end{theorem}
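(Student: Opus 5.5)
The plan is to reduce the statement to the Schauder fixed point theorem: a continuous map of a closed convex subset $K$ of a Banach space into itself whose image $\mathcal{T}(K)$ is relatively compact has a fixed point. This is exactly the route taken in \cite{gilbarg2015elliptic}. Here ``compact mapping'' is understood in the usual sense that $\mathcal{T}$ sends bounded sets to relatively compact sets.

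\textbf{Step 1 (radial retraction).} Let $M$ be the a priori bound from the statement. Define the radial retraction $r:\mathfrak{B}\to \overline{B_M}$ by
\begin{equation}
r(x)=x \ \text{ if } \|x\|_{\mathfrak{B}}\le M, \qquad r(x)=\frac{Mx}{\|x\|_{\mathfrak{B}}} \ \text{ if } \|x\|_{\mathfrak{B}}> M .
\end{equation}
This map is continuous; in fact it is Lipschitz with constant at most $2$. Set $\mathcal{T}^*\coloneqq r\circ\mathcal{T}$ restricted to the closed convex set $K=\overline{B_M}$. Then $\mathcal{T}^*$ maps $K$ into $K$ and is continuous. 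Its image $r(\mathcal{T}(K))$ is relatively compact, because $\mathcal{T}(K)$ is relatively compact ($K$ is bounded) and $r$ is continuous.

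\textbf{Step 2 (apply Schauder).} By the Schauder theorem there is $x\in K$ with $x=r(\mathcal{T}x)$.

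\textbf{Step 3 (exclude the boundary case).} Suppose $\|\mathcal{T}x\|_{\mathfrak{B}}\le M$. Then $x=\mathcal{T}x$ and we are done. Otherwise $\|\mathcal{T}x\|_{\mathfrak{B}}>M$ and $x=\beta\mathcal{T}x$ with $\beta\coloneqq M/\|\mathcal{T}x\|_{\mathfrak{B}}\in(0,1)$. In that case $\|x\|_{\mathfrak{B}}=M$, which contradicts the hypothesis that every solution of $x=\beta\mathcal{T}x$ with $\beta\in[0,1]$ satisfies $\|x\|_{\mathfrak{B}}<M$. Hence $\mathcal{T}$ has a fixed point.

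\textbf{Main obstacle.} The real content sits in the Schauder theorem itself; the steps above are routine. If it has to be proved rather than cited, I would argue as follows.

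\begin{itemize}
\item Fix $\varepsilon>0$ and cover the compact set $\overline{\mathcal{T}^*(K)}$ by a finite $\varepsilon$-net $\{y_i\}$.
\item Form the Schauder projection $P_\varepsilon y=\sum_i \mu_i(y)y_i/\sum_i\mu_i(y)$, with weights $\mu_i(y)=\max\{0,\varepsilon-\|y-y_i\|_{\mathfrak{B}}\}$. It is continuous, satisfies $\|P_\varepsilon y-y\|_{\mathfrak{B}}<\varepsilon$, and takes values in the convex hull $K_\varepsilon\subset K$ of the $y_i$, which is a finite-dimensional compact convex set.
\item Apply Brouwer's theorem to $P_\varepsilon\circ\mathcal{T}^*$ on $K_\varepsilon$ to get $x_\varepsilon$ with $\|x_\varepsilon-\mathcal{T}^*x_\varepsilon\|_{\mathfrak{B}}<\varepsilon$.
\item Extract a convergent subsequence of $\mathcal{T}^*x_\varepsilon$ using relative compactness. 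Then $x_\varepsilon$ converges to the same limit, and continuity of $\mathcal{T}^*$ shows that this limit is a fixed point of $\mathcal{T}^*$.
\end{itemize}
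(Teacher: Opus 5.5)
Your proof is correct. It is essentially the standard argument of Gilbarg--Trudinger (Theorem 11.3): retract radially onto $\overline{B_M}$, apply Schauder's theorem (itself obtained from Brouwer via the Schauder projection), and use the a priori bound to rule out a fixed point on $\|x\|_{\mathfrak{B}}=M$. The paper gives no proof of its own and simply cites that source.
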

\begin{remark}
	In the analysis of systems of partial differential equations {associated to MFGs}, such as \eqref{mfg-pde-sys}, a typical approach to establishing the existence of solutions is an application of Schauder's fixed point theorem, in the case where the system has differentiable Hamiltonians, e.g.\ \cite{achdou2020mean}, or its set-valued generalization in Kakutani's fixed point theorem,  when the Hamiltonian of the system is nondifferentiable, see e.g.\ \cite{ducasse2020second,osborne2022analysis,osborne2024erratum}. In either of those settings, formulating a fixed-point solution map with bounded images is often made feasible under some sufficient conditions which ensure uniform boundedness of solutions to the KFP equation of the system. 
    
    However, in our setting where we study discrete-in-time Hamiltonian ODE systems under some general hypotheses for the model data, no such boundedness property is ensured for solutions to either of the ODE subproblems if one unknown of the system is fixed, unless we further strengthen assumptions on the Hamiltonian. We show in the sequel that it instead suffices to prove uniform boundedness of solutions to a parametrized family of fixed point problems by leveraging the displacement monotonicity of the model data to ultimately conclude existence of a solution via the Leray--Schauder--Schaefer fixed point theorem.
\end{remark}

\subsection{Definition of discrete solution map}
Fix $N
\in
\mathbb{N}$, $X_0^N\in\mathcal{R}_{d}(\mathcal{A}_N,\mathcal{O}_N)$ distributed according to $
\rho_0^N$, for some weights $\mathcal{A}_N$ and partition $\mathcal{O}_{N}$ of $\Omega$, and let $T>0$ be arbitrary. To begin the proof of the main existence result, we introduce a discrete solution map and prove that it has a fixed point, thereby establishing existence as desired. For each $k,N\in\mathbb{N}$, let the map $T^{k,N}$ be defined as follows: for each $Z=(Z_1,Z_2)\in  \mathbb{V}_k^+([0,T];\mathcal{R}_{d}(\mathcal{A}_N,\mathcal{O}_N))\times  \mathbb{V}_k(0,T;\mathcal{R}_{d}(\mathcal{A}_N,\mathcal{O}_N))$, let 
$$T^{k,N}[Z]\coloneqq (T_{1}^{k,N}[Z],T_2^{k,N}[Z])\in  \mathbb{V}_k^+([0,T];\mathcal{R}_{d}(\mathcal{A}_N,\mathcal{O}_N))\times \mathbb{V}_k^-([0,T];\mathcal{R}_{d}(\mathcal{A}_N,\mathcal{O}_N))$$ uniquely satisfy 
\begin{equation}\label{numerical-scheme-fp-map}
	\left\{\begin{aligned}
		\mathcal{I}_+^kT_{1}^{k,N}[Z](t) &=X_0^{N}+ \int_0^tD_pH\left(Z_1(s),Z_2(s),(Z_1(s))_{\#}\mathbb{P}\right)\mathrm{d}s\qquad\qquad\text{ in }[0,T],
		\\
		\mathcal{I}_-^kT_{2}^{k,N}[Z](t) &= -D_xg\left(Z_1(T),(Z_1(T))_{\#}\mathbb{P} \right)+\int_t^TD_xH\left(Z_1(s),Z_2(s),(Z_1(s))_{\#}\mathbb{P}\right)\mathrm{d}s\quad\text{ in }[0,T].
	\end{aligned}\right.
\end{equation}
Since $\mathbb{V}_k^-([0,T];\mathcal{R}_{d}(\mathcal{A}_N,\mathcal{O}_N))$ is a vector subspace of $\mathbb{V}_k(0,T;\mathcal{R}_{d}(\mathcal{A}_N,\mathcal{O}_N))$, it is clear that $T^{k,N}$ maps $\mathbb{V}_k^+([0,T];\mathcal{R}_{d}(\mathcal{A}_N,\mathcal{O}_N))\times  \mathbb{V}_k(0,T;\mathcal{R}_{d}(\mathcal{A}_N,\mathcal{O}_N))$ into itself, and that any fixed point of $T^{k,N}$ is a solution of the numerical scheme \eqref{numerical-scheme}, whence $\rho^{k,N}(t) \coloneqq (X^{k,N}(t))_{\#}\mathbb{P}$. The following lemma confirms that this solution map is indeed well-defined since functions in $ \mathbb{V}_k^+([0,T];\mathcal{R}_{d}(\mathcal{A}_N,\mathcal{O}_N))\times \mathbb{V}_k^-([0,T];\mathcal{R}_{d}(\mathcal{A}_N,\mathcal{O}_N))$ are uniquely determined by their values at the temporal node points $\{0,t_1,t_2,\cdots,t_{M_k-1},T\}$.
\begin{lemma}\label{lemma-fp-map-nodal-charac}
	For each $Z=(Z_1,Z_2)\in  \mathbb{V}_k^+([0,T];\mathcal{R}_{d}(\mathcal{A}_N,\mathcal{O}_N))\times  \mathbb{V}_k(0,T;\mathcal{R}_{d}(\mathcal{A}_N,\mathcal{O}_N))$, $$T^{k,N}[Z]=(T_{1}^{k,N}[Z],T_2^{k,N}[Z])\in  \mathbb{V}_k^+([0,T];\mathcal{R}_{d}(\mathcal{A}_N,\mathcal{O}_N))\times \mathbb{V}_k^-([0,T];\mathcal{R}_{d}(\mathcal{A}_N,\mathcal{O}_N))$$ satisfies \eqref{numerical-scheme-fp-map} if and only if
	\begin{equation}\label{temp-node-cond-1}
		T_{1}^{k,N}[Z](t_n) =X_0^{N}+ \int_0^{t_n}D_pH\left(Z_1(s),Z_2(s),(Z_1(s))_{\#}\mathbb{P}\right)\mathrm{d}s
	\end{equation}
	\begin{equation}\label{temp-node-cond-2}
		T_{2}^{k,N}[Z](t_n) = -D_xg\left(Z_1(T),(Z_1(T))_{\#}\mathbb{P} \right)+\int_{t_n}^TD_xH\left(Z_1(s),Z_2(s),(Z_1(s))_{\#}\mathbb{P}\right)\mathrm{d}s
	\end{equation}
	for $n=0,1,2,\cdots,M_k$.
\end{lemma}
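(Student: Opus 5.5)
The plan is to prove both implications directly from the definitions of $\Ip$, $\In$ and the spaces $\mathbb{V}_k^{\pm}$. The key facts are two. First, $\Ip V(t_n)=V(t_n)$ and $\In W(t_n)=W(t_n)$ for all nodes $n\in\{0,\dots,M_k\}$, as noted in the Remark following \eqref{In-op}. Second, both $\Ip V$ and $\In W$ are affine in $t$ on each closed interval $[t_{n-1},t_n]$; this follows from \eqref{Ip-op}, \eqref{In-op} and \eqref{derivatives_Ip}.

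For the direction from \eqref{numerical-scheme-fp-map} to the nodal conditions, I will evaluate both identities of \eqref{numerical-scheme-fp-map} at $t=t_n$ and use $\Ip T_1^{k,N}[Z](t_n)=T_1^{k,N}[Z](t_n)$ and $\In T_2^{k,N}[Z](t_n)=T_2^{k,N}[Z](t_n)$. This gives \eqref{temp-node-cond-1} and \eqref{temp-node-cond-2} immediately. For $n=0$ in the first equation, left-continuity gives $T_1^{k,N}[Z](0)=X_0^N$; for $n=M_k$ in the second, the convention $W(t_{M_k}^+)=W(b)$ applies.

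For the converse, I assume the nodal identities hold. The integrands $s\mapsto D_pH(Z_1,Z_2,(Z_1)_\#\mathbb{P})$ and $s\mapsto D_xH(Z_1,Z_2,(Z_1)_\#\mathbb{P})$ are piecewise constant in time on $\mathcal{J}_k$ with values in $\mathcal{R}_d(\mathcal{A}_N,\mathcal{O}_N)$, by the Remark containing \eqref{nonlinearity-identity-1}--\eqref{nonlinearity-identity-2}. Hence the right-hand sides of \eqref{numerical-scheme-fp-map} are continuous and affine on each $[t_{n-1},t_n]$. The left-hand sides $\Ip T_1^{k,N}[Z]$ and $\In T_2^{k,N}[Z]$ are also continuous and affine on each such interval, and by the nodal identities they agree with the right-hand sides at both endpoints $t_{n-1},t_n$. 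Two affine maps that agree at two distinct points coincide, so \eqref{numerical-scheme-fp-map} holds on all of $[0,T]$.

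I would also note why this implies the map $T^{k,N}$ is well defined. An element of $\mathbb{V}_k^+$ is determined by its nodal values, since it equals $V(t_n)$ on $(t_{n-1},t_n]$, with $V(0)$ prescribed. An element of $\mathbb{V}_k^-$ is likewise determined, since it equals $W(t_{n-1})$ on $[t_{n-1},t_n)$, with $W(T)$ prescribed. The nodal formulas therefore define $T^{k,N}[Z]$ uniquely.

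The only delicate step is the endpoint bookkeeping: checking that the affine-interpolation identity for $\Ip$ holds on the closed interval, including $t=a$ via $\jump{V}_0$, and analogously at $t=b$ for $\In$. I expect this to be a direct check from the conventions rather than a real obstacle.
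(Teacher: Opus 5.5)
Your proposal is correct and follows essentially the same route as the paper. Necessity comes from evaluating \eqref{numerical-scheme-fp-map} at the nodes, using $\mathcal{I}_+^kV(t_n)=V(t_n)$ and $\mathcal{I}_-^kW(t_n)=W(t_n)$. For sufficiency, the paper explicitly computes $\mathcal{I}_+^kW_1(t_*)$ for $t_*\in I_m$, using the nodal identities and the piecewise constancy of the integrand; this is exactly your observation that both sides are affine on $[t_{m-1},t_m]$ and agree at its endpoints.
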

\begin{proof}
	It is clear that if a function $W=(W_1,W_2)\in  \mathbb{V}_k^+([0,T];\mathcal{R}_{d}(\mathcal{A}_N,\mathcal{O}_N))\times \mathbb{V}_k^-([0,T];\mathcal{R}_{d}(\mathcal{A}_N,\mathcal{O}_N))$ satisfies the equations 
	\begin{subequations}\label{numerical-scheme-fp-map-necessary}
		\begin{align}
			\mathcal{I}_+^kW_1(t) &=X_0^{N}+ \int_0^tD_pH\left(Z_1(s),Z_2(s),(Z_1(s))_{\#}\mathbb{P}\right)\mathrm{d}s\text{ in }[0,T], \label{disc-HJ-eqn-fp-map-nec}
			\\
			\mathcal{I}_-^kW_2(t) &= -D_xg\left(Z_1(T),(Z_1(T))_{\#}\mathbb{P} \right)+\int_t^TD_xH\left(Z_1(s),Z_2(s),(Z_1(s))_{\#}\mathbb{P}\right)\mathrm{d}s\text{ in }[0,T], \label{disc-KFP-eqn-fp-map-nec}
		\end{align}
	\end{subequations}
	then we immediately have that 
	\begin{equation}\label{temp-cond-w-1}
		W_1(t_n) =X_0^{N}+ \int_0^{t_n}D_pH\left(Z_1(s),Z_2(s),(Z_1(s))_{\#}\mathbb{P}\right)\mathrm{d}s
	\end{equation}
	\begin{equation}\label{temp-cond-w-2}
		W_2(t_n) = -D_xg\left(Z_1(T),(Z_1(T))_{\#}\mathbb{P} \right)+\int_{t_n}^TD_xH\left(Z_1(s),Z_2(s),(Z_1(s))_{\#}\mathbb{P}\right)\mathrm{d}s
	\end{equation}
	for $n=0,1,2,\cdots,M_k$.  This shows the necessity of the images of $T^{k,N}$ satisfying the temporal node conditions \eqref{temp-node-cond-1}, \eqref{temp-node-cond-2}. We now show the sufficiency of these temporal node conditions.
	
	
	Suppose   $W=(W_1,W_2)\in  \mathbb{V}_k^+([0,T];\mathcal{R}_{d}(\mathcal{A}_N,\mathcal{O}_N))\times \mathbb{V}_k^-([0,T];\mathcal{R}_{d}(\mathcal{A}_N,\mathcal{O}_N))$ satisfies  \eqref{temp-cond-w-1} and \eqref{temp-cond-w-2} for $n=0,1,2,\cdots,M_k$. We clearly then have that $W$ satisfies the equations \eqref{numerical-scheme-fp-map} for $t=0,t_1,t_2,\cdots,t_{M_k-1},T$. We thus seek to show that these equations hold for all $t\in [0,T]\backslash\{0,t_1,t_2,\cdots,t_{M_k-1},T\}$. Given $t_*\in [0,T]\backslash\{0,t_1,t_2,\cdots,t_{M_k-1},T\}$ there exists $m\in \{1,2,\cdots,M_k\}$ such that $t_*\in (t_{m-1},t_m)=I_m$. We have by definition of $\Ip W_1|_{I_m}$ and the fact that $W_1$ is left-continuous 
	\begin{equation}
		\begin{split} 
			\Ip W_1(t_*) &= W_1(t_*) + \tau_k^{-1}\left(t_m-t_*\right)\left(W_1|_{I_{m-1}}-W_1|_{I_m}\right)
			\\
			&=W_1(t_m) +  \tau_k^{-1}\left(t_m-t_*\right)\left(W_1(t_{m-1})-W_1(t_m)\right)
			\\
			&=W_1(t_m) +  \tau_k^{-1}\left(t_m-t_*\right) \int_{t_m}^{t_{m-1}}D_pH\left(Z_1(s),Z_2(s),(Z_1(s))_{\#}\mathbb{P}\right)\mathrm{d}s
		\end{split} 
	\end{equation}
	where in the final line we used the temporal node conditions \eqref{temp-cond-w-1}, \eqref{temp-cond-w-2}. Since functions in the space $ \mathbb{V}_k^+([0,T];\mathcal{R}_{d}(\mathcal{A}_N,\mathcal{O}_N))\times  \mathbb{V}_k(0,T;\mathcal{R}_{d}(\mathcal{A}_N,\mathcal{O}_N))$ are piecewise constant in time with respect to the temporal partition $\mathcal{J}_k$, we get 
	\begin{equation}
		\begin{split}
			&\Ip W_1(t_*) =W_1(t_m) +  \tau_k^{-1}\left(t_m-t_*\right)(t_{m-1}-t_m) D_pH\left(Z_1(s),Z_2(s),(Z_1(s))_{\#}\mathbb{P}\right)|_{I_m}
			\\
			&=X_0^{N}+ \int_0^{t_m}D_pH\left(Z_1(s),Z_2(s),(Z_1(s))_{\#}\mathbb{P}\right)\mathrm{d}s - \left(t_m-t_*\right) D_pH\left(Z_1(s),Z_2(s),(Z_1(s))_{\#}\mathbb{P}\right)|_{I_m}
			\\
			&=X_0^{N}+ \int_0^{t_{m-1}}D_pH\left(Z_1(s),Z_2(s),(Z_1(s))_{\#}\mathbb{P}\right)\mathrm{d}s +\left(t_*-t_{m-1}\right) D_pH\left(Z_1(s),Z_2(s),(Z_1(s))_{\#}\mathbb{P}\right)|_{I_m} 
			\\
			&\qquad+ \int_{t_{m-1}}^{t_m}D_pH\left(Z_1(s),Z_2(s),(Z_1(s))_{\#}\mathbb{P}\right)\mathrm{d}s + \left(t_{m-1}-t_{m}\right) D_pH\left(Z_1(s),Z_2(s),(Z_1(s))_{\#}\mathbb{P}\right)|_{I_m} 
		\end{split}
	\end{equation}
	Since $$\int_{t_{m-1}}^{t_m}D_pH\left(Z_1(s),Z_2(s),(Z_1(s))_{\#}\mathbb{P}\right)\mathrm{d}s=(t_m-t_{m-1}) D_pH\left(Z_1(s),Z_2(s),(Z_1(s))_{\#}\mathbb{P}\right)|_{I_m}$$ and $$\left(t_*-t_{m-1}\right) D_pH\left(Z_1(s),Z_2(s),(Z_1(s))_{\#}\mathbb{P}\right)|_{I_m} = \int_{t_{m-1}}^{t_*}D_pH\left(Z_1(s),Z_2(s),(Z_1(s))_{\#}\mathbb{P}\right)\mathrm{d}s,$$ we conclude that 
	\begin{equation}
		\Ip W_1(t_*)=X_0^{N}+ \int_0^{t_{*}}D_pH\left(Z_1(s),Z_2(s),(Z_1(s))_{\#}\mathbb{P}\right)\mathrm{d}s
	\end{equation}
	As $t_*\in [0,T]\backslash\{0,t_1,t_2,\cdots,t_{M_k-1},T\}$ was arbitrary, this shows the sufficiency of the temporal node condition for $W_1$. By exacty the same argument, we analogously deduce the sufficiency of the temporal node condition for $W_2$. In all, we deduce that the temporal node conditions \eqref{temp-node-cond-1}, \eqref{temp-node-cond-2} are sufficient for constructing a function $W$ that satisfies \eqref{numerical-scheme-fp-map}, thereby completing the proof.
\end{proof}

\subsection{Continuity and compactness of {of the} discrete solution map}
We aim to apply the Leray--Schauder--Schaefer fixed point theorem to prove the existence of a fixed point for $T^{k,N}$. To this end, we show in the next result that $T^{k,N}$ is a continuous and compact operator. Equip the finite-dimensional vector space $ \mathbb{X}_k\coloneqq \mathbb{V}_k^+([0,T];\mathcal{R}_{d}(\mathcal{A}_N,\mathcal{O}_N))\times  \mathbb{V}_k(0,T;\mathcal{R}_{d}(\mathcal{A}_N,\mathcal{O}_N))$ with the norm
\begin{equation}\label{Xk-space-norm}
	\lVert (Z_1,Z_2)\rVert_{\mathbb{X}_k}^2\coloneqq\|Z_1\|_{ \mathbb{V}_k^+([0,T];\mathcal{R}_{d}(\mathcal{A}_N,\mathcal{O}_N))}^2+\|Z_2\|_{ \mathbb{V}_k(0,T;\mathcal{R}_{d}(\mathcal{A}_N,\mathcal{O}_N))}^2\quad\forall Z=(Z_1,Z_2)\in\mathbb{X}_k.
\end{equation}
where 
\begin{subequations}
	\begin{align}
		&\|Z_1\|_{ \mathbb{V}_k^+([0,T];\mathcal{R}_{d}(\mathcal{A}_N,\mathcal{O}_N))}^2\coloneqq  \int_0^T\mathbb{E}\left[|\partial_t\Ip Z_1|^2+|Z_1|^2\right]\mathrm{d}s+\expectation{|Z_1(0)|^2}\label{Vkplus-norm}
		\\
		&\|Z_2\|_{ \mathbb{V}_k(0,T;\mathcal{R}_{d}(\mathcal{A}_N,\mathcal{O}_N))}^2\coloneqq  \int_0^T\mathbb{E}\left[|Z_2|^2\right]\mathrm{d}s\label{Vk-norm}
	\end{align}
\end{subequations}
for $(Z_1,Z_2)\in\mathbb{X}_k$.
\begin{remark}
	The finite dimensional normed spaces $( \mathbb{V}_k^+([0,T];\mathcal{R}_{d}(\mathcal{A}_N,\mathcal{O}_N)),\|\cdot\|_{ \mathbb{V}_k^+([0,T];\mathcal{R}_{d}(\mathcal{A}_N,\mathcal{O}_N))})$, $( \mathbb{V}_k(0,T;\mathcal{R}_{d}(\mathcal{A}_N,\mathcal{O}_N)),\|\cdot\|_{ \mathbb{V}_k(0,T;\mathcal{R}_{d}(\mathcal{A}_N,\mathcal{O}_N))})$ are Banach, and hence $\mathbb{X}_k$, equipped with the norm \eqref{Xk-space-norm}, is also finite-dimensional and Banach.
\end{remark}
\begin{remark}
Let us motivate the choice of the above norm for $ \mathbb{V}_k^+([0,T];\mathcal{R}_{d}(\mathcal{A}_N,\mathcal{O}_N))$. We note the following $L^{\infty}$-in-time bound for maps in $\mathbb{V}_k^+([0,T];\mathcal{R}_{d}(\mathcal{A}_N,\mathcal{O}_N))$: 
\begin{multline}\label{L-inf-bound-Vk+}
	\sup_{0\leq t\leq T}\mathbb{E}\left[|Z(t)|^2\right]\leq  \int_0^T\mathbb{E}\left[|\partial_t\Ip Z|^2+|Z|^2\right]\mathrm{d}s+\expectation{|Z(0)|^2}=\|Z\|_{ \mathbb{V}_k^+([0,T];\mathcal{R}_{d}(\mathcal{A}_N,\mathcal{O}_N))}^2\\
	\forall Z\in \mathbb{V}_k^+([0,T];\mathcal{R}_{d}(\mathcal{A}_N,\mathcal{O}_N)).
\end{multline}
This is proved directly as follows. The integration-by-parts formula \eqref{eq:discrete_ibp-expectation_pp} implies that 
$$\mathbb{E}[|V(t_n)|^2]\leq \mathbb{E}[|V(0)|^2]+2\int_{0}^{t_n}\mathbb{E}\left[\partial_t\Ip V\cdot V\right]\ds,$$ for $n=0,1,\cdots,M_k$. Apply Young's inequality to deduce the desired bound \eqref{L-inf-bound-Vk+} for times $t$ at the temporal nodes. But the bound therefore holds also for all time since functions in $\mathbb{V}_k^+([0,T];\mathcal{R}_{d}(\mathcal{A}_N,\mathcal{O}_N))$ are left-continuous. Therefore, for each $Z\in  \mathbb{V}_k^+([0,T];\mathcal{R}_{d}(\mathcal{A}_N,\mathcal{O}_N))$ we are able to bound the quantity $\sup_{0\leq t\leq T}\mathbb{E}\left[|Z(t)|^2\right]$ by the squared norm \eqref{Vkplus-norm}. This fact will play a role in establishing the continuity of the solution map $T^{k,N}$.
\end{remark}

The continuity and compactness of $T^{k,N}$ is now proved in the following result. 
\begin{lemma}\label{lemma-cont-compactness-of-fp-map}
		Let the Hamiltonian $H$ satisfy the regularity assumptions \eqref{ass-H:1}, \eqref{ass-H:2}. Let the terminal cost $g$ satisfy the regularity assumptions \eqref{ass-g:1}, \eqref{ass-g:2}. There exists a constant $C\geq 0$, which depends only on $T$ and the Lipschitz constants of $D_pH$, $D_xH$, and $D_xg$, such that, for each $k,N\in\mathbb{N}$, 
	\begin{equation}\label{disc-cont-dependence-bound-fp-map}
		\lVert T^{k,N}[Z]-T^{k,N}[W]\rVert_{\mathbb{X}_k}\leq C\lVert Z-W\rVert_{\mathbb{X}_k}\quad \forall Z,W\in \mathbb{X}_k.
	\end{equation}
	Furthermore, $T^{k,N}$ is a compact operator.
\end{lemma}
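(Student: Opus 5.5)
The plan is to estimate the two components of $T^{k,N}[Z]-T^{k,N}[W]$ separately, using the nodal characterisation of Lemma \ref{lemma-fp-map-nodal-charac} and the derivative formula \eqref{derivatives_Ip}. Write $\Delta_1\coloneqq T_1^{k,N}[Z]-T_1^{k,N}[W]$ and $\Delta_2\coloneqq T_2^{k,N}[Z]-T_2^{k,N}[W]$. Set
\[
\delta_p(s)\coloneqq D_pH(Z_1,Z_2,(Z_1)_{\#}\mathbb{P})-D_pH(W_1,W_2,(W_1)_{\#}\mathbb{P}),
\]
and define $\delta_x$ analogously with $D_xH$. From \eqref{numerical-scheme-fp-map} we get $\partial_t\Ip\Delta_1=\delta_p$ on each $I_n$ and $\Delta_1(0)=0$. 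Hence
\[
\|\Delta_1\|^2_{\mathbb{V}_k^+}=\int_0^T\mathbb{E}\bigl[|\delta_p|^2+|\Delta_1|^2\bigr]\ds .
\]
By the Lipschitz property \eqref{ass-H:2} together with the coupling bound $\mathcal{W}_2((Z_1)_{\#}\mathbb{P},(W_1)_{\#}\mathbb{P})\le\|Z_1-W_1\|_{L^2(\Omega;\mathbb{R}^d)}$ (and $\mathcal{W}_1\le\mathcal{W}_2$), we have pointwise in time
\[
\mathbb{E}|\delta_p|^2\le 3\Lip(D_pH)^2\bigl(2\,\mathbb{E}|Z_1-W_1|^2+\mathbb{E}|Z_2-W_2|^2\bigr).
\]
Integrating gives $\int_0^T\mathbb{E}|\delta_p|^2\lesssim\|Z-W\|_{\mathbb{X}_k}^2$. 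For $\int_0^T\mathbb{E}|\Delta_1|^2$ we use \eqref{L-inf-bound-Vk+}-type reasoning directly. Since $\Delta_1(t_n)=\int_0^{t_n}\delta_p\,\ds$, Cauchy--Schwarz gives $\sup_t\mathbb{E}|\Delta_1(t)|^2\le T\int_0^T\mathbb{E}|\delta_p|^2\ds$, and left-continuity extends this from the nodes to all $t$. Consequently $\int_0^T\mathbb{E}|\Delta_1|^2\le T^2\int_0^T\mathbb{E}|\delta_p|^2$.

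For the second component, only the $L^2(0,T;L^2)$ norm is needed. The nodal formula gives
\[
\Delta_2(t_n)=-\bigl(D_xg(Z_1(T),\cdot)-D_xg(W_1(T),\cdot)\bigr)+\int_{t_n}^T\delta_x\,\ds .
\]
The terminal term is bounded by $2\Lip(D_xg)\|Z_1(T)-W_1(T)\|_{L^2}$, and this is controlled by $\|Z_1-W_1\|_{\mathbb{V}_k^+}$ through \eqref{L-inf-bound-Vk+}. This is the one place where the choice of the $H^1$-type norm \eqref{Vkplus-norm} on the first factor matters: the $L^2$-in-time norm alone would not control the point value at $T$. The integral term is bounded by Cauchy--Schwarz and the Lipschitz bound on $\delta_x$, exactly as for $\delta_p$. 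Right-continuity of $\Delta_2$ then yields $\sup_t\mathbb{E}|\Delta_2(t)|^2\lesssim\|Z-W\|_{\mathbb{X}_k}^2$, and integrating over $[0,T]$ gives the $\mathbb{V}_k$-norm bound. Summing the two components yields \eqref{disc-cont-dependence-bound-fp-map} with $C$ depending only on $T$, $\Lip(D_pH)$, $\Lip(D_xH)$ and $\Lip(D_xg)$; in particular $T^{k,N}$ is continuous.

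Compactness will follow from finite-dimensionality. The space $\mathbb{X}_k$ is finite-dimensional, since it is built from $M_k$ time slabs with values in the $dN$-dimensional space $\mathcal{R}_d(\mathcal{A}_N,\mathcal{O}_N)$. A Lipschitz map sends bounded sets to bounded sets, and bounded sets in a finite-dimensional normed space are relatively compact by Heine--Borel. The only real care point is the terminal-value control just mentioned, together with using the $\mathcal{W}_2$-coupling bound so that the measure argument does not introduce any $N$-dependence.
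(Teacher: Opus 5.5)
Your proposal is correct and follows essentially the same route as the paper. The paper likewise bounds nodal values of each component by H\"older's inequality together with the Lipschitz and $\mathcal{W}_2$-coupling bounds, controls the terminal $D_xg$ term through the $L^\infty$-bound \eqref{L-inf-bound-Vk+}, uses $\partial_t\Ip T_1^{k,N}[Z]=D_pH(Z_1,Z_2,(Z_1)_{\#}\mathbb{P})$ for the derivative part of the norm, and deduces compactness from finite-dimensionality of $\mathbb{X}_k$.
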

\begin{proof}
	Notice that, for each $n=1,2,\cdots,M_k$, and $Z\in\mathbb{X}_k$ there hold
	\begin{equation}
		T_{1}^{k,N}[Z]|_{I_n} = T_{1}^{k,N}[Z](t_n),\qquad T_{2}^{k,N}[Z]|_{I_n} = T_{2}^{k,N}[Z](t_{n-1})
	\end{equation}
	since $T_{1}^{k,N}[Z]$ is left-continuous and $T_{2}^{k,N}[Z]$ is right-continuous. Consequently, by the triangle inequality and H\"older inequality in time we get 
	\begin{equation}
		\begin{split}
			&\mathbb{E}\left[\left|T_{1}^{k,N}[Z]|_{I_n}-T_{1}^{k,N}[W]|_{I_n}\right|^2\right]=\mathbb{E}\left[\left|T_{1}^{k,N}[Z](t_n)-T_{1}^{k,N}[W](t_n)\right|^2\right]
			\\
			&=\mathbb{E}\left[\left| \int_0^{t_n}D_pH\left(Z_1(s),Z_2(s),(Z_1(s))_{\#}\mathbb{P}\right)-D_pH\left(W_1(s),W_2(s),(W_1(s))_{\#}\mathbb{P}\right)\mathrm{d}s\right|^2\right]
			\\
			&\leq \mathbb{E}\left[\left( \int_0^{t_n}|D_pH\left(Z_1(s),Z_2(s),(Z_1(s))_{\#}\mathbb{P}\right)-D_pH\left(W_1(s),W_2(s),(W_1(s))_{\#}\mathbb{P}\right)|\mathrm{d}s\right)^2\right]
			\\
			&\leq  \mathbb{E}\left[ t_n\int_0^{t_n}|D_pH\left(Z_1(s),Z_2(s),(Z_1(s))_{\#}\mathbb{P}\right)-D_pH\left(W_1(s),W_2(s),(W_1(s))_{\#}\mathbb{P}\right)|^2\mathrm{d}s\right]
			\\
			&\leq T \int_0^{t_n}\mathbb{E}\left[|D_pH\left(Z_1(s),Z_2(s),(Z_1(s))_{\#}\mathbb{P}\right)-D_pH\left(W_1(s),W_2(s),(W_1(s))_{\#}\mathbb{P}\right)|^2\right]\ds
		\end{split}
	\end{equation}
	where in the final line we used Fubini's theorem and the fact that
	$t_n\in [0,T]$ for $n\in \{0,1,2,\cdots,M_k\}$. By using Lipschitz continuity of  $D_pH$ via \eqref{ass-H:2} and the elementary bound $(a+b+c)^2\leq 4(a^2+b^2+c^2)$ for $a,b,c\in\mathbb{R}$,  we get
	\begin{equation}
		\begin{split}
			&\mathbb{E}\left[\left|T_{1}^{k,N}[Z]|_{I_n}-T_{1}^{k,N}[W]|_{I_n}\right|^2\right]\\
			&\leq  4\Lip(D_pH)^2T\left[\int_0^{T}\left(\mathbb{E}\left[|Z_1-W_1|^2+|Z_2-W_2|^2\right]+\mathcal{W}_2^2((Z_1(s))_{\#}\mathbb{P},(W_1(s))_{\#}\mathbb{P})\right)\mathrm{d}s\right]
		\end{split}
	\end{equation}
	for $Z,W\in\mathbb{X}_k$ and $n\in\{1,2,\cdots,M_k\}$. Since $\mathcal{W}_2^2((Z_1(s))_{\#}\mathbb{P},(W_1(s))_{\#}\mathbb{P})\leq \mathbb{E}\left[|Z_1(s)-W_1(s)|^2\right]$ for all $s\in [0,T]$, we get 
	\begin{equation}
		\begin{split}
			&\mathbb{E}\left[\left|T_{1}^{k,N}[Z]|_{I_n}-T_{1}^{k,N}[W]|_{I_n}\right|^2\right]\leq  8\Lip(D_pH)^2T\left[\int_0^{T}\left(\mathbb{E}\left[|Z_1-W_1|^2+|Z_2-W_2|^2\right]\right)\mathrm{d}s\right]
		\end{split}
	\end{equation}
	for $n\in \{1,2,\cdots,M_k\}$. Hence, integrating in time the piecewise constant in time map $[0,T]\ni t\mapsto \mathbb{E}\left[\left|T_{1}^{k,N}[Z]-T_{1}^{k,N}[W]\right|^2\right]$ and the above bound give
	\begin{multline}\label{disc-cont-bound-1}
		\int_0^T\mathbb{E}\left[\left|T_{1}^{k,N}[Z]-T_{1}^{k,N}[W]\right|^2\right]\mathrm{d}s=\tau_k\sum_{k=1}^{M_k}\mathbb{E}\left[\left|T_{1}^{k,N}[Z]|_{I_n}-T_{1}^{k,N}[W]|_{I_n}\right|^2\right]\\
		\leq  \tau_k\sum_{n=1}^{M_k}8\Lip(D_pH)^2T
		\lVert Z-W\rVert_{\mathbb{X}_k}^2\\
		=8\Lip(D_pH)^2T^2
		\lVert Z-W\rVert_{\mathbb{X}_k}^2\quad\forall Z,W\in \mathbb{X}_k
	\end{multline}
	since $\tau_kM_k=T$ for all $k\in\mathbb{N}$. For the difference in the second component $T_{2}^{k,N}[Z]-T_{2}^{k,N}[W]$ we likewise deduce that, for $n=1,2,\cdots,M_k$,
	\begin{equation}
		\begin{split}
			&\mathbb{E}\left[\left|T_{2}^{k,N}[Z]|_{I_n}-T_{2}^{k,N}[W]|_{I_n}\right|^2\right]=\mathbb{E}\left[\left|T_{2}^{k,N}[Z](t_{n-1})-T_{2}^{k,N}[W](t_{n-1})\right|^2\right]
			\\
			&=\mathbb{E}\left[\left| -D_xg\left(Z_1(T),(Z_1(T))_{\#}\mathbb{P} \right)+ D_xg\left(W_1(T),(W_1(T))_{\#}\mathbb{P} \right)\right.\right.
			\\
			&\left.\left. \qquad+\int_{t_{n-1}}^TD_xH\left(Z_1(s),Z_2(s),(Z_1(s))_{\#}\mathbb{P}\right)-D_xH\left(W_1(s),W_2(s),(W_1(s))_{\#}\mathbb{P}\right)\mathrm{d}s\right|^2\right]
			\\
			&\leq 2\mathbb{E}\left[\left| -D_xg\left(Z_1(T),(Z_1(T))_{\#}\mathbb{P} \right)+ D_xg\left(W_1(T),(W_1(T))_{\#}\mathbb{P} \right)\right|^2\right]
			\\
			&\qquad +2\mathbb{E}\left[\left|\int_{t_{n-1}}^TD_xH\left(Z_1(s),Z_2(s),(Z_1(s))_{\#}\mathbb{P}\right)-D_xH\left(W_1(s),W_2(s),(W_1(s))_{\#}\mathbb{P}\right)\mathrm{d}s\right|^2\right]
			\\
			&\leq  4\Lip(D_xg)^2\mathbb{E}\left[|Z_1(T)-W_1(T)|^2\right]
			+ 16\Lip(D_xH)^2T\left[\int_0^{T}\left(\mathbb{E}\left[|Z_1-W_1|^2+|Z_2-W_2|^2\right]\right)\mathrm{d}s\right]
			\\
			&\leq \left( 4\Lip(D_xg)^2+ 16\Lip(D_xH)^2T\right)
			\lVert Z-W\rVert_{\mathbb{X}_k}^2
		\end{split}
	\end{equation}
	due to the Lipschitz continuity of $D_xg$, $D_xH$ via \eqref{ass-g:2}, \eqref{ass-H:2} and the $L^{\infty}$-bound \eqref{L-inf-bound-Vk+}. This implies that 
	\begin{equation}\label{disc-cont-bound-2}
		\int_0^T\mathbb{E}\left[\left|T_{2}^{k,N}[Z]-T_{2}^{k,N}[W]\right|^2\right]\mathrm{d}s\leq \left( 4\Lip(D_xg)^2+ 16\Lip(D_xH)^2T\right)T
		\lVert Z-W\rVert_{\mathbb{X}_k}^2
	\end{equation} for all $Z,W\in \mathbb{X}_k$.
	
	Given $Z,W\in \mathbb{X}_k$, it is clear from the definition of $T^{k,N}$ that
	\begin{equation}
		\partial_t\mathcal{I}_+^kT_{1}^{k,N}[Z](t) = D_pH\left(Z_1(t),Z_2(t),(Z_1(t))_{\#}\mathbb{P}\right)
	\end{equation}
	\begin{equation}
		\partial_t\mathcal{I}_+^kT_{1}^{k,N}[W](t) = D_pH\left(W_1(t),W_2(t),(W_1(t))_{\#}\mathbb{P}\right)
	\end{equation} after differentiating \eqref{numerical-scheme-fp-map} for all $t\in [0,T]\backslash \{0,t_1,t_2,\cdots,t_{M_k-1},T\}$. From this we find that
	\begin{equation}
		\begin{split}
			&\int_0^T\mathbb{E}\left[\left|\partial_t\mathcal{I}_+^kT_{1}^{k,N}[Z]-\partial_t\mathcal{I}_+^kT_{1}^{k,N}[W]\right|^2\right]\mathrm{d}s\\
			&\qquad=\int_0^T\mathbb{E}\left[\left| D_pH\left(Z_1(s),Z_2(s),(Z_1(s))_{\#}\mathbb{P}\right)- D_pH\left(W_1(s),W_2(s),(W_1(s))_{\#}\mathbb{P}\right)\right|^2\right]\mathrm{d}s
			\\
			&\qquad\leq 8\Lip(D_pH)^2\left[\int_0^{T}\left(\mathbb{E}\left[|Z_1-W_1|^2+|Z_2-W_2|^2\right]\right)\mathrm{d}s\right]
		\end{split}
	\end{equation}where the last line was deduced from previous arguments. Hence,
	\begin{equation}\label{disc-cont-bound-3}
		\int_0^T\mathbb{E}\left[\left|\partial_t\mathcal{I}_+^kT_{1}^{k,N}[Z]-\partial_t\mathcal{I}_+^kT_{1}^{k,N}[W]\right|^2\right]\mathrm{d}s\leq  8\Lip(D_pH)^2
		\lVert Z-W\rVert_{\mathbb{X}_k}^2.
	\end{equation}
	Noting that $T_{1}^{k,N}[Z](0)-T_{1}^{k,N}[W](0)=\xi^N-\xi^N=0$, the bounds \eqref{disc-cont-bound-1}, \eqref{disc-cont-bound-2} and \eqref{disc-cont-bound-3} added together imply the desired continuous dependence bound \eqref{disc-cont-dependence-bound-fp-map}.
	
	Given a bounded sequence $\{Z_j\}_{j\in\mathbb{N}}$ in $\mathbb{X}_k$, the continuous dependence bound \eqref{disc-cont-dependence-bound-fp-map} implies that
	\begin{equation}
			\sup_{j\in\mathbb{N}}\lVert T^{k,N}[Z_j]-T^{k,N}[0]\rVert_{\mathbb{X}_k}\leq C\sup_{j\in\mathbb{N}}\lVert Z_j\rVert_{\mathbb{X}_k}<\infty.
 	\end{equation}
 	The triangle inequality consequently implies that the sequence $\{T^{k,N}[Z_j]\}_{j\in\mathbb{N}}$ is uniformly bounded in $\mathbb{X}_k$. But since $\mathbb{X}_k$ is a finite dimensional Banach space with norm $\|\cdot\|_{\mathbb{X}_k}$, we deduce that  $\{T^{k,N}[Z_j]\}_{j\in\mathbb{N}}$ is sequentially precompact. Therefore, $T^{k,N}$ is indeed a compact operator.
\end{proof}

\subsection{Proof of existence of numerical approximations}
We now show that solutions to a particular parametrized class of problems is uniformly bounded in the $\mathbb{X}_k$-norm, which will allow us to conclude the existence of a fixed point by the Leray--Schauder--Schaefer fixed point theorem. The class of fixed-point problems consider are defined as follows: given $\beta\in [0,1]$, find $Z_{\beta}^{k,N}=(X_{\beta}^{k,N},Y_{\beta}^{k,N})\in \mathbb{X}_k$ such that $Z_{\beta}^{k,N}=\beta T^{k,N}Z_{\beta}^{k,N}$, i.e.\
\begin{equation}\label{numerical-scheme-fp-map-lambda}
	\begin{split}
		\mathcal{I}_+^kX_{\beta}^{k,N}(t) &=\beta X_0^{N}+ \beta\int_0^tD_pH\left(X_{\beta}^{k,N}(s),Y_{\beta}^{k,N}(s),(X_{\beta}^{k,N}(s))_{\#}\mathbb{P}\right)\mathrm{d}s\qquad\qquad \text{ in }[0,T],
		\\
		\mathcal{I}_-^kY_{\beta}^{k,N}(t) &= -\beta D_xg\left(X_{\beta}^{k,N}(T),(X_{\beta}^{k,N}(T))_{\#}\mathbb{P} \right)\\
		&\qquad\qquad\qquad+\beta\int_t^TD_xH\left(X_{\beta}^{k,N}(s),Y_{\beta}^{k,N}(s),(X_{\beta}^{k,N}(s))_{\#}\mathbb{P}\right)\mathrm{d}s\quad\text{ in }[0,T]
	\end{split}
\end{equation} due to linearity of the interpolation operators $\Ip,\In$. We note in particular that $$Y_{\beta}^{k,N}\in  \mathbb{V}_k^-([0,T];\mathcal{R}_{d}(\mathcal{A}_N,\mathcal{O}_N)),$$ 
for all $\beta\in [0,1]$.

\begin{lemma}\label{lemma-unif-bound-for-lam-fp-map}
	Assume the hypotheses of Theorem \ref{theorem-existence-num-scheme}. For each and $k,N\in\mathbb{N}$, and $\beta\in [0,1]$, let $(X_{\beta}^{k,N},Y_{\beta}^{k,N})$ satisfy the scaled Hamiltonian system \eqref{numerical-scheme-fp-map-lambda}. There exists $k_{\dagger}\in\mathbb{N}$ such that 
	\begin{multline}\label{schaefer-unif-bound}
		\sup_{k\geq k_{\dagger}}\sup_{\beta\in[0,1]}\left[\sup_{0\leq t\leq T}\left(\mathbb{E}\left[|X_{\beta}^{k,N}(t)|^2+|Y_{\beta}^{k,N}(t)|^2\right]\right)^{\frac{1}{2}} \right.\\
		\left.+\text{\emph{ess}}\sup_{0\leq t\leq T}\left(\mathbb{E}\left[|\partial_t\Ip X_{\beta}^{k,N}(t)|^2\right]+\mathbb{E}\left[|\partial_t\In Y_{\beta}^{k,N}(t)|^2\right]\right)^{\frac{1}{2}} \right]\\\leq  C\left[\left(\mathbb{E}\left[|X_0^N|^2\right]\right)^{\frac{1}{2}}+1\right]
	\end{multline}	where the constants $C$ and $k_{\dagger}$ depend only on $,c_0$, $D_pH$,$D_xH$ and $D_xg$, and the time horizon $T>0$.
\end{lemma}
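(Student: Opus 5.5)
The plan is to recognise \eqref{numerical-scheme-fp-map-lambda} as an instance of the generalized discrete Hamiltonian system \eqref{numerical-scheme-gen} and compare it, via Theorem \ref{theorem-cont-dependence-discrete-Hamiltonian-sys}, with the trivial solution at $\lambda=0$. Differentiating \eqref{numerical-scheme-fp-map-lambda} in time (as in Lemma \ref{lemma-fp-map-nodal-charac}), $(X_\beta^{k,N},Y_\beta^{k,N},(X_\beta^{k,N})_{\#}\mathbb{P})$ solves \eqref{numerical-scheme-gen} with $\lambda=\beta$, $F=G=0$ and initial datum $X_0^N$. Indeed, $\mathscr{X}(0)=\beta X_0^N$ and $\mathscr{Y}(T)=-\beta D_xg(\cdot)$, and the first component lies in $\mathbb{V}_k^+$, so $Y_\beta^{k,N}\in\mathbb{V}_k^-$ as noted.

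The case $\beta=0$ is trivial: then $X_\beta=Y_\beta=0$. For $\beta\in(0,1]$, I would apply Theorem \ref{theorem-cont-dependence-discrete-Hamiltonian-sys} with triple $1$ equal to $(X_\beta^{k,N},Y_\beta^{k,N},\varrho_\beta)$, $\lambda_1=\beta$, and triple $2$ equal to the solution $(0,0,\delta_0)$ of \eqref{numerical-scheme-gen} with $\lambda_2=0$ and $F_2=G_2=0$ (Remark \ref{remark-uniqueness-gen-ham-sys}), taking $M=N$. The $\lambda_2\lambda_1^{-1}X_{0,2}$ term vanishes, so \eqref{gen-disp-mono-cont-bound} gives, for $k\ge k_\dagger$,
\[
\sup_t\big(\|X_\beta^{k,N}(t)\|+\|Y_\beta^{k,N}(t)\|\big)\lesssim \|X_0^N\|_{\LLspace}+\big(\|D_xg(0,\delta_0)\|+\|D_pH(0,0,\delta_0)\|_{L^2(0,T;\LLspace)}+\|D_xH(0,0,\delta_0)\|_{L^2(0,T;\LLspace)}\big),
\]
since $|\lambda_2\lambda_1^{-1}-1|=1$. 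The terms in parentheses are deterministic constants depending only on $H$, $g$ and $T$. The key point is that the constant and $k_\dagger$ in the theorem are independent of $\lambda_1\in(0,1]$, which makes the bound uniform in $\beta$. This uniformity in $\beta$ is the main thing to check. A priori the $\lambda_1^{-1}$ factors look dangerous, but here they multiply only vanishing terms ($F_1-F_2=0$, $G_1-G_2=0$, and $\lambda_2=0$), so no blow-up occurs as $\beta\to0$.

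For the derivative part of \eqref{schaefer-unif-bound}, I would use $\partial_t\Ip X_\beta=\beta D_pH(X_\beta,Y_\beta,\varrho_\beta)$ and $\partial_t\In Y_\beta=-\beta D_xH(\cdot)$ on each $I_n$. The linear growth \eqref{linear-growth-D_pH-D_xH} with $q=2$, together with $\mathcal{W}_2(\varrho_\beta(t),\delta_0)=\|X_\beta(t)\|_{\LLspace}$, then bounds their $L^2(\Omega)$ norms pointwise in $t$ by $C(\sup_t\|X_\beta\|+\sup_t\|Y_\beta\|+1)$. Combining this with the previous step yields \eqref{schaefer-unif-bound}, with $C$ and $k_\dagger$ depending only on $c_0$, $T$ and the Lipschitz data.
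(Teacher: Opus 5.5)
Your proposal is correct and is essentially the paper's own proof. The paper also treats $\beta=0$ as trivial and then applies Theorem~\ref{theorem-cont-dependence-discrete-Hamiltonian-sys} with $\lambda_1=\beta$, $\lambda_2=0$, $F_i=G_i=0$, taking the trivial solution $(0,0,\delta_0)$ of Remark~\ref{remark-uniqueness-gen-ham-sys} as the second triple; as you note, the $\lambda_1^{-1}$ factors multiply only vanishing terms, so the $L^\infty(L^2)$ bound is uniform in $\beta$. It then controls $\partial_t\Ip X_\beta^{k,N}$ and $\partial_t\In Y_\beta^{k,N}$ through the linear growth \eqref{linear-growth-D_pH-D_xH} together with $\mathcal{W}_2(\rho_\beta^{k,N}(t),\delta_0)\le\|X_\beta^{k,N}(t)\|_{\LLspace}$, exactly as you propose.
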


\begin{proof}
	It is clear that the bound holds if $\beta=0$, so let us assume that $\beta\in (0,1]$. Since $(X_{\beta}^{k,N},Y_{\beta}^{k,N})\in  \mathbb{V}_k^+([0,T];\mathcal{R}_{d}(\mathcal{A}_N,\mathcal{O}_N))\times \mathbb{V}_k^-([0,T];\mathcal{R}_{d}(\mathcal{A}_N,\mathcal{O}_N))$ satisfies the system \eqref{numerical-scheme-fp-map-lambda} by hypothesis and $(0,0,\delta_0)$ uniqely satisfies the generalized system \eqref{numerical-scheme-gen} with $\lambda=0$, $F,G=0$ in $\mathbb{V}_k(0,T;\mathcal{R}_{d}(\mathcal{A}_N,\mathcal{O}_N))$ and any fixed $X_0^N\in\mathcal{R}_{d}(\mathcal{A}_N,\mathcal{O}_N)$ (see Remark \ref{remark-uniqueness-gen-ham-sys}),  in Theorem  \ref{theorem-cont-dependence-discrete-Hamiltonian-sys} we take $M=N$, $\mathscr{X}_1^{k,N}=X_{\beta}^{k,N}$,  $\mathscr{Y}_1^{k,N}=Y_{\beta}^{k,N}$ with $\varrho_1^{k,N} = (\mathscr{X}_1^{k,N})_{\#}\mathbb{P} =(X_{\beta}^{k,N})_{\#}\mathbb{P}$, and $\mathscr{X}_2^{k,M}=0$ and $\mathscr{Y}_2^{k,M}=0$ with $\varrho_2^{k,M}=\delta_0$ where $F_1=F_2=G_1=G_2=0$ in $\mathbb{V}_k(0,T;\mathcal{R}_{d}(\mathcal{A}_N,\mathcal{O}_N))$, and $\lambda_1=\beta$, $\lambda_2=0$. As such, Theorem  \ref{theorem-cont-dependence-discrete-Hamiltonian-sys} implies
	\begin{equation}\label{beta-Linf-bound-X-Y}
		\begin{split}
			&\sup_{0\leq t\leq T}\left(\expectation{\abs{X_{\beta}^{k,N}}^2+\abs{Y_{\beta}^{k,N}}^2}\right)^{\frac{1}{2}}
			\\
			&\lesssim \left(\expectation{\abs{X_0^N}^2}\right)^{\frac{1}{2}}
			+\left(\left(\expectation{\abs{D_xg(0,\delta_0)}^2}\right)^{\frac{1}{2}}+\left\|D_pH(0,0,\delta_0)\right\|_{L^2(0,T;\LLspace)}+\left\|D_xH(0,0,\delta_0)\right\|_{L^2(0,T;\LLspace)}\right)
		\end{split}
	\end{equation}
	for all $k\geq k_{\dagger}$, $\beta\in (0,1]$, and $N\in\mathbb{N}$. 
	
	 We now prove the uniform bound on the partial derivatives of the interpolants. For each $N,k\in\mathbb{N}$ with $k\geq k_{\dagger}$ and each $\beta\in (0,1]$, the pair $(X_{\beta}^{k,N}, Y_{\beta}^{k,N})$ satisfies 
	 \begin{subequations}\label{numerical-scheme-non-gen}
	 	\begin{align}
	 		\partial_t\mathcal{I}_+^kX_{\beta}^{k,N}(t) &=  \beta D_pH\left(X_{\beta}^{k,N}(t),Y_{\beta}^{k,N}(t),\rho_{\beta}^{k,N}\right)\quad\text{in }\mathcal{R}_{d}(\mathcal{A}_N,\mathcal{O}_N)\label{disc-HJ-eqn-non-gen}
	 		\\
	 		\partial_t\mathcal{I}_-^kY_{\beta}^{k,N}(t) &=- \beta D_xH\left(X_{\beta}^{k,N}(t),Y_{\beta}^{k,N}(t),\rho_{\beta}^{k,N}\right)\quad\text{in }\mathcal{R}_{d}(\mathcal{A}_N,\mathcal{O}_N)\label{disc-KFP-eqn-non-gen}
	 		\\
	 		\rho_{\beta}^{k,N}(t) &= (X_{\beta}^{k,N}(t))_{\#}\mathbb{P} \label{disc-KFP-eqn-law-non-gen}
	 		\\
	 		X_{\beta}^{k,N}(0)&= \beta X_0^N,\quad Y_{\beta}^{k,N}(T) = -  \beta D_xg(X_{\beta}^{k,N}(T),\rho_{\beta}^{k,N}(T))&&\text{  }\label{disc-equiv-init-term-condition-non-gen}
	 	\end{align}
	 \end{subequations}
	 where \eqref{disc-HJ-eqn-non-gen} and \eqref{disc-KFP-eqn-non-gen} hold for all $t\in (0,T)\backslash \iota_k$, and the distribution is given by \eqref{disc-KFP-eqn-law-non-gen} for all $t\in [0,T]$. From the linear growth properties \eqref{linear-growth-D_pH},\eqref{linear-growth-D_xH} for the partial derivatives of $H$ w.r.t.\ $p$ and $x$, respectively, for each $\beta\in (0,1]$ we find that 
	 \begin{equation}
	 	\expectation{| D_pH\left(X_{\beta}^{k,N}(t),Y_{\beta}^{k,N}(t),\rho_{\beta}^{k,N}\right)|^2}\lesssim \expectation{|X_{\beta}^{k,N}(t)|^2+|Y_{\beta}^{k,N}(t)|^2}+\mathcal{W}_2^2(\rho_{\beta}^{k,N}(t),\delta_0)+1
	 \end{equation}
	 \begin{equation}
	 	\expectation{| D_xH\left(X_{\beta}^{k,N}(t),Y_{\beta}^{k,N}(t),\rho_{\beta}^{k,N}\right)|^2}\lesssim \expectation{|X_{\beta}^{k,N}(t)|^2+|Y_{\beta}^{k,N}(t)|^2}+\mathcal{W}_2^2(\rho_{\beta}^{k,N}(t),\delta_0)+1
	 \end{equation}
	 for all $t\in [0,T]$, where the hidden constants depend only on $D_pH$ and $D_xH$, respectively. Since we clearly have that $\mathcal{W}_2^2(\rho_{\beta}^{k,N}(t),\delta_0)\leq \expectation{|X_{\beta}^{k,N}(t)|^2}$ for all $t\in [0,T]$, we find that
	 \begin{multline}
	 	\expectation{| D_pH\left(X_{\beta}^{k,N}(t),Y_{\beta}^{k,N}(t),\rho_{\beta}^{k,N}\right)|^2}+\expectation{| D_xH\left(X_{\beta}^{k,N}(t),Y_{\beta}^{k,N}(t),\rho_{\beta}^{k,N}\right)|^2}\\
	 	\lesssim \expectation{|X_{\beta}^{k,N}|^2+|Y_{\beta}^{k,N}|^2}+1
	 \end{multline}
	 and hence the system \eqref{disc-HJ-eqn-non-gen},\eqref{disc-KFP-eqn-non-gen},\eqref{disc-KFP-eqn-law-non-gen}, \eqref{disc-equiv-init-term-condition-non-gen} implies further that
	 \begin{equation}
	 	\expectation{|	\partial_t\mathcal{I}_+^kX_{\beta}^{k,N}(t)|^2}+\expectation{|	\partial_t\mathcal{I}_-^kY_{\beta}^{k,N}(t)|^2}
	 	\lesssim \expectation{|X_{\beta}^{k,N}|^2+|Y_{\beta}^{k,N}|^2}+1
	 \end{equation} for all $t\in [0,T]\backslash \iota_k$. The $L^{\infty}$-bound \eqref{beta-Linf-bound-X-Y} on $X_{\beta}^{k,N}$ and $Y_{\beta}^{k,N}$ then yields
	 \begin{equation}\label{Linf-unif-bound-time-interp-deriv}
	 	\sup_{N\in\mathbb{N}}\sup_{k\geq k_{\dagger}}\text{ess}\sup_{0< t< T}\left(\mathbb{E}\left[|\partial_t\Ip X_{\beta}^{k,N}(t)|^2+|\partial_t \In Y_{\beta}^{k,N}(t)|^2\right]\right)^{\frac{1}{2}} \leq C''\left(\sup_{N\in\mathbb{N}}\left(\mathbb{E}\left[|X_0^N|^2\right]\right)^{\frac{1}{2}}+1\right).
	 \end{equation} 
	 where $C''$ is independent of $k,N\in\mathbb{N}$ and $\beta\in (0,1]$. This completes the proof.
\end{proof}

\begin{proof}[Proof of Theorem \ref{theorem-existence-num-scheme}]
Fixing $N\in\mathbb{N}$ and $k\geq k_{\dagger}$ where $k_{\dagger}$ is given by Lemma \ref{lemma-unif-bound-for-lam-fp-map}, we immediately deduce from \eqref{schaefer-unif-bound} that there exists a constant $M>0$ such that the set $$\{Z_{\beta}^{k,N}\in\mathbb{X}_k: Z_{\beta}^{k,N}=\beta T^{k,N}Z_{\beta}^{k,N},\text{ }\beta\in [0,1]\}$$ is contained in the open ball in $\mathbb{X}_k$ centred at the origin and of radius $M$. 
In view of Lemma \ref{lemma-cont-compactness-of-fp-map} which further shows that $T^{k,N}:\mathbb{X}_k\to\mathbb{X}_k$ is a continuous compact operator, we deduce by the Leray--Schauder-Schaefer fixed-point Theorem \ref{lemma-leray--schauder--schaefer-fpt} that $T^{k,N}$ admits a fixed point, i.e. there exists a solution to the numerical scheme \eqref{numerical-scheme} for each $k\geq k_{\dagger}$ and $N\in\mathbb{N}$. This proves Theorem \ref{theorem-existence-num-scheme}.
\end{proof}

\section{Convergence of {the} numerical approximations}\label{sec-convergence}
We aim to show that, in the joint limit as $k\to\infty$ and $N\to\infty$, then $(X^{k,N},Y^{k,N},\rho^{k,N})$ converges in a suitable norm to the unique solution of \eqref{definition-soln-of-continuous-pb}. 

\subsection{Convergence analysis of scheme for fixed $N$ as $k\to\infty$}

The main result of this section asserts the convergence of the numerical scheme \eqref{numerical-scheme} to a continuous time Hamiltonian system as the time-step vanishes while the number of samples $N$ of the support of $\rho_0$ is fixed. In this case, the limit continuous time problem is as follows: \emph{find $(X^N,Y^N,\rho^N)\in H^1(0,T;\mathcal{R}_{d}(\mathcal{A}_N,\mathcal{O}_N))\times H^1(0,T;\mathcal{R}_{d}(\mathcal{A}_N,\mathcal{O}_N))\times C([0,T];\mathcal{P}_2(\mathbb{R}^d))$ which satisfies} 
\begin{equation}\label{numerical-scheme-N-only}
	\left\{\begin{aligned}
		X^{N}(t) &=X_0^N+ \int_0^tD_pH\left(X^{N}(s),Y^{N}(s),\rho^{N}(s)\right)\mathrm{d}s, \qquad\text{in }\mathcal{R}_{d}(\mathcal{A}_N,\mathcal{O}_N)
		\\
		Y^{N}(t) &= -D_xg\left(X^{N}(T),\rho^{N}(T)\right)+\int_t^TD_xH\left(X^{N}(s),Y^{N}(s),\rho^{N}(s)\right)\mathrm{d}s, \qquad\text{in }\mathcal{R}_{d}(\mathcal{A}_N,\mathcal{O}_N)\\
		\rho^{N}(t) &= (X^{N}(t))_{\#}\mathbb{P} \qquad\text{in }\mathcal{P}_2(\mathbb{R}^d)
	\end{aligned}\right.
\end{equation} \emph{for all $t\in [0,T]$.} 
Under the hypotheses on the model data, importantly the displacement monotonicity of $H$ and $g$, this problem has at most one solution, see \cite[Theorem 4.5]{meszaros2024mean}. The main convergence result that we now proceed to establish also serves as an alternative proof of existence by a compactness argument.
\begin{lemma}[Convergence for fixed $N$]\label{theorem-convegrence-fixed-N}
	Assume the hypotheses of Theorem \ref{theorem-existence-num-scheme} and let $N\in\mathbb{N}$ be given. Let $\{(X^{k,N},Y^{k,N})\}_{k\geq k_{\dagger}}$ denote the sequence generated by the discrete Hamiltonian system \eqref{numerical-scheme} where $k_{\dagger}\in\mathbb{N}$ is given by Theorem  \ref{theorem-cont-dependence-discrete-Hamiltonian-sys}. Then,  there exists $(X^N,Y^N,\rho^N)\in C^1([0,T];\mathcal{R}_{d}(\mathcal{A}_N,\mathcal{O}_N))\times C^1([0,T];\mathcal{R}_{d}(\mathcal{A}_N,\mathcal{O}_N))\times C([0,T];\mathcal{P}_2(\mathbb{R}^d))$ which uniquely satisfies \eqref{numerical-scheme-N-only}. Moreover, 
	\begin{subequations}\label{numerical-scheme-N-only-conv}
		\begin{align}
			&X^{k,N}\to X^N {\rm\ and\ } Y^{k,N}\to Y^N {\rm\ in\ }L^p(0,T;\mathcal{R}_{d}(\mathcal{A}_N,\mathcal{O}_N)),\label{XkYkN-convergence}\\
			&\partial_t\Ip X^{k,N} \to \partial_t X^N {\rm\ and\ } \partial_t\In Y^{k,N} \to \partial_t Y^N {\rm \ in\  }L^p(0,T;\mathcal{R}_{d}(\mathcal{A}_N,\mathcal{O}_N)),\\
			&X^{k,N}(T)\to X^N(T) {\rm\ in\  }\mathcal{R}_{d}(\mathcal{A}_N,\mathcal{O}_N),\quad Y^{k,N}(0)\to Y^N(0) {\rm\ in\ }\mathcal{R}_{d}(\mathcal{A}_N,\mathcal{O}_N), \\
			&\rho^{k,N} \to \rho^N=(X^{N})_{\#}\mathbb{P} \quad {\rm\ in\  }L^2(0,T;\mathcal{P}_2(\mathbb{R}^d)),
			\quad\rho^{k,N}(t) \to \rho^N(t) \quad {\rm\ in\  }\mathcal{P}_2(\mathbb{R}^d){\rm{\ for}\  }t\in \{0,T\},
		\end{align}
	\end{subequations} as $k\to\infty$ for each  $p\in [1,\infty)$.
\end{lemma}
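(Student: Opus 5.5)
The plan is a compactness argument in the finite-dimensional space $\mathcal{R}_{d}(\mathcal{A}_N,\mathcal{O}_N)$, followed by identification of the limit and a uniqueness argument that upgrades subsequential convergence to convergence of the whole sequence.

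\textbf{Uniform bounds.} For $k\geq k_{\dagger}$ the triple $(X^{k,N},Y^{k,N})$ solves \eqref{numerical-scheme-fp-map-lambda} with $\beta=1$. Lemma \ref{lemma-unif-bound-for-lam-fp-map} therefore gives bounds uniform in $k$:
\begin{itemize}
\item $\sup_t\mathbb{E}[|X^{k,N}|^2+|Y^{k,N}|^2]\leq C$;
\item $\mathrm{ess\,sup}_t\,\mathbb{E}[|\partial_t\Ip X^{k,N}|^2+|\partial_t\In Y^{k,N}|^2]\leq C$.
\end{itemize}
Hence the interpolants $\Ip X^{k,N}$ and $\In Y^{k,N}$ are bounded in $W^{1,\infty}(0,T;\mathcal{R}_{d}(\mathcal{A}_N,\mathcal{O}_N))$, uniformly Lipschitz in time with values in a fixed finite-dimensional space.

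\textbf{Compactness.} Arzelà--Ascoli gives a subsequence along which $\Ip X^{k,N}\to X^N$ and $\In Y^{k,N}\to Y^N$ uniformly on $[0,T]$, with $X^N,Y^N$ Lipschitz. By \eqref{Ip-op}, \eqref{In-op} and \eqref{derivatives_Ip}, the gap between a piecewise constant function and its interpolant is at most one jump, $\tau_k|\partial_t\Ip X^{k,N}|$. This gives
$$\sup_t\|X^{k,N}(t)-\Ip X^{k,N}(t)\|_{L^2(\Omega;\mathbb{R}^d)}\lesssim\tau_k,$$
and similarly for $Y^{k,N}$. So $X^{k,N}\to X^N$ and $Y^{k,N}\to Y^N$ uniformly in $t$, hence in every $L^p(0,T)$. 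Since $\Ip X^{k,N}$ and $\In Y^{k,N}$ interpolate the nodal values, we also get $X^{k,N}(T)\to X^N(T)$ and $Y^{k,N}(0)\to Y^N(0)$. The bound $\mathcal{W}_2(\rho^{k,N}(t),(X^N(t))_\#\mathbb{P})\leq\|X^{k,N}(t)-X^N(t)\|_{L^2(\Omega;\mathbb{R}^d)}$ then yields the claimed convergence of $\rho^{k,N}$.

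\textbf{Passing to the limit.} By the Lipschitz continuity \eqref{ass-H:2}, \eqref{ass-g:2} with respect to $\mathcal{W}_2$, the integrands in \eqref{disc-HJ-eqn} and \eqref{disc-KFP-eqn} converge uniformly in $t$, in $L^2(\Omega;\mathbb{R}^d)$, to $D_pH(X^N,Y^N,\rho^N)$ and $D_xH(X^N,Y^N,\rho^N)$. The terminal term converges to $D_xg(X^N(T),\rho^N(T))$. Passing to the limit in the integral identities shows that $(X^N,Y^N,\rho^N)$ satisfies \eqref{numerical-scheme-N-only}. The integrands are continuous in $t$, so $X^N,Y^N\in C^1$.

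\textbf{Derivative convergence.} We have $\partial_t\Ip X^{k,N}=D_pH(X^{k,N},Y^{k,N},\rho^{k,N})$ a.e., and this converges uniformly to $D_pH(X^N,Y^N,\rho^N)=\partial_tX^N$. The same holds for $\partial_t\In Y^{k,N}$. This gives the derivative convergence in $L^p$ for every $p<\infty$.

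\textbf{Uniqueness and full-sequence convergence.} Uniqueness of solutions to \eqref{numerical-scheme-N-only} follows from \cite[Theorem 4.5]{meszaros2024mean}. Alternatively, one can rerun the argument of Lemma \ref{lemma-step-1-discrete-uniqueness} in continuous time, using ordinary integration by parts and displacement monotonicity. Since every subsequence has a further subsequence converging to the same limit, the whole sequence converges.

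\textbf{Main obstacle.} I expect the delicate step to be the uniform control of the gap between the piecewise constant functions and their interpolants, together with the endpoint values, given left- versus right-continuity. Specifically, $Y^{k,N}(0)$ is the right-limit value at $t_0$ and $X^{k,N}(T)$ the left-limit at $T$. Both coincide with the interpolants at the nodes, so this should reduce to the $O(\tau_k)$ jump bound coming from the uniform derivative estimates.
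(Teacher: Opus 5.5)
Your proof is correct and shares the paper's overall architecture: uniform bounds from Lemma \ref{lemma-unif-bound-for-lam-fp-map} with $\beta=1$, Arzel\`a--Ascoli for the interpolants, passage to the limit via Lipschitz continuity, and uniqueness from \cite[Theorem 4.5]{meszaros2024mean} to upgrade to the full sequence. Where you differ is in how you treat the piecewise constant sequences themselves.

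\emph{The paper's route.} It proves $L^p$-precompactness of $\{X^{k,N}\}$ separately, via an equicontinuity estimate and Simon's compactness theorem (Lemma \ref{lemma-precompactness}). It then identifies the resulting $L^p$-limit with the uniform limit of $\mathcal{I}_+^kX^{k,N}$ through a weak-derivative and integration-by-parts argument (Lemma \ref{lemma-X-YkN-subseq-convergence}).

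\emph{Your route.} You use the exact identity $\mathcal{I}_+^kX^{k,N}(t)-X^{k,N}(t)=-(t_n-t)\,\partial_t\mathcal{I}_+^kX^{k,N}|_{I_n}$ for $t\in(t_{n-1},t_n]$, and its analogue for $\mathcal{I}_-^k$. Combined with the $L^\infty$-in-time derivative bound, this gives $\sup_t\|X^{k,N}(t)-\mathcal{I}_+^kX^{k,N}(t)\|_{L^2(\Omega;\mathbb{R}^d)}\lesssim\tau_k$ directly. This is more elementary and makes the identification of the limits automatic. It also yields stronger conclusions: $X^{k,N}$, $Y^{k,N}$, $\rho^{k,N}$ and the discrete derivatives all converge uniformly in time. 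That is the $p=\infty$ case, which the paper only obtains later, quantitatively, in Lemma \ref{theorem-time-err-bound-fixed-N}. Simon's theorem is the more general tool, since it needs only integrated-in-time equicontinuity. Here, however, the $L^\infty$-in-time derivative bound is already available, so that machinery is not needed.

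\emph{A small caveat on your alternative uniqueness argument.} Rerunning Lemma \ref{lemma-step-1-discrete-uniqueness} in continuous time would not suffice on its own, because displacement monotonicity alone does not close the estimate. You would also need continuous-time analogues of the strong-convexity step and the Gr\"onwall step (Lemmas \ref{lemma-X-bound-Y-discr-bound} and \ref{X-Y-diff-Linf-bound}). Your primary justification is the same citation the paper uses, so this does not affect correctness.
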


The proof of this result is based on precompactness of solutions to the discrete-time system \eqref{numerical-scheme}, which we establish in the next subsection. 

\subsubsection{Stability and precompactness}

The first step to proving Theorem \ref{theorem-convegrence-fixed-N} concerns establishing uniform stability for the solution of the scheme \eqref{numerical-scheme} and their time interpolants and corresponding time derivatives. 
\begin{lemma}\label{lemma-unif-k-N-bound-X-Y}
	Assume the hypotheses of Theorem \ref{theorem-existence-num-scheme}. There exist constants $C>0$ and $k_{\dagger}\in\mathbb{N}$ such that, for each $N\in\mathbb{N}$,
	\begin{multline}\label{Linf-unif-bound-time-interp}
		\sup_{k\geq k_{\dagger}}\left[\sup_{0\leq t\leq T}\left(\mathbb{E}\left[|X^{k,N}(t)|^2+|Y^{k,N}(t)|^2+|\Ip X^{k,N}(t)|^2+|\In Y^{k,N}(t)|^2\right]\right)^{\frac{1}{2}}\right. \\
		\left.+\text{\emph{ess}}\sup_{0< t< T}\left(\mathbb{E}\left[|\partial_t\Ip X^{k,N}(t)|^2+|\partial_t \In Y^{k,N}(t)|^2\right]\right)^{\frac{1}{2}}\right]\\\leq  C\left(\left(\mathbb{E}\left[|X_0^N|^2\right]\right)^{\frac{1}{2}}+1\right).
	\end{multline}
\end{lemma}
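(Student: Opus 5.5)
The plan is to observe that $(X^{k,N},Y^{k,N})$ is exactly the $\beta=1$ member of the family of scaled problems \eqref{numerical-scheme-fp-map-lambda}. Lemma \ref{lemma-unif-bound-for-lam-fp-map} therefore gives most of the bound at once, and only the interpolants remain.

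Concretely, by Theorem \ref{theorem-existence-num-scheme} and Theorem \ref{theorem-uniqueness-numerical-sheme}, for $k\geq k_\dagger$ the scheme \eqref{numerical-scheme} has a unique solution. Because of the linearity of $\Ip,\In$, this solution is a fixed point $Z=T^{k,N}Z$, i.e. $Z=\beta T^{k,N}Z$ with $\beta=1$. Applying Lemma \ref{lemma-unif-bound-for-lam-fp-map} with $\beta=1$ then bounds
\[
\sup_t\mathbb{E}\bigl[|X^{k,N}|^2+|Y^{k,N}|^2\bigr]^{1/2}
\]
and the essential supremum of $\mathbb{E}\bigl[|\partial_t\Ip X^{k,N}|^2+|\partial_t\In Y^{k,N}|^2\bigr]^{1/2}$ by $C\bigl((\mathbb{E}|X_0^N|^2)^{1/2}+1\bigr)$. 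Here $C$ and $k_\dagger$ depend only on $c_0$, $T$ and the Lipschitz data, and not on $N$ or $k$.

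Alternatively, if one wants to avoid the $\beta$-family, the same bound follows directly from Theorem \ref{theorem-cont-dependence-discrete-Hamiltonian-sys} with $\lambda_1=1$, $\lambda_2=0$, $F_i=G_i=0$, comparing against the trivial solution $(0,0,\delta_0)$ of Remark \ref{remark-uniqueness-gen-ham-sys}. The right-hand side then reduces to $\|X_0^N\|_{L^2}$ plus norms of $D_xg(0,\delta_0)$, $D_pH(0,0,\delta_0)$ and $D_xH(0,0,\delta_0)$, which are constants. The derivative bounds then follow from the scheme equations and the linear growth estimates \eqref{linear-growth-D_pH-D_xH}, together with $\mathcal{W}_2(\rho^{k,N},\delta_0)\le\|X^{k,N}\|_{L^2}$.

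The only new ingredient is the bound on $\Ip X^{k,N}$ and $\In Y^{k,N}$. By definition, for $t\in(t_{n-1},t_n]$,
\[
\Ip X^{k,N}(t)=\tfrac{t-t_{n-1}}{\tau_k}X^{k,N}(t_n)+\tfrac{t_n-t}{\tau_k}X^{k,N}(t_{n-1}),
\]
a convex combination of nodal values. The same holds for $\In Y^{k,N}$ on $[t_{n-1},t_n)$. Convexity of $|\cdot|^2$ then gives
\[
\mathbb{E}\bigl[|\Ip X^{k,N}(t)|^2\bigr]\le\max_{n}\mathbb{E}\bigl[|X^{k,N}(t_n)|^2\bigr]\le\sup_s\mathbb{E}\bigl[|X^{k,N}(s)|^2\bigr],
\]
and analogously for $\In Y^{k,N}$. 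Combining the two bounds and taking the supremum over $k\geq k_\dagger$ gives \eqref{Linf-unif-bound-time-interp}.

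No step here is a genuine obstacle. The one point needing care is that the constant and $k_\dagger$ must be uniform in $N$, and this holds because Theorem \ref{theorem-cont-dependence-discrete-Hamiltonian-sys} has constants depending only on $c_0$, $T$ and the Lipschitz constants.
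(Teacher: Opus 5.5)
Your proposal is correct and follows the paper's proof. The paper likewise applies Lemma~\ref{lemma-unif-bound-for-lam-fp-map} with $\beta=1$ to bound $X^{k,N}$, $Y^{k,N}$ and the derivatives of their interpolants, and then says only that the bound on $\mathcal{I}_+^k X^{k,N}$ and $\mathcal{I}_-^k Y^{k,N}$ follows from the definition of the interpolation operators. Your observation that these interpolants are convex combinations of consecutive nodal values, combined with convexity of $|\cdot|^2$, supplies exactly the detail the paper leaves implicit.
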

\begin{proof}
	For fixed $N$, we apply the uniform $L^{\infty}({L}^2)$-bound given by Lemma \ref{lemma-unif-bound-for-lam-fp-map} for $\beta=1$, which then implies 
	\begin{multline}\label{Linf-unif-bound}
		\sup_{k\geq k_{\dagger}}\left[\sup_{0\leq t\leq T}\left(\mathbb{E}\left[|X^{k,N}(t)|^2+|Y^{k,N}(t)|^2\right]\right)^{\frac{1}{2}} \right.\\
		\left.+\text{{ess}}\sup_{0< t< T}\left(\mathbb{E}\left[|\partial_t\Ip X^{k,N}(t)|^2+|\partial_t \In Y^{k,N}(t)|^2\right]\right)^{\frac{1}{2}}\right]\\\leq C\left(\left(\mathbb{E}\left[|X_0^N|^2\right]\right)^{\frac{1}{2}}+1\right)
	\end{multline}
	for each $N\in\mathbb{N}$, where $C>0$ is independent of $k,N\in\mathbb{N}$. It follows from this and the definition of the time-interpolation operators $\Ipm^k$ that the claimed uniform bound holds: 
	\begin{equation}
		\sup_{k\geq k_{\dagger}}\sup_{0\leq t\leq T}\left(\mathbb{E}\left[|\Ip X^{k,N}(t)|^2+|\In Y^{k,N}(t)|^2\right]\right)^{\frac{1}{2}}\leq  C'\left(\left(\mathbb{E}\left[|X_0^N|^2\right]\right)^{\frac{1}{2}}+1\right)\quad\forall N\in\mathbb{N}.
	\end{equation}
\end{proof}

Fixing $N\in\mathbb{N}$, the above result allows us to deduce precompactness of $\{(X^{k,N},Y^{k,N})\}_{k\geq k_{\dagger}}$ in the space $L^{p}(0,T;\mathcal{R}_{d}(\mathcal{A}_N,\mathcal{O}_N))\times L^{p}(0,T;\mathcal{R}_{d}(\mathcal{A}_N,\mathcal{O}_N))$ for each $p\in [1,\infty)$. 
\begin{lemma}\label{lemma-precompactness}
	Assume the hypotheses of Theorem \ref{theorem-existence-num-scheme} and let $N\in\mathbb{N}$ be given. The sequence $\{(X^{k,N},Y^{k,N})\}_{k\geq k_{\dagger}}$ generated by the numerical scheme \eqref{numerical-scheme} is precompact in the strong topology of $L^{p}(0,T;\mathcal{R}_{d}(\mathcal{A}_N,\mathcal{O}_N))\times L^{p}(0,T;\mathcal{R}_{d}(\mathcal{A}_N,\mathcal{O}_N))$ for each $p\in [1,\infty)$. Moreover, $\{(\Ip X^{k,N},\In Y^{k,N})\}_{k\geq k_{\dagger}}$ is precompact in the strong topology of $C([0,T];\mathcal{R}_{d}(\mathcal{A}_N,\mathcal{O}_N))\times C([0,T];\mathcal{R}_{d}(\mathcal{A}_N,\mathcal{O}_N))$. 
\end{lemma}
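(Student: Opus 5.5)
The plan is to reduce everything to finite-dimensional compactness combined with the uniform bounds of Lemma \ref{lemma-unif-k-N-bound-X-Y}. Fix $N$. The space $\mathcal{R}_{d}(\mathcal{A}_N,\mathcal{O}_N)$ is a finite-dimensional subspace of $\LLspace$, isometric to $\mathbb{R}^{dN}$ equipped with the weighted norm $\sum_j\alpha_{j,N}|w_j|^2$. Bounded sets in it are therefore relatively compact, and the Arzel\`a--Ascoli and Aubin--Lions/Simon type criteria apply without any spatial compactness issue.

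\textbf{Step 1: the interpolants.} Lemma \ref{lemma-unif-k-N-bound-X-Y} gives $\sup_{k\geq k_\dagger}\sup_t\|\Ip X^{k,N}(t)\|_{\LLspace}\leq C_N$ and $\operatorname{ess\,sup}_t\|\partial_t\Ip X^{k,N}(t)\|_{\LLspace}\leq C_N$. Since $\Ip X^{k,N}$ is continuous and piecewise $C^1$, it follows that
\[
\|\Ip X^{k,N}(t)-\Ip X^{k,N}(s)\|_{\LLspace}\leq C_N|t-s| .
\]
The family is thus uniformly bounded and equi-Lipschitz with values in a finite-dimensional space, so Arzel\`a--Ascoli gives precompactness in $C([0,T];\mathcal{R}_{d}(\mathcal{A}_N,\mathcal{O}_N))$. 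The same argument applies to $\In Y^{k,N}$, and taking a diagonal subsequence handles the pair.

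\textbf{Step 2: transfer to the piecewise constant functions.} By the definition \eqref{Ip-op}, on $I_n$ we have $\Ip X^{k,N}(t)-X^{k,N}(t)=\frac{t_n-t}{\tau_k}\jump{X^{k,N}}_{n-1}$. By \eqref{derivatives_Ip}, $\jump{X^{k,N}}_{n-1}=-\tau_k\partial_t\Ip X^{k,N}|_{I_n}$, hence
\[
\sup_{t}\|\Ip X^{k,N}(t)-X^{k,N}(t)\|_{\LLspace}\leq \tau_k\operatorname{ess\,sup}_t\|\partial_t\Ip X^{k,N}\|_{\LLspace}\leq C_N\tau_k\to0 .
\]
The analogous identity for $\In$ gives $\sup_t\|\In Y^{k,N}-Y^{k,N}\|\leq C_N\tau_k$. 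Consequently, any subsequence along which the interpolants converge uniformly also has $X^{k,N}$ and $Y^{k,N}$ converging to the same limits in $L^\infty(0,T;\mathcal{R}_{d}(\mathcal{A}_N,\mathcal{O}_N))$, and hence in $L^p$ for every $p\in[1,\infty)$ because $T<\infty$. Combined with Step 1, this yields precompactness of $\{(X^{k,N},Y^{k,N})\}$ in $L^p\times L^p$.

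The only point needing care is the bookkeeping of the jump identity at the endpoints, namely the conventions $X(t_0^-)=X(0)$ and $Y(t_{M_k}^+)=Y(T)$; this is routine from the definitions. No step here is a real obstacle, since the heavy lifting, the $k$-uniform bounds, is already contained in Lemma \ref{lemma-unif-k-N-bound-X-Y}.
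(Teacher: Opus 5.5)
Your proof is correct. Step 1 is the same as the second half of the paper's argument: boundedness in the finite-dimensional space $\mathcal{R}_{d}(\mathcal{A}_N,\mathcal{O}_N)$ plus equicontinuity from the derivative bound of Lemma \ref{lemma-unif-k-N-bound-X-Y}, then Arzel\`a--Ascoli.

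For the piecewise-constant functions you take a genuinely different route.
\begin{itemize}
\item The paper bounds the time integrals $\int_{t_1}^{t_2}X^{k,N}\,\mathrm{d}s$ in $\mathcal{R}_{d}(\mathcal{A}_N,\mathcal{O}_N)$. It then derives a time-translation estimate $\|X^{k,N}(\cdot+s)-X^{k,N}\|_{L^p(0,T-s)}\lesssim s^{1/p}$ from a discrete equicontinuity bound taken from \cite{osborne2024thesis}, and concludes with Simon's criterion \cite{simon1986compact}.
\item You instead use the identity $\Ip X^{k,N}(t)-X^{k,N}(t)=(t_n-t)\,\partial_t\Ip X^{k,N}|_{I_n}$ on $I_n$. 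This gives $\sup_t\|\Ip X^{k,N}(t)-X^{k,N}(t)\|_{\LLspace}\lesssim\tau_k$, and you transfer compactness from the interpolants.
\end{itemize}

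Your argument is more elementary and self-contained, and it yields more. It gives precompactness in $L^\infty(0,T;\mathcal{R}_{d}(\mathcal{A}_N,\mathcal{O}_N))$, not only in $L^p$ for $p<\infty$. The limits of $X^{k,N}$ and $\Ip X^{k,N}$ are automatically equal, and $X^{k,N}(T)=\Ip X^{k,N}(T)$ converges. This would make redundant the separate identification step $\tilde{X}_p^N=X^N$ (via weak derivatives) in Lemma \ref{lemma-X-YkN-subseq-convergence}.

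The Simon-type route is the more robust tool in general. It needs only an $L^1$-in-time bound on the discrete derivative together with the $L^\infty(L^2)$ bound, and it extends to infinite-dimensional target spaces with compact embeddings. Your transfer step, by contrast, first needs compactness of the interpolants, which you obtain from the uniform-in-time derivative bound. In the present finite-dimensional setting, with exactly the bounds of Lemma \ref{lemma-unif-k-N-bound-X-Y} available, your choice loses nothing.
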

We prove this result by adapting the proof of \cite[Theorem 6.2.8]{osborne2024thesis}.
\begin{proof}
	Let $N\in\mathbb{N}$ be fixed. Since the bound in Lemma \ref{lemma-unif-k-N-bound-X-Y} implies uniform boundedness of the sequence $\{(X^{k,N},Y^{k,N})\}_{k\geq k_{\dagger}}$ in $L^{\infty}(0,T; \mathcal{R}_{d}(\mathcal{A}_N,\mathcal{O}_N))$, we have the uniform bound 
	\begin{equation}
		\sup_{k\geq k_{\dagger}}\sup_{0<t_1<t_2<T}\left[\left\|\int_{t_1}^{t_2}X^{k,N}(s)\mathrm{d}s\right\|_{\LLspace}+\left\|\int_{t_1}^{t_2}Y^{k,N}(s)\mathrm{d}s\right\|_{\LLspace}\right]
        \leq C''\left(\left(\mathbb{E}\left[|X_0^N|^2\right]\right)^{\frac{1}{2}}+1\right)
	\end{equation}
	where $C''>0$ is a constant that is independent of $N\in\mathbb{N}$. We recall here that $\|\cdot\|_{L^2(\Omega;\mathbb{R}^d)}$ is the standard norm on $L^2(\Omega;\mathbb{R}^d)$ that is induced by the expectation on $(\Omega,\mathbb{F},\mathbb{P})$: $\|W\|_{L^{2}(\Omega;\mathbb{R}^d)}=\left(\expectation{|W|^2}\right)^{1/2}$ for $W\in L^2(\Omega;\mathbb{R}^d)$.
	Since $\mathcal{R}_{d}(\mathcal{A}_N,\mathcal{O}_N)$ is a finite dimensional subspace of $L^2(\Omega;\mathbb{R}^d)$, we conclude that the sequences
	\begin{equation}\label{seq-int-precompact}
		\left\{\int_{t_1}^{t_2}X^{k,N}(s)\mathrm{d}s\right\}_{k\geq k_{\dagger}},\quad \left\{\int_{t_1}^{t_2}Y^{k,N}(s)\mathrm{d}s\right\}_{k\geq k_{\dagger}} \text{are precompact in $\mathcal{R}_{d}(\mathcal{A}_N,\mathcal{O}_N)$ }
	\end{equation}
	for all $0<t_1<t_2<T$.  For the remainder of the proof, we will show that $\{X^{k,N}\}_{k\geq k_{\dagger}}$ is precompact in $L^{p}(0,T;\mathcal{R}_{d}(\mathcal{A}_N,\mathcal{O}_N))$ for each $p\in [1,\infty))$. We omit the proof of precompactness for the sequence $\{Y^{k,N}\}_{k\geq k_{\dagger}}$ since it follows by the same argument.
    
    There holds the equicontinuity bound:
	\begin{multline}\label{equi-bound-general}
		\sup_{s\in (0,T)}\int_0^{T-s}s^{-1}\|V(t+s)-V(t)\|_{\LLspace}\mathrm{d}t\leq \int_0^T\|\partial_t\Ipm^k V\|_{\LLspace}\mathrm{d}t \quad
        \forall V\in \mathbb{V}_k^{\pm}([0,T];\mathcal{R}_{d}(\mathcal{A}_N,\mathcal{O}_N)).
	\end{multline}
	This is shown by the direct argument given in \cite[Lemma 6.2.9]{osborne2024thesis}, which uses the fact that maps in $\mathbb{V}_k^{\pm}([0,T];\mathcal{R}_{d}(\mathcal{A}_N,\mathcal{O}_N))$ are piecewise constant-in-time and defined everywhere in $[0,T]$ through left or right continuity. 
	We note that the equicontinuity bound \eqref{equi-bound-general} and Lemma \ref{lemma-unif-k-N-bound-X-Y} allow us to apply the argument of \cite[Theorem 6.2.8]{osborne2024thesis} to then obtain that
	\begin{equation}\label{equi-cont-bound}
		\sup_{k\geq k_{\dagger}}\|X^{k,N}(\cdot+s)-X^{k,N}(\cdot)\|_{L^{p}(0,T-s,\mathcal{R}_{d}(\mathcal{A}_N,\mathcal{O}_N))}\leq (2^{1-\frac{1}{p}}T^{\frac{1}{p}}M_{*,X,N})s^{\frac{1}{p}}\quad\forall s\in (0,T),
	\end{equation} for each $p\in [1,\infty)$, where the quantity
	\begin{equation}
		M_{*,X,N}\coloneqq \sup_{k\geq k_{\dagger}}\text{ess sup}_{0\leq t\leq T}\left(\| X^{k,N}(t)\|_{\LLspace}+\|\partial_t\Ip X^{k,N}(t)\|_{\LLspace}\right)
	\end{equation} is finite by Lemma \ref{lemma-unif-k-N-bound-X-Y}. Since \eqref{seq-int-precompact} and \eqref{equi-cont-bound} hold, we apply \cite[Theorem 1]{simon1986compact} to conclude that the sequence $\{X^{k,N}\}_{k\geq k_{\dagger}}$ is precompact in $L^{p}(0,T;\mathcal{R}_{d}(\mathcal{A}_N,\mathcal{O}_N))$ for any $p\in [1,\infty)$.
	
	We finally show that $\{(\Ip X^{k,N},\In Y^{k,N})\}_{k\geq k_{\dagger}}$ is precompact in the strong topology of space $$C([0,T];\mathcal{R}_{d}(\mathcal{A}_N,\mathcal{O}_N))\times C([0,T];\mathcal{R}_{d}(\mathcal{A}_N,\mathcal{O}_N)).$$ Observe that Lemma \ref{lemma-unif-k-N-bound-X-Y}  and the fact that  $\mathcal{R}_{d}(\mathcal{A}_N,\mathcal{O}_N)$ is a finite dimensional subspace of $\LLspace$ together imply that 
	for each $t\in (0,T)$, the sequence $\{(\Ip X^{k,N}(t),\In Y^{k,N}(t))\}_{k\geq k_{\dagger}}$ is bounded and hence precompact in $\mathcal{R}_{d}(\mathcal{A}_N,\mathcal{O}_N)\times \mathcal{R}_{d}(\mathcal{A}_N,\mathcal{O}_N)$. Lemma \ref{lemma-unif-k-N-bound-X-Y} further implies that the sequence $\{(\Ip X^{k,N},\In Y^{k,N})\}_{k\geq k_{\dagger}}$ is uniformly equicontinuous in time with values in $\mathcal{R}_{d}(\mathcal{A}_N,\mathcal{O}_N)\times\mathcal{R}_{d}(\mathcal{A}_N,\mathcal{O}_N)$. Thus, precompactness follows from the Arzela-Ascoli theorem, see e.g.\ \cite[Lemma 1]{simon1986compact}.
\end{proof}

We then establish the following result on convergence along subsequences.
\begin{lemma}\label{lemma-X-YkN-subseq-convergence}
	Assume the hypotheses of Theorem \ref{theorem-existence-num-scheme}. There exists a constant $C>0$ such that, for each $N\in\mathbb{N}$, there exist $X^N,Y^N\in C([0,T];\mathcal{R}_{d}(\mathcal{A}_N,\mathcal{O}_N))$ such that  
	\begin{subequations}
		\begin{align}
			&X^{k,N}\to X^N{\rm\ and\ } Y^{k,N}\to Y^N {\rm\ in\ }L^{p}(0,T;\mathcal{R}_{d}(\mathcal{A}_N,\mathcal{O}_N)),\quad X^{k,N}(T)\to X^N(T){\rm\ in\  }\mathcal{R}_{d}(\mathcal{A}_N,\mathcal{O}_N)
			\\
			& \Ip X^{k,N}\to {X}^N{\rm\ and\  }\In Y^{k,N}\to Y^N{\rm\ in\  }C([0,T];\mathcal{R}_{d}(\mathcal{A}_N,\mathcal{O}_N))
		\end{align}
	\end{subequations}
	as $k\to\infty$, along a subsequence for all $p\in [1,\infty)$, and
	\begin{equation}\label{Linf-unif-bound-N-post-time-limit}
		\sup_{0\leq t\leq T}\left(\mathbb{E}\left[| X^{N}(t)|^2+| Y^{N}(t)|^2\right]\right)^{\frac{1}{2}} \leq   C\left(\left(\mathbb{E}\left[|X_0^N|^2\right]\right)^{\frac{1}{2}}+1\right).
	\end{equation}
\end{lemma}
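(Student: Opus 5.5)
We apply Lemma \ref{lemma-precompactness} and then identify the limits of the two precompact families. Fix $N$. By Lemma \ref{lemma-precompactness}, a subsequence (not relabelled) exists with $\Ip X^{k,N}\to \tilde X$ and $\In Y^{k,N}\to \tilde Y$ in $C([0,T];\mathcal{R}_{d}(\mathcal{A}_N,\mathcal{O}_N))$. Passing to a further subsequence, we also get $X^{k,N}\to X^N$ and $Y^{k,N}\to Y^N$ in $L^p(0,T;\mathcal{R}_{d}(\mathcal{A}_N,\mathcal{O}_N))$ for, say, $p=2$. Since the sequences are bounded in $L^\infty$ in time by Lemma \ref{lemma-unif-k-N-bound-X-Y}, interpolation upgrades this to convergence in every $p\in[1,\infty)$, and a diagonal argument over $p\in\mathbb{N}$ gives a single subsequence.

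The key step is to show $\tilde X=X^N$ and $\tilde Y=Y^N$ a.e., so that the $L^p$-limits have continuous representatives. On each interval $I_n$, the definitions \eqref{Ip-op}, \eqref{In-op} give
\[
\|\Ip X^{k,N}(t)-X^{k,N}(t)\|_{\LLspace}\le \|\jump{X^{k,N}}_{n-1}\|_{\LLspace}=\tau_k\|\partial_t\Ip X^{k,N}|_{I_n}\|_{\LLspace},
\]
and the analogous estimate holds for $Y$. The ess-sup bound on $\partial_t\Ip X^{k,N}$ and $\partial_t\In Y^{k,N}$ from Lemma \ref{lemma-unif-k-N-bound-X-Y} then yields $\sup_t\|\Ip X^{k,N}-X^{k,N}\|_{\LLspace}\lesssim \tau_k(\|X_0^N\|_{\LLspace}+1)\to0$. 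Hence the uniform and $L^p$ limits coincide, and we define $X^N\coloneqq\tilde X$, $Y^N\coloneqq\tilde Y$, both in $C([0,T];\mathcal{R}_{d}(\mathcal{A}_N,\mathcal{O}_N))$.

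For the terminal value we use the node identity from the Remark after \eqref{derivatives_Ip}: $X^{k,N}(T)=\Ip X^{k,N}(T)$. This converges to $X^N(T)$ by uniform convergence, so $X^{k,N}(T)\to X^N(T)$ in $\mathcal{R}_{d}(\mathcal{A}_N,\mathcal{O}_N)$.

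For the bound \eqref{Linf-unif-bound-N-post-time-limit}, fix $t$ and use the pointwise convergence $\Ip X^{k,N}(t)\to X^N(t)$ and $\In Y^{k,N}(t)\to Y^N(t)$ in $\LLspace$. Lower semicontinuity of the norm together with the uniform bound \eqref{Linf-unif-bound-time-interp} on $\Ip X^{k,N}$ and $\In Y^{k,N}$ gives the estimate with the same constant $C$ (up to a factor $\sqrt2$ from combining terms), and this constant is independent of $N$.

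The main obstacle is the interpolant-versus-piecewise-constant comparison above. Everything else is compactness bookkeeping, but that comparison requires the derivative bound to be uniform in $k$. This holds only for $k\ge k_{\dagger}$, which is why the lemma is stated for the tail $k\ge k_{\dagger}$.
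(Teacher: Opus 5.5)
Your proof is correct, and its skeleton (Lemma \ref{lemma-precompactness} for both families, identification of the limits, the node identity $X^{k,N}(T)=\Ip X^{k,N}(T)$, and passing to the limit in \eqref{Linf-unif-bound-time-interp}) is the same as the paper's. The genuine difference is the identification step.

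The paper argues qualitatively. It uses the $L^2(0,T;\LLspace)$ bound on $\partial_t\Ip X^{k,N}$ to extract a weak limit. It identifies this limit (by adapting \cite[Theorem 6.2.7]{osborne2024thesis}) with the weak derivative of the $L^p$-limit $\tilde X_p^N$. Integration by parts against test functions then shows $\partial_t X^N=\partial_t\tilde X_p^N$, and the resulting constant is removed with the initial condition.

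You instead use the exact identity $\Ip X^{k,N}(t)-X^{k,N}(t)=-(t_n-t)\,\partial_t\Ip X^{k,N}|_{I_n}$ for $t\in(t_{n-1},t_n]$. This gives the quantitative estimate $\sup_{t\in[0,T]}\|\Ip X^{k,N}(t)-X^{k,N}(t)\|_{\LLspace}\lesssim\tau_k$. Your argument is shorter and self-contained, and it proves more than you use. Together with the uniform convergence of $\Ip X^{k,N}$, it already gives $\sup_{t\in[0,T]}\|X^{k,N}(t)-X^N(t)\|_{\LLspace}\to 0$. So the separate $L^p$-subsequence, the interpolation between $L^2$ and $L^\infty$, and the diagonal argument in your first paragraph are superfluous. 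All $p\in[1,\infty]$ follow at once, and the $L^p$ part of Lemma \ref{lemma-precompactness}, which rests on Simon's theorem, is not needed at all.

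Your route also does not really need the $L^\infty$-in-time derivative bound. An $L^2$-in-time bound already yields $\|\Ip X^{k,N}-X^{k,N}\|_{L^2(0,T;\LLspace)}\le\tau_k\|\partial_t\Ip X^{k,N}\|_{L^2(0,T;\LLspace)}$.

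What the paper's route gives in addition is the weak convergence $\partial_t\Ip X^{k,N}\rightharpoonup\partial_t X^N$ and $H^1$-regularity of the limit. Neither is needed afterwards, since $\partial_t X^N$ is identified directly from the integral equation in the proof of Lemma \ref{theorem-convegrence-fixed-N}.
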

\begin{proof}
	Let $N
	\in\mathbb{N}$ be given. By Lemma \ref{lemma-precompactness} there exist ${X}^N,{Y}^N\in C([0,T];\mathcal{R}_{d}(\mathcal{A}_N,\mathcal{O}_N))$ such that $\Ip X^{k,N}\to {X}^N$ and $\In Y^{k,N}\to Y^N$ in $C([0,T];\mathcal{R}_{d}(\mathcal{A}_N,\mathcal{O}_N))$ as $k\to\infty$ along a subsequence. This fact allows us to pass to the limit in the uniform bound \eqref{Linf-unif-bound-time-interp} of Lemma \ref{lemma-unif-k-N-bound-X-Y} to obtain
	\begin{equation}
		\sup_{0\leq t\leq T}\left(\mathbb{E}\left[| X^{N}(t)|^2+| Y^{N}(t)|^2\right]\right)^{\frac{1}{2}} \leq  C\left(\left(\mathbb{E}\left[|X_0^N|^2\right]\right)^{\frac{1}{2}}+1\right)
	\end{equation}
	for a constant $C>0$ that is independent of $N\in\mathbb{N}$, thereby proving \eqref{Linf-unif-bound-N-post-time-limit}. By applying Lemma \ref{lemma-precompactness} once more, for each $p\in [1,\infty)$ we pass to further subsequences (without change of notation) to find that there exist $\tilde{X}_p^N,\tilde{Y}_p^N\in L^{p}(0,T;\mathcal{R}_{d}(\mathcal{A}_N,\mathcal{O}_N))$ (which depend on $p$ in general) such that $X^{k,N}\to \tilde{X}_p^N$ and $Y^{k,N}\to \tilde{Y}_p^N$ in $L^{p}(0,T;\mathcal{R}_{d}(\mathcal{A}_N,\mathcal{O}_N))$ as $k\to\infty$. We claim that $\tilde{X}_p^N=X^N$ and $\tilde{Y}_p^N=Y^N$ in $L^p(0,T;\mathcal{R}_{d}(\mathcal{A}_N,\mathcal{O}_N))$ for each $p\in [1,\infty)$, and hence
	\begin{equation}\label{interp-conv}
		X^{k,N}\to X^N \text{ and } Y^{k,N}\to Y^{N}\text{ in }L^p(0,T;\LLspace)
	\end{equation}
	as $k\to\infty$ for each $p\in [1,\infty)$. As such, the remainder of the proof seeks to establish  $\tilde{X}_p^N=X^N$ and $\tilde{Y}_p^N=Y^N$ in $L^p(0,T;\mathcal{R}_{d}(\mathcal{A}_N,\mathcal{O}_N))$ for each $p\in [1,\infty)$.
	
	We prove the claim for the sequence $\{X^{k,N}\}_{k\geq k_{\dagger}}$ as the argument for the sequence $\{ Y^{k,N}\}_{k\geq k_{\dagger}}$ follows analogously. Indeed, the uniform bound \eqref{Linf-unif-bound-time-interp} implies that the sequence $\{\partial_t\Ip X^{k,N}\}_{k\geq k_{\dagger}}$ is bounded in $L^2(0,T;\LLspace)$ uniformly in $N\in\mathbb{N}$. We may then adapt the proof of \cite[Theorem 6.2.7]{osborne2024thesis} to deduce that the weak derivative $\partial_t\tilde{X}_p^N\in L^2(0,T;L^2(\Omega;\mathbb{R}^d))$ exists such that $\partial_t\Ip X^{k,N}$ converges weakly to $\partial_t \tilde{X}_p^N$ in $L^2(0,T;\LLspace)$ along a further subsequence as $k\to\infty$, together with the fact that $\xi^N=\tilde{X}_p^N(0)=X^N(0)$ and $\tilde{X}_p^N(T)=X^N(T)$ in $\mathcal{R}_{d}(\mathcal{A}_N,\mathcal{O}_N)$. We then deduce that $X^{k,N}(T)=\Ip X^{k,N}(T)\to X^N(T)$ in $\mathcal{R}_{d}(\mathcal{A}_N,\mathcal{O}_N)$ as $k\to\infty$, while noting that $X^{k,N}(0)=\Ip X^{k,N}(0)= X_0^N=X^{N}(0)$ for all $k\in\mathbb{N}$. This, coupled with the fact that $X^{k,N}\to \tilde{X}_p^N$ and $\Ip X^{k,N}\to {X}^N$ in $L^2(0,T;\LLspace)$ as $k\to\infty$, allow us to deduce from integration-by-parts that
\begin{equation}
	\begin{split} 
		\int_0^T\expectation{-{X}^{N}\cdot \partial_t\phi}\ds
		&=\lim_{k\to \infty}\int_0^T\expectation{-\Ip X^{k,N}\cdot \partial_t\phi}\ds
		=\lim_{k\to \infty}\int_0^T\expectation{\partial_t\Ip X^{k,N}\cdot \phi}\ds
		=  \int_0^T\expectation{\partial_t \tilde{X}_p^{N}\cdot \phi }\ds
	\end{split} 
\end{equation} for all test functions $\phi\in C_c^{\infty}(0,T;\LLspace)$. Hence, the weak derivative $\partial_t {X}^N$ exists with $\partial_t{X}^N=\partial_t \tilde{X}_p^{N}$ in $L^2(0,T;\LLspace)$, which implies that $\tilde{X}_p^N(t)=X^N(t)+z_p^N$ in $\LLspace$ for a.e.\ $t\in (0,T)$, for some function $z_p^N\in\LLspace$. Since clearly ${X}^N+z_p^N\in C([0,T];\LLspace)$, we have $\tilde{X}_p^{N}\in C([0,T];\LLspace)$ as well, and so $\tilde{X}_p^N(t)=X^N(t)+z_p^N$ in $\LLspace$ for all $t\in [0,T]$. But since $\tilde{X}_p^N(0)=X^N(0)$ in $L^2(\Omega;\mathbb{R}^d)$, we deduce that $z_p^N=0$ in $L^2(\Omega;\mathbb{R}^d)$, which shows that $\tilde{X}_p^N=X^N$ in $C([0,T];\mathcal{R}_{d}(\mathcal{A}_N,\mathcal{O}_N))$ as claimed.
\end{proof}

\subsubsection{Proof of Lemma \ref{theorem-convegrence-fixed-N}}
\begin{proof}
Let $N\in\mathbb{N}$ be given. Lemma \ref{lemma-X-YkN-subseq-convergence} implies that there exist $X^N,Y^N\in C([0,T];\mathcal{R}_{d}(\mathcal{A}_N,\mathcal{O}_N))$ such that $X^{k,N}\to X^N$ and $Y^{k,N}\to Y^N$ in $L^{p}(0,T;\mathcal{R}_{d}(\mathcal{A}_N,\mathcal{O}_N))$ and $X^{k,N}(T)\to X^N(T)$ in $\LLspace$ as $k\to\infty$ along a subsequence for all $p\in [1,\infty)$. With this, it is straight-forward to show that $
\rho^{k,N}=(X^{k,N}(t))_{\#}\mathbb{P}\to \rho^{N} = (X^{N}(t))_{\#}\mathbb{P}$ in $L^2(0,T;\mathcal{P}_2(\mathbb{R}^d))$ and $\rho^{k,N}(T)\to \rho^N(T)$ in $\mathcal{P}_2(\mathbb{R}^d)$ as $k\to\infty$ along a subsequence. Indeed,  since 
\begin{equation}
	\mathcal{W}_2(\rho^{k,N}(t),\rho^{N}(t))\leq \left(\mathbb{E}\left[|X^{k,N}(t)-X^{N}(t)|^2\right]\right)^{\frac{1}{2}}\quad \forall t\in [0,T],
\end{equation} we conclude from the convergences  $X^{k,N}\to X^N$ in $L^{p}(0,T;\mathcal{R}_{d}(\mathcal{A}_N,\mathcal{O}_N))$ with $p=2$ and $X^{k,N}(T)\to X^N(T)$ in $\LLspace$ as $k\to\infty$ that $\rho^{k,N}\to \rho^N$ in $L^2(0,T;\mathcal{P}_2(\mathbb{R}^d))$ and $\rho^{k,N}(T)\to \rho^N(T)$ in $\mathcal{P}_2(\mathbb{R}^d)$ as $k\to\infty$ along a subsequence. In all, we have shown with Lemma \ref{lemma-X-YkN-subseq-convergence} that 
\begin{subequations}\label{numerical-scheme-N-only-conv-pre}
\begin{align}
	&X^{k,N}\to X^N \text{ and } Y^{k,N}\to Y^N \text{ in }L^p(0,T;\mathcal{R}_{d}(\mathcal{A}_N,\mathcal{O}_N)),\quad X^{k,N}(T)\to X^N(T) \text{ in }\mathcal{R}_{d}(\mathcal{A}_N,\mathcal{O}_N), \\
	&\Ip X^{k,N}\to {X}^N\text{ and }\In Y^{k,N}\to Y^N\text{ in }C([0,T];\mathcal{R}_{d}(\mathcal{A}_N,\mathcal{O}_N))\\
	&\rho^{k,N} \to \rho^N=(X^{N})_{\#}\mathbb{P} \quad \text{in }L^2(0,T;\mathcal{P}_2(\mathbb{R}^d)),\quad \rho^{k,N}(T) \to \rho^N(T) \quad \text{in }\mathcal{P}_2(\mathbb{R}^d),
\end{align}
\end{subequations} as $k\to\infty$ along a subsequence for any $p\in [1,\infty)$.

Using the convergence \eqref{numerical-scheme-N-only-conv-pre} and the uniform Lipschitz continuity of $D_pH$, $D_xH$ and $D_xg$ via \eqref{ass-H:2},\eqref{ass-g:2}, we deduce  that 
\begin{subequations}\label{D_pH-D_xH-D_xg-comp-convergence}
	\begin{align}
		&D_pH\left(X^{k,N}(t),Y^{k,N}(t),\rho^{k,N}(t)\right)\to D_pH\left(X^{N}(t),Y^{N}(t),\rho^{N}(t)\right) \text{ in }  L^2(0,T;\mathcal{R}_{d}(\mathcal{A}_N,\mathcal{O}_N))\\
		&D_xH\left(X^{k,N}(t),Y^{k,N}(t),\rho^{k,N}(t)\right)\to D_xH\left(X^{N}(t),Y^{N}(t),\rho^{N}(t)\right) \text{ in }  L^2(0,T;\mathcal{R}_{d}(\mathcal{A}_N,\mathcal{O}_N))\\
		&D_xg\left(X^{k,N}(T),\rho^{k,N}(T)\right)\to D_xg\left(X^{N}(T),\rho^{N}(T)\right) \text{ in }  \mathcal{R}_{d}(\mathcal{A}_N,\mathcal{O}_N)
	\end{align}
\end{subequations} as $k\to\infty$, along some subsequence. It then follows from H\"older's inequality and Fubini's theorem that
\begin{equation}
	\begin{split}
		&\lim_{k\to\infty}\sup_{0\leq t\leq T}\mathbb{E}\left|\int_0^tD_pH\left(X^{k,N},Y^{k,N},\rho^{k,N}\right)- D_pH\left(X^{N},Y^{N},\rho^{N}\right)\mathrm{d}s\right|^2
		\\
		&\qquad\qquad\leq T\lim_{k\to\infty}\int_0^T\mathbb{E}\left[\left|D_pH\left(X^{k,N},Y^{k,N},\rho^{k,N}\right)- D_pH\left(X^{N},Y^{N},\rho^{N}\right)\right|^2\right]\mathrm{d}s=0
	\end{split}
\end{equation}
\begin{equation}
	\begin{split}
		&\lim_{k\to\infty}\sup_{0\leq t\leq T}\mathbb{E}\left|\int_t^TD_xH\left(X^{k,N},Y^{k,N},\rho^{k,N}\right)- D_xH\left(X^{N},Y^{N},\rho^{N}\right)\mathrm{d}s\right|^2
		\\
		&\qquad\qquad\leq T\lim_{k\to\infty}\int_0^T\mathbb{E}\left[\left|D_xH\left(X^{k,N},Y^{k,N},\rho^{k,N}\right)- D_xH\left(X^{N},Y^{N},\rho^{N}\right)\right|^2\right]\mathrm{d}s=0.
	\end{split}
\end{equation}
We then deduce from the definition of the numerical scheme \eqref{numerical-scheme} and the uniform convergence of the interpolant terms via \eqref{numerical-scheme-N-only-conv-pre} that 
\begin{equation}\label{conv-result-1-alt}
	X^N(t)=\lim_{k\to\infty}\mathcal{I}_+^kX^{k,N}(t)=X_0^N+ \int_0^tD_pH\left(X^{N}(s),Y^{N}(s),\rho^{N}(s)\right)\mathrm{d}s\quad\text{in }\mathcal{R}_{d}(\mathcal{A}_N,\mathcal{O}_N)
\end{equation}
\begin{multline}\label{conv-result-2-alt}
	Y^N(t)=\lim_{k\to\infty}\mathcal{I}_-^kY^{k,N}(t)= -D_xg\left(X^{N}(T),\rho^{N}(T)\right)+\int_t^TD_xH\left(X^{N}(s),Y^{N}(s),\rho^{N}(s)\right)\mathrm{d}s\\
    \text{in }\mathcal{R}_{d}(\mathcal{A}_N,\mathcal{O}_N)
\end{multline}
for all $t\in [0,T]$, where $\rho^N(t):=(X^{N}(t))_{\#}\mathbb{P}$ is in $ C([0,T];\mathcal{P}_2(\mathbb{R}^d))$ since $X^N\in C([0,T];\mathcal{R}_{d}(\mathcal{A}_N,\mathcal{O}_N))$. We also clearly see that 
\begin{subequations}
	\begin{align}
		&\partial_tX^N=D_pH\left(X^{N}(t),Y^{N}(t),\rho^{N}(t)\right)\text{ in }L^2(0,T;\mathcal{R}_{d}(\mathcal{A}_N,\mathcal{O}_N))
		\\
		&\partial_tY^N=-D_xH\left(X^{N}(t),Y^{N}(t),\rho^{N}(t)\right)\text{ in }L^2(0,T;\mathcal{R}_{d}(\mathcal{A}_N,\mathcal{O}_N))
	\end{align}
\end{subequations}
We note that these time derivatives are continuous in $[0,T]$ since $D_pH$ and $D_xH$ are continuous while $X^N,Y^N\in C([0,T];\mathcal{R}_{d}(\mathcal{A}_N,\mathcal{O}_N))$ and $\rho^N\in C([0,T];\mathcal{P}_2(\mathbb{R}^d))$. As such, we have thus shown for each $N\in\mathbb{N}$ the existence of a solution $(X^N,Y^N,\rho^N)\in C^1([0,T];\mathcal{R}_{d}(\mathcal{A}_N,\mathcal{O}_N))\times C^1([0,T];\mathcal{R}_{d}(\mathcal{A}_N,\mathcal{O}_N))\times C([0,T];\mathcal{P}_2(\mathbb{R}^d))$ to the continuous time Hamiltonian system \eqref{numerical-scheme-N-only}. 

In addition, we deduce from the numerical scheme \eqref{numerical-scheme} and the convergence \eqref{D_pH-D_xH-D_xg-comp-convergence} that
\begin{equation}
	\begin{split}
		&\int_0^T\mathbb{E}\left[|\partial_t\Ip X^{k,N} - \partial_t X^N|^2\right]\mathrm{d}s+\int_0^T\mathbb{E}\left[|\partial_t\In Y^{k,N} - \partial_t Y^N|^2\right]\mathrm{d}s
		\\
		&=\int_0^T\mathbb{E}\left[|D_pH\left(X^{k,N},Y^{k,N},\rho^{k,N}\right) - D_pH\left(X^{N},Y^{N},\rho^{N}\right)|^2\right]\mathrm{d}s\\
		&\qquad\qquad\qquad+\int_0^T\mathbb{E}\left[|D_xH\left(X^{k,N},Y^{k,N},\rho^{k,N}\right) - D_xH\left(X^{N},Y^{N},\rho^{N}\right)|^2\right]\mathrm{d}s
		\\
		&\qquad\qquad\qquad\qquad\qquad\qquad\qquad\qquad\qquad\qquad\qquad\to 0 \text{ as }k\to\infty.
	\end{split}
\end{equation}
Thus, we have strong convergence: $\partial_t\Ip X^{k,N} \to \partial_t X^N$ and $\partial_t\Ip Y^{k,N} \to \partial_t Y^N$ in $L^2(0,T;\mathcal{R}_{d}(\mathcal{A}_N,\mathcal{O}_N))$ as $k\to\infty$ along a subsequence. But, by Lemma \ref{lemma-convegrence-fixed-N-inf}, we have that the solution to \eqref{numerical-scheme-N-only} is unique. Hence this convergence and \eqref{numerical-scheme-N-only-conv-pre} hold for the entire sequence in the limit as $k\to\infty$. This completes the proof.
\end{proof}

\subsection{{Uniform in $N$} temporal error bound}
In this section we prove the following error bound for the convergence \eqref{XkYkN-convergence} as $k\to\infty$ for fixed $N\in\mathbb{N}$, which shows in particular that this convergence also holds for $p=\infty$ uniformly in time. 
\begin{lemma}[Error bound in time uniform in $N$]\label{theorem-time-err-bound-fixed-N} Assume the hypotheses of Theorem \ref{theorem-convergence-k-N-joint}. There holds
	\begin{multline}\label{part-conv-bound-conv-rate-time}
			\sup_{t\in [0,T]}\left(\|(Y^{N}-Y^{k,N})(t)\|_{\LLspace}+\|(X^{N}-X^{k,N})(t)\|_{\LLspace}\right)+\sup_{t\in [0,T]}\mathcal{W}_2(\rho^{N}(t),\rho^{k,N}(t))\lesssim \tau_k
	\end{multline}
	for all $k\in\mathbb{N}$ sufficiently large and all $N\in\mathbb{N}$, where the hidden constant in \eqref{part-conv-bound-conv-rate-time} is independent of $k$ and $N$.
\end{lemma}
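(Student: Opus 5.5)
The plan is to treat the continuous solution $(X^N,Y^N,\rho^N)$ of \eqref{numerical-scheme-N-only}, sampled in time, as a solution of the generalized discrete Hamiltonian system \eqref{numerical-scheme-gen} with small affine perturbations $F,G$. Theorem \ref{theorem-cont-dependence-discrete-Hamiltonian-sys} then bounds the discrepancy from $(X^{k,N},Y^{k,N},\rho^{k,N})$ by the $L^2$-norms of those perturbations.

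\textbf{Step 1: the comparison triple.} Define $\hat X\in\mathbb{V}_k^+([0,T];\mathcal{R}_d(\mathcal{A}_N,\mathcal{O}_N))$ by $\hat X(t_n)=X^N(t_n)$ for $n=0,\dots,M_k$ (left-continuous piecewise constant). Define $\hat Y\in\mathbb{V}_k^-$ by $\hat Y(t_n)=Y^N(t_n)$ (right-continuous), and set $\hat\varrho(t)=(\hat X(t))_\#\mathbb{P}$. Then $\hat X(0)=X_0^N$ and $\hat Y(T)=-D_xg(\hat X(T),\hat\varrho(T))$ hold exactly, since $\hat X(T)=X^N(T)$. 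On $I_n$ we have
\[
\partial_t\Ip\hat X=\tau_k^{-1}\int_{I_n}\partial_tX^N\,\mathrm{d}s,\qquad \partial_t\In\hat Y=\tau_k^{-1}\int_{I_n}\partial_tY^N\,\mathrm{d}s .
\]
So $(\hat X,\hat Y,\hat\varrho)$ solves \eqref{numerical-scheme-gen} with $\lambda=1$ and residuals
\[
F|_{I_n}=\tau_k^{-1}\int_{I_n}\big(D_pH(X^N,Y^N,\rho^N)(s)-D_pH(\hat X,\hat Y,\hat\varrho)|_{I_n}\big)\,\mathrm{d}s,
\]
with $G$ defined analogously. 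Both $F$ and $G$ lie in $\mathbb{V}_k(0,T;\mathcal{R}_d(\mathcal{A}_N,\mathcal{O}_N))$.

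\textbf{Step 2: residual estimates.} By the Lipschitz continuity \eqref{ass-H:2} and $\mathcal{W}_2(\rho^N(s),\hat\varrho)\le\|X^N(s)-\hat X\|_{L^2}$, we get
\[
\|F\|_{L^2(0,T;L^2)}+\|G\|_{L^2(0,T;L^2)}\lesssim \sup_{|s-r|\le\tau_k}\big(\|X^N(s)-X^N(r)\|+\|Y^N(s)-Y^N(r)\|\big)\le \tau_k\big(\|\partial_tX^N\|_{L^\infty(L^2)}+\|\partial_tY^N\|_{L^\infty(L^2)}\big).
\]
By the equations and the linear growth \eqref{linear-growth-D_pH-D_xH}, the time derivatives are bounded by $C(\sup_t\|X^N\|+\sup_t\|Y^N\|+1)$. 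This is at most $C(\|X_0^N\|_{L^2}+1)$, either by \eqref{Linf-unif-bound-N-post-time-limit} or by Lemma \ref{lemma-convegrence-fixed-N-inf} applied against the solution with initial datum $0$.

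\textbf{Step 3: conclusion.} Apply Theorem \ref{theorem-cont-dependence-discrete-Hamiltonian-sys} with $M=N$, $\lambda_1=\lambda_2=1$, $X_{0,1}^N=X_{0,2}^N=X_0^N$, $F_1=G_1=0$, $F_2=F$ and $G_2=G$. This gives
\[
\sup_t\big(\|X^{k,N}-\hat X\|+\|Y^{k,N}-\hat Y\|+\mathcal{W}_2(\rho^{k,N},\hat\varrho)\big)\lesssim\tau_k(\|X_0^N\|+1).
\]
Next, $\sup_t\|X^N(t)-\hat X(t)\|\le\tau_k\|\partial_tX^N\|_{L^\infty(L^2)}$, and the same holds for $Y$. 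The triangle inequality then yields \eqref{part-conv-bound-conv-rate-time}.

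\textbf{Main obstacle.} The main obstacle is uniformity in $N$. The constant carries the factor $\|X_0^N\|_{L^2}+1$, so the estimate is uniform in $N$ only because $X_0^N\to X_0$ in $L^2(\Omega;\mathbb{R}^d)$ under the hypotheses of Theorem \ref{theorem-convergence-k-N-joint}, which bounds $\sup_N\|X_0^N\|$. The care needed here is to make sure the a priori bound on $\partial_tX^N,\partial_tY^N$ really depends only on the data, $T$ and $\|X_0^N\|$. For that I would rely on the $\lambda_2=0$ comparison, exactly as in Lemma \ref{lemma-unif-bound-for-lam-fp-map}.
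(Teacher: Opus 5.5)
Your proposal is correct and matches the paper's proof. Your $\hat X,\hat Y$ are the paper's projections $\Pi_{+}^{k,N}X^N,\Pi_{-}^{k,N}Y^N$, and your $F,G$ are its residuals $X_{pert,avg}^{k,N},Y_{pert,avg}^{k,N}$, estimated in the same way. The conclusion, Theorem \ref{theorem-cont-dependence-discrete-Hamiltonian-sys} with $\lambda_1=\lambda_2=1$, $F_1=G_1=0$ plus the triangle inequality, is identical, and the paper likewise gets $N$-uniformity of $\sup_t(\|X^N\|+\|Y^N\|)$ from Lemma \ref{lemma-convegrence-fixed-N-inf} together with $X_0^N\to X_0$.
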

The analysis will be based on the following projection operators. Let $\qPikp:C([0,T];\mathcal{R}_{d}(\mathcal{A}_N,\mathcal{O}_N))\to \mathbb{V}_k^{+}([0,T];\mathcal{R}_{d}(\mathcal{A}_N,\mathcal{O}_N))$, $\qPikn:C([0,T];\mathcal{R}_{d}(\mathcal{A}_N,\mathcal{O}_N))\to \mathbb{V}_k^{-}([0,T];\mathcal{R}_{d}(\mathcal{A}_N,\mathcal{O}_N))$ be the operators given by
\begin{equation}
	\qPikp \mathscr{X}(t)\coloneqq \mathscr{X}(t_n)\text{ in }\mathcal{R}_{d}(\mathcal{A}_N,\mathcal{O}_N),\quad t\in (t_{n-1},t_n],\quad n\in\{1,2,\cdots,M_k\} 
\end{equation} with $\qPikp \mathscr{X}(0)\coloneqq \mathscr{X}(0)$ in $\mathcal{R}_{d}(\mathcal{A}_N,\mathcal{O}_N)$ for all $\mathscr{X}\in C([0,T];\mathcal{R}_{d}(\mathcal{A}_N,\mathcal{O}_N))$, and 
\begin{equation}
	\qPikn \mathscr{Y}(t)\coloneqq \mathscr{Y}(t_{n-1})\text{ in }\mathcal{R}_{d}(\mathcal{A}_N,\mathcal{O}_N),\quad t\in [t_{n-1},t_n),\quad n\in\{1,2,\cdots,M_k\} 
\end{equation} with $\qPikn \mathscr{Y}(T)\coloneqq \mathscr{Y}(T)$ in $\mathcal{R}_{d}(\mathcal{A}_N,\mathcal{O}_N)$ for all $\mathscr{Y}\in C([0,T];\mathcal{R}_{d}(\mathcal{A}_N,\mathcal{O}_N))$. Given $\mathscr{X}\in C([0,T];\mathcal{R}_{d}(\mathcal{A}_N,\mathcal{O}_N))$, we also define  $\pi_{\mathscr{X}}^k{\rho}^N(t)\coloneqq (\qPikp \mathscr{X}(t))_{\#}\mathbb{P}$, which is the law of the projection $\qPikp \mathscr{X}(t)$, for each $t\in [0,T]$.

The following result provides a temporal error in the projections of the solution $$(X^N,Y^N,\rho^N)\in  C^1(0,T;\mathcal{R}_{d}(\mathcal{A}_N,\mathcal{O}_N))\times C^1(0,T;\mathcal{R}_{d}(\mathcal{A}_N,\mathcal{O}_N))\times C([0,T];\mathcal{P}_2(\mathbb{R}^d))$$ to the continuous time system \eqref{numerical-scheme-N-only} for each $N\in\mathbb{N}$.
\begin{lemma}\label{lemma-temporal-inter-error-bound}
	Assume the hypotheses of Theorem \ref{theorem-convergence-k-N-joint}. There holds
	\begin{multline}\label{time-projection-err-bound}
			\sup_{t\in [0,T]}\left(\|(Y^{N}-\qPikn Y^{N})(t)\|_{\LLspace}+\|(X^{N}-\qPikp X^{N})(t)\|_{\LLspace}\right)+\sup_{t\in [0,T]}\mathcal{W}_2(\rho^{N}(t),\pi_{X^N}^k{\rho}^N(t))\lesssim \tau_k
	\end{multline}
	for all $k,N\in\mathbb{N}$, where the hidden constant in \eqref{part-conv-bound-conv-rate-time} is independent of $k$ and $N$.
\end{lemma}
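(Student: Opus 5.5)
The bound reduces to a uniform-in-$N$ Lipschitz-in-time estimate for $(X^N,Y^N)$ in $\LLspace$. By definition of the projections, for $t\in(t_{n-1},t_n]$ we have
\[
(X^N-\qPikp X^N)(t)=X^N(t)-X^N(t_n)=-\int_t^{t_n}\partial_sX^N(s)\,\ds ,
\]
and similarly, for $t\in[t_{n-1},t_n)$,
\[
(Y^N-\qPikn Y^N)(t)=\int_{t_{n-1}}^t\partial_sY^N(s)\,\ds .
\]
At the endpoints $t=0$ and $t=T$ the differences vanish by the conventions $\qPikp X^N(0)=X^N(0)$ and $\qPikn Y^N(T)=Y^N(T)$. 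Minkowski's integral inequality then gives
\[
\|(X^N-\qPikp X^N)(t)\|_{\LLspace}\le \tau_k\sup_{s\in[0,T]}\|\partial_sX^N(s)\|_{\LLspace},
\]
and the same estimate for $Y^N$. For the measure term, $\rho^N(t)$ and $\pi_{X^N}^k\rho^N(t)$ are the laws of $X^N(t)$ and $\qPikp X^N(t)$, so the definition \eqref{W2-distance} gives $\mathcal{W}_2(\rho^N(t),\pi^k_{X^N}\rho^N(t))\le\|(X^N-\qPikp X^N)(t)\|_{\LLspace}$. It therefore remains to bound $\sup_t\|\partial_tX^N\|_{\LLspace}+\sup_t\|\partial_tY^N\|_{\LLspace}$ independently of $N$.

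By Lemma \ref{theorem-convegrence-fixed-N}, $(X^N,Y^N)$ is $C^1$ and satisfies $\partial_tX^N=D_pH(X^N,Y^N,\rho^N)$ and $\partial_tY^N=-D_xH(X^N,Y^N,\rho^N)$. The linear growth bounds \eqref{linear-growth-D_pH}--\eqref{linear-growth-D_xH}, with $q=2$ and $\mathcal{W}_2(\rho^N(t),\delta_0)\le\|X^N(t)\|_{\LLspace}$, give
\[
\|\partial_tX^N(t)\|_{\LLspace}+\|\partial_tY^N(t)\|_{\LLspace}\lesssim \|X^N(t)\|_{\LLspace}+\|Y^N(t)\|_{\LLspace}+1 .
\]
The uniform bound \eqref{Linf-unif-bound-N-post-time-limit} of Lemma \ref{lemma-X-YkN-subseq-convergence} controls the right-hand side by $C(\|X_0^N\|_{\LLspace}+1)$, with $C$ independent of $N$.

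The step that needs care is making this uniform in $N$. Since $X_0^N\to X_0$ in $\LLspace$ by the hypotheses of Theorem \ref{theorem-convergence-k-N-joint}, the sequence $\sup_N\|X_0^N\|_{\LLspace}$ is finite, so the final constant depends only on the data and $\sup_N\|X_0^N\|_{\LLspace}\lesssim\|X_0\|_{\LLspace}+1$ for large $N$. If one wants to avoid this dependence, an alternative is to invoke Lemma \ref{lemma-convegrence-fixed-N-inf}: it gives $\sup_t\|X^N-X\|_{\LLspace}+\sup_t\|Y^N-Y\|_{\LLspace}\lesssim\|X_0-X_0^N\|_{\LLspace}$, so $X^N,Y^N$ are uniformly bounded by the $C^1([0,T];\LLspace)$ norm of $(X,Y)$ plus a bounded term. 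Either route yields the $\tau_k$ bound with constant independent of $k$ and $N$, and this holds for all $k$ with no restriction on the time step.
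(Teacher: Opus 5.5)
Your argument is correct and is essentially the paper's proof. Like the paper, you write the projection errors as integrals of $D_pH(X^N,Y^N,\rho^N)$ and $-D_xH(X^N,Y^N,\rho^N)$ over an interval of length at most $\tau_k$, bound the integrands by $\|X^N\|+\|Y^N\|+1$, take the uniform-in-$N$ bound from Lemma \ref{lemma-convegrence-fixed-N-inf} (your second route), and control $\mathcal{W}_2(\rho^N(t),\pi^k_{X^N}\rho^N(t))$ via the coupling $(X^N(t),\qPikp X^N(t))$; the paper uses Lipschitz continuity about $(0,0,\delta_0)$ and Cauchy--Schwarz in time where you use linear growth and Minkowski, which is equivalent, and your first route via \eqref{Linf-unif-bound-N-post-time-limit} with $\sup_N\|X_0^N\|_{\LLspace}<\infty$ works equally well.
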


\begin{proof}
	Fix $k,N\in\mathbb{N}$. Given $t\in (0,T]$, there exists $n\in \{1,2,\cdots,M_k\}$ such that $t\in (t_{n-1},t_n]$ and hence the system \eqref{numerical-scheme-N-only} satisfied uniquely by the triple $(X^N,Y^N,\rho^N)$ gives
	\begin{equation}
		\begin{split}
			&X^N(t)-\qPikp X^N(t)\\
			&= X_0^N+ \int_0^tD_pH\left(X^{N}(s),Y^{N}(s),\rho^{N}(s)\right)\mathrm{d}s - X^N(t_n)
			\\
			&=X_0^N+ \int_0^tD_pH\left(X^{N}(s),Y^{N}(s),\rho^{N}(s)\right)\mathrm{d}s - X_0^{N}- \int_0^{t_n}D_pH\left(X^{N}(s),Y^{N}(s),\rho^{N}(s)\right)\mathrm{d}s
			\\
			&=- \int_t^{t_n}D_pH\left(X^{N}(s),Y^{N}(s),\rho^{N}(s)\right)\mathrm{d}s
		\end{split}
	\end{equation}
	Since $|t-t_n|\leq \tau_k$, we apply H\"older's inequality and Fubini's theorem to obtain
	\begin{multline}
		\mathbb{E}\left[|X^N(t)-\qPikp X^N(t)|^2\right]
		\leq \mathbb{E}\left[\left(\int_t^{t_n}|D_pH\left(X^{N}(s),Y^{N}(s),\rho^{N}(s)\right)|\mathrm{d}s\right)^2\right]
		\\
		\leq2\tau_k\int_t^{t_n}\mathbb{E}\left[|D_pH(X^N,Y^N,\rho^N)-D_pH(0,0,\delta_0)|^2+|D_pH(0,0,\delta_0)|^2\right]\mathrm{d}s
		\\
		\lesssim\tau_k \int_t^{t_n}\mathbb{E}\left[|X^N|^2+|Y^N|^2+\mathcal{W}_2^2(\rho^N,\delta_0)\right]\mathrm{d}s+\tau_k^2|D_pH(0,0,\delta_0)|^2
	\end{multline}
	where in the final inequality we used Lipschitz continuity of $D_pH$ via \eqref{ass-H:2}. 
	Since $\mathcal{W}_2^2(\rho^N,\delta_0)\leq \expectation{|X^N|^2}$ in $[0,T]$, we get
	\begin{equation}
		\mathbb{E}\left[|X^N(t)-\qPikp X^N(t)|^2\right]
		\lesssim\tau_k \int_t^{t_n}\mathbb{E}\left[2|X^N|^2+|Y^N|^2\right]\mathrm{d}s+\tau_k^2|D_pH(0,0,\delta_0)|^2
	\end{equation}
	Noting the assumption that $X_0^N
    \to X_0$ in $L^2(\Omega;\mathbb{R}^d)$ as $N
    \to
    \infty$, we deduce from Lemma \ref{lemma-convegrence-fixed-N-inf} the strong convergence of $X^N\to X$ and $Y^N\to Y$ in $C([0,T], L^2(\Omega;\mathbb{R}^d))$ as $N\to\infty$. This implies that 
	\begin{equation}\label{Linf-unif-bound-N-post-time-limit-real}
		\sup_{N\in\mathbb{N}}\sup_{0\leq t\leq T}\left(\mathbb{E}\left[| X^{N}(t)|^2+| Y^{N}(t)|^2\right]\right)^{\frac{1}{2}}<\infty,
	\end{equation}
	and therefore 
	\begin{equation}
		\begin{split}
			&\mathbb{E}\left[|X^N(t)-\qPikp X^N(t)|^2\right] \lesssim \tau_k(t_n-t)+\tau_k^2\lesssim\tau_k^2
		\end{split}
	\end{equation}
	where the hidden constant is independent of $N\in\mathbb{N}$. 
	Since $t\in (0,T]$ was arbitrary and $X^N(0) -\qPikp X^N(0)=X_0^N-X^N(0)=0$, we conclude that
	\begin{equation}
		\sup_{0\leq t\leq T}\left(\mathbb{E}\left[|X^N(t)-\qPikp X^N(t)|^2\right]\right)^{\frac{1}{2}}\lesssim\tau_k.
	\end{equation} 
	We find that for each $t\in [t_{n-1},t_n)$ with $n\in \{1,2,\cdots,M_k\}$
	\begin{equation}
		\begin{split}
			&Y^N(t)-\qPikn Y^N(t)
			\\
			&= -D_xg\left(X^{N}(T),\rho^{N}(T)\right)+ \int_t^TD_xH\left(X^{N}(s),Y^{N}(s),\rho^{N}(s)\right)\mathrm{d}s - Y^N(t_{{n-1}})
			\\
			&=-D_xg\left(X^{N}(T),\rho^{N}(T)\right)+ \int_t^TD_xH\left(X^{N}(s),Y^{N}(s),\rho^{N}(s)\right)\mathrm{d}s + D_xg\left(X^{N}(T),\rho^{N}(T)\right)
			\\
			&\qquad\qquad\qquad\qquad\qquad- \int_{t_{n-1}}^TD_xH\left(X^{N}(s),Y^{N}(s),\rho^{N}(s)\right)\mathrm{d}s
			\\
			&=- \int_{t_{n-1}}^{t}D_xH\left(X^{N}(s),Y^{N}(s),\rho^{N}(s)\right)\mathrm{d}s
		\end{split}
	\end{equation}
	We then likewise deduce from the Lipschitz continuity of $D_xH$ via \eqref{ass-H:2}, the fact that $|t-t_{n-1}|\leq \tau_k$ for each $t\in [t_{n-1},t_n)$ with $n\in \{1,2,\cdots,M_k\}$, and the uniform bound \eqref{Linf-unif-bound-N-post-time-limit-real} that 
	\begin{equation}
		\sup_{0\leq t\leq T}\left(\mathbb{E}\left[|Y^N(t)-\qPikn Y^N(t)|^2\right]\right)^{\frac{1}{2}}\lesssim\tau_k.
	\end{equation} Finally, since $\mathcal{W}_2(\rho^{N}(t),\pi_{X^N}^k{\rho}^N(t))\leq \left(\mathbb{E}\left[|X^N(t)-\qPikp X^N(t)|^2\right]\right)^{\frac{1}{2}}$ for all $t\in [0,T]$, we conclude the proof of the bound \eqref{time-projection-err-bound}.	
\end{proof}
\begin{remark}
	The projection error bound \eqref{time-projection-err-bound} indicates a rate of convergence of order one in the time-step $\tau_k$. This rate is optimal since the projections are piecewise constant in time and exactly approximate the zero-order terms $X^N(0)$, $Y^N(T)$ in the first-order temporal Taylor expansions of $X^N,Y^N$, respectively, as shown in the above calculations. 
\end{remark}

In addition, the projections $\qPikp X^N,\qPikn Y^N$ satisfy a discrete Hamiltonian system with zeroth-order perturbations which are first order in the time-step in the norm of $L^2(0,T;\LLspace)$.
\begin{lemma}\label{lemma-deriv-interp-time-rep-XY}
	Assume the hypotheses of Theorem \ref{theorem-convergence-k-N-joint}.  For each $k,N\in\mathbb{N}$, there hold
	\begin{subequations}\label{deriv-interp-time-rep-XY}
		\begin{align}
			&\partial_t\Ip \qPikp X^{N}= D_pH\left(\qPikp X^{N},\qPikn Y^{N},\pi_{X^N}^k{\rho}^N\right)+X_{pert,avg}^{k,N}\quad \text{ in }\mathcal{R}_{d}(\mathcal{A}_N,\mathcal{O}_N),\\
			&\partial_t\In \qPikn Y^{N}= -D_xH\left(\qPikp X^{N},\qPikn Y^{N},\pi_{X^N}^k{\rho}^N\right) +Y_{pert,avg}^{k,N}\quad \text{ in }\mathcal{R}_{d}(\mathcal{A}_N,\mathcal{O}_N),
		\end{align}
	\end{subequations}
	for a.e.\ $t\in (0,T)$ with $\qPikp X^{N}(0)=X_0^N$ and $\qPikn Y^{N}(T)=-D_xg\left(\qPikp X^{N}(T),\pi_{X^N}^k{\rho}^N(T)\right)$, where $X_{pert,avg}^{k,N},Y_{pert,avg}^{k,N}\in\mathbb{V}_k(0,T;\mathcal{R}_{d}(\mathcal{A}_N,\mathcal{O}_N))$ with
	\begin{subequations}
		\begin{align}
			&X_{pert,avg}^{k,N}|_{I_n}\coloneqq \tau_k^{-1}\int_{t_{n-1}}^{t_n}\left(D_pH(X^N,Y^N,\rho^N) -D_pH\left(\qPikp X^{N},\qPikn Y^{N},\pi_{X^N}^k{\rho}^N\right) \right)\mathrm{d}s,\label{X-pert-avg-defn}\\
			&Y_{pert,avg}^{k,N}|_{I_n}\coloneqq -\tau_k^{-1}\int_{t_{n-1}}^{t_n}\left(D_xH(X^N,Y^N,\rho^N) -D_xH\left(\qPikp X^{N},\qPikn Y^{N},\pi_{X^N}^k{\rho}^N\right) \right)\mathrm{d}s,\label{Y-pert-avg-defn}
		\end{align}
	\end{subequations}
	in $\mathcal{R}_{d}(\mathcal{A}_N,\mathcal{O}_N)$ for $n\in\{1,2,\cdots,M_k\}$ and 
	\begin{equation}\label{XYdiffavg-error}
		\left\|X_{pert,avg}^{k,N}\right\|_{L^2(0,T;\LLspace)}+\left\|Y_{pert,avg}^{k,N}\right\|_{L^2(0,T;\LLspace)}\lesssim\tau_k 
	\end{equation}
	where the hidden constant is independent of $k$ and $N$.
\end{lemma}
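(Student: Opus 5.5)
The proof has two parts: an exact algebraic computation on each subinterval that produces \eqref{deriv-interp-time-rep-XY} with the averaged perturbations, followed by an estimate of those perturbations using Lemma \ref{lemma-temporal-inter-error-bound}.

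\emph{Identity on each subinterval.} Fix $n\in\{1,\dots,M_k\}$ and $t\in I_n$. By \eqref{derivatives_Ip}, $\partial_t\Ip\qPikp X^N|_{I_n}=-\tau_k^{-1}\jump{\qPikp X^N}_{n-1}=\tau_k^{-1}(X^N(t_n)-X^N(t_{n-1}))$. For $n=1$ this uses the convention $\qPikp X^N(0)=X^N(0)$. The integral form of \eqref{numerical-scheme-N-only} then gives
\[
\partial_t\Ip\qPikp X^N|_{I_n}=\tau_k^{-1}\int_{t_{n-1}}^{t_n}D_pH(X^N,Y^N,\rho^N)\,\mathrm{d}s .
\]
Now add and subtract $D_pH(\qPikp X^N,\qPikn Y^N,\pi^k_{X^N}\rho^N)$. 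This quantity is constant on $I_n$, so it equals its own average over $I_n$. The remainder is precisely $X^{k,N}_{pert,avg}|_{I_n}$ as defined in \eqref{X-pert-avg-defn}.

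The $Y$ equation is handled the same way. We have $\partial_t\In\qPikn Y^N|_{I_n}=-\tau_k^{-1}(Y^N(t_{n-1})-Y^N(t_n))$, and by the backward integral equation this equals $-\tau_k^{-1}\int_{t_{n-1}}^{t_n}D_xH(X^N,Y^N,\rho^N)\,\mathrm{d}s$. Adding and subtracting the frozen value gives the sign convention in \eqref{Y-pert-avg-defn}.

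The boundary conditions follow directly from the definitions. First, $\qPikp X^N(0)=X^N(0)=X_0^N$. Second, $\qPikn Y^N(T)=Y^N(T)=-D_xg(X^N(T),\rho^N(T))$. Since $\qPikp X^N(T)=X^N(T)$, we have $\pi^k_{X^N}\rho^N(T)=\rho^N(T)$, which yields the stated terminal condition. Membership in $\mathbb{V}_k(0,T;\mathcal{R}_d(\mathcal{A}_N,\mathcal{O}_N))$ is clear, because every quantity involved is $\mathcal{R}_d$-valued (see \eqref{nonlinearity-identity-1}).

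\emph{Estimate.} By Jensen's inequality on each $I_n$, followed by summation over $n$,
\[
\|X^{k,N}_{pert,avg}\|^2_{L^2(0,T;L^2)}\le\int_0^T\mathbb{E}\bigl|D_pH(X^N,Y^N,\rho^N)-D_pH(\qPikp X^N,\qPikn Y^N,\pi^k_{X^N}\rho^N)\bigr|^2\,\mathrm{d}s .
\]
Lipschitz continuity \eqref{ass-H:2} bounds the integrand by a constant times
\[
\mathbb{E}|X^N-\qPikp X^N|^2+\mathbb{E}|Y^N-\qPikn Y^N|^2+\mathcal{W}_2^2(\rho^N,\pi^k_{X^N}\rho^N).
\]
Lemma \ref{lemma-temporal-inter-error-bound} controls each of these three terms by $\tau_k^2$ uniformly in $t$, so the total is at most $T\tau_k^2$. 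The $Y$ perturbation is treated identically using $\Lip(D_xH)$. Taking square roots gives \eqref{XYdiffavg-error}.

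The only point needing care is that the constant must be uniform in $N$. This uniformity is inherited from Lemma \ref{lemma-temporal-inter-error-bound}, which in turn relies on the uniform bound \eqref{Linf-unif-bound-N-post-time-limit-real}, itself a consequence of the convergence $X_0^N\to X_0$. Beyond this, the main risk is bookkeeping in the jump sign conventions, which I would check carefully against \eqref{derivatives_Ip}.
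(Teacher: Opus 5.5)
Your proposal is correct and follows essentially the same route as the paper. You compute $\partial_t\Ip\qPikp X^N$ and $\partial_t\In\qPikn Y^N$ on each $I_n$ via the jump formulas \eqref{derivatives_Ip} and the integral form of \eqref{numerical-scheme-N-only}, then add and subtract the frozen nonlinearity. You then bound the averaged perturbations by Jensen/H\"older on each subinterval, Lipschitz continuity of $D_pH$ and $D_xH$, and Lemma \ref{lemma-temporal-inter-error-bound}, and you correctly trace the $N$-uniformity of that lemma's constant back to \eqref{Linf-unif-bound-N-post-time-limit-real}.
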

\begin{proof}
	Given $k,N\in\mathbb{N}$, the definition of $\qPikp ,\qPikn $ and the regularity of $X^N,Y^N$ given in Theorem \ref{theorem-convegrence-fixed-N} clearly imply that  $X_{pert,avg}^{k,N},Y_{pert,avg}^{k,N}$ are well-defined as maps in $\mathbb{V}_k(0,T;\mathcal{R}_{d}(\mathcal{A}_N,\mathcal{O}_N))$. To derive the discrete Hamiltonian system \eqref{deriv-interp-time-rep-XY}, fix $n\in\{1,2,\cdots,M_k\}$. From the definitions of the interpolation operators $\Ip,\In$ and projection operators $\qPikp ,\qPikn $ we deduce that
	\begin{equation}
		\begin{split}
			\partial_t\Ip \qPikp X^{N}|_{I_n}
			&=-\tau_k^{-1}\jump{\qPikp X^{N}}_{n-1}=-\tau_k^{-1}\left(\qPikp X^{N}(t_{n-1})-\qPikp X^{N}(t_n)\right)\\
			&=-\tau_k^{-1}\left(X^{N}(t_{n-1})-X^{N}(t_n)\right)
		\end{split}
	\end{equation}
	The system \eqref{numerical-scheme-N-only} satisfied uniquely by the triple $(X^N,Y^N,\rho^N)$ gives
	\begin{equation}
			\begin{split} 
			&\partial_t\Ip \qPikp X^{N}|_{I_n}=\tau_k^{-1}\int_{t_{n-1}}^{t_n}D_pH(X^N,Y^N,\rho^N)\mathrm{d}s
			\\
			&=\tau_k^{-1}\int_{t_{n-1}}^{t_n}\left(D_pH(X^N,Y^N,\rho^N)- D_pH\left(\qPikp X^{N},\qPikn Y^{N},\pi_{X^N}^k{\rho}^N\right)\right)\mathrm{d}s\\
			&\qquad\qquad\qquad\qquad\qquad\qquad\qquad\qquad+ D_pH\left(\qPikp X^{N},\qPikn Y^{N},\pi_{X^N}^k{\rho}^N\right)|_{I_n}
			\\&= D_pH\left(\qPikp X^{N},\qPikn Y^{N},\pi_{X^N}^k{\rho}^N\right)|_{I_n}+X_{pert,avg}^{k,N}|_{I_n}
			\end{split}
	\end{equation} where $X_{pert,avg}^{k,N}$ is defined in \eqref{X-pert-avg-defn}. We likewise obtain
	\begin{equation}
		\begin{split}
			\partial_t\In \qPikn Y^{N}|_{I_n}&=-\tau_k^{-1}\jump{\qPikn Y^{N}}_{n}=-\tau_k^{-1}\left(\qPikn Y^{N}(t_{n-1})-\qPikn Y^{N}(t_n)\right)
			=-\tau_k^{-1}\left(Y^{N}(t_{n-1})-Y^{N}(t_n)\right)
		\end{split}
	\end{equation} and so
	\begin{equation}
		\begin{split}
			&\partial_t\In \qPikn Y^{N}|_{I_n}=-\tau_k^{-1}\int_{t_{n-1}}^{t_n}D_xH(X^N,Y^N,\rho^N)\mathrm{d}s
			\\
			&=-\tau_k^{-1}\int_{t_{n-1}}^{t_n}\left(D_xH(X^N,Y^N,\rho^N)- D_xH\left(\qPikp X^{N},\qPikn Y^{N},\pi_{X^N}^k{\rho}^N\right)\right)\mathrm{d}s\\
			&\qquad\qquad\qquad\qquad\qquad\qquad\qquad\qquad- D_xH\left(\qPikp X^{N},\qPikn Y^{N},\pi_{X^N}^k{\rho}^N\right)|_{I_n}
			\\
			&= -D_xH\left(\qPikp X^{N},\qPikn Y^{N},\pi_{X^N}^k{\rho}^N\right)|_{I_n}+Y_{pert,avg}^{k,N}|_{I_n}
		\end{split}
	\end{equation} where $Y_{pert,avg}^{k,N}$ is defined in \eqref{Y-pert-avg-defn}. Since $n\in\{1,2,\cdots,M_k\}$ was arbitrary this establishes the discrete Hamiltonian system \eqref{deriv-interp-time-rep-XY}. The initial and terminal conditions hold as well since the initial and terminal conditions of the system satisfied by $(X^N,Y^N,\rho^N)$ can be rewritten in terms of the projection operators $\qPikn$, $\qPikp$ and $\pi_{X^N}^k\rho^N$. Indeed, $\qPikp X^N(0)=X^N(0)=X_0^N$ and $\qPikp X^N(T)=X^N(T)$, so
	\begin{multline}
		\qPikn Y^N(T)=Y^N(T)=-D_xg(X^N(T),(X^N(T))_{\#}\mathbb{P})\\
		=-D_xg(\qPikp X^N(T),(\qPikp X^N(T))_{\#}\mathbb{P})=-D_xg(\qPikp X^N(T),\pi_{X^N}^k\rho^N(T)),
	\end{multline}
	as claimed. 
	
	We now prove \eqref{XYdiffavg-error}. The fact that $t_n-t_{n-1}=\tau_k$ for $n\in\{1,2,\cdots,M_k\}$ implies 
	\begin{equation}
		\begin{split}
			&\int_0^T\mathbb{E}\left[\left|X_{pert,avg}^{k,N}\right|^2\right]\mathrm{d}s=\tau_k\sum_{n=1}^{M_k}\mathbb{E}\left[\left|X_{pert,avg}^{k,N}\right|^2\right]
			\\
			&=\tau_k\sum_{n=1}^{M_k}\mathbb{E}\left[\left| \tau_k^{-1}\int_{t_{n-1}}^{t_n}\left(D_pH(X^N,Y^N,\rho^N) -D_pH\left(\qPikp X^{N},\qPikn Y^{N},\pi_{X^N}^k{\rho}^N\right) \right)\mathrm{d}s\right|^2\right]
			\\
			&\leq \tau_k^{-1}\sum_{n=1}^{M_k}\mathbb{E}\left[\left( \int_{t_{n-1}}^{t_n}\left|D_pH(X^N,Y^N,\rho^N) -D_pH\left(\qPikp X^{N},\qPikn Y^{N},\pi_{X^N}^k{\rho}^N\right) \right|\mathrm{d}s\right)^2\right]
			\\
			&\leq\tau_k^{-1}\sum_{n=1}^{M_k}\mathbb{E}\left[ (t_n-t_{n-1})\int_{t_{n-1}}^{t_n}\left|D_pH(X^N,Y^N,\rho^N) -D_pH\left(\qPikp X^{N},\qPikn Y^{N},\pi_{X^N}^k{\rho}^N\right) \right|^2\mathrm{d}s\right]
		\end{split}
	\end{equation}
	where we used H\"older's inequality to obtain the final inequality.
	We then use Lipschitz continuity of $D_pH$ via \eqref{ass-H:2} and  $t_n-t_{n-1}=\tau_k$ for $n\in\{1,2,\cdots,M_k\}$ to get
	\begin{equation}
		\begin{split}
			&\int_0^T\mathbb{E}\left[\left|X_{pert,avg}^{k,N}\right|^2\right]\mathrm{d}s
			\leq \sum_{n=1}^{M_k}\int_{t_{n-1}}^{t_n}\mathbb{E}\left[ \left|D_pH(X^N,Y^N,\rho^N) -D_pH\left(\qPikp X^{N},\qPikn Y^{N},\pi_{X^N}^k{\rho}^N\right) \right|^2\right]\mathrm{d}s
			\\
			&\lesssim \sum_{n=1}^{M_k}\int_{t_{n-1}}^{t_n}\mathbb{E}\left[ |X^N-\qPikp X^{N}|^2+|Y^N-\qPikn Y^{N}|^2+\mathcal{W}_2^2(\rho^N,\pi_{X^N}^k{\rho}^N)\right]\mathrm{d}s
		\end{split}
	\end{equation}
	Then, the uniform $L^{\infty}({L}^2)$-bound given in Lemma \ref{lemma-temporal-inter-error-bound} implies that
	\begin{equation}
		\begin{split}
			\int_0^T\mathbb{E}\left[\left|X_{pert,avg}^{k,N}\right|^2\right]\mathrm{d}s\lesssim \sum_{n=1}^{M_k}\int_{t_{n-1}}^{t_n}\tau_k^2\lesssim\tau_k^2
		\end{split}
	\end{equation}
	which yields $\left\|X_{pert,avg}^{k,N}\right\|_{L^2(0,T;\LLspace)}\lesssim\tau_k$, where the hidden constant is independent of $k$ and $N$. An analogous argument also establishes that $\left\|Y_{pert,avg}^{k,N}\right\|_{L^2(0,T;\LLspace)}\lesssim\tau_k$ where the hidden constant is independent of $k$ and $N$, thereby concluding the proof of the lemma. 
\end{proof}

We are now able to prove Lemma \ref{theorem-time-err-bound-fixed-N}.
\begin{proof}[Proof of Lemma \ref{theorem-time-err-bound-fixed-N}]
	We establish the desired temporal error bound by leveraging the displacement monotonicity of $H$ and $g$ via the general continuity bound of the discrete Hamiltonian system as shown in Theorem  \ref{theorem-cont-dependence-discrete-Hamiltonian-sys}. To this end, let $N,k\in\mathbb{N}$ with $k\geq k_{\dagger}$. The triangle inequality gives
	\begin{multline}
			\sup_{t\in [0,T]}\mathcal{W}_2(\rho^{N}(t),\rho^{k,N}(t))+\sup_{t\in [0,T]}\left(\mathbb{E}\left[|(Y^{N}-Y^{k,N})(t)|^2\right]\right)^{\frac{1}{2}}+\sup_{t\in [0,T]}\left(\mathbb{E}\left[|(X^{N}-X^{k,N})(t)|^2\right]\right)^{\frac{1}{2}}
			\leq A+B
	\end{multline}
	where \begin{equation}
		A\coloneqq \sup_{t\in [0,T]}\mathcal{W}_2(\rho^{N}(t),\pi_{X^N}^k{\rho}^N(t))+\sup_{t\in [0,T]}\left(\mathbb{E}\left[|(Y^{N}-\qPikn Y^{N})(t)|^2\right]\right)^{\frac{1}{2}}
		+\sup_{t\in [0,T]}\left(\mathbb{E}\left[|(X^{N}-\qPikp X^{N})(t)|^2\right]\right)^{\frac{1}{2}},
	\end{equation}
	\begin{equation}
		B\coloneqq \sup_{t\in [0,T]}\mathcal{W}_2(\pi_{X^N}^k{\rho}^N(t),\rho^{k,N}(t))+\sup_{t\in [0,T]}\left(\mathbb{E}\left[|\Delta\mathscr{Y}_{N,N}^{k}(t)|^2\right]\right)^{\frac{1}{2}}+\sup_{t\in [0,T]}\left(\mathbb{E}\left[|\Delta \mathscr{X}_{N,N}^{k}(t)|^2\right]\right)^{\frac{1}{2}}
	\end{equation}
	where  $\Delta \mathscr{X}_{N,N}^{k}\coloneqq X^{k,N}-\qPikp X^N$ in $\mathbb{V}_k^+([0,T];\mathcal{R}_{d}(\mathcal{A}_N,\mathcal{O}_N))$ and let $\Delta\mathscr{Y}_{N,N}^{k}\coloneqq Y^{k,N}-\qPikn Y^N$ in $\mathbb{V}_k^-([0,T];\mathcal{R}_{d}(\mathcal{A}_N,\mathcal{O}_N))$. We know from Lemma \ref{lemma-temporal-inter-error-bound} that $A\lesssim\tau_k$. We therefore seek to prove that
	\begin{equation}\label{disc-approx-proj-exact-N-error}
		\sup_{t\in [0,T]}\mathcal{W}_2(\pi_{X^N}^k{\rho}^N(t),\rho^{k,N}(t))+\sup_{t\in [0,T]}\left(\mathbb{E}\left[|\Delta\mathscr{Y}_{N,N}^{k}(t)|^2\right]\right)^{\frac{1}{2}}+\sup_{t\in [0,T]}\left(\mathbb{E}\left[|\Delta \mathscr{X}_{N,N}^{k}(t)|^2\right]\right)^{\frac{1}{2}}\lesssim\tau_k
	\end{equation}
	for all $k\in\mathbb{N}$ sufficiently large, where the hidden constant is independent of $N\in\mathbb{N}$. Now, this is a direct application of the continuous dependence Theorem  \ref{theorem-cont-dependence-discrete-Hamiltonian-sys}. Indeed, let $M=N$, $\lambda_1=\lambda_2=1$, $X_{0,1}^N= X_{0,2}^N=X_0^N$ in $\mathcal{R}_{d}(\mathcal{A}_N,\mathcal{O}_N)$, and let $F_1,G_1=0$ and $F_2\coloneqq X_{pert,avg}^{k,N}$, $G_2\coloneqq Y_{pert,avg}^{k,N}$ in $\mathbb{V}_k(0,T;\mathcal{R}_{d}(\mathcal{A}_N,\mathcal{O}_N))$, where $X_{pert,avg}^{k,N},Y_{pert,avg}^{k,N}$ are defined as in Lemma \ref{lemma-deriv-interp-time-rep-XY}. Since $(X^{k,N},Y^{k,N},\rho^{k,N})$ and $(\qPikp X^{N},\qPikn Y^{N},\pi_{X^N}^k{\rho}^N)$ are triples that (uniquely) satisfy the discrete Hamiltonian systems \eqref{numerical-scheme} and \eqref{deriv-interp-time-rep-XY}, respectively, we apply Theorem  \ref{theorem-cont-dependence-discrete-Hamiltonian-sys} with 
	$$(\mathscr{X}_1^{k,N},\mathscr{Y}_1^{k,N},\varrho_1^{k,N})\coloneqq (X^{k,N},Y^{k,N},\rho^{k,N})$$ and 
	$$(\mathscr{X}_2^{k,M},\mathscr{Y}_2^{k,M},\varrho_{2}^{k,M})=(\qPikp X^{N},\qPikn Y^{N},\pi_{X^N}^k{\rho}^N),$$ to obtain
	\begin{equation}
		\begin{split}
			&\sup_{t\in [0,T]}\mathcal{W}_2(\pi_{X^N}^k{\rho}^N(t),\rho^{k,N}(t))+\sup_{t\in [0,T]}\left(\mathbb{E}\left[|\Delta\mathscr{Y}_{N,N}^{k}(t)|^2\right]\right)^{\frac{1}{2}}+\sup_{t\in [0,T]}\left(\mathbb{E}\left[|\Delta \mathscr{X}_{N,N}^{k}(t)|^2\right]\right)^{\frac{1}{2}}
			\\
			&\qquad\qquad\qquad\qquad\qquad\qquad\lesssim \|X_{pert,avg}^{k,N}\|_{L^2(0,T;\LLspace)}+\|Y_{pert,avg}^{k,N}\|_{L^2(0,T;\LLspace)}\lesssim \tau_k
		\end{split}
	\end{equation}
	where in the last line we invoked the bound \eqref{XYdiffavg-error} from Lemma \ref{lemma-deriv-interp-time-rep-XY}. This proves \eqref{disc-approx-proj-exact-N-error}, thereby establishing the lemma.
\end{proof}

\subsection{Proof of Theorem \ref{theorem-convergence-k-N-joint}}
Theorem \ref{theorem-convergence-k-N-joint} on strong convergence of the approximations is deduced from the triangle inquality and application of both Lemma \ref{theorem-time-err-bound-fixed-N} and Lemma \ref{lemma-convegrence-fixed-N-inf}. Indeed, we find that for each $N,k\in\mathbb{N}$ with $k\geq k_{\dagger}$, the triangle inequality gives
\begin{equation}
	\begin{split} 
		&\sup_{t\in [0,T]}\mathcal{W}_2(\rho(t),\rho^{k,N}(t))+\sup_{t\in [0,T]}\|(Y-Y^{k,N})(t)\|_{L^2(\Omega;\mathbb{R}^d)}+\sup_{t\in [0,T]}\|(X-X^{k,N})(t)\|_{L^2(\Omega;\mathbb{R}^d)}\leq \mathfrak{A}+\mathfrak{B}
	\end{split}
\end{equation}
where 
\begin{equation}
	\mathfrak{A}
	\coloneqq \sup_{t\in [0,T]}\mathcal{W}_2(\rho(t),\rho^{N}(t))+\sup_{t\in [0,T]}\|(Y-Y^{N})(t)\|_{L^2(\Omega;\mathbb{R}^d)}+\sup_{t\in [0,T]}\|(X-X^{N})(t)\|_{L^2(\Omega;\mathbb{R}^d)},
\end{equation}
\begin{equation}
	\mathfrak{B}
	\coloneqq\sup_{t\in [0,T]}\mathcal{W}_2(\rho^N(t),\rho^{k,N}(t))+\sup_{t\in [0,T]}\|(Y^N-Y^{k,N})(t)\|_{L^2(\Omega;\mathbb{R}^d)}+\sup_{t\in [0,T]}\|(X^N-X^{k,N})(t)\|_{L^2(\Omega;\mathbb{R}^d)},
\end{equation}
and $(X,Y,\rho)\in C^1(0,T;L^2(\Omega;\mathbb{R}^d))\times C^1(0,T;L^2(\Omega;\mathbb{R}^d))\times C([0,T];\mathcal{P}_2(\mathbb{R}^d))$ denotes the triple which uniquely satisfies the continuous Hamiltonian system \eqref{characteristics-continuous} in the sense of \eqref{definition-soln-of-continuous-pb}, as given by Lemma \ref{lemma-convegrence-fixed-N-inf}. Lemma \ref{lemma-convegrence-fixed-N-inf} also gives $\mathfrak{A}\lesssim \|X_0-X_0^N\|_{\LLspace}$, while Lemma \ref{theorem-time-err-bound-fixed-N} implies that $\mathfrak{B}\lesssim\tau_k$ since $k\geq k_{\dagger}$. Therefore, we get
	\begin{equation}
	\begin{split} 
		&\sup_{t\in [0,T]}\mathcal{W}_2(\rho(t),\rho^{k,N}(t))+\sup_{t\in [0,T]}\|(Y-Y^{k,N})(t)\|_{L^2(\Omega;\mathbb{R}^d)}+\sup_{t\in [0,T]}\|(X-X^{k,N})(t)\|_{L^2(\Omega;\mathbb{R}^d)}\\
		&\qquad\qquad\qquad\qquad\qquad\qquad\qquad\qquad\qquad\qquad\qquad\qquad\qquad\qquad\lesssim \|X_0-X_0^N\|_{\LLspace}+ \tau_k
	\end{split}
\end{equation}
for all $N\in\mathbb{N}$ and $k\geq k_{\dagger}$. Because $X_0^N\to X_0$ in $\LLspace$ as $N\to\infty$ and $\tau_k\to 0$ as $k\to\infty$, we conclude the claimed convergence \eqref{Y-rho-convergence} in the joint limit as $N,k\to\infty$. This completes the proof of the theorem.
\hfill\proofbox

\subsection{Proof of Corollary \ref{cor-val-func-contin-dep-bound}}

Using the representation formulae \eqref{u-opt-trajectory-rep} and \eqref{u-opt-trajectory-rep-N} we get for each $t\in [0,T]$
\begin{align*}
    &\|(u(\cdot,X) - u^{N}(\cdot,X^N))(t)\|_{L^1(\Omega)}\\
    &\leq \int_t^T\int_{\Omega}\left|L(X(s),{D_pH(X(s),Y(s),\rho(s))},\rho(s))-L(X^N(s),{D_pH(X^N(s),Y^N(s),\rho^N(s))},\rho^N(s))\right|\mathrm{d}\mathbb{P}(\omega)\mathrm{d}s\\
    &+\int_{\Omega}\left|g(X(T),\rho(T))-g(X^N(T),\rho^N(T))\right|\mathrm{d}\mathbb{P}(\omega),
\end{align*}
where we used Fubini's Theorem. Furthermore, for each $t\in [0,T]$, we have 
$$\|(u(\cdot,X) - u^{N}(\cdot,X^N))(t)\|_{L^1(\Omega)}\leq Q_1(t)+Q_2(t) + Q_3 + Q_4,$$ 
where 
\small
\begin{equation}
    \begin{split}
    &Q_1(t)\coloneqq \int_t^T\int_{\Omega}\left|L(X(s),{D_pH(X(s),Y(s),\rho(s))},\rho(s))-L(X(s),{D_pH(X(s),Y(s),\rho(s))},\rho^N(s))\right|\mathrm{d}\mathbb{P}(\omega)\mathrm{d}s,\\
    &Q_2(t)\coloneqq \int_t^T\int_{\Omega}\left|L(X(s),{D_pH(X(s),Y(s),\rho(s))},\rho^N(s))-L(X^N(s),{D_pH(X^N(s),Y^N(s),\rho^N(s))},\rho^N(s))\right|\mathrm{d}\mathbb{P}(\omega)\mathrm{d}s,\\
    &Q_3\coloneqq \int_{\Omega}\left|g(X(T),\rho(T))-g(X(T),\rho^N(T))\right|\mathrm{d}\mathbb{P}(\omega),\\
    & Q_4\coloneqq \int_{\Omega}\left|g(X(T),\rho^N(T))-g(X^N(T),\rho^N(T))\right|\mathrm{d}\mathbb{P}(\omega).
    \end{split}
\end{equation}\normalsize For the remainder of the proof we seek to show that 
\begin{equation}
    \sup_{t\in[0,T]}Q_i(t)\lesssim \|X_0-X_0^N\|_{\LLspace},\ \    i\in\{1,2\},\ \ {{\rm and}\ \ Q_i\lesssim \|X_0-X_0^N\|_{\LLspace},  \ \ i\in\{3,4\},}
\end{equation} for all sufficiently large $N\in\mathbb{N}$.

\underline{Bounding the terms $Q_1$, $Q_3$}: 
Bounding the terms $Q_1$,$Q_3$ relies on the assumption that $L$ and $g$ exhibit local quantitative continuity via \eqref{L-loc-meas-contin} and \eqref{g-loc-meas-contin}, respectively. Indeed, fix any $R>0$ (e.g., set $R=1$ for concreteness). We prove the estimate for $Q_1$ as the proof for $Q_3$ is analogous and simpler. Note that the error bound \eqref{N-convergence-error-bound} from Lemma \ref{lemma-convegrence-fixed-N-inf} implies $\rho^N\to\rho$ in $C([0,T];\mathcal{P}_2(\mathbb{R}^d))$ as $N\to\infty$. Therefore, picking $N_R\in\mathbb{N}$ such that 
$$\sup_{t\in[0,T]}\mathcal{W}_1(\rho(t),\rho^N(t))\leq \sup_{t\in[0,T]}\mathcal{W}_2(\rho(t),\rho^N(t))<R,$$ 
for all $N>N_R$, the local Lipschitz continuity of $L$ via \eqref{L-loc-meas-contin} implies
\begin{multline}
    \left|L(X(s),{D_pH(X(s),Y(s),\rho(s))},\rho(s))-L(X(s),{D_pH(X(s),Y(s),\rho(s))},\rho^N(s))\right|\\\leq C_R \left(1+|X(s)|+|D_pH(X(s),Y(s),\rho(s))|+\mathcal{W}_1(\rho(s),\delta_0)+\mathcal{W}_1(\rho^N(s),\delta_0)\right)\mathcal{W}_1(\rho(s),\rho^N(s))
\end{multline}
for all $s\in[0,T]$ and a.e.\ $\omega\in \Omega$, and hence
\begin{equation}
    \begin{split}
    &\int_t^T\int_{\Omega}\left|L(X(s),{D_pH(X(s),Y(s),\rho(s))},\rho(s))-L(X(s),{D_pH(X(s),Y(s),\rho(s))},\rho^N(s))\right|\mathrm{d}\mathbb{P}(\omega)\mathrm{d}s\\
    &\leq C_R \int_t^T\int_{\Omega}\left(1+|X(s)|+|D_pH(X(s),Y(s),\rho(s))|+\mathcal{W}_1(\rho(s),\delta_0)+\mathcal{W}_1(\rho^N(s),\delta_0)\right)\mathcal{W}_1(\rho(s),\rho^N(s))\mathrm{d}\mathbb{P}(\omega)\mathrm{d}s
    \\&\lesssim\int_0^T\int_{\Omega}\left(1+|X(s)|+|Y(s)|+\mathcal{W}_1(\rho(s),\delta_0)+\mathcal{W}_1(\rho^N(s),\delta_0)\right)\mathcal{W}_1(\rho(s),\rho^N(s))\mathrm{d}\mathbb{P}(\omega)\mathrm{d}s
    \end{split}
\end{equation}
where the hidden constant arises due to linear growth of $D_pH$ (via \eqref{linear-growth-D_pH}) and is independent of $t\in [0,T]$ but depends in particular on $R$.
We then obtain
\begin{equation}
    \begin{split}
    &\int_t^T\int_{\Omega}\left|L(X(s),{D_pH(X(s),Y(s),\rho(s))},\rho(s))-L(X(s),{D_pH(X(s),Y(s),\rho(s))},\rho^N(s))\right|\mathrm{d}\mathbb{P}(\omega)\mathrm{d}s
    \\
    &\lesssim \sup_{s\in[0,T]}\left(1+\|X(s)\|_{L^2(\Omega;\mathbb{R}^d)}+\|Y(s)\|_{L^2(\Omega;\mathbb{R}^d)}+\mathcal{W}_2(\rho(s),\delta_0)+\mathcal{W}_2(\rho^N(s),\delta_0)\right)\sup_{s\in[0,T]}\mathcal{W}_2(\rho(s),\rho^N(s))
    \end{split}
\end{equation} for all $N>N_R$. Since {$\sup_{s\in[0,T]}\left(\|X(s)\|_{L^2(\Omega;\mathbb{R}^d)}+\|Y(s)\|_{L^2(\Omega;\mathbb{R}^d)}\right)$ is finite} and $\rho^N\to\rho$ in $C([0,T];\mathcal{P}_2(\mathbb{R}^d))$ as $N\to\infty$ with estimate $\sup_{s\in[0,T]}\mathcal{W}_2(\rho(s),\rho^N(s))\lesssim \|X_0-X_0^N\|_{\LLspace}$, we deduce that the first supremum of the left above is uniformly bounded with respect to\ $N$, thus completing the proof that $\sup_{t\in [0,T]}Q_1(t)\lesssim \|X_0-X_0^N\|_{\LLspace}$ for all $N\in\mathbb{N}$ with $N>N_R$ sufficiently large. Likewise, it holds that $Q_3\lesssim \|X_0-X_0^N\|_{\LLspace}$ for all $N\in\mathbb{N}$ with $N>\tilde{N}_R\geq N_R$ sufficiently large (where we note that $\tilde{N}_R$ depends on $g$ and $N_R$). It now remains to bound the terms $Q_2$ and $Q_4$.

\underline{Bounding the terms $Q_2$, $Q_4$}: Due to the regularity property \eqref{property-L:1} for $L$, we use the chain rule and the triangle inequality to obtain 
\begin{multline}
    \int_t^T\int_{\Omega}\left|L(X(s),{D_pH(X(s),Y(s),\rho(s))},\rho^N(s))-L(X^N(s),{D_pH(X^N(s),Y^N(s),\rho^N(s))},\rho^N(s))\right|\mathrm{d}\mathbb{P}(\omega)\mathrm{d}s \\\leq \int_t^T\int_{\Omega}\int_0^1|D_xL(rX(s)+(1-r)X^N(s),rV_*(s)+(1-r)V_*^N(s),\rho^N(s))||X(s)-X^N(s)|\mathrm{d}r\mathrm{d}\mathbb{P}(\omega)\mathrm{d}s\\+\int_t^T\int_{\Omega}\int_0^1|D_vL(rX(s)+(1-r)X^N(s),rV_*(s)+(1-r)V_*^N(s),\rho^N(s))||V_*(s)-V_*^N(s)|\mathrm{d}r\mathrm{d}\mathbb{P}(\omega)\mathrm{d}s,
\end{multline}
where $V_*(s,\omega)\coloneqq D_pH(X(s),Y(s),\rho(s))$ and $V_*^N(s,\omega)\coloneqq D_pH(X^N(s),Y^N(s),\rho^N(s))$ for all $s\in [0,T]$ and a.e.\ $\omega\in \Omega$. We consequently obtain from the linear growth of $D_vL$ and $D_xL$ via \eqref{linear-growth-D_qL} and \eqref{linear-growth-D_xL}
\begin{multline}
    Q_2(t)\lesssim
    \int_t^T\int_{\Omega}\left(|X(s)-X^N(s)|^2+|V(s)-V_*^N(s)||X(s)-X^N(s)|\right)\mathrm{d}\mathbb{P}(\omega)\mathrm{d}s\\+\int_t^T\int_{\Omega}(|X^N(s)|+|V_*^N(s)|+{\mathcal{W}_1}(\rho^N(s),\delta_0)+1)|X(s)-X^N(s)|\mathrm{d}\mathbb{P}(\omega)\mathrm{d}s \\+
    \int_t^T\int_{\Omega}\left(|V(s)-V_*^N(s)|^2+|X(s)-X^N(s)||V(s)-V_*^N(s)|\right)\mathrm{d}\mathbb{P}(\omega)\mathrm{d}s\\
    +
    \int_t^T\int_{\Omega}(|X^N(s)|+|V_*^N(s)|+{\mathcal{W}_1}(\rho^N,\delta_0)+1)|V(s)-V_*^N(s)|\mathrm{d}\mathbb{P}(\omega)\mathrm{d}s.
\end{multline} Due to Lipschitz continuity of $D_pH$ via \eqref{ass-H:2} and the linear growth of $D_pH$ (see \eqref{linear-growth-D_pH}), we bound each of $|V-V_*^N|$ and $|V_*^N|$ in the above and collect product terms to eventually obtain
\small
\begin{multline}
    Q_2(t)\lesssim
    \int_t^T\int_{\Omega}\left(|X(s)-X^N(s)|^2+|Y(s)-Y^N(s)||X(s)-X^N(s)|+\mathcal{W}_2(\rho(s),\rho^N(s))|X(s)-X^N(s)|\right)\mathrm{d}\mathbb{P}(\omega)\mathrm{d}s\\+
    \int_t^T\int_{\Omega}\left(|Y(s)-Y^N(s)|^2+\mathcal{W}_2^2(\rho(s),\rho^N(s))\right)\mathrm{d}\mathbb{P}(\omega)\mathrm{d}s\\
    +
    \int_t^T\int_{\Omega}(|X^N(s)|+|Y^N(s)|+{\mathcal{W}_2}(\rho^N,\delta_0)+1)(|X(s)-X^N(s)|+|Y(s)-Y^N(s)|+\mathcal{W}_2(\rho(s),\rho^N(s)))\mathrm{d}\mathbb{P}(\omega)\mathrm{d}s.
\end{multline}\normalsize
By applying Young's inequality, followed by Cauchy--Schwarz inequality applied to each of the integrals over $\Omega$ above, we get the estimate
\small
\begin{multline}
    Q_2(t)\lesssim
    \int_0^T\left(\|X(s)-X^N(s)\|_{L^2(\Omega;\mathbb{R}^d)}^2+\|Y(s)-Y^N(s)\|_{L^2(\Omega;\mathbb{R}^d)}^2\right)\mathrm{d}s\\
    +\int_0^T(\|X^N(s)\|_{\LLspace}+\|Y^N(s)\|_{\LLspace}+{\mathcal{W}_2}(\rho^N(s),\delta_0)+1)(\|X(s)-X^N(s)\|_{\LLspace}+\|Y(s)-Y^N(s)\|_{\LLspace})\mathrm{d}s,
\end{multline}\normalsize 
where we used the estimate $\mathcal{W}_2(\rho(s),\rho^N(s))\leq \|X(s)-X^N(s)\|_{L^2(\Omega;\mathbb{R}^d)}$, for all $s\in [0,T]$, to obtain an upper bound without the term $\mathcal{W}_2(\rho,\rho^N)$. We note that the hidden constant in this bound does not depend on $t\in [0,T]$. By a similar argument that uses the Lipschitz continuity of $D_xg$ via \eqref{ass-g:2}, we also deduce that
\begin{equation}
    Q_4\lesssim \|X(T)-X^N(T)\|_{L^2(\Omega;\mathbb{R}^d)}^2+(\|X^N(T)\|_{\LLspace}+{\mathcal{W}_2}(\rho^N(T),\delta_0)+1)\|X(T)-X^N(T)\|_{\LLspace}.
\end{equation}
Since $X_0^N\to X_0$ in $\LLspace$, we apply Lemma \ref{lemma-convegrence-fixed-N-inf} to deduce the uniform boundedness of the sequences $\{X^N\}_{N}$, $\{Y^N\}_N$ in $C([0,T];\LLspace)$ and $\{\rho^N\}_N$ in $C([0,T];\mathcal{P}_2(\mathbb{R}^d))$ from the strong convergence estimate \eqref{N-convergence-error-bound}, which in turn implies that
\begin{equation}
    \sup_{t\in[0,T]}Q_2(t)+Q_4\lesssim \|X_0-X_0^N\|_{\LLspace},
\end{equation} where we note that the hidden constant in this inequality is independent of $t\in [0,T]$. This completes the proof.

\hfill\proofbox

\subsection{Proof of Theorem \ref{theorem-convergence-k-N-joint-u-val}}
Let $k_{\dagger}$ be as given by Theorem \ref{theorem-convergence-k-N-joint}. We note that Corollary \ref{cor-val-func-contin-dep-bound} implies that there exists $N_{\dagger}\in\mathbb{N}$ such that $\sup_{t\in [0,T]}\|(u(\cdot,X)-u^{N}(\cdot,X^N))(t)\|_{L^1(\Omega;\mathbb{R}^d)}\lesssim \|X_0-X_0^N\|_{\LLspace}$ for all $N\geq N_{\dagger}$. By the triangle inequality, we then have
\begin{multline}
    \sup_{t\in [0,T]}\|(u(\cdot,X)-u^{k,N})(t)\|_{L^1(\Omega;\mathbb{R}^d)}\\
    \leq \sup_{t\in [0,T]}\|(u(\cdot,X)-u^{N}(\cdot,X^N))(t)\|_{L^1(\Omega;\mathbb{R}^d)}+\sup_{t\in [0,T]}\|(u^N(\cdot,X^N)-u^{k,N})(t)\|_{L^1(\Omega;\mathbb{R}^d)}
    \\
    \lesssim \|X_0-X_0^N\|_{\LLspace}+\sup_{t\in [0,T]}\|(u^N(\cdot,X^N)-u^{k,N})(t)\|_{L^1(\Omega;\mathbb{R}^d)}
\end{multline}
for all $N\geq N_{\dagger}$ and $k\geq k_{\dagger}$. We are then left with bounding the latter term to complete the proof. Using the representation formula \eqref{u-opt-trajectory-rep-N} and the definition of $u^{k,N}$ via \eqref{u-opt-trajectory-rep-N-k}, we can follow the proof of Corollary \ref{cor-val-func-contin-dep-bound} to likewise get
$\|(u^N(\cdot,X^N)-u^{k,N})(t)\|_{L^1(\Omega;\mathbb{R}^d)}\leq \sum_{i=1}^2\tilde{Q}_i(t) {+ \sum_{i=3}^4\tilde{Q}_i,}$ for all $t\in [0,T]$, where 
\begin{equation}
    \begin{split}
    &\tilde{Q}_1(t)\coloneqq\int_t^T\int_{\Omega}\left|L(X^N(s),V_*^N,\rho^N(s))-L(X^{N}(s),V_*^N,\rho^{k,N}(s))\right|\mathrm{d}\mathbb{P}(\omega)\mathrm{d}s,\\
    &\tilde{Q}_2(t)\coloneqq \int_t^T\int_{\Omega}\left|L(X^{N}(s),V_*^N,\rho^{k,N}(s))-L(X^{k,N}(s),V_*^{k,N},\rho^{k,N}(s))\right|\mathrm{d}\mathbb{P}(\omega)\mathrm{d}s,\\
    &\tilde{Q}_3\coloneqq \int_{\Omega}\left|g(X^N(T),\rho^N(T))-g(X^N(T),\rho^{k,N}(T))\right|\mathrm{d}\mathbb{P}(\omega),\\
    &\tilde{Q}_4\coloneqq \int_{\Omega}\left|g(X^N(T),\rho^{k,N}(T))-g(X^{k,N}(T),\rho^{k,N}(T))\right|\mathrm{d}\mathbb{P}(\omega).
    \end{split}
\end{equation} where $V_*^N\coloneqq D_pH(X^N(s),Y^N(s),\rho^N(s))$ and $V_*^{k,N}\coloneqq D_pH(X^{k,N}(s),Y^{k,N}(s),\rho^{k,N}(s))$ for all $s\in [0,T]$ and a.e.\ $\omega\in\Omega$.
Now, Lemma \ref{theorem-time-err-bound-fixed-N} implies that $$\sup_{N\in\mathbb{N}}\left(\sup_{t\in [0,T]}\left(\|(Y^{N}-Y^{k,N})(t)\|_{\LLspace}+\|(X^{N}-X^{k,N})(t)\|_{\LLspace}\right)+\sup_{t\in [0,T]}\mathcal{W}_2(\rho^{N}(t),\rho^{k,N}(t))\right)\lesssim\tau_k\to 0$$ as $k\to\infty$, while Theorem \ref{theorem-convergence-k-N-joint} and Lemma \ref{lemma-convegrence-fixed-N-inf} together imply the boundedness of the sequences $\{X^{k,N}\}_{k\geq k_{\dagger},N}$, $\{Y^{k,N}\}_{k\geq k_{\dagger},N}$ in $L^{\infty}(0,T;\LLspace)$, boundedness of the sequences $\{X^N\}_N$, $\{Y^N\}_N$ in $C([0,T];\LLspace)$ and boundedness of the sequences $\{\rho^N\}_N$, $\{\rho^{k,N}\}_{k\geq k_{\dagger},N}$ in $L^{\infty}(0,T;\mathcal{P}_2(\mathbb{R}^d))$. By taking $k_{\dagger}^*\geq k_{\dagger}$ sufficiently large so that $\sup_{N\in\mathbb{N}}\sup_{t\in [0,T]}\mathcal{W}_1(\rho^{N}(t),\rho^{k,N}(t))\leq \sup_{N\in\mathbb{N}}\sup_{t\in [0,T]}\mathcal{W}_2(\rho^{N}(t),\rho^{k,N}(t))<1=:R$ for all $k\geq k_{\dagger}^*$, we follow the remaining steps of the proof of Corollary \ref{cor-val-func-contin-dep-bound} to deduce that
\small
\begin{multline}
    \sup_{t\in[0,T]}\tilde{Q}_1(t)+\tilde{Q}_3\\
    \lesssim \sup_{ s\in[0,T]}\left(1+\|X^N(s)\|_{L^2(\Omega;\mathbb{R}^d)}+\|Y^N(s)\|_{L^2(\Omega;\mathbb{R}^d)}+\mathcal{W}_2(\rho^N(s),\delta_0)+\mathcal{W}_2(\rho^{k,N}(s),\delta_0)\right)\sup_{s\in[0,T]}\mathcal{W}_2(\rho^N(s),\rho^{k,N}(s))
    \\
    \lesssim\tau_k
\end{multline}\normalsize
and that
\begin{multline}
    \sup_{t\in[0,T]}\tilde{Q}_2(t)+\tilde{Q}_4
    \lesssim 
    \sup_{s\in [0,T]}\left(\|X^N-X^{k,N}\|_{L^2(\Omega;\mathbb{R}^d)}^2+\|Y^N-Y^{k,N}\|_{L^2(\Omega;\mathbb{R}^d)}^2\right)\\
    +\sup_{s\in [0,T]}\left(\|X^{k,N}\|_{\LLspace}+\|Y^{k,N}\|_{\LLspace}+{\mathcal{W}_2}(\rho^{k,N},\delta_0)+1\right)\\\times\sup_{s\in[0,T]}\left(\|X^N-X^{k,N}\|_{\LLspace}+\|Y^N-Y^{k,N}\|_{\LLspace}\right)
    \lesssim\tau_k^2+\tau_k
\end{multline} for all $k\geq k_{\dagger}^*$ and all $N\in\mathbb{N}$. The desired error bound \eqref{val-func-error-bound} then follows for all $k\geq k_{\dagger}^*$ and $N\geq N_*$.
\hfill\proofbox

\section{Numerical Experiments}\label{sec-num-experi-algorithms}

\subsection{Algorithms for approximating the discrete Hamiltonian system}
In this section we propose two algorithms for approximating the general discrete Hamiltonian system \eqref{numerical-scheme} for each fixed $k, N\in\mathbb{N}$.  The first approach is a standard Picard iteration performed globally in time and which is formulated in Algorithm \ref{alg:standard_picard_iteration} for generic time intervals of the form $[a,b]$ and arbitrary initial conditions $X_{init}^{k,N}$ and terminal time couplings $\mathcal{T}: \mathcal{R}_d(\mathcal{A}_N,\mathcal{O}_N)\to \mathcal{R}_d(\mathcal{A}_N,\mathcal{O}_N)$. Note that for Algorithm \ref{alg:standard_picard_iteration} we allow the trivial case where $\mathcal{T}$ is a constant map. Given a momentum approximation $\mathcal{Y}\in \mathbb{V}_k^-([0,T];\mathcal{R}_{d}(\mathcal{A}_N,\mathcal{O}_{N}))$, the deterministic forward McKean--Vlasov equation in Algorithm \ref{alg:standard_picard_iteration} is approximated by Algorithm \ref{alg:MKV_iteration}.  When using this first algorithm to approximate the discrete Hamiltonian system \eqref{numerical-scheme} globally in time, we take $a=0$, $b=T$, and $X_{init}^{k,N}\equiv X_0^N$, $\mathcal{T}[\mathcal{X}]\coloneqq  -D_xg\left(\mathcal{X},{\mathcal{X}}_{\#}\mathbb{P}\right)$, $\mathcal{X}\in \mathcal{R}_d(\mathcal{A}_N,\mathcal{O}_N)$. 
\begin{algorithm}
    \renewcommand{\thealgorithm}{A}
	\caption{A standard Picard Iteration on $[a,b]$ for the discrete Hamiltonian System}\label{alg:standard_picard_iteration}
	\begin{algorithmic}
		\Require $k,N\in\mathbb{N}$; $r = 0$; $0<tol<1$; $a,b\in\mathbb{R}$ s.t.\ $0\leq a<b$; $X_{init}^{k,N}\in \mathbb{V}_k^+([a,b];\mathcal{R}_{d}(\mathcal{A}_N,\mathcal{O}_{N}))$; $Y_{init}^{k,N}\in \mathbb{V}_k^-([a,b];\mathcal{R}_{d}(\mathcal{A}_N,\mathcal{O}_{N}))$; $E_{1}\geq 1$; and a map $\mathcal{T}: \mathcal{R}_d(\mathcal{A}_N,\mathcal{O}_N)\to \mathcal{R}_d(\mathcal{A}_N,\mathcal{O}_N)$. 
		\State Set $Y_0^{k,N} \gets Y_{init}^{k,N}$
		\While{$E_1>tol$}
		\State \underline{Forward McKean--Vlasov equation:} Compute $X_r^{k,N}\in  \mathbb{V}_k^+([a,b];\mathcal{R}_{d}(\mathcal{A}_N,\mathcal{O}_{N}))$ such that $$\mathcal{I}_+^kX_r^{k,N}(t) =X_{init}^{k,N}(a)+ \int_a^tD_pH\left(X_r^{k,N}(s),Y_r^{k,N}(s),(X_r^{k,N}(s))_{\#}\mathbb{P}\right)\mathrm{d}s\quad\text{for }t\in [a,b].$$
		\State \underline{Backward momentum equation:} Compute  $Y_{r+1}^{k,N}\in  \mathbb{V}_k^-([a,b];\mathcal{R}_{d}(\mathcal{A}_N,\mathcal{O}_{N}))$ such that 
		\begin{equation*}
			\mathcal{I}_-^kY_{r+1}^{k,N}(t) =\mathcal{T}[X_r^{k,N}(b)]+\int_t^bD_xH\left(X_r^{k,N}(s),Y_{r+1}^{k,N}(s),(X_r^{k,N}(s))_{\#}\mathbb{P}\right)\mathrm{d}s\quad\text{for }t\in [a,b].
		\end{equation*} 
		\State \underline{One-step error update:} Compute the one-step error $E_1=\|Y_{r+1}^{k,N}-Y_r^{k,N}\|_{L^2(a,b;L^2(\Omega;\mathbb{R}^d))}$
		\State If $E_1>tol$, set $r+1\gets r$ and compute the next iteration. 
		\EndWhile
	\end{algorithmic}
\end{algorithm}

\begin{algorithm}[h!]
    \renewcommand{\thealgorithm}{MKV}
	\caption{Picard Iteration for discrete McKean--Vlasov Equation on $[a,b]$}\label{alg:MKV_iteration}
	\begin{algorithmic}
		\Require $k,N\in\mathbb{N}$; $m = 0$; $a,b\in\mathbb{R}$ s.t.\ $0\leq a<b$; $0<tol<1$; $\chi\in \mathcal{R}_d(\mathcal{A}_N,\mathcal{O}_N)$;  $X_{init}^{k,N}\in \mathbb{V}_k^+([a,b];\mathcal{R}_{d}(\mathcal{A}_N,\mathcal{O}_{N}))$ with $X_{init}^{k,N}(0)=\chi$ in $\mathcal{R}_{d}(\mathcal{A}_N,\mathcal{O}_{N})$;  $\mathcal{Y}\in \mathbb{V}_k^-([a,b];\mathcal{R}_{d}(\mathcal{A}_N,\mathcal{O}_{N}))$; $E_{1}\geq 1$. 
		\State Set $X_0^{k,N}\gets X_{init}^{k,N}$
		\State Set $\mu_0^{k,N}(t) \gets (X_0^{k,N}(t))_{\#}\mathbb{P}$ $\forall t\in [a,b]$
		\While{$E_1>tol$}
		\State \underline{McKean--Vlasov equation:} Compute $X_{m+1}^{k,N}\in  \mathbb{V}_k^+([a,b];\mathcal{R}_{d}(\mathcal{A}_N,\mathcal{O}_{N}))$ such that $$\mathcal{I}_+^kX_{m+1}^{k,N}(t) =\chi+ \int_a^tD_pH\left(X_{m+1}^{k,N}(s),\mathcal{Y}(s),\mu_{m}^{k,N}\right)\mathrm{d}s\text{ for }t\in [a,b].$$ 
		\State \underline{One-step error:} Compute $E_1=\|X_{m+1}^{k,N}-X_m^{k,N}\|_{L^2(a,b;L^2(\Omega;\mathbb{R}^d))}$ (Note: $E_1\geq \|\mathcal{W}_2(\mu_{m+1}^{k,N},\mu_{m}^{k,N})\|_{L^2(a,b)}$)
		\State \underline{Update player distribution} Set $\mu_{m+1}^{k,N}(t) \gets (X_{m+1}^{k,N}(t))_{\#}\mathbb{P}$ $\forall t\in [a,b]$
		\State If $E_1>tol$, set $m+1\gets m$ and compute the next iteration.
		\EndWhile
	\end{algorithmic}
\end{algorithm}

However, it is known that, in general, the Picard iteration in Algorithm \ref{alg:standard_picard_iteration} for approximating the Hamiltonian system \eqref{numerical-scheme} is not guaranteed to converge for arbitrarily large time horizon $T$. Therefore, we propose a second algorithm that is inspired by \cite{chassagneux2019numerical}, where  Chassagneux--Crisan--Delarue introduced a numerical method for the  approximation of a forward--backward system of stochastic differential equations based on patching together the results of local Picard iterations developed on time \emph{subintervals of $[0,T]$} with sufficiently small time-step. The algorithm we propose also performs local Picard iterations but within a global-in-time approximation of the deterministic nonlinear forward-backward discrete Hamiltonian system \eqref{numerical-scheme} for arbitrary time horizon $T$ and with displacement monotone data. 

To formulate the algorithm more compactly, recall the definition of the spaces $ \mathbb{V}_k^{\pm}([a,b];A)$ from Section \ref{sec-numerical-discretization}. We define related the spaces  $ \{\mathbb{V}_k^{\pm}([t_n,t_{n+1}];\mathcal{R}_{d}(\mathcal{A}_N,\mathcal{O}_{N}))\}_{n=0}^{M_k-1}$ where the set $\{t_0,t_1,\cdots, t_{M_k}\}\subset [a,b]$ denotes the temporal nodes of the time mesh $\mathcal{J}_k$. Furthermore, note that for any function $Q\in \mathbb{V}_k^{\pm}([a,b];\mathcal{R}_{d}(\mathcal{A}_N,\mathcal{O}_{N}))$, its restriction to a given temporal subinterval $[t_n,t_{n+1}]$ satisfies $$Q|_{[t_n,t_{n+1}]}\in \mathbb{V}_k^{\pm}([t_n,t_{n+1}];\mathcal{R}_{d}(\mathcal{A}_N,\mathcal{O}_{N}))$$ for each $n\in\{0,1,2,\cdots,M_k-1\}$. For each $k,N\in\mathbb{N}$, $0<tol<1$, $a,b\in\mathbb{R}$ s.t.\ $0\leq a<b$, $X_{init}^{k,N}\in V_k^{+}([a,b];\mathcal{R}_{d}(\mathcal{A}_N,\mathcal{O}_{N}))$, $Y_{init}^{k,N}\in \mathbb{V}_k^-([a,b];\mathcal{R}_{d}(\mathcal{A}_N,\mathcal{O}_{N}))$, $E_1>1$, and given map $\mathcal{T}: \mathcal{R}_d(\mathcal{A}_N,\mathcal{O}_N)\to \mathcal{R}_d(\mathcal{A}_N,\mathcal{O}_N)$, we let $$\Phi^{k,N}[a,b,tol,X_{init}^{k,N},Y_{init}^{k,N},E_1,\mathcal{T}]=(\Phi_X,\Phi_Y)\in  \mathbb{V}_k^+([a,b];\mathcal{R}_{d}(\mathcal{A}_N,\mathcal{O}_{N}))\times  \mathbb{V}_k^-([a,b];\mathcal{R}_{d}(\mathcal{A}_N,\mathcal{O}_{N}))$$ denote the output of Algorithm \ref{alg:standard_picard_iteration}, whenever it converges after some iteration count $r_*\in\mathbb{N}$ within the tolerance $tol$, where $\Phi_X\coloneqq X_{r_*}^{k,N}$ and $\Phi_Y\coloneqq Y_{r_*}^{k,N}$. We then define in Algorithm \ref{alg:local_picard_iteration} a method based on performing Picard iterations  on each of the subintervals $\{[t_n,t_{n+1}]\}_{n=0}^{M_k-1}$ along downward induction, c.f.\ \cite{chassagneux2019numerical}. In this algorithm we also apply Algorithm \ref{alg:MKV_iteration} to approximate the associated discrete McKean--Vlasov equations along the iterations. A pictorial depiction of Algorithm \ref{alg:local_picard_iteration} is given in Figure \ref{algo:local_picard_iteration-pic}. We note that the use of $L^2(L^2)$-norms for computing the one-step error quantities (namely $E_1,E_{local},E_{global}$) in each of the algorithms is a choice made merely for formulating the algorithms; for each algorithm one can fix any norm  to check the stated convergence criteria since all norms are equivalent in the finite dimensional spaces $\mathbb{V}_k^{\pm}$.

\begin{figure}[htbp]
\footnotesize
  \centering
\begin{tikzpicture}[
    >={Stealth[length=2.6mm]},
    vin/.style   = {->, thick, red},      
    vout/.style  = {->, thick, magenta},  
    horiz/.style = {->, thick, blue},     
    line join=round]
\def\s{1.6}\def\d{0.55}\def\al{0.95}\def\sp{6}
\pgfmathsetmacro{\cc}{(\s+\d)/2}   %

\drawcube{0*\sp}{$(X_j^{k,N}(0),Y_j^{k,N}(0))$}{$Y_{j+1}^{k,N}(t_1)$}{$Y_{j+1}^{k,N}(0)$}{$X_{j+1}^{k,N}(t_1)$}{\tiny \textbf{Picard iter.\ } \\ \tiny on $[0,t_1]$ }
\drawcube{1*\sp}{$(X_j^{k,N}(t_n),Y_j^{k,N}(t_n))$}{$Y_{j+1}^{k,N}(t_{n+1})$}{$Y_{j+1}^{k,N}(t_n)$}{$X_{j+1}^{k,N}(t_{n+1})$}{\tiny \textbf{Picard iter.\ } \\ \tiny on $[t_n,t_{n+1}]$}
\drawcube{2*\sp}{$(X_j^{k,N}(t_{M_k-1}),Y_j^{k,N}(t_{M_k-1}))$}{$\quad\mathcal{T}=-D_xg$}{$Y_{j+1}^{k,N}(t_{M_k-1})$}{$X_{j+1}^{k,N}(T)$}{\tiny \textbf{Picard iter.\ } \\ \tiny on $[t_{M_k-1},T]$}

\foreach \k in {0,1} {
  \node[font=\Large] at ({\k*\sp + \sp/2 + \cc}, \cc) {$\cdots$};
}
\end{tikzpicture}
\normalsize
\caption{ Depiction of $j$-th iteration of Algorithm \ref{alg:local_picard_iteration} -- The direction of the blue arrows indicate downward induction along the temporal subintervals $\{[t_n,t_{n+1}]\}_{n=0}^{M_k-1}$ starting from the right-most grey block. Each grey block corresponds to performing standard Picard iteration by Algorithm \ref{alg:standard_picard_iteration} to approximate the discrete Hamiltonian system posed on the $n$-th subinterval $[t_n,t_{n+1}]$ with initialization being the value pair $(X_j^{k,N}(t_n),Y_j^{k,N}(t_n))$ (red arrow) and output pair being the pair $(X_{j+1}^{k,N}(t_{n+1}),Y_{j+1}^{k,N}(t_n))$ (magenta and blue arrows respectively). Communication between subintervals is done by setting the terminal time value of $Y$ on the $n$-th interval as the $Y$-component of the output of the $(n+1)$-th interval, where for $n=M_k-1$ the terminal time value $Y_{j+1}^{k,N}(T)$ is implicitly determined by $X_{j+1}^{k,N}(T)$ via the terminal time coupling $\mathcal{T}=-D_xg
$. With $X_{j+1}^{k,N}(0)\coloneqq X_0^N$, the nodal output values of one downward sweep completely determine the function pairs $(X_{j+1}^{k,N},Y_{j+1}^{k,N})$ on $[0,T]$ used as input for the next $(j+1)$-th iteration of the algorithm.}
  \label{algo:local_picard_iteration-pic}
\end{figure}

Proving the convergence of Algorithm \ref{alg:local_picard_iteration} is {outside the scope of the current paper and} will be the subject of future research. { Nonetheless, the following result sheds light on the relationship between Algorithm \ref{alg:local_picard_iteration} and the target numerical scheme \eqref{numerical-scheme} in an ideal scenario where all of the local Picard iterations of the algorithm are solved exactly, i.e.\ with zero one-step error.
\begin{proposition}[An a posteriori error bound for Algorithm \ref{alg:local_picard_iteration}]
	Assume the hypotheses of Theorem \ref{theorem-existence-num-scheme}. Let $k\geq k_{\dagger}$ be given, where $k_{\dagger}\in\mathbb{N}$ is as in Theorem \ref{theorem-existence-num-scheme} and let $N\in\mathbb{N}$ be given. Suppose that in Algorithm \ref{alg:local_picard_iteration} the computation of the map $\Phi^{k,N}$ is always exact, in the sense that the first one-step error made in employing Algorithm \ref{alg:standard_picard_iteration} to compute $\Phi^{k,N}$ is always equal to zero. Then, the sequence of approximations $\{(X_{j+1}^{k,N},Y_{j+1}^{k,N})\}_{j=0}^{\infty}$ generated by Algorithm \ref{alg:local_picard_iteration} satisfies the discrete Hamiltonian system
	 \begin{equation}\label{numerical-scheme-loc-algo}
	 	\left\{ \begin{aligned}
	 		\partial_t\mathcal{I}_+^k{X}_{j+1}^{k,N}(t) &=  D_pH\left({X}_{j+1}^{k,N}(t),{Y}_{j+1}^{k,N}(t),(X_{j+1}^{k,N}(t))_{\#}\mathbb{P}\right)+F_j(t)\quad{\rm in\ }\mathcal{R}_{d}(\mathcal{A}_N,\mathcal{O}_N)&&\forall t\in (0,T)\backslash \iota_k, 
	 		\\
	 		\partial_t\mathcal{I}_-^k{Y}_{j+1}^{k,N}(t) &=- D_xH\left({X}_{j+1}^{k,N}(t),{Y}_{j+1}^{k,N}(t),(X_{j+1}^{k,N}(t))_{\#}\mathbb{P}\right)\quad{\rm in\ }\mathcal{R}_{d}(\mathcal{A}_N,\mathcal{O}_N)&&\forall t\in (0,T)\backslash \iota_k, 
	 		\\
	 		{X}_{j+1}^{k,N}(0)&=X_0^N,\quad {Y}_{j+1}^{k,N}(T) = -  D_xg({X}_{j+1}^{k,N}(T),(X_{j+1}^{k,N}(T))_{\#}\mathbb{P})\quad{\rm in\ }\mathcal{R}_{d}(\mathcal{A}_N,\mathcal{O}_N)&&\text{  }
	 	\end{aligned}\right.
	 \end{equation}
	 where $F_j\in \mathbb{V}_k(0,T;\mathcal{R}_{d}(\mathcal{A}_N,\mathcal{O}_N))$ is a residual error term defined by $F_j|_{I_1}= 0$ in $\mathcal{R}_{d}(\mathcal{A}_N,\mathcal{O}_N)$ and
	 \begin{equation}\label{F_j-defn}
	 	F_j|_{I_n}\coloneqq \tau_k^{-1}\left(X_{j}^{k,N}(t_{n-1})-X_{j+1}^{k,N}(t_{n-1})\right)
	 \end{equation} in $\mathcal{R}_{d}(\mathcal{A}_N,\mathcal{O}_N)$ for $n\in\{2,3,\cdots, M_k\}$. Furthermore, there holds the error bound
	 \begin{equation}\label{it-approx-bound-a-posteriori}
	 	\sup_{0\leq t\leq T}\left(\norm{({X}^{k,N}-{X}_{j+1}^{k,N})(t)}_{L^2(\Omega;\mathbb{R}^d)}+\norm{({Y}^{k,N}-{Y}_{j+1}^{k,N})(t)}_{L^2(\Omega;\mathbb{R}^d)}\right)\lesssim \|F_j\|_{L^2(0,T;\LLspace)}
	 \end{equation}
	 where the hidden constant in \eqref{it-approx-bound-a-posteriori} depends only on the time horizon $T$, the uniform convexity constant $c_0>0$, and the Lipschitz constants of $D_pH$, $D_xH$ and $D_xg$. 
\end{proposition}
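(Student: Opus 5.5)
The plan is to reconstruct, from the (not fully displayed) Algorithm \ref{alg:local_picard_iteration}, what the exact local solves produce on each subinterval, and to show that gluing them together gives a pair in $\mathbb{V}_k^+\times\mathbb{V}_k^-$ over $[0,T]$ that satisfies the global discrete system with one localized residual. On the $n$-th subinterval $[t_{n-1},t_n]$ there is a single time cell $I_n$. An exact local Picard solve there, started from $X_j^{k,N}(t_{n-1})$ (the previous iterate's nodal value, per Figure \ref{algo:local_picard_iteration-pic}) and with terminal datum $Y_{j+1}^{k,N}(t_n)$ supplied by the solve on the next interval (or by $\mathcal{T}=-D_xg$ when $n=M_k$), gives the nodal relations
\begin{equation*}
X_{j+1}^{k,N}(t_n)-X_j^{k,N}(t_{n-1})=\tau_k D_pH(\cdot)|_{I_n},\qquad Y_{j+1}^{k,N}(t_{n-1})-Y_{j+1}^{k,N}(t_n)=\tau_k D_xH(\cdot)|_{I_n}.
\end{equation*}
Here the arguments are the cell values $X_{j+1}^{k,N}|_{I_n}=X_{j+1}^{k,N}(t_n)$ and $Y_{j+1}^{k,N}|_{I_n}=Y_{j+1}^{k,N}(t_{n-1})$, together with the law of the former.

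I would then rewrite these relations using \eqref{derivatives_Ip}. The $Y$-equation is already exactly $\partial_t\mathcal{I}_-^kY_{j+1}^{k,N}|_{I_n}=-\tau_k^{-1}\jump{Y_{j+1}^{k,N}}_n=-D_xH(\cdot)$, because the $Y$-values chain consistently across intervals. The terminal condition holds by the choice of $\mathcal{T}$. The $X$-equation contains $X_j^{k,N}(t_{n-1})$ instead of $X_{j+1}^{k,N}(t_{n-1})$. Writing
\begin{equation*}
-\tau_k^{-1}\jump{X_{j+1}^{k,N}}_{n-1}=\tau_k^{-1}\bigl(X_{j+1}^{k,N}(t_n)-X_j^{k,N}(t_{n-1})\bigr)+\tau_k^{-1}\bigl(X_j^{k,N}(t_{n-1})-X_{j+1}^{k,N}(t_{n-1})\bigr)
\end{equation*}
gives exactly \eqref{numerical-scheme-loc-algo} with $F_j$ as in \eqref{F_j-defn}. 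For $n=1$ both iterates equal $X_0^N$ at $t_0$, so $F_j|_{I_1}=0$ and $X_{j+1}^{k,N}(0)=X_0^N$. Left/right continuity and membership in $\mathcal{R}_d(\mathcal{A}_N,\mathcal{O}_N)$ follow from the construction.

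The error bound \eqref{it-approx-bound-a-posteriori} then follows directly from Theorem \ref{theorem-cont-dependence-discrete-Hamiltonian-sys}. I would take $M=N$ and $\lambda_1=\lambda_2=1$, with the same initial datum $X_0^N$. The first triple is the exact scheme solution $(X^{k,N},Y^{k,N},\rho^{k,N})$, which exists and is unique for $k\ge k_\dagger$ by Theorems \ref{theorem-existence-num-scheme} and \ref{theorem-uniqueness-numerical-sheme}, with $F_1=G_1=0$. The second triple is the iterate with $F_2=F_j$ and $G_2=0$. All $\lambda$-difference and initial-data terms on the right of \eqref{gen-disp-mono-cont-bound} vanish, which leaves $\|F_j\|_{L^2(0,T;L^2)}$ with the stated constant dependence.

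The main obstacle is the bookkeeping in the first step. One has to pin down exactly which nodal values the algorithm passes between subintervals, namely that the $X$-initialization of interval $n$ uses the old iterate $X_j^{k,N}(t_{n-1})$ while the $Y$-terminal value uses the new $Y_{j+1}^{k,N}(t_n)$. One also has to check the left/right-continuity conventions, so that the cell values entering $D_pH$ and $D_xH$ match those of a global $\mathbb{V}_k^\pm$ pair; otherwise the residual would also show up in the $Y$-equation.
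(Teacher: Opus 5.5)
Your proposal is correct and follows essentially the same route as the paper's sketched proof. Exact local solves give the nodal relations $X_{j+1}^{k,N}(t_n)=X_j^{k,N}(t_{n-1})+\tau_k D_pH(\cdot)$ and $Y_{j+1}^{k,N}(t_{n-1})=Y_{j+1}^{k,N}(t_n)+\tau_k D_xH(\cdot)$, from which the perturbed system with residual $F_j$ is read off via the interpolant derivatives, and the bound then follows from Theorem~\ref{theorem-cont-dependence-discrete-Hamiltonian-sys} with $\lambda_1=\lambda_2=1$, equal initial data and $F_2=F_j$. Your explicit jump decomposition and parameter choices simply fill in the steps the paper leaves as direct calculations.
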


\begin{proof}
	We give a sketch of the proof as each step consists of merely direct calculations. Since we have assumed that $\Phi^{k,N}$ is always exactly evaluated,  we have from Algorithm \ref{alg:local_picard_iteration} that, for each $j\in\mathbb{N}$, 
	\begin{equation}\label{temp-node-cond-1-algo-3}
		X_{j+1}^{k,N}(t_n) =X_{j}^{k,N}(t_{n-1})+ \tau_kD_pH\left(X_{j+1}^{k,N}(t_n),Y_{j+1}^{k,N}(t_{n-1}),(X_{j+1}^{k,N}(t_n))_{\#}\mathbb{P}\right)
	\end{equation}
	\begin{equation}\label{temp-node-cond-2-algo-3}
		Y_{j+1}^{k,N}(t_{n-1}) = Y_{j+1}^{k,N}(t_n)+\tau_kD_xH\left(X_{j+1}^{k,N}(t_n),Y_{j+1}^{k,N}(t_{n-1}),(X_{j+1}^{k,N}(t_n))_{\#}\mathbb{P}\right)
	\end{equation}
	for all $n\in\{1,2,\cdots,M_k\}$ where $Y_{j+1}^{k,N}(T)=-  D_xg({X}_{j+1}^{k,N}(T),(X_{j+1}^{k,N}(T))_{\#}\mathbb{P})$ and $X_{j+1}^{k,N}(0)=X_0^N$. There are only two unknowns of the local forward-backward Hamiltonian system on $(t_{n-1},t_n)$, namely the values $X_{j+1}^{k,N}(t_n)$ and $Y_{j+1}^{k,N}(t_{n-1})$, whereas  $Y_{j+1}^{k,N}(t_{n})$ is known from the previous local forward-backward Hamiltonian system on the subinterval $(t_n,t_{n+1})$  and $X_{j}^{k,N}(t_{n-1})$ is known from the previous global iteration. The perturbed forward-backward Hamiltonian system \eqref{numerical-scheme-loc-algo} with residual term \eqref{F_j-defn} is then obtained by directly computing the partial derivatives $\partial_t\mathcal{I}_+^k{X}_{j+1}^{k,N}(t)$, $\partial_t\mathcal{I}_-^k{Y}_{j+1}^{k,N}(t)$  for $t\in (0,T)\backslash \iota_k$ by using the definition of the interpolation operators $\Ipm^k$ and the equations \eqref{temp-node-cond-1-algo-3}, \eqref{temp-node-cond-2-algo-3}, in similar fashion to the proof of Lemma \ref{lemma-fp-map-nodal-charac}. Because $k\geq k_{\dagger}$, we can apply our main continuous dependence bound from Theorem  \ref{theorem-cont-dependence-discrete-Hamiltonian-sys} to the numerical scheme \eqref{numerical-scheme} and the system \eqref{numerical-scheme-loc-algo} obtained from Algorithm \ref{alg:local_picard_iteration} to immediately get the claimed a posteriori error bound 
	\eqref{it-approx-bound-a-posteriori}. 
\end{proof}}

\begin{algorithm}[h!]
    \renewcommand{\thealgorithm}{B}
	\caption{Local Picard Iteration Method for the discrete Hamiltonian System \eqref{numerical-scheme}}\label{alg:local_picard_iteration}
	\begin{algorithmic}
		\Require $k,N\in\mathbb{N}$ fixed; $j = 0$; $0<tol_{global},tol_{local}<1$; $X_{init}^{k,N}\in \mathbb{V}_k^+([0,T];\mathcal{R}_{d}(\mathcal{A}_N,\mathcal{O}_{N}))$ with $X_{init}^{k,N}(0)=X_0^N$ in $L^2(\Omega;\mathbb{R}^d)$, $Y_{init}^{k,N}\in \mathbb{V}_k^-([0,T];\mathcal{R}_{d}(\mathcal{A}_N,\mathcal{O}_{N}))$; $E_{global}=1$ and $E_{local}=1$. \State Set $X_0^{k,N} \coloneqq X_{init}^{k,N}$ in $ \mathbb{V}_k^+([0,T];\mathcal{R}_{d}(\mathcal{A}_N,\mathcal{O}_{N}))$,  $Y_0^{k,N} \coloneqq Y_{init}^{k,N}$ in $ \mathbb{V}_k^-([0,T];\mathcal{R}_{d}(\mathcal{A}_N,\mathcal{O}_{N}))$.
		
		\While{$E_{global}>tol_{global}$}
		\State Use $X_j^{k,N}$ and $Y_j^{k,N}$ to compute $X_{j+1}^{k,N}\in \mathbb{V}_k^+([0,T];\mathcal{R}_{d}(\mathcal{A}_N,\mathcal{O}_{N}))$ with $X_{j+1}^{k,N}(0)\coloneqq X_0^N$ in $\mathcal{R}_d(\mathcal{A}_N,\mathcal{O}_N)$, $Y_{j+1}^{k,N}\in \mathbb{V}_k^-([0,T];\mathcal{R}_{d}(\mathcal{A}_N,\mathcal{O}_{N}))$ as follows:
		\State \underline{Computation on subinterval $[t_{M_k-1},T]$}
		\State Set $\mathcal{T}:\mathcal{R}_{d}(\mathcal{A}_N,\mathcal{O}_N)\to \mathcal{R}_{d}(\mathcal{A}_N,\mathcal{O}_N)$ via $\mathcal{T}[\mathcal{X}]\coloneqq  -D_xg\left(\mathcal{X},{\mathcal{X}}_{\#}\mathbb{P}\right)$, $\mathcal{X}\in \mathcal{R}_d(\mathcal{A}_N,\mathcal{O}_N)$.
		\State Compute $(\Phi_{X},\Phi_{Y})\coloneqq \Phi^{k,N}[t_{M_k-1},T,tol_{local},X_{j}^{k,N},Y_{j}^{k,N},E_{local},\mathcal{T}]$ by Algorithm \ref{alg:standard_picard_iteration}.
		\State Set $X_{j+1}^{k,N}(T)\coloneqq \Phi_{X}(T)$,  $Y_{j+1}^{k,N}(T)\coloneqq\Phi_{Y}(T)$, $Y_{j+1}^{k,N}(t_{M_k-1})\coloneqq \Phi_{Y}(t_{M_k-1})$ in $\mathcal{R}_{d}(\mathcal{A}_N,\mathcal{O}_{N})$.
		
		\For{$i = M_k-2,M_k-3,\cdots, 3, 2,1$}
		\State \underline{Computation on subinterval $[t_n,t_{n+1}]$}
		\State Set the constant map $\mathcal{T}:\mathcal{R}_{d}(\mathcal{A}_N,\mathcal{O}_N)\to \mathcal{R}_{d}(\mathcal{A}_N,\mathcal{O}_N)$ via $\mathcal{T}[\mathcal{X}]\coloneqq Y_{j+1}^{k,N}(t_{n+1})$, $\mathcal{X}\in \mathcal{R}_d(\mathcal{A}_N,\mathcal{O}_N)$.
		\State Compute $(\Phi_{X},\Phi_Y)\coloneqq \Phi^{k,N}[t_n,t_{n+1},tol_{local},X_{j}^{k,N},Y_{j}^{k,N},E_{local},\mathcal{T}]$ by Algorithm \ref{alg:standard_picard_iteration}.
		\State Set $X_{j+1}^{k,N}(t_{n+1})\coloneqq \Phi_{X}(t_{n+1})$,  $Y_{j+1}^{k,N}(t_n)\coloneqq \Phi_Y(t_n)$ in $\mathcal{R}_{d}(\mathcal{A}_N,\mathcal{O}_{N})$.
		\EndFor
		
		\State \underline{Computation on subinterval $[0,t_1]$}
		\State Set the constant map $\mathcal{T}:\mathcal{R}_{d}(\mathcal{A}_N,\mathcal{O}_N)\to \mathcal{R}_{d}(\mathcal{A}_N,\mathcal{O}_N)$ with $\mathcal{T}[\mathcal{X}]\coloneqq Y_{j+1}^{k,N}(t_1)$, $\mathcal{X}\in \mathcal{R}_d(\mathcal{A}_N,\mathcal{O}_N)$.
		\State Compute $(\Phi_{X},\Phi_{Y})\coloneqq \Phi^{k,N}[0,t_1,tol_{local},X_{j}^{k,N},Y_{j}^{k,N},E_{local},\mathcal{T}]$ by Algorithm \ref{alg:standard_picard_iteration}.
		\State Set $X_{j+1}^{k,N}(t_1)\coloneqq \Phi_{X}(t_1)$,  $Y_{j+1}^{k,N}(0)\coloneqq \Phi_{Y}(0)$ in $\mathcal{R}_{d}(\mathcal{A}_N,\mathcal{O}_{N})$.
		\State \underline{Global one-step error update:} Compute $$E_{global}=\left\|X_{j+1}^{k,N}-X_j^{k,N}\right\|_{L^2(0,T;L^2(\Omega;\mathbb{R}^d))}+\left\|Y_{j+1}^{k,N}-Y_j^{k,N}\right\|_{L^2(0,T;L^2(\Omega;\mathbb{R}^d))}.$$
		\State If $E_{global}>tol_{global}$, set $j+1\gets j$ and compute the next global iteration on $[0,T]$.
		\EndWhile
	\end{algorithmic}
\end{algorithm}

\subsection{The experiments}
In this section we present results of two numerical experiments that test the performance of the numerical scheme. To this end, we compare the performance of the standard Picard iteration from Algorithm \ref{alg:standard_picard_iteration} and the local Picard iteration scheme from Algortihm \ref{alg:local_picard_iteration}. Since there holds the estimate
\begin{equation}
	\sup_{t\in [0,T]}\mathcal{W}_2(\rho(t),\rho^{k,N}(t))\leq \sup_{t\in [0,T]}\|(X-X^{k,N})(t)\|_{L^2(\Omega;\mathbb{R}^d)},
\end{equation}
in the experiments we assess the approximations $(X_{app}^{k,N},Y_{app}^{k,N})$ of $(X^{k,N},Y^{k,N})$ generated by either algorithm by computing the following relative errors:
\begin{equation}
	E_X^{k}\coloneqq \frac{\sup_{t\in [0,T]}\|(X-X_{app}^{k,N})(t)\|_{L^2(\Omega;\mathbb{R}^d)}}{\sup_{t\in [0,T]}\|X(t)\|_{L^2(\Omega;\mathbb{R}^d)}}, \quad  E_Y^{k}\coloneqq \frac{\sup_{t\in [0,T]}\|Y-Y_{app}^{k,N}(t)\|_{L^2(\Omega;\mathbb{R}^d)}}{\sup_{t\in [0,T]}\|Y(t)\|_{L^2(\Omega;\mathbb{R}^d)}}.
\end{equation}

\subsubsection*{First experiment.}
In this test, we consider the approximation of the continuous MFG system with respect to a reference solution computed on a fine grid.
The model data that we consider for the continuous MFG system are as follows. Let the space dimension $d=2$ and let $T=1$. Let $\mathcal{S}\coloneqq \{ 2^{-3}(2m-1,2n-1)^\top\in\mathbb{R}^2:n,m\in \{1,2,3,4\}\}$, which we note is a set that consists of sixteen distinct points of the unit square $[0,1]^2$. Let $\{x_i\}_{i=1}^{16}$ denote a fixed denumeration of the points in $\mathcal{S}$. We assume that the initial measure $\rho_0\in\mathcal{P}_2(\mathbb{R}^2)$ is a deterministic empirical measure of the form $\rho_0\coloneqq (1/16)\sum_{i=1}^{16}\delta_{x_i}$. The Hamiltonian of the MFG system is taken to be 
\begin{multline}
	H(x,p,\mu)\coloneqq \frac{1}{4}(p_1^2+p_2^2-x_1^2-x_2^2)+\left(p_1+p_2\right)\int_{\mathbb{R}^2}\left(1-\sin(y_1+y_2)\right)\mathrm{d}\mu(y)
	\quad\forall (x,p,\mu)\in  \mathbb{R}^2\times\mathbb{R}^2\times\mathcal{P}_2(\mathbb{R}^2),
\end{multline}
with the terminal cost  set as
\begin{equation}
	g(x,\mu)\coloneqq  \frac{1}{2}(x_1^2+x_2^2)+\left(x_1+x_2\right)\int_{\mathbb{R}^2}\left(1-\cos(y_1+y_2)\right)\mathrm{d}\mu(y)
	\quad\forall (x,\mu)\in  \mathbb{R}^2\times\mathcal{P}_2(\mathbb{R}^2).
\end{equation}
From Example \ref{eg-1} we find that $H$ satisfies the regularity assumptions \eqref{ass-H:1}, \eqref{ass-H:2}, \eqref{ass-H:3}, \eqref{ass-H:4}, \eqref{ass-H:5} and the displacement monotonicity condition \eqref{ass-H:disp-mono-cond-H}, and that the $g$ above satisfies the regularity assumptions \eqref{ass-g:1}, \eqref{ass-g:2} and the displacement monotonicity condition \eqref{ass-g:disp-mono-cond-g}, and that the growth condition \eqref{ass-L-g} on $g$ and the Lagrangian $L$ of $H$ also holds. In addition, using \cite[Theorem 3.7, Theorem 4.5]{meszaros2024mean}, the MFG system \eqref{mfg-pde-sys} has a unique solution in the sense of \cite[Definition 3.2]{meszaros2024mean}.

For the associated continuous Hamiltonian system \eqref{characteristics-continuous}, we consider the reference probability space $(\Omega,\mathbb{F},\mathbb{P})$ where $\Omega=[0,1]^2$,  $\mathbb{F}$  is the $\sigma$-algebra of Lebesgue measurable subsets of $\Omega$, and $\mathbb{P}$ denotes the restriction of the two-dimensional Lebesgue measure to $\Omega$. Let  $\mathcal{O}\coloneqq \{\Omega_{i}\}_{i=1}^{16}\subset \Omega$ denote a uniform partition of $\Omega$ into (open) squares, each with centroid $w_i\in \Omega_i$ and area $\mathbb{P}(\overline{\Omega_{i}})=16^{-1}$, for $i\in\{1,2,\cdots, 16\}$. For this problem, we let the initial random variable $X_0$ take the form $X_0\coloneqq \sum_{i=1}^{16}{w_i}\chi_{\Omega_i}$ a.e.\ in $\Omega$. In this setting we do not have access to an explicit formula for the unique solution $(X,Y)$ of the continuous Hamiltonian system \eqref{characteristics-continuous}, so instead we consider the performance of the numerical scheme \eqref{numerical-scheme} with respect to the reference solution, which we illustrate in Figure \ref{fig:test-1-ref-soln} on a coarse temporal grid.

\begin{figure}[h!]
	\centering
	\includegraphics[width=0.7\textwidth]{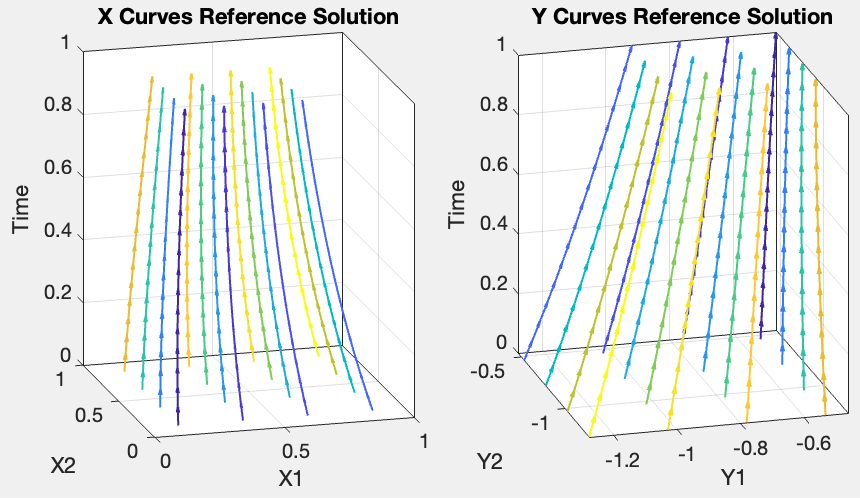}
	\caption{First experiment -- Illustration of reference solution $(X,Y)$ of continuous Hamiltonian system \eqref{characteristics-continuous} with $T=1$ on a coarse temporal grid.}
	\label{fig:test-1-ref-soln}
\end{figure}

In the discrete Hamiltonian system \eqref{numerical-scheme}, we simply set $\mathcal{A}_N\coloneqq \{\alpha_{i,N}\}_{i=1}^N$ with $\alpha_{i,N}\coloneqq 1/16$ for $i=1,2\cdots,N$, $\mathcal{O}_N\coloneq \mathcal{O}$, $\rho_0^N\coloneqq \rho_0$ in $\mathcal{P}_2(\mathbb{R}^2)$, and $X_0^N\coloneqq X_0$ in $\mathcal{R}_{2}(\mathcal{A}_N,\mathcal{O}_{N})$ for fixed $N=16$, since $\rho_0$ is already an empirical measure supported on a finite number of points. We consider a uniform partition of the time interval $[0,T]$ with time-step $\tau_k\coloneqq 2^{-k+1}$ with $k\in\mathbb{N}$. Theorem \ref{theorem-convergence-k-N-joint} then implies that the total error $E_{total}^{k}\coloneqq E_{X}^{k}+E_Y^{k}$ satisfies
\begin{equation}
	E_{total}^{k}\lesssim \|X_0-X_0^N\|_{\LLspace}+\tau_k\lesssim0+\tau_k=\tau_k
\end{equation}for sufficiently large $k\in\mathbb{N}$. To obtain a reference solution for $(X,Y)$ for computing the errors when $T=1$, we applied the standard Picard iteration scheme in Algorithm \ref{alg:standard_picard_iteration} with $tol_{global}=10^{-8}$, $k_{ref}=12$ (whence $\tau_{k_{ref}}=2^{-11}$), and we set $Y_{init}^{k_{ref},N}\coloneq 0$ in $\mathbb{V}_{k_{ref}}^-([0,T];\mathcal{R}_{2}(\mathcal{A},\mathcal{O}))$ and $X_{init}^{k_{ref},N}\in  \mathbb{V}_k^+([0,T];\mathcal{R}_{2}(\mathcal{A},\mathcal{O}))$ where $X_{init}^{k_{ref},N}|_{(0,T]}\coloneqq 0$ with $X_{init}^{k_{ref},N}(0)\coloneqq X_0$. This reference solution is depicted in Figure \ref{fig:test-1-ref-soln} on a coarser temporal partition. In the next sub-test where we let $T\in\{2,4,8,16,32\}$, for each $T$ we obtain a reference solution $(X,Y)$ for computing the errors by employing the local Picard iteration of Algorithm \ref{alg:local_picard_iteration} with the same initialisation $X_{init}^{k_{ref},N}$, $Y_{init}^{k_{ref},N}$, where $k_{ref}=12$ and global tolerance $tol_{global}=10^{-8}$, but we set the local tolerance to be $tol_{local}=10^{-12}$.

\begin{figure}[h!]
	\centering
	\begin{tabular}{c c} 
		\begin{subfigure}[b]{0.45\textwidth}
			\begin{adjustbox}{width=0.95\linewidth}
				\begin{tikzpicture}
					\begin{loglogaxis}[
						title={$(Y(t))_{t}$ Approx.\ in $L^{\infty}(L^2)$-norm},
						xlabel={time-step $\tau_k$},
						ylabel={relative error},
						xmax=1,
						ymax=0.05,
						legend pos=north west,
						ymajorgrids=true,
						grid style=dashed,
						]
						
						\addplot[
						color=blue,
						mark=square,]
						coordinates {
							(0.500000, 0.028365)
(0.250000, 0.014596)
(0.125000, 0.007381)
(0.062500, 0.003701)
(0.031250, 0.001842)
(0.015625, 0.000905)
(0.007812, 0.000438)
(0.003906, 0.000203)
						};
						
						\addplot[
						color=red,
						mark=triangle,]
						coordinates {
							(0.500000, 0.028367)
(0.250000, 0.014598)
(0.125000, 0.007382)
(0.062500, 0.003699)
(0.031250, 0.001841)
(0.015625, 0.000907)
(0.007812, 0.000439)
(0.003906, 0.000205)

						};
						
						\logLogSlopeTriangle{0.4}{0.25}{0.08}{1}{black};
						
						\legend{Algorithm A,Algorithm B}
					\end{loglogaxis}  
				\end{tikzpicture}
			\end{adjustbox}
		\end{subfigure}
		&
		\begin{subfigure}[b]{0.45\textwidth}
			\begin{adjustbox}{width=0.95\linewidth} 
				\begin{tikzpicture}
					\begin{loglogaxis}[
						title={$(X(t))_t$ Approx. in $L^{\infty}(L^2)$-norm},
						xlabel={time step $\tau_k$},
						ylabel={relative error},
						xmax=1,
						ymax= 0.03,
						legend pos=north west,
						ymajorgrids=true,
						grid style=dashed,
						]
						
						\addplot[
						color=blue,
						mark=square,]
						coordinates {
						(0.500000, 0.016713)
(0.250000, 0.006219)
(0.125000, 0.002894)
(0.062500, 0.001464)
(0.031250, 0.000732)
(0.015625, 0.000361)
(0.007812, 0.000175)
(0.003906, 0.000082)
						};
						
						\addplot[
						color=red,
						mark=triangle,]
						coordinates {
							(0.500000, 0.016713)
(0.250000, 0.006218)
(0.125000, 0.002895)
(0.062500, 0.001464)
(0.031250, 0.000732)
(0.015625, 0.000361)
(0.007812, 0.000175)
(0.003906, 0.000082)
						};

						\logLogSlopeTriangle{0.4}{0.25}{0.08}{1}{black};
						
						\legend{Algorithm A,Algorithm B}
					\end{loglogaxis}  
				\end{tikzpicture}
			\end{adjustbox}
		\end{subfigure}
	\end{tabular}
	\caption{First experiment: $L^{\infty}(L^2)$-Error plots for approximations of the reference  $(Y(t))_{t\in [0,T]}$ curve and the reference trajectories of the players  $(X(t))_{t\in [0,T]}$  for $T=1$ using Algorithm \ref{alg:standard_picard_iteration} and Algorithm \ref{alg:local_picard_iteration}.}
	\label{Fig-1}
\end{figure}
For $k\in \{2,3,\cdots, 9\}$, we apply both the standard Algorithm \ref{alg:standard_picard_iteration} and the local Algorithm \ref{alg:local_picard_iteration} in MATLAB to solve the discrete Hamiltonian system \eqref{numerical-scheme}, where for each algorithm we initialized by setting $Y_{init}^{k,N}\coloneqq 0$ in $ \mathbb{V}_k^-([0,T];\mathcal{R}_{2}(\mathcal{A},\mathcal{O}))$ and $X_{init}^{k,N}\in  \mathbb{V}_k^+([0,T];\mathcal{R}_{2}(\mathcal{A},\mathcal{O}))$ where $X_{init}^{k,N}|_{(0,T]}\coloneqq 0$ with $X_{init}^{k,N}(0)\coloneqq X_0$, and we set the global tolerance to be $tol_{global}=10^{-8}$. The local error tolerance for Algorithm \ref{alg:local_picard_iteration} was set to $tol_{local}=10^{-12}$. For solving the associated discrete McKean--Vlasov equations using Algorithm \ref{alg:MKV_iteration} in the respective implementations, throughout we initialized using $X_{init}^{k,N}\in  \mathbb{V}_k^-([0,T];\mathcal{R}_{2}(\mathcal{A},\mathcal{O}))$ where $X_{init}^{k,N}|_{(0,T]}\coloneqq 0$ with $X_{init}^{k,N}(0)\coloneqq \chi$ where $\chi$ denotes a generic initial condition that is determined by the given scheme (i.e. being fixed for Algorithm \ref{alg:standard_picard_iteration} but varying in Algorithm \ref{alg:local_picard_iteration} across subinterval computations). The algorithm for approximating the discrete McKean--Vlasov equation was run with error tolerance $tol=10^{-12}$ throughout both implementations of Algorithm \ref{alg:local_picard_iteration} and Algorithm \ref{alg:standard_picard_iteration} in this experiment.

The error plots given by the discrete system \eqref{numerical-scheme} for this experiment are shown in Figure \ref{Fig-1}. We observe that the error plots indicate an asymptotic rate of convergence of order one as the time-step vanishes, which is in agreement with our theoretical prediction. This observed rate of convergence is optimal since the approximation is based on piecewise constant-in-time approximations of $H^{1}$-regular time curves. We further see that the approximations generated by both Algorithms \ref{alg:standard_picard_iteration} and \ref{alg:local_picard_iteration} produce comparable discrete solutions since their respective error plots overlap well. 

We also assess the robustness of the numerical scheme \eqref{numerical-scheme} for longer time horizons $T\in\{2,4,8,16,32\}$ by using Algorithm \ref{alg:local_picard_iteration} based on local Picard iterations. We assess here robustness in the sense that the asymptotic rate of convergence of the method is independent of the time horizon $T$. The relative errors are plotted in Figure \ref{Fig-3-ref}. We observe in particular the decrease in the number of points in the curves from eight points to three points as $T$ increases.  We suspect that this is due to Algorithm \ref{alg:local_picard_iteration} requiring sufficiently large mesh parameter $k_{\dagger}$ to converge to the approximation $(X^{k,N_k},Y^{k,N_k})$ for each given $k\geq k_{\dagger}$ for the entire range of time horizons $T\in\{1,2,4,8,16,32\}$. For instance, in the experiments for $T=32$ we observed non-convergence of the algorithm for each $k<k_{\dagger}$ with $k_{\dagger}=6$. This size condition on $k$ for this experiment is perhaps not surprising, since there is a similar condition on $k$ in the statements of Theorem \ref{theorem-existence-num-scheme} and Theorem \ref{theorem-uniqueness-numerical-sheme} on well-posedness for the numerical scheme. We see that the asymptotic slope of each error plot is comparable to that of the base case $T=1$. Therefore, these results suggest that the numerical scheme \eqref{numerical-scheme} is robust as the time horizon increases.
\begin{figure}[h!]
	\centering
	\begin{tabular}{c c} 
		\begin{subfigure}[b]{0.45\textwidth}
			\begin{adjustbox}{width=0.95\linewidth}
				\begin{tikzpicture}
					\begin{loglogaxis}[
						title={$(Y(t))_{t}$ Approx.\ in $L^{\infty}(L^2)$-norm},
						xlabel={time-step $\tau_k$},
						ylabel={relative error},
						xmax=1,
						ymax=0.07,
						legend pos=north west,
						ymajorgrids=true,
						grid style=dashed,
						]
						
						\addplot[
						color=blue,
						mark=square,]
						coordinates {
                            (0.500000, 0.028367)
(0.250000, 0.014598)
(0.125000, 0.007382)
(0.062500, 0.003699)
(0.031250, 0.001841)
(0.015625, 0.000907)
(0.007812, 0.000439)
(0.003906, 0.000205)
						};
						
						\addplot[
						color=red,
						mark=triangle,]
						coordinates {
                            (0.500000, 0.039574)
(0.250000, 0.020530)
(0.125000, 0.010429)
(0.062500, 0.005226)
(0.031250, 0.002586)
(0.015625, 0.001255)
(0.007812, 0.000587)
						};
						
						\addplot[
						color=magenta,
						mark=star,]
						coordinates {
                            (0.500000, 0.039355)
(0.250000, 0.020288)
(0.125000, 0.010242)
(0.062500, 0.005085)
(0.031250, 0.002471)
(0.015625, 0.001156)
						};
						
							\addplot[
						color=brown,
						mark=diamond,]
						coordinates {
							(0.500000, 0.030338)
(0.250000, 0.015582)
(0.125000, 0.007809)
(0.062500, 0.003814)
(0.031250, 0.001789)
						};
						
						\addplot[
						color=teal,
						mark=oplus,]
						coordinates {
							(0.500000, 0.021744)
(0.250000, 0.011143)
(0.125000, 0.005511)
(0.062500, 0.002601)
						};

                        \addplot[
						color=black,
						mark=o,]
						coordinates {
							(0.500000, 0.016200)
(0.250000, 0.008505)
(0.125000, 0.004065)
						};
                        
						\logLogSlopeTriangle{0.9}{0.25}{0.5}{1}{black};
						
						\legend{$T=1$,$T=2$, $T=4$, $T=8$, $T=16$, $T=32$}
					\end{loglogaxis}  
				\end{tikzpicture}
			\end{adjustbox}
		\end{subfigure}
		&
		\begin{subfigure}[b]{0.45\textwidth}
			\begin{adjustbox}{width=0.95\linewidth} 
				\begin{tikzpicture}
					\begin{loglogaxis}[
						title={$(X(t))_t$ Approx. in $L^{\infty}(L^2)$-norm},
						xlabel={time step $\tau_k$},
						ylabel={relative error},
						xmax=1,
						ymax= 0.2,
						legend pos=north west,
						ymajorgrids=true,
						grid style=dashed,
						]
						
						\addplot[
						color=blue,
						mark=square,]
						coordinates {
                            (0.500000, 0.016713)
(0.250000, 0.006218)
(0.125000, 0.002895)
(0.062500, 0.001464)
(0.031250, 0.000732)
(0.015625, 0.000361)
(0.007812, 0.000175)
(0.003906, 0.000082)
						};
						
						\addplot[
						color=red,
						mark=triangle,]
						coordinates {
                            (0.500000, 0.025040)
(0.250000, 0.011501)
(0.125000, 0.005519)
(0.062500, 0.002661)
(0.031250, 0.001292)
(0.015625, 0.000621)
(0.007812, 0.000289)
						};
						
						\addplot[
						color=magenta,
						mark=star,]
						coordinates {
                            (0.500000, 0.045526)
(0.250000, 0.022677)
(0.125000, 0.011061)
(0.062500, 0.005418)
(0.031250, 0.002613)
(0.015625, 0.001217)

						};
						
						\addplot[
						color=brown,
						mark=diamond,]
						coordinates {
							(0.500000, 0.072242)
(0.250000, 0.036383)
(0.125000, 0.017820)
(0.062500, 0.008626)
(0.031250, 0.004024)
						};
						
							\addplot[
						color=teal,
						mark=oplus,]
						coordinates {
							(0.500000, 0.095711)
(0.250000, 0.048368)
(0.125000, 0.023323)
(0.062500, 0.010934)
						};

                        \addplot[
						color=black,
						mark=o,]
						coordinates {
							(0.500000, 0.104144)
(0.250000, 0.052484)
(0.125000, 0.024410)
						};

						\logLogSlopeTriangle{0.7}{0.25}{0.3}{1}{black};
						
						\legend{$T=1$,$T=2$, $T=4$, $T=8$, $T=16$, $T=32$}
					\end{loglogaxis}  
				\end{tikzpicture}
			\end{adjustbox}
		\end{subfigure}
	\end{tabular}
	\caption{First experiment: $L^{\infty}(L^2)$-Error plots for approximations of the reference flow  $(Y(t))_{t\in [0,T]}$ and the reference trajectories of the players  $(X(t))_{t\in [0,T]}$ for $T\in \{1,2,4,8,16,32\}$ using Algorithm \ref{alg:local_picard_iteration} for mesh counts $k\in \{1,2,3,4,5,6,7,8\}$}
	\label{Fig-3-ref}
\end{figure}

\subsubsection*{Second experiment.}
In this experiment we test the performance of the numerical scheme \eqref{numerical-scheme} for a MFG system with an exact solution.
We consider the continuous MFG system \eqref{mfg-pde-sys} with the following model data. We let the space dimension $d=2$, the time horizon $T\in (0,\infty)$ and we consider the initial measure $\rho_0 \in L^2(\mathbb{R}^2)$ with $\rho_0(x)\coloneqq \chi_{[0,1]^2}(x)$, $x\in\mathbb{R}^2$. Note that, the second moment of $\rho_0$ is $M_2(\rho_0)=\left(\int_{\mathbb{R}^2}|x|^2 {\rho}_0(x)dx\right)^{\frac{1}{2}}=\sqrt{\frac{2}{3}}$. We set the Hamiltonian  $H(x,p,\mu)\coloneqq \frac{1}{2}|p|^2$,  $(x,p,\mu)\in  \mathbb{R}^2\times\mathbb{R}^2\times\mathcal{P}_2(\mathbb{R}^2)$ and the terminal cost to be $g(x,\mu)\coloneqq \frac{1}{4}|x|^2M_2^2(\mu)$, $(x,\mu)\in \mathbb{R}^2\times\mathcal{P}_2(\mathbb{R}^2)$. 

It is clear that $H$ satisfies the regularity assumptions \eqref{ass-H:1}, \eqref{ass-H:2}, \eqref{ass-H:3}, \eqref{ass-H:4} and the displacement monotonicity condition \eqref{ass-H:disp-mono-cond-H}. Furthermore,  $g$ satisfies \eqref{ass-g:1} and the displacement monotonicity condition \eqref{ass-g:disp-mono-cond-g} {(to remain self-contained in our analysis -- as this is not a consequence of the results in Example \ref{eg-1} -- we supply the proof of this fact in Lemma \ref{lem:app-mon})}. However, the derivative $D_xg$ is only locally Lipschitz continuous { in the measure variable}, and so does not satisfy the uniform Lipschitz continuity assumption \eqref{ass-g:2}. Nonetheless, we now show by construction that the MFG system \eqref{mfg-pde-sys} has a solution $(u,\rho)$, whence the solution is unique by arguments in \cite[Corollary 4.3]{meszaros2024mean} due to displacement monotonicity of the data. Suppose we are given a distribution curve $\tilde{\rho}\in C([0,T];\mathcal{P}_2(\mathbb{R}^d))$. Then the unique viscosity solution $\tilde{u}$ of the Hamilton--Jacobi equation of the system \eqref{mfg-pde-sys} for given distribution $\tilde{\rho}$ takes the form
\begin{equation}
	\tilde{u}(t,x)=\inf_{y\in\mathbb{R}^2}\left(\frac{1}{4}|y|^2M_2^2(\tilde{\rho}(T))+\frac{1}{2(T-t)}|x-y|^2\right)\quad\forall (t,x)\in [0,T]\times \mathbb{R}^2
\end{equation}  
due to the Hopf--Lax formula.
The infimum here is uniquely attained at 
\begin{equation}
	y_*(t,x)=\frac{2}{2+(T-t)M_2^2(\tilde{\rho}(T))}x,\quad \forall (t,x)\in [0,T]\times \mathbb{R}^2,
\end{equation}
which yields the value function ansatz
\begin{equation}\label{val-func-ansatz}
	\tilde{u}(t,x)= \frac{|x|^2M_2^2(\tilde{\rho}(T))}{2(2+(T-t)M_2^2(\tilde{\rho}(T)))}, \quad  \forall (t,x)\in [0,T]\times \mathbb{R}^2.
\end{equation}
We then have that 
\begin{equation}
	D_x\tilde{u}(t,x)= \frac{M_2^2(\tilde{\rho}(T))}{2+(T-t)M_2^2(\tilde{\rho}(T))}x, \quad  \forall (t,x)\in [0,T]\times \mathbb{R}^2.
\end{equation}
This gradient results in the player distribution ansatz ${\rho}_*(t)\coloneqq (\mathcal{X}(t))_{\#}\rho_0$, $t\in [0,T]$, where the flow $(\mathcal{X}(t))_{t\in [0,T]}$ is uniquely given by the ODE
\begin{equation}
	\frac{d\mathcal{X}}{dt}(t,x) = - D_x\tilde{u}(t,\mathcal{X}(t)), \quad t\in [0,T], \quad \mathcal{X}(0)=x,\quad \forall x\in \text{spt}(\rho_0)=[0,1]^2.
\end{equation}
By a direct calculation we obtain
\begin{equation}
	\mathcal{X}(t,x)=\frac{2+(T-t)M_2^2(\tilde{\rho}(T))}{2+TM_2^2(\tilde{\rho}(T))}x, \quad  (t,x)\in [0,T]\times  [0,1]^2.
\end{equation}
Furthermore, using \cite[Lemma 3.6]{meszaros2024mean}, we have that $\rho_*\in C([0,T];\mathcal{P}_2(\mathbb{R}^2))$ is the unique distributional solution to the Cauchy problem $\partial_t\rho_* - D_x\cdot\left(\rho_* D_x\tilde{u}\right)=0 $ in $(0,T)\times \mathbb{R}^2$ with $\rho_*(0)=\rho_0$ in $\mathcal{P}_2(\mathbb{R}^2)$, where $\sup_{t\in [0,T]}M_2(\rho_*(t))<\infty$. Notice that the ansatz for both $\tilde{u}$ and $\rho_*$ depend on merely the value of $M_2(\tilde{\rho}(T))$. As such, we seek a fixed point relation between $M_2(\tilde{\rho}(T))$ and $M_2({\rho}_*(T))$ which fixes the value of the second-moment of $\rho_*$.

Using the definition of the push-forward, we find that
$M_2^2(\rho_*(T))=\int_{\mathbb{R}^2}|\mathcal{X}(T,x)|^2\mathrm{d}\rho_0(x)$ and hence
\begin{equation}
	M_2^2(\rho_*(T))=\int_{\mathbb{R}^2}|\mathcal{X}(T,x)|^2\mathrm{d}\rho_0(x)=\frac{4}{(2+TM_2^2(\tilde{\rho}(T)))^2}M_2^2(\rho_0)=\frac{8}{3(2+TM_2^2(\tilde{\rho}(T)))^2}.
\end{equation}
Therefore, we arrive at the fixed-point equation
\begin{equation}\label{second-moment-eqn}
	r(T)^2=\frac{8}{3(2+Tr(T)^2)^2}
\end{equation} for the second moment of the player distribution. One can show directly that, for each $T\in (0,\infty)$,  this equation is satisfied by a unique value of $r(T)\in (0,\sqrt{2/3})$. In addition, it can be shown from \eqref{second-moment-eqn} that 
\begin{equation}\label{second-moment-formula-eg}
	r^2(T)=\frac{2^{\frac{2}{3}}}{3T}\left(9T+3\sqrt{9T^2+4T}+2\right)^{\frac{1}{3}}+\frac{2^{\frac{4}{3}}}{T\left(9T+3\sqrt{9T^2+4T}+2\right)^{\frac{1}{3}}} - \frac{4}{3T}
\end{equation} for $T\in (0,\infty)$. In all, we have shown that the MFG has a solution $(u,\rho)$ where the player distribution is given by $\rho\coloneqq \rho_*$ with corresponding value function $u\coloneqq \tilde{u}$ given by \eqref{val-func-ansatz} with $M_2(\rho(T))=r(T)$ uniquely satisfying \eqref{second-moment-eqn}. The uniqueness of the solution is then a consequence of  \cite[Corollary 4.3]{meszaros2024mean}.

As before, we let $\Omega\coloneqq [0,1]^2$ and we consider the associated continuous Hamiltonian system 
\begin{subequations}\label{definition-soln-of-continuous-pb-eg}
	\begin{align}
		X(t) &=X_0+ \int_0^tY(s)\mathrm{d}s, \qquad&&\text{in }C([0,T];L^2(\Omega;\mathbb{R}^2))
		\\
		Y(t) &= - \frac{M_2^2(\rho(T))}{2}X(T), \qquad&&\text{in }C([0,T];L^2(\Omega;\mathbb{R}^2)) \\
		\rho(t) &= (X(t))_{\#}\mathbb{P} \qquad&&\text{in }C([0,T];\mathcal{P}_2(\mathbb{R}^2)).
	\end{align}
\end{subequations}
where we let $X_0\in L^2(\Omega;\mathbb{R}^2)$ be the identity map $X_0(\omega)\coloneqq \omega$, $\omega\in 
\Omega$, whose range is the support of $\rho_0$. Note that $\mathbb{P}$ is the restriction of $\rho_0$ to $[0,1]^2$.  It follows by a direct calculation that the unique solution of this Hamiltonian system is given by 
\begin{equation}
	X(t)=\left(\frac{2+(T-t)M_2^2(\rho(T))}{2+TM_2^2(\rho(T))}\right)X_0, \quad Y(t)=-\frac{M_2^2(\rho(T))}{2+TM_2^2(\rho(T))}X_0 \text{ in }C^{\infty}([0,T];H^1(\Omega;\mathbb{R}^d))
\end{equation}
where 	$M_2(\rho(T))\in (0,\sqrt{2/3})$ uniquely satisfies \eqref{second-moment-eqn} and is given by \eqref{second-moment-formula-eg}. Note that $X(t)=\mathcal{X}(t,X_0)$ and $Y(t)=-D_xu(t,X(t))$ in $C^{\infty}([0,T];H^1(\Omega;\mathbb{R}^d))$. 


Let us formulate the discrete Hamiltonian system that we aim to solve. We begin by setting the discretization parameters. For each $N\in\mathbb{N}$, we let  $\mathcal{O}_N\coloneqq \{\Omega_{i,N}\}_{i=1}^N\subset \Omega$ denote a uniform partition of $\Omega$ into (open) squares each of area $\mathbb{P}(\overline{\Omega_{i,N}})=N^{-1}$, and we let  $X_0^N\coloneqq \sum_{i=1}^Nx_{i,N}\chi_{\Omega_{i,N}}$ where $x_{i,N}$ denotes the centroid of $\Omega_{i,N}$. We set $N=N_k=2^{2(k-1)}$ for each $k\in\mathbb{N}$, and we consider a uniform partition of the time interval $[0,T]$ with time-step $\tau_k\coloneqq 2^{-k+1}T$ with $k\in\mathbb{N}$. In this case, we have that $\tau_k\eqsim N_k^{-\frac{1}{2}}$, and it can be shown from the definition of $X_0$ and $X_0^{N_k}$ that $\mathcal{W}_2(\rho_0,\rho_0^{N_k})\leq \|X_0-X_0^{N_k}\|_{L^2(\Omega;\mathbb{R}^2)}\lesssim N_k^{-\frac{1}{2}} $ for $k\in\mathbb{N}$.  
\begin{figure}[h!]
	\centering
	\begin{tabular}{c c} 
		\begin{subfigure}[b]{0.45\textwidth}
			\begin{adjustbox}{width=0.95\linewidth}
				\begin{tikzpicture}
					\begin{loglogaxis}[
						title={$(-D_xu(t,X(t)))_{t}$ Approx.\ in $L^{\infty}(L^2)$-norm},
						xlabel={time-step $\tau_k$},
						ylabel={relative error},
						xmax=1.2,
						ymax=0.6,
						legend pos=north west,
						ymajorgrids=true,
						grid style=dashed,
						]
						
						\addplot[
						color=blue,
						mark=square,]
						coordinates {
						(1.000000, 0.519889)
						(0.500000, 0.252732)
						(0.250000, 0.125345)
						(0.125000, 0.062536)
						(0.062500, 0.031240)
						(0.031250, 0.015595)
						(0.015625, 0.007751)
						(0.007812, 0.003782)
						};
						
							\addplot[
						color=red,
						mark=triangle,]
						coordinates {
							(1.000000, 0.519889)
							(0.500000, 0.252732)
							(0.250000, 0.125345)
							(0.125000, 0.062536)
							(0.062500, 0.031240)
							(0.031250, 0.015595)
							(0.015625, 0.007751)
							(0.007812, 0.003782)
						};
						
						\logLogSlopeTriangle{0.4}{0.25}{0.08}{1}{black};
						
						\legend{Algorithm A,Algorithm B}
					\end{loglogaxis}  
				\end{tikzpicture}
			\end{adjustbox}
		\end{subfigure}
		&
		\begin{subfigure}[b]{0.45\textwidth}
			\begin{adjustbox}{width=0.95\linewidth} 
				\begin{tikzpicture}
					\begin{loglogaxis}[
						title={$(X(t))_t$ Approx. in $L^{\infty}(L^2)$-norm},
						xlabel={time step $\tau_k$},
						ylabel={relative error},
						xmax=1.2,
						ymax= 0.2,
						legend pos=north west,
						ymajorgrids=true,
						grid style=dashed,
						]
						
						\addplot[
						color=blue,
						mark=square,]
						coordinates {
						(1.000000, 0.094759)
						(0.500000, 0.046065)
						(0.250000, 0.022846)
						(0.125000, 0.011398)
						(0.062500, 0.005694)
						(0.031250, 0.002843)
						(0.015625, 0.001413)
						(0.007812, 0.000689)
						};
						
						\addplot[
						color=red,
						mark=triangle,]
						coordinates {
						(1.000000, 0.094759)
						(0.500000, 0.046065)
						(0.250000, 0.022846)
						(0.125000, 0.011398)
						(0.062500, 0.005694)
						(0.031250, 0.002843)
						(0.015625, 0.001413)
						(0.007812, 0.000689)
						};

						\logLogSlopeTriangle{0.4}{0.25}{0.08}{1}{black};
						
						\legend{Algorithm A,Algorithm B}
					\end{loglogaxis}  
				\end{tikzpicture}
			\end{adjustbox}
		\end{subfigure}
	\end{tabular}
	\caption{Second experiment: $L^{\infty}(L^2)$-Error plots for approximations of the exact flow  $(-D_xu(t,X(t)))_{t\in [0,T]}$ and the exact trajectories of the players  $(X(t))_{t\in [0,T]}$ for $T=1$ using Algorithm \ref{alg:standard_picard_iteration} and Algorithm \ref{alg:local_picard_iteration}.}
	\label{Fig-2}
\end{figure}

In this experiment we first test the approximation of the player distribution $\rho$ and the value function gradient along the player trajectories, which is given by the flow $(-D_xu(t,X(t)))_{t\in [0,T]}$ for the case $T=1$ to compare the performance of Algorithm \ref{alg:standard_picard_iteration} and Algorithm \ref{alg:local_picard_iteration}. For $k\in \{1,2,\cdots, 8\}$, we apply both the standard Algorithm \ref{alg:standard_picard_iteration} and the local Algorithm \ref{alg:local_picard_iteration} in MATLAB to solve the discrete system \eqref{numerical-scheme}, where for each algorithm we initialised by setting $Y_{init}^{k,N_k}\coloneqq 0$ in $ \mathbb{V}_k^-([0,T];\mathcal{R}_{2}(\mathcal{A}_{N_k},\mathcal{O}_{N_k}))$ and $X_{init}^{k,N_k}\in  \mathbb{V}_k^-([0,T];\mathcal{R}_{2}(\mathcal{A}_{N_k},\mathcal{O}_{N_k}))$ where $X_{init}^{k,N_k}|_{(0,T]}\coloneqq 0$ with $X_{init}^{k,N_k}(0)\coloneqq X_0^{N_k}$. For both algorithms, we set the global tolerance to be $tol_{global}=10^{-8}$ with the local error tolerance for Algorithm \ref{alg:local_picard_iteration} set as $tol_{local}=10^{-12}$. For solving the associated discrete McKean--Vlasov equations using Algorithm \ref{alg:MKV_iteration} in the respective implementations, throughout we initialised using $X_{init}^{k,N_k}\in  \mathbb{V}_k^-([0,T];\mathcal{R}_{2}(\mathcal{A}_{N_k},\mathcal{O}_{N_k}))$ where $X_{init}^{k,N_k}|_{(0,T]}\coloneqq 0$ with $X_{init}^{k,N_k}(0)\coloneqq \chi$ where $\chi$ denotes a generic initial condition that is determined by the given scheme (i.e. being fixed for Algorithm \ref{alg:standard_picard_iteration} but varying in Algorithm \ref{alg:local_picard_iteration}), and we set the error tolerance to $tol=10^{-12}$. 

The error plots given by the numerical scheme \eqref{numerical-scheme} are shown in Figure \ref{Fig-2}. In this setting where the model data do not satisfy all the hypotheses of Theorem \ref{theorem-convergence-k-N-joint}, we observe an optimal asymptotic rate of convergence of order one as the time-step vanishes in the error plots. The optimality of this rate of convergence is likely due to the fact that the exact solution of the continuous Hamiltonian system are $H^1$-regular in $(0,T)\times \Omega$. Here too we see that the respective error plots given by Algorithm \ref{alg:standard_picard_iteration} and Algorithm \ref{alg:local_picard_iteration} overlap well with each other.

\begin{figure}[h!]
	\centering
	\begin{tabular}{c c} 
		\begin{subfigure}[b]{0.45\textwidth}
			\begin{adjustbox}{width=0.95\linewidth}
				\begin{tikzpicture}
					\begin{loglogaxis}[
						title={$(-D_xu(t,X(t)))_{t}$ Approx.\ in $L^{\infty}(L^2)$-norm},
						xlabel={time-step $\tau_k$},
						ylabel={relative error},
						xmax=4,
						ymax=0.6,
						legend pos=north west,
						ymajorgrids=true,
						grid style=dashed,
						]
						
						\addplot[
						color=blue,
						mark=square,]
						coordinates {
                            (1, 0.519889146605252)   (0.5, 0.252732338851463)
							(0.250000, 0.1253447863)
							(0.125000, 0.06253617577)
							(0.062500, 0.03124022714)
							(0.031250, 0.01559514628)
							(0.015625, 0.007751319244)
							(0.007812, 0.003782222816)
						};
						
						\addplot[
						color=red,
						mark=triangle,]
						coordinates {
                            (2, 0.513502210001673)   
                            (1, 0.251778683965082)
							(0.500000, 0.125221077)
							(0.250000, 0.0625205743)
							(0.125000, 0.03123827619)
							(0.062500, 0.01559489894)
							(0.031250, 0.007751285236)
							(0.015625, 0.003782246547)
						};
						
						\addplot[
						color=magenta,
						mark=star,]
						coordinates {
                            (2, 0.251099545116972)
							(1.000000, 0.1251342935)
							(0.500000, 0.06250966791)
							(0.250000, 0.03123691229)
							(0.125000, 0.01559472974)
							(0.062500, 0.007751262136)
							(0.031250, 0.003784015615)
						};
						
							\addplot[
						color=brown,
						mark=diamond,]
						coordinates {
							(2.000000, 0.1250790281)
							(1.000000, 0.06250274211)
							(0.500000, 0.03123604465)
							(0.250000, 0.01559462129)
							(0.125000, 0.007751248933)
							(0.062500, 0.003782214283)
						};
						
						\addplot[
						color=teal,
						mark=oplus,]
						coordinates {
							(2.000000, 0.06249855304)
(1.000000, 0.03123552157)
(0.500000, 0.01559455616)
(0.250000, 0.007751240111)
(0.125000, 0.003782213957)
						};

                        \addplot[
						color=black,
						mark=o,]
						coordinates {
							(2.000000, 0.0312352109)
(1.000000, 0.01559451731)
(0.500000, 0.007751235594)
(0.250000, 0.003782213069)
						};
                        
						\logLogSlopeTriangle{0.9}{0.25}{0.08}{1}{black};
						\legend{$T=1$,$T=2$, $T=4$, $T=8$, $T=16$, $T=32$}
					\end{loglogaxis}  
				\end{tikzpicture}
			\end{adjustbox}
		\end{subfigure}
		&
		\begin{subfigure}[b]{0.45\textwidth}
			\begin{adjustbox}{width=0.95\linewidth} 
				\begin{tikzpicture}
					\begin{loglogaxis}[
						title={$(X(t))_t$ Approx. in $L^{\infty}(L^2)$-norm},
						xlabel={time step $\tau_k$},
						ylabel={relative error},
						xmax=4,
						ymax= 0.15,
						legend pos=north west,
						ymajorgrids=true,
						grid style=dashed,
						]
						
						\addplot[
						color=blue,
						mark=square,]
						coordinates {
                            (1, 0.094759324516241)   (0.5, 0.046065101001758)
							(0.250000, 0.02284638353)
							(0.125000, 0.01139836442)
							(0.062500, 0.005694106496)
							(0.031250, 0.002842501199)
							(0.015625, 0.001412817886)
							(0.007812, 0.0006893795782)
						};
						
						\addplot[
						color=red,
						mark=triangle,]
						coordinates {
                            (2, 0.136006575820415)   
                            (1, 0.066686288551197)
							(0.500000, 0.03316614677)
							(0.250000, 0.01655924514)
							(0.125000, 0.008273793597)
							(0.062500, 0.004130477712)
							(0.031250, 0.002053010391)
							(0.015625, 0.001001759523)
						};
						
						\addplot[
						color=magenta,
						mark=star,]
						coordinates {
                            (2, 0.089402769083495)
							(1.000000, 0.04455345589)
							(0.500000, 0.02225626247)
							(0.250000, 0.01112175014)
							(0.125000, 0.005552427844)
							(0.062500, 0.00275979915)
							(0.031250, 0.001346936545)
						};
						
						\addplot[
						color=brown,
						mark=diamond,]
						coordinates {
							(2.000000, 0.05605448498)
							(1.000000, 0.02801076255)
							(0.500000, 0.01399851331)
							(0.250000, 0.006988769558)
							(0.125000, 0.003473741714)
							(0.062500, 0.001695009228)
						};
						
							\addplot[
						color=teal,
						mark=oplus,]
						coordinates {
							(2.000000, 0.03345408388)
(1.000000, 0.01671967911)
(0.500000, 0.00834741873)
(0.250000, 0.004149066389)
(0.125000, 0.002024534702)
						};

                        \addplot[
						color=black,
						mark=o,]
						coordinates {
							(2.000000, 0.01917552938)
(1.000000, 0.009573590745)
(0.500000, 0.004758541338)
(0.250000, 0.002321928828)
						};

						\logLogSlopeTriangle{0.9}{0.25}{0.3}{1}{black};
						
						\legend{$T=1$,$T=2$, $T=4$, $T=8$, $T=16$, $T=32$}
					\end{loglogaxis}  
				\end{tikzpicture}
			\end{adjustbox}
		\end{subfigure}
	\end{tabular}
	\caption{Second experiment: $L^{\infty}(L^2)$-Error plots for approximations of the exact flow  $(-D_xu(t,X(t)))_{t\in [0,T]}$ and the exact trajectories of the players  $(X(t))_{t\in [0,T]}$ for $T\in \{1,2,4,8,16,32\}$ using Algorithm \ref{alg:local_picard_iteration} for mesh counts $k\in \{1,2,3,4,5,6,7,8\}$}
	\label{Fig-3}
\end{figure}

In the second part of this experiment, we assess the robustness of the numerical scheme \eqref{numerical-scheme} for longer time horizons $T\in\{2,4,8,16,32\}$ by using Algorithm \ref{alg:local_picard_iteration} based on local Picard iterations. The relative errors are plotted in Figure \ref{Fig-3}. Like in the first experiment, we observe the decrease in the number of points in the curves as $T$ increases, namely from eight points to four points in this experiment.  In conducting this experiment, we observed convergence of Algorithm \ref{alg:local_picard_iteration} for each mesh level $k\geq k_{\dagger}$ where $k_{\dagger}=k_{\dagger}(T)$ increases from $1$ to $5$ as $T$ increases in the range $T\in\{1,2,4,8,16,32\}$, but otherwise we observed non-convergence of the algorithm for each mesh level $k<k_{\dagger}(T)$. We see that the slope of each error plot is comparable to that of the base case $T=1$, thereby showcasing the robustness of the numerical scheme \eqref{numerical-scheme} as the time horizon increases.

\subsubsection*{Third experiment.}
In this final test, we illustrate the space dimension dependence in the asymptotic rate of convergence implied by Theorem \ref{theorem-convergence-k-N-joint} for an example model problem. The model data that we consider for the continuous MFG system are as follows. We fix the time horizon to be $T=0.1$ and we assume that $d\in \{1,2,3,4,5,6\}$. We let the initial distribution $\rho_0$ be the regular probability measure given by the standard normal distribution on $\mathbb{R}^d$ with zero mean and identity covariance, {i.e. $\rho_0 = \frac{1}{(2\pi)^{d/2}}e^{-|x|^2/2}$}. Note that $\rho_0$ is thus supported on all of $\mathbb{R}^d$. We set the Hamiltonian of the MFG system to be 
\begin{equation}
	H(x,p,\mu)\coloneqq \frac{7}{6}(|p|^2-|x|^2)+\left(\phi(p)-\phi(x)\right)\int_{\mathbb{R}^d}\frac{|z|^2}{1+|z|^2}\mathrm{d}\mu(y)
	\quad\forall (x,p,\mu)\in  \mathbb{R}^d\times\mathbb{R}^d\times\mathcal{P}_2(\mathbb{R}^d),
\end{equation}
where the function $\phi:\mathbb{R}^d\to [0,\infty)$ is defined by $\phi(v)\coloneqq \frac{1}{2}|v|^2 - \frac{1}{12}|v|^4$ if $|v|\leq 1$ and $\phi(v)\coloneqq \frac{2}{3}|v|-\frac{1}{4}$ if $|v|\geq 1$.
The terminal cost of the system is set as
\begin{equation}
	g(x,\mu)\coloneqq  \frac{1}{2}|x|^2+ {(x_1+\ldots+x_d)}\int_{\mathbb{R}^d}\left(1-\sin({y_1+\ldots+y_d})\right)\mathrm{d}\mu(y)
	\quad\forall (x,\mu)\in  \mathbb{R}^d\times\mathcal{P}_2(\mathbb{R}^d).
\end{equation}

From Example \ref{eg-1} we find that $H$ satisfies the regularity assumptions \eqref{ass-H:1}, \eqref{ass-H:2}, \eqref{ass-H:3}, \eqref{ass-H:4}, \eqref{ass-H:5} and the displacement monotonicity condition \eqref{ass-H:disp-mono-cond-H}, that the $g$ above satisfies the regularity assumptions \eqref{ass-g:1}, \eqref{ass-g:2} and the displacement monotonicity condition \eqref{ass-g:disp-mono-cond-g}, and that the growth condition \eqref{ass-L-g} on $g$ and the Lagrangian $L$ of $H$ also holds. In addition, using \cite[Theorem 3.7, Theorem 4.5]{meszaros2024mean}, the MFG system \eqref{mfg-pde-sys} has a unique solution in the sense of \cite[Definition 3.2]{meszaros2024mean}. For the associated continuous Hamiltonian system \eqref{characteristics-continuous}, we consider the reference probability space $(\Omega,\mathbb{F},\mathbb{P})$ where $\Omega=[0,1]^d$,  $\mathbb{F}$  is the $\sigma$-algebra of Lebesgue measurable subsets of $\Omega$, and $\mathbb{P}$ denotes the $d$-dimensional Lebesgue measure {restricted to $\Omega$}. We let the initial random variable $X_0$ be distributed according to $\rho_0$ with $X_0(\omega)\coloneqq (\Phi^{-1}(\omega_1),\cdots, \Phi^{-1}(\omega_i),\cdots, \Phi^{-1}(\omega_d))$ for almost every $\omega=(\omega_i)_{i=1}^d\in \Omega$ where $\Phi:\mathbb{R}\to [0,1]$ denotes the cumulative distribution function of the one-dimensional standard normal distribution. 

For the numerical scheme, we let $N_k\coloneqq 2^{k}$ and set $\tau_k=T/N_k$ for $k=1,2,3,4,5,6$. Given $k\in \{1,2,3,4,5,6,7\}$, the approximate random variable $X_0^{N_k}$ is computed in MATLAB as an approximate $N_k$-point optimal quantization of $X_0$ using Lloyd's algorithm on a Monte Carlo sample of size 1,000,000. We estimate the corresponding quantization weights $\mathcal{A}_{N_k}$ via computing empirical Voronoi cell frequencies. We do not have access to an explicit formula for the unique solution $(X,Y)$ of the continuous Hamiltonian system \eqref{characteristics-continuous}, so we compute the approximation errors for the numerical scheme \eqref{numerical-scheme} with respect to the reference solution obtained at $N_{ref}=2048$ spatial samples with time-step $\tau_{ref}\coloneqq T/N_{ref}$. 

For this experiment, we employ the standard Picard iteration via Algorithm \ref{alg:standard_picard_iteration} to approximate the discrete problem, where we initialised the iteration by setting $Y_{init}^{k,N_k}\coloneqq 0$ in $ \mathbb{V}_k^-([0,T];\mathcal{R}_{2}(\mathcal{A}_{N_k},\mathcal{O}_{N_k}))$ and $X_{init}^{k,N_k}\in  \mathbb{V}_k^-([0,T];\mathcal{R}_{2}(\mathcal{A}_{N_k},\mathcal{O}_{N_k}))$ with $X_{init}^{k,N_k}|_{(0,T]}\coloneqq 0$ and $X_{init}^{k,N_k}(0)\coloneqq X_0^{N_k}$. The iteration was run with the global tolerance set to $tol_{global}=10^{-8}$, while the inner McKean--Vlasov solves were run via Algorithm \ref{alg:MKV_iteration} with tolerance $tol=10^{-12}$. For the chosen $N_k,\tau_k$ and the fact that $X_0\in L^{2+\delta}(\Omega;\mathbb{R}^d)$ for every finite $\delta>0$, we have that for each $d\in \{1,2,3,4,5,6\}$ Corollary \ref{cor-2-eg-rate-of-conv} gives an error estimate for the scheme of order $N_k^{-1/d}+\tau_k\eqsim N_k^{-1/d}$ for all $k\in\mathbb{N}$ sufficiently large, with rate of convergence of order $1/d$. In Figure \ref{Fig-4} we display error plots for $d\in\{1,2,3,4,5,6\}$, where we observe asymptotic rates of convergence of order $1/d$, in agreement with the conclusion of Corollary \ref{cor-2-eg-rate-of-conv}.

\begin{figure}[h!]
	\centering
	\begin{tabular}{c c} 
		\begin{subfigure}[b]{0.45\textwidth}
			\begin{adjustbox}{width=0.95\linewidth}
				\begin{tikzpicture}
					\begin{loglogaxis}[
						title={$(-D_xu(t,X(t)))_{t}$ Approx.\ in $L^{\infty}(L^2)$-norm},
						xlabel={time-step $\tau_k$},
						ylabel={relative error},
						xmax=0.1,
						ymax=0.9,
						legend pos=south east,
						ymajorgrids=true,
						grid style=dashed,
						]
						
						\addplot[
						color=blue,
						mark=square,]
						coordinates {
                            (0.050000, 0.444215)
(0.025000, 0.253073)
(0.012500, 0.137224)
(0.006250, 0.072154)
(0.003125, 0.037046)
(0.001563, 0.018797)
(0.000781, 0.009435)
						};
						
						\addplot[
						color=red,
						mark=triangle,]
						coordinates {
                            (0.050000, 0.608539)
(0.025000, 0.430135)
(0.012500, 0.320036)
(0.006250, 0.233820)
(0.003125, 0.170222)
(0.001563, 0.122671)
(0.000781, 0.087686)
						};
						
						\addplot[
						color=magenta,
						mark=star,]
						coordinates {
                            (0.050000, 0.674686)
(0.025000, 0.525727)
(0.012500, 0.424078)
(0.006250, 0.343183)
(0.003125, 0.277976)
(0.001563, 0.223678)
(0.000781, 0.179823)
						};
                        
						\logLogSlopeTriangle{0.35}{0.25}{0.08}{1}{black};
						\logLogSlopeTrianglei{0.34}{0.25}{0.52}{0.5}{black};
                        \logLogSlopeTriangleii{0.3}{0.2}{0.67}{0.333333333333333333333}{black};
                        
						\legend{$d=1$,$d=2$, $d=3$}
					\end{loglogaxis}  
				\end{tikzpicture}
			\end{adjustbox}
		\end{subfigure}
		&
		\begin{subfigure}[b]{0.45\textwidth}
			\begin{adjustbox}{width=0.95\linewidth} 
				\begin{tikzpicture}
					\begin{loglogaxis}[
						title={$(X(t))_t$ Approx. in $L^{\infty}(L^2)$-norm},
						xlabel={time step $\tau_k$},
						ylabel={relative error},
						xmax=0.1,
						ymax= 0.25,
						legend pos=south east,
						ymajorgrids=true,
						grid style=dashed,
						]
						
						\addplot[
						color=blue,
						mark=square,]
						coordinates {
                            (0.050000, 0.131229)
(0.025000, 0.074134)
(0.012500, 0.040082)
(0.006250, 0.021071)
(0.003125, 0.010815)
(0.001563, 0.005489)
(0.000781, 0.002756)
						};
						
						\addplot[
						color=red,
						mark=triangle,]
						coordinates {
                            (0.050000, 0.185431)
(0.025000, 0.132368)
(0.012500, 0.098185)
(0.006250, 0.071599)
(0.003125, 0.052066)
(0.001563, 0.037554)
(0.000781, 0.026835)
						};
						
						\addplot[
						color=magenta,
						mark=star,]
						coordinates {
                           (0.050000, 0.209600)
(0.025000, 0.163846)
(0.012500, 0.132119)
(0.006250, 0.106530)
(0.003125, 0.086235)
(0.001563, 0.069369)
(0.000781, 0.055709)
						};
						
						\logLogSlopeTriangle{0.35}{0.25}{0.08}{1}{black};
						\logLogSlopeTrianglei{0.34}{0.25}{0.53}{0.5}{black};
                        \logLogSlopeTriangleii{0.3}{0.2}{0.69}{0.333333333333333333333}{black};

						\legend{$d=1$,$d=2$, $d=3$}
					\end{loglogaxis}  
				\end{tikzpicture}
			\end{adjustbox}
		\end{subfigure}\\
        \begin{subfigure}[b]{0.45\textwidth}
			\begin{adjustbox}{width=0.95\linewidth}
				\begin{tikzpicture}
					\begin{loglogaxis}[
						title={$(-D_xu(t,X(t)))_{t}$ Approx.\ in $L^{\infty}(L^2)$-norm},
						xlabel={time-step $\tau_k$},
						ylabel={relative error},
						xmax=0.1,
						ymax=0.9,
						legend pos=south east,
						ymajorgrids=true,
						grid style=dashed,
						]

							\addplot[
						color=brown,
						mark=diamond,]
						coordinates {
							(0.050000, 0.761930)
(0.025000, 0.616840)
(0.012500, 0.486642)
(0.006250, 0.417782)
(0.003125, 0.354867)
(0.001563, 0.302037)
(0.000781, 0.256550)
						};
						
						\addplot[
						color=teal,
						mark=oplus,]
						coordinates {
							(0.050000, 0.760340)
(0.025000, 0.664528)
(0.012500, 0.577215)
(0.006250, 0.468331)
(0.003125, 0.410395)
(0.001563, 0.361442)
(0.000781, 0.316969)
						};

                        \addplot[
						color=black,
						mark=o,]
						coordinates {
							(0.050000, 0.774407)
(0.025000, 0.690177)
(0.012500, 0.592159)
(0.006250, 0.537847)
(0.003125, 0.457434)
(0.001563, 0.409330)
(0.000781, 0.365862)
						};
                        
                        \logLogSlopeTriangleiii{0.91}{0.125}{0.78}{0.25}{black};
                        \logLogSlopeTriangleiv{0.91}{0.1}{0.67}{0.2}{black};
                        \logLogSlopeTrianglev{0.91}{0.125}{0.56}{0.16666667}{black};
						\legend{$d=4$, $d=5$, $d=6$}
					\end{loglogaxis}  
				\end{tikzpicture}
			\end{adjustbox}
		\end{subfigure}
		&
		\begin{subfigure}[b]{0.45\textwidth}
			\begin{adjustbox}{width=0.95\linewidth} 
				\begin{tikzpicture}
					\begin{loglogaxis}[
						title={$(X(t))_t$ Approx. in $L^{\infty}(L^2)$-norm},
						xlabel={time step $\tau_k$},
						ylabel={relative error},
						xmax=0.1,
						ymax= 0.25,
						legend pos=south east,
						ymajorgrids=true,
						grid style=dashed,
						]

						\addplot[
						color=brown,
						mark=diamond,]
						coordinates {
							(0.050000, 0.238344)
(0.025000, 0.192560)
(0.012500, 0.151047)
(0.006250, 0.129682)
(0.003125, 0.109959)
(0.001563, 0.093557)
(0.000781, 0.079467)
						};
						
							\addplot[
						color=teal,
						mark=oplus,]
						coordinates {
							(0.050000, 0.235964)
(0.025000, 0.206852)
(0.012500, 0.179936)
(0.006250, 0.144467)
(0.003125, 0.126575)
(0.001563, 0.111470)
(0.000781, 0.097702)
						};

                        \addplot[
						color=black,
						mark=o,]
						coordinates {
							(0.050000, 0.239250)
(0.025000, 0.213810)
(0.012500, 0.182932)
(0.006250, 0.166861)
(0.003125, 0.140548)
(0.001563, 0.125569)
(0.000781, 0.112244)
						};
						
                        \logLogSlopeTriangleiii{0.91}{0.125}{0.765}{0.25}{black};
                        \logLogSlopeTriangleiv{0.91}{0.105}{0.65}{0.2}{black};
                        \logLogSlopeTrianglev{0.91}{0.125}{0.54}{0.16666667}{black};
						
						\legend{$d=4$, $d=5$, $d=6$}
					\end{loglogaxis}  
				\end{tikzpicture}
			\end{adjustbox}
		\end{subfigure}
	\end{tabular}
	\caption{Third experiment: $L^{\infty}(L^2)$-Error plots for approximations of the exact flow  $(-D_xu(t,X(t)))_{t\in [0,T]}$ and the exact trajectories of the players  $(X(t))_{t\in [0,T]}$ for $d\in \{1,2,3,4,5,6\}$ using Algorithm \ref{alg:standard_picard_iteration} for mesh counts $k\in \{1,2,3,4,5,6,7\}$}
	\label{Fig-4}
\end{figure}

\section*{Acknowledgments.}
The work of the first author was supported by the Engineering and Physical Sciences Research Council New Investigator Award ``Mean Field Games and Master equations'' under award no. EP/X020320/1. The work of the second author was supported by The Royal Society Career Development Fellowship.

\appendix

\section{Proofs of auxiliary results}\label{app-aux-disc-temp-ana}

\subsection{Proof of Lemma \ref{lemma-ibp-time-formulae} on discrete integration-by-parts formulae}

	\begin{proof}
		We include only the proof of \eqref{eq:discrete_ibp-expectation_pn} as the identities \eqref{eq:discrete_ibp-expectation_pp} and \eqref{eq:discrete_ibp-expectation_nn} then follow analogously. Let $(V,W)\in \mathbb{V}_k^+([0,T];L^2(\Omega;\mathbb{R}^d))\times \mathbb{V}_k^-([0,T];L^2(\Omega;\mathbb{R}^d))$ be given. Without loss of generality assume that $n>m$ in $\{1,\cdots,M_k\}$ so that $t_n>t_m$. From definition of $\partial_s\Ip V$ and $\p_s \In W$ we have via their definitions \eqref{derivatives_Ip} that
		\begin{equation}
			\begin{split} 
				&\int_{t_{m}}^{t_{n}}\mathbb{E}\left[\partial_s\Ip V(s)\cdot W(s) + V(s) \cdot\p_s \In W(s)\right]\ds
				=\tau_k\sum_{i=m+1}^{n}\mathbb{E}\left[\partial_s\Ip V|_{I_i}\cdot W|_{I_i}+V|_{I_i} \cdot\p_s \In W|_{I_i}\right]\\
				&=\tau_k\sum_{i=m+1}^{n}\mathbb{E}\left[\left(-\frac{1}{\tau_k}\jump{V}_{i-1}\right)\cdot W|_{I_i}+V|_{I_i} \cdot\left(-\frac{1}{\tau_k} \jump{W}_i\right)\right]\\
                &=\sum_{i=m+1}^{n}\mathbb{E}\left[\left[V|_{I_i}-V|_{I_{i-1}}\right]\cdot W|_{I_i}+V|_{I_i} \cdot\left[W|_{I_{i+1}}-W|_{I_i}\right]\right]\\
				&=\sum_{i=m+1}^{n}\mathbb{E}\left[-V|_{I_{i-1}}\cdot W|_{I_i}+V|_{I_i} \cdot W|_{I_{i+1}}\right]
				=\mathbb{E}\left[V|_{I_{n}} \cdot W|_{I_{n+1}} - V|_{I_{m}}\cdot W|_{I_{m+1}}\right]
				\\&=\mathbb{E}\left[V(t_n)\cdot W(t_n) - V(t_m)\cdot W(t_m)\right]
			\end{split} 
		\end{equation}
		since $V$ is defined everywhere in time through left-continuity in time and $W$ is defined everywhere in time through right-continuity in time, by definition of $ \mathbb{V}_k^+([0,T];L^2(\Omega;\mathbb{R}^d))$ and $ \mathbb{V}_k^-([0,T];L^2(\Omega;\mathbb{R}^d))$, respectively. This proves 
		\begin{multline}\label{eq:discrete_ibp}
			\int_{t_m}^{t_n}\mathbb{E}\left[\partial_s\Ip V(s)\cdot W(s) + V(s) \cdot\p_s \In W(s)\right]\ds
			=\mathbb{E}\left[V(t_n)\cdot W(t_n)-V(t_m)\cdot W(t_m)\right]\quad\forall n,m\in \{0,1,\cdots, M_k\}
		\end{multline}
		for	all $(V,W)\in \mathbb{V}_k^+([0,T];L^2(\Omega;\mathbb{R}^d))\times \mathbb{V}_k^-([0,T];L^2(\Omega;\mathbb{R}^d))$. This completes the proof.
	\end{proof}

\subsection{Proof of Lemma \ref{lemma-discrete-gronwall} on discrete Gronwall inequalities}
\begin{proof}
	Let $k_{\dagger}^x$ be the smallest positive integer such that $1-c_{x}\tau_{k_{\dagger}^x}>0$ and fix $k\geq k_{\dagger}^x$. We then apply the discrete Gronwall inequality from \cite[Proposition 4.1]{Emmrich2000DiscreteVO} to obtain 
	\begin{equation}
		x_n\leq b_x(1-c_x\tau_k)^{-n}\quad\forall n\in\{0,1,2,\cdots, M_k\}.
	\end{equation}
	By taking $s= c_x\tau_k/(1-c_x\tau_k)$ in the standard estimate $\exp(s)\geq 1+s$, $s\geq 0$, we find that $(1-c_x\tau_k)^{-n}\leq \exp(c_xn\tau_k/(1-c_x\tau_k))\leq \exp(c_xT/(1-c_x\tau_{k_{\dagger}^x}))$ for all $n\in \{0,1,2,\cdots,M_k\}$ since $T=M_k\tau_k$ and $\tau_k\leq \tau_{k_{\dagger}^x}$ for all $k\geq k_{\dagger}^x$. We thus deduce \eqref{gronwall-uniform-x-bound}.
	
	The second bound is obtained via reversal of the index variable under the summation in the hypothesis \eqref{gronwall-sum-cond-y}. Indeed, fix $k\geq k_*$. We have from the hypothesis \eqref{gronwall-sum-cond-y} that
	\begin{equation}
		y_{M_k-n}\leq b_y +c_y\tau_k\sum_{m=M_k-n+1}^{M_k}y_{m-1}
		=b_y +c_y\tau_k\sum_{m=-n}^{-1}y_{M_k+m}=b_y +c_y\tau_k\sum_{m=1}^{n}y_{M_k-m}
	\end{equation} for all $n\in \{0,1,2,\cdots, M_k\}$. We let $z_n\coloneqq y_{M_k-n}$ for each $n\in \{0,1,2,\cdots, M_k\}$ to obtain that 
	\begin{equation}
		z_n\leq b_y +c_y\tau_k\sum_{m=1}^nz_m\quad\forall n\in \{0,1,2,\cdots, M_k\}.
	\end{equation} We then apply the discrete Gronwall inequality from \cite[Proposition 4.1]{Emmrich2000DiscreteVO} as in the previous case to find that, for each $k\geq k_{\dagger}^y$,
	\begin{equation}
		\max_{0\leq n \leq M_k} y_{n}=\max_{0\leq n \leq M_k} z_{n}\leq b_y \exp\left(\frac{c_yT}{1-c_y\tau_{k_{\dagger}^y}}\right)
	\end{equation} where $k_{\dagger}^y$ is the smallest positive integer such that $1-c_{x}\tau_{k_{\dagger}^y}>0$ and $k_{\dagger}^y\geq k_*$.
\end{proof}

\subsection{Proof of equivalence of numerical schemes}

\begin{proof}[Proof of Proposition \ref{Prop-equiv-scheme}]
	If we have a solution to \eqref{numerical-scheme} then the definition of $\mathcal{R}_{d}(\mathcal{A}_N,\mathcal{O}_N)$ implies that we can write the simple random variables as $ X^{k,N}(t) =\sum_{i=1}^{N}X^{k,N}(t)|_{\Omega_{i,N}}\chi_{\Omega_{i,N}}$ and $ Y^{k,N}(t) =\sum_{i=1}^{N}Y^{k,N}(t)|_{\Omega_{i,N}}\chi_{\Omega_{i,N}}$ for each $t\in [0,T]$. Therefore, for each $i=1,\cdots, N$, define $\mathcal{X}_{k,N}^i(t)\coloneqq X^{k,N}(t)|_{\Omega_{i,N}}$ and $\mathcal{Y}_{k,N}^i(t)\coloneqq Y^{k,N}(t)|_{\Omega_{i,N}}$ for all $t\in [0,T]$, and note that $\rho^{k,N}(t) = (X^{k,N}(t))_{\#}\mathbb{P}
			=\sum_{i=1}^N\alpha_{i,N}\delta_{\mathcal{X}_{k,N}^i(t)}$
	for all $t\in [0,T]$.	Now let $n\in \{1,\cdots M_k\}$ be given. For each $t\in I_n$ \eqref{disc-HJ-eqn} and the definition of $\Ip$ imply that
	\begin{equation}
		{X^{k,N}(t)+\frac{t_n-t}{\tau_k}\jump{X^{k,N}(t)}_{n-1}}=X_0^{N}+ \int_0^tD_pH\left(X^{k,N}(s),Y^{k,N}(s),\rho^{k,N}(s)\right)\mathrm{d}s
	\end{equation}
	Writing  $ X^{k,N}(t)|_{I_n} =\sum_{i=1}^{N}\left(X^{k,N}(t)|_{\Omega_{i,N}}\right)|_{I_n}\chi_{\Omega_{i,N}}=\sum_{i=1}^{N}\mathfrak{X}_{\Omega_{i,N}}(t)|_{I_n}\chi_{\Omega_{i,N}}$ where we recall that $\mathfrak{X}_{\Omega_{i,N}}(t)\coloneqq  X^{k,N}(t)|_{\Omega_{i,N}}$ is constant-in-time on $I_n$ for $n\in\{1,\cdots,M_k\}$, we obtain for all $t\in I_n$
	\begin{multline}
		\sum_{i=1}^{N}\mathfrak{X}_{\Omega_{i,N}}|_{I_n}\chi_{\Omega_{i,N}}+\frac{t_n-t}{\tau_k}\sum_{i=1}^{N}\left(\mathfrak{X}_{\Omega_{i,N}}|_{I_{n-1}}-\mathfrak{X}_{\Omega_{i,N}}|_{I_n}\right)\chi_{\Omega_{i,N}}=X_0^{N}+ \int_0^tD_pH\left(X^{k,N}(s),Y^{k,N}(s),\rho^{k,N}(s)\right)\mathrm{d}s
	\end{multline}
	where we have used the definition of the temporal jumps given in \eqref{def-time-jump}. We differentiate the above w.r.t.\ $t\in I_n$ to obtain that
	\begin{equation}
		-\tau_k^{-1}\sum_{i=1}^{N}\left(\mathfrak{X}_{\Omega_{i,N}}|_{I_{n-1}}-\mathfrak{X}_{\Omega_{i,N}}|_{I_n}\right)\chi_{\Omega_{i,N}} = D_pH\left(X^{k,N}(t),Y^{k,N}(t),\rho^{k,N}(t)\right)\quad\forall t\in I_n
	\end{equation}
	which is the same as
	\begin{multline}
		-\tau_k^{-1}\sum_{i=1}^{N}\left(\mathfrak{X}_{\Omega_{i,N}}|_{I_{n-1}}-\mathfrak{X}_{\Omega_{i,N}}|_{I_n}\right)\chi_{\Omega_{i,N}} = D_pH\left(\sum_{i=1}^{N}\mathfrak{X}_{\Omega_{i,N}}|_{I_n}\chi_{\Omega_{i,N}},\sum_{i=1}^{N}\mathfrak{Y}_{\Omega_{i,N}}|_{I_n}\chi_{\Omega_{i,N}},\rho^{k,N}(t)\right)\quad\forall t\in I_n
	\end{multline}
	where we similarly let $ Y^{k,N}(t)|_{I_n}=\sum_{i=1}^{N}\mathfrak{Y}_{\Omega_{i,N}}|_{I_n}\chi_{\Omega_{i,N}}$ where  $\mathfrak{Y}_{\Omega_{i,N}}(t)\coloneqq Y^{k,N}(t)|_{\Omega_{i,N}}$ is constant-in-time on $I_n$ for $n\in\{1,\cdots,M_k\}$. Evaluating the above equation at a sample point $\omega\in \Omega_{i,N}$ with $i=1,2,\cdots, N$, we then obtain
	\begin{equation}\label{x-discr-eqn}
		-\tau_k^{-1}\left(\mathfrak{X}_{\Omega_{i,N}}|_{I_{n-1}}-\mathfrak{X}_{\Omega_{i,N}}|_{I_n}\right) = D_pH\left(\mathfrak{X}_{\Omega_{i,N}}|_{I_n},\mathfrak{Y}_{\Omega_{i,N}}|_{I_n},\rho^{k,N}(t)\right)\quad\forall t\in I_n. 
	\end{equation}
	Since $\mathfrak{X}_{\Omega_{i,N}}(t)= X^{k,N}(t)|_{\Omega_{i,N}}=\mathcal{X}_{k,N}^i(t)|_{I_n}$ and $\mathfrak{Y}_{\Omega_{i,N}}(t)=Y^{k,N}(t)|_{\Omega_{i,N}}=\mathcal{Y}_{k,N}^i(t)|_{I_n}$, then \eqref{x-discr-eqn} is equivalent to 
	\begin{equation}
		\partial_t\mathcal{I}_+^k\mathcal{X}_{k,N}^i|_{I_n}=-\tau_k^{-1}\left(\mathcal{X}_{k,N}^i(t)|_{I_{n-1}}-\mathcal{X}_{k,N}^i(t)|_{I_n}\right) = D_pH\left(\mathcal{X}_{k,N}^i(t)|_{I_n},\mathcal{Y}_{k,N}^i(t)|_{I_n},\rho^{k,N}(t)\right)
	\end{equation}
	for all $t\in I_n$. Because this holds for all $t\in I_n$, $n=1,\cdots, M_k$, we conclude \eqref{disc-HJ-eqn-equiv}, as claimed. The derivation of \eqref{disc-KFP-eqn-equiv} follows by a similar direct calculation. It is direct to deduce the initial-terminal condition \eqref{disc-equiv-init-term-condition} from the scheme \eqref{numerical-scheme} by using the definition of $X_0^N$ and $\mathcal{R}_{d}(\mathcal{A}_N,\mathcal{O}_N)$. In all, any solution of the scheme \eqref{numerical-scheme} yields a solution to the alternative scheme \eqref{numerical-scheme-equiv}. 
	
	It is then clear from the above calculations that a solution to the first scheme \eqref{numerical-scheme} can be constructed using a solution to the alternative scheme \eqref{numerical-scheme-equiv}, by defining $ X^{k,N}(t) =\sum_{i=1}^{N}\mathcal{X}_{k,N}^i(t)\chi_{\Omega_{i,N}}$ and $ Y^{k,N}(t) =\sum_{i=1}^{N}\mathcal{Y}_{k,N}^i(t)\chi_{\Omega_{i,N}}$ for each $t\in [0,T]$ and noting the identities \eqref{nonlinearity-identity-1}, \eqref{nonlinearity-identity-2} and \eqref{nonlinearity-identity-3}.
\end{proof}

\subsection{Supplementary lemma for second numerical experiment}
{\begin{lemma}\label{lem:app-mon}
The function $g: \mathbb{R}^2\times\mathcal{P}_2(\mathbb{R}^2)\to \R$ defined as $g(x,\mu)\coloneqq \frac{1}{4}|x|^2M_2^2(\mu)$ satisfies the displacement monotonicity condition \eqref{ass-g:disp-mono-cond-g}.
\end{lemma}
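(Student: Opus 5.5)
We compute the left-hand side of \eqref{ass-g:disp-mono-cond-g} explicitly and reduce it to an elementary inequality between real numbers. For $X\in\LLspace$ with law $\mathcal{L}_X$ we have $M_2^2(\mathcal{L}_X)=\mathbb{E}[|X|^2]$. Since $D_xg(x,\mu)=\frac12 M_2^2(\mu)\,x$, this gives
\begin{equation}
D_xg(X,\mathcal{L}_X)=\tfrac12\,\mathbb{E}[|X|^2]\,X .
\end{equation}
Take $X_1,X_2\in\LLspace$ and set $a\coloneqq\mathbb{E}[|X_1|^2]$, $b\coloneqq\mathbb{E}[|X_2|^2]$ and $c\coloneqq\mathbb{E}[X_1\cdot X_2]$. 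Expanding the product inside the expectation gives
\begin{equation}
2\,\mathbb{E}\left[\left(D_xg(X_1,\mathcal{L}_{X_1})-D_xg(X_2,\mathcal{L}_{X_2})\right)\cdot(X_1-X_2)\right]=a^2+b^2-(a+b)c .
\end{equation}
The statement therefore reduces to showing that $a^2+b^2\ge (a+b)c$ for all $a,b\ge0$ with $c\le\sqrt{ab}$, where the last constraint is Cauchy--Schwarz.

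We split into two cases on the sign of $c$. If $c\le 0$, the inequality is immediate because $a+b\ge0$. If $c>0$, we use $c\le\sqrt{ab}\le\frac{a+b}{2}$ (AM--GM), which gives $(a+b)c\le\frac{(a+b)^2}{2}$. We then use $\frac{(a+b)^2}{2}\le a^2+b^2$, which is equivalent to $(a-b)^2\ge0$. Chaining these yields $(a+b)c\le a^2+b^2$, so the expectation above is non-negative.

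No step is a genuine obstacle. The one point needing care is the expansion of the product, and in particular noting that the cross terms combine to $-(a+b)c$. For completeness we would also remark that $g(\cdot,\mu)\in C^2$, so \eqref{ass-g:1} holds. Finally, only local Lipschitz continuity of $D_xg$ in $\mu$ is available, which is consistent with the paper's remark that \eqref{ass-g:2} fails for this $g$.
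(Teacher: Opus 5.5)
Your proof is correct and follows essentially the same route as the paper: both reduce twice the left-hand side of \eqref{ass-g:disp-mono-cond-g} to $a^2+b^2-(a+b)c$ and then use $c\le \frac{a+b}{2}$. The paper gets this bound in one step from the pointwise Young inequality $X_1\cdot X_2\le\frac12|X_1|^2+\frac12|X_2|^2$, so the case split on the sign of $c$ is not needed, and the argument gives the explicit lower bound $\frac12(a-b)^2\ge 0$.
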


\begin{proof}
We have that $D_xg(x,\mu) = \frac12 xM_2^2(\mu),$ from where for any two random variables $X_1,X_2 \in L^2(\Omega;\R^d)$ we have
\begin{align*}
2 \mathbb{E}&\left[\left(D_xg({X}_1,\mathcal{L}_{X_1})-D_xg({X}_2,\mathcal{L}_{X_2})\right)\cdot ({X}_1-{X}_2)\right] = \mathbb{E}\left[ \left( X_1 \mathbb{E}[|X_1|^2] - X_2 \mathbb{E}[|X_2|^2]\right)\cdot ({X}_1-{X}_2) \right]\\
& = \mathbb{E}[|X_1|^2] \mathbb{E}[|X_1 - X_2|^2] + \mathbb{E}[X_2 \cdot ({X}_1-{X}_2)] \mathbb{E}\left[|X_1|^2 - |X_2|^2\right]\\
& =\mathbb{E}[|X_1|^2] \left(\mathbb{E}[|X_1 - X_2|^2]  + \mathbb{E}[X_1\cdot X_2] -\mathbb{E}[|X_2|^2] \right) - \mathbb{E}[|X_2|^2]\left(\mathbb{E}[X_1\cdot X_2] - \mathbb{E}[|X_2|^2] \right)\\
& =\mathbb{E}[|X_1|^2] \left(\mathbb{E}[|X_1|^2 ]  - \mathbb{E}[X_1\cdot X_2]  \right) + \mathbb{E}[|X_2|^2]\left(-\mathbb{E}[X_1\cdot X_2] + \mathbb{E}[|X_2|^2] \right)\\
&\ge \frac{1}{2}\mathbb{E}[|X_1|^2] \left(\mathbb{E}[|X_1|^2 ] -  \mathbb{E}[|X_2|^2 \right) + \frac12\mathbb{E}[|X_2|^2]\left( \mathbb{E}[|X_2|^2] -\mathbb{E}[|X_1|^2] \right) = \frac12 \left(\mathbb{E}[|X_1|^2 ] -  \mathbb{E}[|X_2|^2\right)^2,
\end{align*}
where in the penultimate inequality we have used Young's inequality, i.e. 
$$- \mathbb{E}[X_1\cdot X_2] \ge - \frac12\mathbb{E}[|X_1|^2] - \frac12\mathbb{E}[|X_2|^2].$$
The result follows.
\end{proof}
}

\bibliographystyle{siamplain}
\bibliography{bib_file}

\begin{thebibliography}{10}
	
	\bibitem{achdou2013mean}
	{\sc Y.~Achdou, F.~Camilli, and I.~Capuzzo-Dolcetta}, {\em Mean field games: convergence of a finite difference method}, SIAM J.~Numer.~Anal., 51 (2013), pp.~2585--2612, \url{https://doi.org/10.1137/120882421}.
	
	\bibitem{achdou2010mean}
	{\sc Y.~Achdou and I.~Capuzzo-Dolcetta}, {\em Mean field games: numerical methods}, SIAM J.~Numer.~Anal., 48 (2010), pp.~1136--1162, \url{https://doi.org/10.1137/090758477}.
	
	\bibitem{achdou2020mean}
	{\sc Y.~Achdou, P.~Cardaliaguet, F.~Delarue, A.~Porretta, and F.~Santambrogio}, {\em Mean field games}, vol.~2281 of Lecture Notes in Mathematics, Springer, Cham; Centro Internazionale Matematico Estivo (C.I.M.E.), Florence, 2020, \url{https://doi.org/10.1007/978-3-030-59837-2}.
	
	\bibitem{achdou2016convergence}
	{\sc Y.~Achdou and A.~Porretta}, {\em Convergence of a finite difference scheme to weak solutions of the system of partial differential equations arising in mean field games}, SIAM J.~Numer.~Anal., 54 (2016), pp.~161--186, \url{https://doi.org/10.1137/15M1015455}.
	
	\bibitem{berry2025approximation}
	{\sc J.~Berry, O.~Ley, and F.~J. Silva}, {\em Approximation and perturbations of stable solutions to a stationary mean field game system}, J.~Math.~Pures Appl. (9), 194 (2025), pp.~Paper No. 103666, 28, \url{https://doi.org/10.1016/j.matpur.2025.103666}.

    \bibitem{boissard2014mean}
	{\sc E.~Boissard and T.~Le Gouic}, {\em On the mean speed of convergence of empirical and occupation measures in {W}asserstein distance}, Ann.~Inst.~Henri Poincar\'e{} Probab.~Stat., 50 (2014), pp.~539--563, \url{https://doi.org/10.1214/12-AIHP517}.
	
	\bibitem{bonnans2022error}
	{\sc J.~F. Bonnans, K.~Liu, and L.~Pfeiffer}, {\em Error estimates of a theta-scheme for second-order mean field games}, ESAIM Math.~Model.~Numer.~Anal., 57 (2023), pp.~2493--2528, \url{https://doi.org/10.1051/m2an/2023059}.

    \bibitem{bourne2018semi}
	{\sc D.~P. Bourne, B.~Schmitzer, and B.~Wirth}, {\em Semi-discrete unbalanced optimal transport and quantization}, arXiv preprint arXiv:1808.01962 (2018), \url{https://arxiv.org/abs/1808.01962}.

    \bibitem{brenier1991polar}
	{\sc Y.~Brenier}, {\em Polar factorization and monotone rearrangement of vector-valued functions}, Comm.~Pure Appl.~Math., 44 (1991), pp.~375--417, \url{https://doi.org/10.1002/cpa.3160440402}.

	\bibitem{CarliniSilva14}
	{\sc E.~Carlini and F.~J. Silva}, {\em A fully discrete semi-{L}agrangian scheme for a first order mean field game problem}, SIAM J.~Numer.~Anal., 52
	(2014), pp.~45--67, \url{https://doi.org/10.1137/120902987}.
	
	\bibitem{CarliniSilve15}
	{\sc E.~Carlini and F.~J. Silva}, {\em A semi-{L}agrangian scheme for a degenerate second order mean field game system}, Discrete Contin.~Dyn.~Syst., 35 (2015), pp.~4269--4292, \url{https://doi.org/10.3934/dcds.2015.35.4269}.

    \bibitem{chassagneux2019numerical}
	{\sc J.-F.~Chassagneux, D.~Crisan, and F.~Delarue}, {\em Numerical method for {FBSDE}s of {M}c{K}ean-{V}lasov type}, Ann. Appl. Probab., 29 (2019), pp.~1640--1684, \url{https://doi.org/10.1214/18-AAP1429}.
    
    \bibitem{chevallier2018uniform}
	{\sc J.~Chevallier}, {\em Uniform decomposition of probability measures: quantization, clustering and rate of convergence}, J.~Appl.~Probab., 55 (2018), pp.~1037--1045, \url{https://doi.org/10.1017/jpr.2018.69}.
    
	\bibitem{Ciarlet1978}
	{\sc P.~G. Ciarlet}, {\em The finite element method for elliptic problems}, vol.~40 of Classics in Applied Mathematics, Society for Industrial and Applied Mathematics (SIAM), Philadelphia, PA, 2002, \url{https://doi.org/10.1137/1.9780898719208}.

    \bibitem{du1999centroidal}
	{\sc Q.~Du, V.~Faber, and M.~Gunzburger}, {\em Centroidal {V}oronoi tessellations: applications and algorithms}, SIAM Rev., 41 (1999), pp.~637--676, \url{https://doi.org/10.1137/S0036144599352836}.

    \bibitem{ducasse2020second}
	{\sc R.~Ducasse, G.~Mazanti, and F.~Santambrogio}, {\em Second order local minimal-time {M}ean {F}ield {G}ames}, Nonlinear Differ. Equ. Appl., 29 (2022), \url{https://doi.org/10.1007/s00030-022-00767-2}.
    
    \bibitem{el2022new}
	{\sc R.~El Nmeir, H.~Luschgy, and G.~Pag\`es}, {\em New approach to greedy vector quantization}, Bernoulli, 28 (2022), pp.~424--452, \url{https://doi.org/10.3150/21-bej1350}.

    \bibitem{Emmrich2000DiscreteVO}
	{\sc E.~Emmrich}, {\em Discrete Versions of Gronwall's Lemma and Their Application to the Numerical Analysis of Parabolic Problems}, (2000), \url{https://api.semanticscholar.org/CorpusID:221213518}.

    \bibitem{fournier2015rate}
    {\sc N.~Fournier and A.~Guillin}, {\em On the rate of convergence in Wasserstein distance of the empirical measure}, Probab.~Theory Related Fields, 162 (2015), pp.~707--738, \url{https://link.springer.com/article/10.1007/s00440-014-0583-7}.

    \bibitem{fournier2023convergence}
    {\sc N.~Fournier}, {\em Convergence of the empirical measure in expected wasserstein distance: non-asymptotic explicit bounds in $\mathbb{R}^d$}, ESAIM Probab.~Stat., 27 (2023), pp.~749--775, \url{https://www.esaim-ps.org/articles/ps/abs/2023/01/ps220050/ps220050.html}.
    
	\bibitem{gilbarg2015elliptic}
	{\sc D.~Gilbarg and N.~S. Trudinger}, {\em Elliptic {P}artial {D}ifferential {E}quations of {S}econd {O}rder}, Springer, 2001, \url{https://doi.org/10.1007/978-3-642-61798-0}.
	
	\bibitem{GomesSaude2014}
	{\sc D.~A. Gomes and J.~a. Sa\'{u}de}, {\em Mean field games models---a brief survey}, Dyn.~Games~Appl., 4 (2014), pp.~110--154, \url{https://doi.org/10.1007/s13235-013-0099-2}.
	
	\bibitem{graf2000foundations}
	{\sc S.~Graf and H.~Luschgy}, {\em Foundations of quantization for probability distributions}, Springer-Verlag, 2000, \url{https://doi.org/10.1007/BFb0103945}.
	
	\bibitem{GueantLasryLions2003}
	{\sc O.~Gu\'{e}ant, J.-M. Lasry, and P.-L. Lions}, {\em Mean field games and applications}, in Paris-{P}rinceton {L}ectures on {M}athematical {F}inance
	2010, vol.~2003 of Lecture Notes in Math., Springer, Berlin, 2011, pp.~205--266, \url{https://doi.org/10.1007/978-3-642-14660-2\_3}.
	
	\bibitem{huang2006large}
	{\sc M.~Huang, R.~P. Malham\'e, and P.~E. Caines}, {\em Large population stochastic dynamic games: closed-loop {M}c{K}ean-{V}lasov systems and the {N}ash certainty equivalence principle}, Commun.~Inf.~Syst., 6 (2006), pp.~221--251, \url{https://doi.org/10.4310/cis.2006.v6.n3.a5}.

    \bibitem{jones1964fundamental}
	{\sc G.~S.~Jones}, {\em Fundamental Inequalities for Discrete and Discontinuous Functional Equations}, J. Soc. Indust. Appl. Math., 12 (1964), pp.~43--57, \url{http://www.jstor.org/stable/2946506}.

    \bibitem{kieffer1982exponential}
	{\sc J.~Kieffer}, {\em Exponential rate of convergence for {L}loyd's method. {I}}, IEEE Trans. Inform. Theory, 28 (1982), pp.~205--210, \url{https://doi.org/10.1109/TIT.1982.1056482}.
	
	\bibitem{lasry2006jeuxI}
	{\sc J.-M. Lasry and P.-L. Lions}, {\em Jeux \`a champ moyen. {I}. {L}e cas
		stationnaire}, C.~R.~Math.~Acad.~Sci.~Paris, 343 (2006), pp.~619--625,
	\url{https://doi.org/10.1016/j.crma.2006.09.019}.
	
	\bibitem{lasry2006jeuxII}
	{\sc J.-M. Lasry and P.-L. Lions}, {\em Jeux \`a champ moyen. {II}. {H}orizon fini et contr\^{o}le optimal}, C.~R.~Math.~Acad.~Sci.~Paris, 343 (2006),
	pp.~679--684, \url{https://doi.org/10.1016/j.crma.2006.09.018}.
	
	\bibitem{lasry2007mean}
	{\sc J.-M. Lasry and P.-L. Lions}, {\em Mean field games}, Jpn.~J.~Math., 2 (2007), pp.~229--260, \url{https://doi.org/10.1007/s11537-007-0657-8}.

    \bibitem{lloyd1982least}
	{\sc S.~P. Lloyd}, {\em Least squares quantization in {PCM}}, IEEE Trans. Inform. Theory, 28 (1982), pp.~129--137, \url{https://doi.org/10.1109/TIT.1982.1056489}.

    \bibitem{luschgy2015greedy}
	{\sc H.~Luschgy and G.~Pag\`es}, {\em Greedy vector quantization}, J. Approx. Theory, 198 (2015), pp.~111--131, \url{https://doi.org/10.1016/j.jat.2015.05.005}.
    
    \bibitem{max1960quantizing}
	{\sc J.~Max}, {\em Quantizing for minimum distortion}, Trans. IRE, IT-6 (1960), pp.~7--12,
	\url{https://doi.org/10.1109/tit.1960.1057548}.
    
	\bibitem{meszaros2024mean}
	{\sc A.~R. M\'esz\'aros and C.~Mou}, {\em Mean field games systems under displacement monotonicity}, SIAM J.~Math.~Anal., 56 (2024), pp.~529--553,
	\url{https://doi.org/10.1137/22M1534353}.
	
	\bibitem{osborne2024thesis}
	{\sc Y.~A.~P. Osborne}, {\em Analysis and numerical approximation of mean field game partial differential inclusions}, PhD thesis, University College London, UK, 2024. Available online at \url{https://discovery.ucl.ac.uk/id/eprint/10194241/}.
	
	\bibitem{osborne2022analysis}
	{\sc Y.~A.~P. Osborne and I.~Smears}, {\em Analysis and numerical approximation
		of stationary second-order mean field game partial differential inclusions}, SIAM J.~Numer.~Anal., 62 (2024), pp.~138--166,
	\url{https://doi.org/10.1137/22M1519274}.
	
	\bibitem{osborne2024erratum}
	{\sc Y.~A.~P. Osborne and I.~Smears}, {\em Erratum: Analysis and numerical approximation of stationary second-order mean field game partial differential inclusions}, SIAM J.~Numer.~Anal., 62 (2024), pp.~2415--2417, \url{https://doi.org/10.1137/24M165123X}.
	
	\bibitem{osborne2024regularization}
	{\sc Y.~A.~P. Osborne and I.~Smears}, {\em Regularization of stationary second-order mean field game partial differential inclusions}, SIAM J.~Math.~Anal., 57 (2025), pp.~5189--5215, \url{https://doi.org/10.1137/24M1686401}.
	
	\bibitem{osborne2023finite}
	{\sc Y.~A.~P. Osborne and I.~Smears}, {\em Finite element approximation of time-dependent mean field games with nondifferentiable {H}amiltonians}, Numer.~Math., 157 (2025), pp.~165--211, \url{https://doi.org/10.1007/s00211-024-01447-2}.
	
	\bibitem{osborne2024near}
	{\sc Y.~A.~P. Osborne and I.~Smears}, {\em Near and full quasi-optimality of finite element approximations of stationary second-order mean field games}, Math.~Comp.,  (2025), \url{https://doi.org/10.1090/mcom/4080}.
	
	\bibitem{osborne2025posteriori}
	{\sc Y.~A.~P. Osborne, I.~Smears, and H.~Wells}, {\em A posteriori error bounds for finite element approximations of steady-state mean field games}, IMA J.~Numer.~Anal., (2025), \url{https://doi.org/https://doi.org/10.1093/imanum/draf107}.

    \bibitem{pages2003optimal}
	{\sc G.~Pag\`es and J.~Printems}, {\em Optimal quadratic quantization for numerics: the {G}aussian case}, Monte Carlo Methods Appl., (2003), \url{https://doi.org/10.1163/156939603322663321}.
    
    \bibitem{quattrocchi2024asymptotics}
	{\sc F.~Quattrocchi}, {\em Asymptotics for Optimal Empirical Quantization of Measures}, arXiv preprint arXiv:2408.12924, (2024), \url{https://arxiv.org/abs/2408.12924}.

    \bibitem{sadhu2024stability}
	{\sc R.~Sadhu, Z.~Goldfeld, and K.~Kato}, {\em Stability and statistical inference for semidiscrete optimal transport maps}, Ann.~Appl.~Probab., (2025), \url{https://doi.org/10.1214/24-aap2104}.

    \bibitem{scott1990finite}
	{\sc L.~R. Scott and S.~Zhang}, {\em Finite element interpolation of nonsmooth functions satisfying boundary conditions}, Math. Comp., 54 (1990), pp.~483--493, \url{https://doi.org/10.2307/2008497}.
    
	\bibitem{seeger2025error}
	{\sc B.~Seeger}, {\em Error estimates for deterministic empirical approximations of probability measures}, arXiv preprint arXiv:2510.03451,  (2025), \url{https://arxiv.org/abs/2510.03451v1}.
	
	\bibitem{simon1986compact}
	{\sc J.~Simon}, {\em Compact sets in the space {$L^p(0,T;B)$}}, Ann. Mat. Pura Appl. (4), 146 (1987), pp.~65--96, \url{https://doi.org/10.1007/BF01762360}.

    \bibitem{tacskesen2023semi}
	{\sc B.~Ta\c skesen, S.~Shafieezadeh-Abadeh, and D.~Kuhn}, {\em Semi-discrete optimal transport: hardness, regularization and numerical solution}, Math. Program., 199 (2023), pp.~1033--1106, \url{https://doi.org/10.1007/s10107-022-01856-x}.
	
	\bibitem{zador1964development}
	{\sc P.~L.~Zador}, {\em D{evelopment} {and} {evaluation} {of} {procedures} {for} {quantizing} {multivariate} {distributions}}, PhD thesis, Stanford University, US, 1964. Available online at \url{http://gateway.proquest.com/openurl?url_ver=Z39.88-2004&rft_val_fmt=info:ofi/fmt:kev:mtx:dissertation&res_dat=xri:pqdiss&rft_dat=xri:pqdiss:6409855}.
	
\end{thebibliography}

\end{document}